\numberwithin{equation}{section} 
\newdimen\arrowsize
\tikzset{rhrhombithickside/.style={ultra thick}}
\tikzset{rhrhombiarrow/.style={}}
\tikzset{rhrhombifinearrow/.style={line width=1pt}}
\tikzset{rhrhombipatharrow/.style={thin}}
\tikzset{rhrhombidraw/.style={ultra thin}}
\tikzset{rhrhombifill/.style={thick,fill=black!40}}
\newcommand{\FIGtriangulararray}{\begin{tikzpicture}
\draw[rhrhombidraw] (0.0pt,0.0pt) -- (-4.619pt,-8.0pt) -- (4.619pt,-8.0pt) -- cycle;
\draw[rhrhombidraw] (-4.619pt,-8.0pt) -- (-9.238pt,-16.0pt) -- (0.0pt,-16.0pt) -- cycle;
\draw[rhrhombidraw] (4.619pt,-8.0pt) -- (0.0pt,-16.0pt) -- (9.238pt,-16.0pt) -- cycle;
\draw[rhrhombidraw] (-9.238pt,-16.0pt) -- (-13.857pt,-24.0pt) -- (-4.619pt,-24.0pt) -- cycle;
\draw[rhrhombidraw] (0.0pt,-16.0pt) -- (-4.619pt,-24.0pt) -- (4.619pt,-24.0pt) -- cycle;
\draw[rhrhombidraw] (9.238pt,-16.0pt) -- (4.619pt,-24.0pt) -- (13.857pt,-24.0pt) -- cycle;
\draw[rhrhombidraw] (-13.857pt,-24.0pt) -- (-18.476pt,-32.0pt) -- (-9.238pt,-32.0pt) -- cycle;
\draw[rhrhombidraw] (-4.619pt,-24.0pt) -- (-9.238pt,-32.0pt) -- (0.0pt,-32.0pt) -- cycle;
\draw[rhrhombidraw] (4.619pt,-24.0pt) -- (0.0pt,-32.0pt) -- (9.238pt,-32.0pt) -- cycle;
\draw[rhrhombidraw] (13.857pt,-24.0pt) -- (9.238pt,-32.0pt) -- (18.476pt,-32.0pt) -- cycle;
\draw[rhrhombidraw] (-18.476pt,-32.0pt) -- (-23.095pt,-40.0pt) -- (-13.857pt,-40.0pt) -- cycle;
\draw[rhrhombidraw] (-9.238pt,-32.0pt) -- (-13.857pt,-40.0pt) -- (-4.619pt,-40.0pt) -- cycle;
\draw[rhrhombidraw] (0.0pt,-32.0pt) -- (-4.619pt,-40.0pt) -- (4.619pt,-40.0pt) -- cycle;
\draw[rhrhombidraw] (9.238pt,-32.0pt) -- (4.619pt,-40.0pt) -- (13.857pt,-40.0pt) -- cycle;
\draw[rhrhombidraw] (18.476pt,-32.0pt) -- (13.857pt,-40.0pt) -- (23.095pt,-40.0pt) -- cycle;
\end{tikzpicture}}
\newcommand{\FIGdualtriangulararray}{\begin{tikzpicture}
\draw[rhrhombidraw] (0.0pt,0.0pt) -- (-9.238pt,-16.0pt) -- (9.238pt,-16.0pt) -- cycle;
\draw (-4.619pt,-8.0pt) -- (0.0pt,-10.656pt);
\draw (4.619pt,-8.0pt) -- (0.0pt,-10.656pt);
\draw (0.0pt,-16.0pt) -- (0.0pt,-10.656pt);
\draw (0.0pt,-16.0pt) -- (0.0pt,-21.344pt);
\draw (-4.619pt,-24.0pt) -- (0.0pt,-21.344pt);
\draw (4.619pt,-24.0pt) -- (0.0pt,-21.344pt);
\fill[color=white] (-4.619pt,-8.0pt) circle (1.2pt);
\draw[rhrhombidraw] (-4.619pt,-8.0pt) circle (1.2pt);
\fill[color=white] (4.619pt,-8.0pt) circle (1.2pt);
\draw[rhrhombidraw] (4.619pt,-8.0pt) circle (1.2pt);
\fill[color=white] (0.0pt,-16.0pt) circle (1.2pt);
\draw[rhrhombidraw] (0.0pt,-16.0pt) circle (1.2pt);
\fill (0.0pt,-10.656pt) circle (1.5pt);
\fill (0.0pt,-21.344pt) circle (1.5pt);
\draw[rhrhombidraw] (-9.238pt,-16.0pt) -- (-18.476pt,-32.0pt) -- (0.0pt,-32.0pt) -- cycle;
\draw (-13.857pt,-24.0pt) -- (-9.238pt,-26.656pt);
\draw (-4.619pt,-24.0pt) -- (-9.238pt,-26.656pt);
\draw (-9.238pt,-32.0pt) -- (-9.238pt,-26.656pt);
\draw (-9.238pt,-32.0pt) -- (-9.238pt,-37.344pt);
\draw (-13.857pt,-40.0pt) -- (-9.238pt,-37.344pt);
\draw (-4.619pt,-40.0pt) -- (-9.238pt,-37.344pt);
\fill[color=white] (-13.857pt,-24.0pt) circle (1.2pt);
\draw[rhrhombidraw] (-13.857pt,-24.0pt) circle (1.2pt);
\fill[color=white] (-4.619pt,-24.0pt) circle (1.2pt);
\draw[rhrhombidraw] (-4.619pt,-24.0pt) circle (1.2pt);
\fill[color=white] (-9.238pt,-32.0pt) circle (1.2pt);
\draw[rhrhombidraw] (-9.238pt,-32.0pt) circle (1.2pt);
\fill (-9.238pt,-26.656pt) circle (1.5pt);
\fill (-9.238pt,-37.344pt) circle (1.5pt);
\draw[rhrhombidraw] (9.238pt,-16.0pt) -- (0.0pt,-32.0pt) -- (18.476pt,-32.0pt) -- cycle;
\draw (4.619pt,-24.0pt) -- (9.238pt,-26.656pt);
\draw (13.857pt,-24.0pt) -- (9.238pt,-26.656pt);
\draw (9.238pt,-32.0pt) -- (9.238pt,-26.656pt);
\draw (9.238pt,-32.0pt) -- (9.238pt,-37.344pt);
\draw (4.619pt,-40.0pt) -- (9.238pt,-37.344pt);
\draw (13.857pt,-40.0pt) -- (9.238pt,-37.344pt);
\fill[color=white] (4.619pt,-24.0pt) circle (1.2pt);
\draw[rhrhombidraw] (4.619pt,-24.0pt) circle (1.2pt);
\fill[color=white] (13.857pt,-24.0pt) circle (1.2pt);
\draw[rhrhombidraw] (13.857pt,-24.0pt) circle (1.2pt);
\fill[color=white] (9.238pt,-32.0pt) circle (1.2pt);
\draw[rhrhombidraw] (9.238pt,-32.0pt) circle (1.2pt);
\fill (9.238pt,-26.656pt) circle (1.5pt);
\fill (9.238pt,-37.344pt) circle (1.5pt);
\draw[rhrhombidraw] (-18.476pt,-32.0pt) -- (-27.714pt,-48.0pt) -- (-9.238pt,-48.0pt) -- cycle;
\draw (-23.095pt,-40.0pt) -- (-18.476pt,-42.656pt);
\draw (-13.857pt,-40.0pt) -- (-18.476pt,-42.656pt);
\draw (-18.476pt,-48.0pt) -- (-18.476pt,-42.656pt);
\draw (-18.476pt,-48.0pt) -- (-18.476pt,-53.344pt);
\draw (-23.095pt,-56.0pt) -- (-18.476pt,-53.344pt);
\draw (-13.857pt,-56.0pt) -- (-18.476pt,-53.344pt);
\fill[color=white] (-23.095pt,-40.0pt) circle (1.2pt);
\draw[rhrhombidraw] (-23.095pt,-40.0pt) circle (1.2pt);
\fill[color=white] (-13.857pt,-40.0pt) circle (1.2pt);
\draw[rhrhombidraw] (-13.857pt,-40.0pt) circle (1.2pt);
\fill[color=white] (-18.476pt,-48.0pt) circle (1.2pt);
\draw[rhrhombidraw] (-18.476pt,-48.0pt) circle (1.2pt);
\fill (-18.476pt,-42.656pt) circle (1.5pt);
\fill (-18.476pt,-53.344pt) circle (1.5pt);
\draw[rhrhombidraw] (0.0pt,-32.0pt) -- (-9.238pt,-48.0pt) -- (9.238pt,-48.0pt) -- cycle;
\draw (-4.619pt,-40.0pt) -- (0.0pt,-42.656pt);
\draw (4.619pt,-40.0pt) -- (0.0pt,-42.656pt);
\draw (0.0pt,-48.0pt) -- (0.0pt,-42.656pt);
\draw (0.0pt,-48.0pt) -- (0.0pt,-53.344pt);
\draw (-4.619pt,-56.0pt) -- (0.0pt,-53.344pt);
\draw (4.619pt,-56.0pt) -- (0.0pt,-53.344pt);
\fill[color=white] (-4.619pt,-40.0pt) circle (1.2pt);
\draw[rhrhombidraw] (-4.619pt,-40.0pt) circle (1.2pt);
\fill[color=white] (4.619pt,-40.0pt) circle (1.2pt);
\draw[rhrhombidraw] (4.619pt,-40.0pt) circle (1.2pt);
\fill[color=white] (0.0pt,-48.0pt) circle (1.2pt);
\draw[rhrhombidraw] (0.0pt,-48.0pt) circle (1.2pt);
\fill (0.0pt,-42.656pt) circle (1.5pt);
\fill (0.0pt,-53.344pt) circle (1.5pt);
\draw[rhrhombidraw] (18.476pt,-32.0pt) -- (9.238pt,-48.0pt) -- (27.714pt,-48.0pt) -- cycle;
\draw (13.857pt,-40.0pt) -- (18.476pt,-42.656pt);
\draw (23.095pt,-40.0pt) -- (18.476pt,-42.656pt);
\draw (18.476pt,-48.0pt) -- (18.476pt,-42.656pt);
\draw (18.476pt,-48.0pt) -- (18.476pt,-53.344pt);
\draw (13.857pt,-56.0pt) -- (18.476pt,-53.344pt);
\draw (23.095pt,-56.0pt) -- (18.476pt,-53.344pt);
\fill[color=white] (13.857pt,-40.0pt) circle (1.2pt);
\draw[rhrhombidraw] (13.857pt,-40.0pt) circle (1.2pt);
\fill[color=white] (23.095pt,-40.0pt) circle (1.2pt);
\draw[rhrhombidraw] (23.095pt,-40.0pt) circle (1.2pt);
\fill[color=white] (18.476pt,-48.0pt) circle (1.2pt);
\draw[rhrhombidraw] (18.476pt,-48.0pt) circle (1.2pt);
\fill (18.476pt,-42.656pt) circle (1.5pt);
\fill (18.476pt,-53.344pt) circle (1.5pt);
\draw[rhrhombidraw] (-27.714pt,-48.0pt) -- (-36.952pt,-64.0pt) -- (-18.476pt,-64.0pt) -- cycle;
\draw (-32.333pt,-56.0pt) -- (-27.714pt,-58.656pt);
\draw (-23.095pt,-56.0pt) -- (-27.714pt,-58.656pt);
\draw (-27.714pt,-64.0pt) -- (-27.714pt,-58.656pt);
\draw (-27.714pt,-64.0pt) -- (-27.714pt,-69.344pt);
\draw (-32.333pt,-72.0pt) -- (-27.714pt,-69.344pt);
\draw (-23.095pt,-72.0pt) -- (-27.714pt,-69.344pt);
\fill[color=white] (-32.333pt,-56.0pt) circle (1.2pt);
\draw[rhrhombidraw] (-32.333pt,-56.0pt) circle (1.2pt);
\fill[color=white] (-23.095pt,-56.0pt) circle (1.2pt);
\draw[rhrhombidraw] (-23.095pt,-56.0pt) circle (1.2pt);
\fill[color=white] (-27.714pt,-64.0pt) circle (1.2pt);
\draw[rhrhombidraw] (-27.714pt,-64.0pt) circle (1.2pt);
\fill (-27.714pt,-58.656pt) circle (1.5pt);
\fill (-27.714pt,-69.344pt) circle (1.5pt);
\draw[rhrhombidraw] (-9.238pt,-48.0pt) -- (-18.476pt,-64.0pt) -- (0.0pt,-64.0pt) -- cycle;
\draw (-13.857pt,-56.0pt) -- (-9.238pt,-58.656pt);
\draw (-4.619pt,-56.0pt) -- (-9.238pt,-58.656pt);
\draw (-9.238pt,-64.0pt) -- (-9.238pt,-58.656pt);
\draw (-9.238pt,-64.0pt) -- (-9.238pt,-69.344pt);
\draw (-13.857pt,-72.0pt) -- (-9.238pt,-69.344pt);
\draw (-4.619pt,-72.0pt) -- (-9.238pt,-69.344pt);
\fill[color=white] (-13.857pt,-56.0pt) circle (1.2pt);
\draw[rhrhombidraw] (-13.857pt,-56.0pt) circle (1.2pt);
\fill[color=white] (-4.619pt,-56.0pt) circle (1.2pt);
\draw[rhrhombidraw] (-4.619pt,-56.0pt) circle (1.2pt);
\fill[color=white] (-9.238pt,-64.0pt) circle (1.2pt);
\draw[rhrhombidraw] (-9.238pt,-64.0pt) circle (1.2pt);
\fill (-9.238pt,-58.656pt) circle (1.5pt);
\fill (-9.238pt,-69.344pt) circle (1.5pt);
\draw[rhrhombidraw] (9.238pt,-48.0pt) -- (0.0pt,-64.0pt) -- (18.476pt,-64.0pt) -- cycle;
\draw (4.619pt,-56.0pt) -- (9.238pt,-58.656pt);
\draw (13.857pt,-56.0pt) -- (9.238pt,-58.656pt);
\draw (9.238pt,-64.0pt) -- (9.238pt,-58.656pt);
\draw (9.238pt,-64.0pt) -- (9.238pt,-69.344pt);
\draw (4.619pt,-72.0pt) -- (9.238pt,-69.344pt);
\draw (13.857pt,-72.0pt) -- (9.238pt,-69.344pt);
\fill[color=white] (4.619pt,-56.0pt) circle (1.2pt);
\draw[rhrhombidraw] (4.619pt,-56.0pt) circle (1.2pt);
\fill[color=white] (13.857pt,-56.0pt) circle (1.2pt);
\draw[rhrhombidraw] (13.857pt,-56.0pt) circle (1.2pt);
\fill[color=white] (9.238pt,-64.0pt) circle (1.2pt);
\draw[rhrhombidraw] (9.238pt,-64.0pt) circle (1.2pt);
\fill (9.238pt,-58.656pt) circle (1.5pt);
\fill (9.238pt,-69.344pt) circle (1.5pt);
\draw[rhrhombidraw] (27.714pt,-48.0pt) -- (18.476pt,-64.0pt) -- (36.952pt,-64.0pt) -- cycle;
\draw (23.095pt,-56.0pt) -- (27.714pt,-58.656pt);
\draw (32.333pt,-56.0pt) -- (27.714pt,-58.656pt);
\draw (27.714pt,-64.0pt) -- (27.714pt,-58.656pt);
\draw (27.714pt,-64.0pt) -- (27.714pt,-69.344pt);
\draw (23.095pt,-72.0pt) -- (27.714pt,-69.344pt);
\draw (32.333pt,-72.0pt) -- (27.714pt,-69.344pt);
\fill[color=white] (23.095pt,-56.0pt) circle (1.2pt);
\draw[rhrhombidraw] (23.095pt,-56.0pt) circle (1.2pt);
\fill[color=white] (32.333pt,-56.0pt) circle (1.2pt);
\draw[rhrhombidraw] (32.333pt,-56.0pt) circle (1.2pt);
\fill[color=white] (27.714pt,-64.0pt) circle (1.2pt);
\draw[rhrhombidraw] (27.714pt,-64.0pt) circle (1.2pt);
\fill (27.714pt,-58.656pt) circle (1.5pt);
\fill (27.714pt,-69.344pt) circle (1.5pt);
\draw[rhrhombidraw] (-36.952pt,-64.0pt) -- (-46.19pt,-80.0pt) -- (-27.714pt,-80.0pt) -- cycle;
\draw (-41.571pt,-72.0pt) -- (-36.952pt,-74.656pt);
\draw (-32.333pt,-72.0pt) -- (-36.952pt,-74.656pt);
\draw (-36.952pt,-80.0pt) -- (-36.952pt,-74.656pt);
\fill[color=white] (-41.571pt,-72.0pt) circle (1.2pt);
\draw[rhrhombidraw] (-41.571pt,-72.0pt) circle (1.2pt);
\fill[color=white] (-32.333pt,-72.0pt) circle (1.2pt);
\draw[rhrhombidraw] (-32.333pt,-72.0pt) circle (1.2pt);
\fill[color=white] (-36.952pt,-80.0pt) circle (1.2pt);
\draw[rhrhombidraw] (-36.952pt,-80.0pt) circle (1.2pt);
\fill (-36.952pt,-74.656pt) circle (1.5pt);
\draw[rhrhombidraw] (-18.476pt,-64.0pt) -- (-27.714pt,-80.0pt) -- (-9.238pt,-80.0pt) -- cycle;
\draw (-23.095pt,-72.0pt) -- (-18.476pt,-74.656pt);
\draw (-13.857pt,-72.0pt) -- (-18.476pt,-74.656pt);
\draw (-18.476pt,-80.0pt) -- (-18.476pt,-74.656pt);
\fill[color=white] (-23.095pt,-72.0pt) circle (1.2pt);
\draw[rhrhombidraw] (-23.095pt,-72.0pt) circle (1.2pt);
\fill[color=white] (-13.857pt,-72.0pt) circle (1.2pt);
\draw[rhrhombidraw] (-13.857pt,-72.0pt) circle (1.2pt);
\fill[color=white] (-18.476pt,-80.0pt) circle (1.2pt);
\draw[rhrhombidraw] (-18.476pt,-80.0pt) circle (1.2pt);
\fill (-18.476pt,-74.656pt) circle (1.5pt);
\draw[rhrhombidraw] (0.0pt,-64.0pt) -- (-9.238pt,-80.0pt) -- (9.238pt,-80.0pt) -- cycle;
\draw (-4.619pt,-72.0pt) -- (0.0pt,-74.656pt);
\draw (4.619pt,-72.0pt) -- (0.0pt,-74.656pt);
\draw (0.0pt,-80.0pt) -- (0.0pt,-74.656pt);
\fill[color=white] (-4.619pt,-72.0pt) circle (1.2pt);
\draw[rhrhombidraw] (-4.619pt,-72.0pt) circle (1.2pt);
\fill[color=white] (4.619pt,-72.0pt) circle (1.2pt);
\draw[rhrhombidraw] (4.619pt,-72.0pt) circle (1.2pt);
\fill[color=white] (0.0pt,-80.0pt) circle (1.2pt);
\draw[rhrhombidraw] (0.0pt,-80.0pt) circle (1.2pt);
\fill (0.0pt,-74.656pt) circle (1.5pt);
\draw[rhrhombidraw] (18.476pt,-64.0pt) -- (9.238pt,-80.0pt) -- (27.714pt,-80.0pt) -- cycle;
\draw (13.857pt,-72.0pt) -- (18.476pt,-74.656pt);
\draw (23.095pt,-72.0pt) -- (18.476pt,-74.656pt);
\draw (18.476pt,-80.0pt) -- (18.476pt,-74.656pt);
\fill[color=white] (13.857pt,-72.0pt) circle (1.2pt);
\draw[rhrhombidraw] (13.857pt,-72.0pt) circle (1.2pt);
\fill[color=white] (23.095pt,-72.0pt) circle (1.2pt);
\draw[rhrhombidraw] (23.095pt,-72.0pt) circle (1.2pt);
\fill[color=white] (18.476pt,-80.0pt) circle (1.2pt);
\draw[rhrhombidraw] (18.476pt,-80.0pt) circle (1.2pt);
\fill (18.476pt,-74.656pt) circle (1.5pt);
\draw[rhrhombidraw] (36.952pt,-64.0pt) -- (27.714pt,-80.0pt) -- (46.19pt,-80.0pt) -- cycle;
\draw (32.333pt,-72.0pt) -- (36.952pt,-74.656pt);
\draw (41.571pt,-72.0pt) -- (36.952pt,-74.656pt);
\draw (36.952pt,-80.0pt) -- (36.952pt,-74.656pt);
\fill[color=white] (32.333pt,-72.0pt) circle (1.2pt);
\draw[rhrhombidraw] (32.333pt,-72.0pt) circle (1.2pt);
\fill[color=white] (41.571pt,-72.0pt) circle (1.2pt);
\draw[rhrhombidraw] (41.571pt,-72.0pt) circle (1.2pt);
\fill[color=white] (36.952pt,-80.0pt) circle (1.2pt);
\draw[rhrhombidraw] (36.952pt,-80.0pt) circle (1.2pt);
\fill (36.952pt,-74.656pt) circle (1.5pt);
\end{tikzpicture}}
\newcommand{\rhtikzrhombus}[1]{\smash{\raisebox{-2.5pt}{\scalebox{0.7}{\begin{tikzpicture}
  \draw[rhrhombidraw] (0.0pt,0.0pt) -- (-4.619pt,8.0pt) -- (0.0pt,16.0pt) -- (4.619pt,8.0pt) -- cycle;
  #1
\end{tikzpicture}}}}}
\newcommand{\rhc}{\rhtikzrhombus{}}
\newcommand{\rhdnorth}{\rhtikzrhombus{\fill (0.0pt,16.0pt) circle (2.3095pt);}}
\newcommand{\rhdsouth}{\rhtikzrhombus{\fill (0.0pt,0.0pt) circle (2.3095pt);}}
\newcommand{\rhdeast}{\rhtikzrhombus{\fill (4.619pt,8.0pt) circle (2.3095pt);}}
\newcommand{\rhdwest}{\rhtikzrhombus{\fill (-4.619pt,8.0pt) circle (2.3095pt);}}
\newcommand{\rhsc}{\rhtikzrhombus{\draw[rhrhombithickside] (4.619pt,8.0pt) -- (-4.619pt,8.0pt);}}
\newcommand{\rhsoul}{\rhtikzrhombus{\draw[rhrhombithickside] (-4.618pt,8.0pt) -- (0.0010pt,16.0pt);}}
\newcommand{\rhsour}{\rhtikzrhombus{\draw[rhrhombithickside] (0.0010pt,16.0pt) -- (4.62pt,8.0pt);}}
\newcommand{\rhsoll}{\rhtikzrhombus{\draw[rhrhombithickside] (-4.618pt,8.0pt) -- (0.0010pt,0.0pt);}}
\newcommand{\rhslr}{\rhtikzrhombus{\draw[rhrhombithickside] (4.62pt,8.0pt) -- (9.239pt,0.0pt);}}
\newcommand{\rhrhoul}{\rhtikzrhombus{\fill[rhrhombifill] (-4.618pt,8.0pt) -- (4.62pt,8.0pt) -- (0.0010pt,16.0pt) -- (-9.237pt,16.0pt) -- cycle;}}
\newcommand{\rhrhour}{\rhtikzrhombus{\fill[rhrhombifill] (0.0010pt,16.0pt) -- (-4.618pt,8.0pt) -- (4.62pt,8.0pt) -- (9.239pt,16.0pt) -- cycle;}}
\newcommand{\rhrholl}{\rhtikzrhombus{\fill[rhrhombifill] (-4.618pt,8.0pt) -- (-9.237pt,0.0pt) -- (0.0010pt,0.0pt) -- (4.62pt,8.0pt) -- cycle;}}
\newcommand{\rhrholr}{\rhtikzrhombus{\fill[rhrhombifill] (0.0010pt,0.0pt) -- (9.239pt,0.0pt) -- (4.62pt,8.0pt) -- (-4.618pt,8.0pt) -- cycle;}}
\newcommand{\rhrhul}{\rhtikzrhombus{\fill[rhrhombifill] (-9.237pt,16.0pt) -- (-13.856pt,8.0pt) -- (-4.618pt,8.0pt) -- (0.0010pt,16.0pt) -- cycle;}}
\newcommand{\rhrhll}{\rhtikzrhombus{\fill[rhrhombifill] (-9.237pt,0.0pt) -- (0.0010pt,0.0pt) -- (-4.618pt,8.0pt) -- (-13.856pt,8.0pt) -- cycle;}}
\newcommand{\rhrhpl}{\rhtikzrhombus{\fill[rhrhombifill] (-4.619pt,8.0pt) -- (-9.238pt,16.0pt) -- (-13.857pt,8.0pt) -- (-9.238pt,0.0pt) -- cycle;}}
\newcommand{\rhrhxul}{\rhtikzrhombus{\fill[rhrhombifill] (-13.856pt,8.0pt) -- (-4.618pt,8.0pt) -- (-9.237pt,16.0pt) -- (-18.475pt,16.0pt) -- cycle;}}
\newcommand{\rhrhxll}{\rhtikzrhombus{\fill[rhrhombifill] (-13.856pt,8.0pt) -- (-18.475pt,0.0pt) -- (-9.237pt,0.0pt) -- (-4.618pt,8.0pt) -- cycle;}}
\newcommand{\rhacM}{\rhtikzrhombus{\draw[rhrhombithickside] (4.619pt,8.0pt) -- (-4.619pt,8.0pt);
\draw[->,rhrhombiarrow] (0.0pt,4.0pt) -- (0.0pt,12.0pt);}}
\newcommand{\rhacW}{\rhtikzrhombus{\draw[rhrhombithickside] (4.619pt,8.0pt) -- (-4.619pt,8.0pt);
\draw[->,rhrhombiarrow] (0.0pt,12.0pt) -- (0.0pt,4.0pt);}}
\newcommand{\rhaoulM}{\rhtikzrhombus{\draw[rhrhombithickside] (-4.618pt,8.0pt) -- (0.0010pt,16.0pt);
\draw[->,rhrhombiarrow] (-5.773pt,14.0pt) -- (1.156pt,10.0pt);}}
\newcommand{\rhaoulW}{\rhtikzrhombus{\draw[rhrhombithickside] (-4.618pt,8.0pt) -- (0.0010pt,16.0pt);
\draw[->,rhrhombiarrow] (1.155pt,10.0pt) -- (-5.773pt,14.0pt);}}
\newcommand{\rhaourM}{\rhtikzrhombus{\draw[rhrhombithickside] (0.0010pt,16.0pt) -- (4.62pt,8.0pt);
\draw[->,rhrhombiarrow] (5.774pt,14.0pt) -- (-1.154pt,10.0pt);}}
\newcommand{\rhaourW}{\rhtikzrhombus{\draw[rhrhombithickside] (0.0010pt,16.0pt) -- (4.62pt,8.0pt);
\draw[->,rhrhombiarrow] (-1.154pt,10.0pt) -- (5.775pt,14.0pt);}}
\newcommand{\rhaollM}{\rhtikzrhombus{\draw[rhrhombithickside] (-4.618pt,8.0pt) -- (0.0010pt,0.0pt);
\draw[->,rhrhombiarrow] (1.155pt,6.0pt) -- (-5.773pt,2.0pt);}}
\newcommand{\rhaollW}{\rhtikzrhombus{\draw[rhrhombithickside] (-4.618pt,8.0pt) -- (0.0010pt,0.0pt);
\draw[->,rhrhombiarrow] (-5.773pt,2.0pt) -- (1.156pt,6.0pt);}}
\newcommand{\rhaolrM}{\rhtikzrhombus{\draw[rhrhombithickside] (0.0010pt,0.0pt) -- (4.62pt,8.0pt);
\draw[->,rhrhombiarrow] (-1.154pt,6.0pt) -- (5.775pt,2.0pt);}}
\newcommand{\rhaolrW}{\rhtikzrhombus{\draw[rhrhombithickside] (0.0010pt,0.0pt) -- (4.62pt,8.0pt);
\draw[->,rhrhombiarrow] (5.774pt,2.0pt) -- (-1.154pt,6.0pt);}}
\newcommand{\rhalrM}{\rhtikzrhombus{\draw[rhrhombithickside] (4.62pt,8.0pt) -- (9.239pt,0.0pt);
\draw[->,rhrhombiarrow] (10.393pt,6.0pt) -- (3.465pt,2.0pt);}}
\newcommand{\rhpcMl}{\rhtikzrhombus{\draw[rhrhombipatharrow,-my] (0.0pt,8.0pt) arc (0:60:4.619pt);}}
\newcommand{\rhpcMr}{\rhtikzrhombus{\draw[rhrhombipatharrow,-my] (0.0pt,8.0pt) arc (180:120:4.619pt);}}
\newcommand{\rhpcMll}{\rhtikzrhombus{\draw[rhrhombipatharrow,-my] (0.0pt,8.0pt) arc (0:60:4.619pt) arc (60:120:4.619pt);}}
\newcommand{\rhpcMrl}{\rhtikzrhombus{\draw[rhrhombipatharrow,-my] (0.0pt,8.0pt) arc (180:120:4.619pt) arc (300:360:4.619pt);}}
\newcommand{\rhpcWl}{\rhtikzrhombus{\draw[rhrhombipatharrow,-my] (0.0pt,8.0pt) arc (180:240:4.619pt);}}
\newcommand{\rhpcWr}{\rhtikzrhombus{\draw[rhrhombipatharrow,-my] (0.0pt,8.0pt) arc (0:-60:4.619pt);}}
\newcommand{\rhpcWrr}{\rhtikzrhombus{\draw[rhrhombipatharrow,-my] (0.0pt,8.0pt) arc (0:-60:4.619pt) arc (300:240:4.619pt);}}
\newcommand{\rhpcWrl}{\rhtikzrhombus{\draw[rhrhombipatharrow,-my] (0.0pt,8.0pt) arc (0:-60:4.619pt) arc (120:180:4.619pt);}}
\newcommand{\rhpoulMl}{\rhtikzrhombus{\draw[rhrhombipatharrow,-my] (-2.309pt,12.0pt) arc (240:300:4.619pt);}}
\newcommand{\rhpoulMr}{\rhtikzrhombus{\draw[rhrhombipatharrow,-my] (-2.309pt,12.0pt) arc (60:0:4.619pt);}}
\newcommand{\rhpoulMll}{\rhtikzrhombus{\draw[rhrhombipatharrow,-my] (-2.309pt,12.0pt) arc (240:300:4.619pt) arc (300:360:4.619pt);}}
\newcommand{\rhpoulMlr}{\rhtikzrhombus{\draw[rhrhombipatharrow,-my] (-2.309pt,12.0pt) arc (240:300:4.619pt) arc (120:60:4.619pt);}}
\newcommand{\rhpoulMrl}{\rhtikzrhombus{\draw[rhrhombipatharrow,-my] (-2.309pt,12.0pt) arc (60:0:4.619pt) arc (180:240:4.619pt);}}
\newcommand{\rhpoulMrr}{\rhtikzrhombus{\draw[rhrhombipatharrow,-my] (-2.309pt,12.0pt) arc (60:0:4.619pt) arc (0:-60:4.619pt);}}
\newcommand{\rhpoulMrrl}{\rhtikzrhombus{\draw[rhrhombipatharrow,-my] (-2.309pt,12.0pt) arc (60:0:4.619pt) arc (0:-60:4.619pt) arc (120:180:4.619pt);}}
\newcommand{\rhpourMl}{\rhtikzrhombus{\draw[rhrhombipatharrow,-my] (2.31pt,12.0pt) arc (120:180:4.619pt);}}
\newcommand{\rhpourMr}{\rhtikzrhombus{\draw[rhrhombipatharrow,-my] (2.31pt,12.0pt) arc (300:240:4.619pt);}}
\newcommand{\rhpourMll}{\rhtikzrhombus{\draw[rhrhombipatharrow,-my] (2.31pt,12.0pt) arc (120:180:4.619pt) arc (180:240:4.619pt);}}
\newcommand{\rhpourMlr}{\rhtikzrhombus{\draw[rhrhombipatharrow,-my] (2.31pt,12.0pt) arc (120:180:4.619pt) arc (0:-60:4.619pt);}}
\newcommand{\rhpollMl}{\rhtikzrhombus{\draw[rhrhombipatharrow,-my] (-2.309pt,4.0pt) arc (120:180:4.619pt);}}
\newcommand{\rhpollMr}{\rhtikzrhombus{\draw[rhrhombipatharrow,-my] (-2.309pt,4.0pt) arc (300:240:4.619pt);}}
\newcommand{\rhpollMrl}{\rhtikzrhombus{\draw[rhrhombipatharrow,-my] (-2.309pt,4.0pt) arc (300:240:4.619pt) arc (60:120:4.619pt);}}
\newcommand{\rhpollMrr}{\rhtikzrhombus{\draw[rhrhombipatharrow,-my] (-2.309pt,4.0pt) arc (300:240:4.619pt) arc (240:180:4.619pt);}}
\newcommand{\rhpollWl}{\rhtikzrhombus{\draw[rhrhombipatharrow,-my] (-2.309pt,4.0pt) arc (300:360:4.619pt);}}
\newcommand{\rhpollWr}{\rhtikzrhombus{\draw[rhrhombipatharrow,-my] (-2.309pt,4.0pt) arc (120:60:4.619pt);}}
\newcommand{\rhpollWll}{\rhtikzrhombus{\draw[rhrhombipatharrow,-my] (-2.309pt,4.0pt) arc (300:360:4.619pt) arc (0:60:4.619pt);}}
\newcommand{\rhpollWlr}{\rhtikzrhombus{\draw[rhrhombipatharrow,-my] (-2.309pt,4.0pt) arc (300:360:4.619pt) arc (180:120:4.619pt);}}
\newcommand{\rhpolrMl}{\rhtikzrhombus{\draw[rhrhombipatharrow,-my] (2.31pt,4.0pt) arc (240:300:4.619pt);}}
\newcommand{\rhpolrWl}{\rhtikzrhombus{\draw[rhrhombipatharrow,-my] (2.31pt,4.0pt) arc (60:120:4.619pt);}}
\newcommand{\rhpolrWr}{\rhtikzrhombus{\draw[rhrhombipatharrow,-my] (2.31pt,4.0pt) arc (240:180:4.619pt);}}
\newcommand{\rhpolrWll}{\rhtikzrhombus{\draw[rhrhombipatharrow,-my] (2.31pt,4.0pt) arc (60:120:4.619pt) arc (120:180:4.619pt);}}
\newcommand{\rhpolrWlr}{\rhtikzrhombus{\draw[rhrhombipatharrow,-my] (2.31pt,4.0pt) arc (60:120:4.619pt) arc (300:240:4.619pt);}}
\newcommand{\rhpolrWrl}{\rhtikzrhombus{\draw[rhrhombipatharrow,-my] (2.31pt,4.0pt) arc (240:180:4.619pt) arc (0:60:4.619pt);}}
\newcommand{\rhpolrWrr}{\rhtikzrhombus{\draw[rhrhombipatharrow,-my] (2.31pt,4.0pt) arc (240:180:4.619pt) arc (180:120:4.619pt);}}
\newcommand{\rhpulMll}{\rhtikzrhombus{\draw[rhrhombipatharrow,-my] (-6.928pt,12.0pt) arc (120:180:4.619pt) arc (180:240:4.619pt);}}
\newcommand{\rhpulWr}{\rhtikzrhombus{\draw[rhrhombipatharrow,-my] (-6.928pt,12.0pt) arc (120:60:4.619pt);}}
\newcommand{\rhpulWrl}{\rhtikzrhombus{\draw[rhrhombipatharrow,-my] (-6.928pt,12.0pt) arc (120:60:4.619pt) arc (240:300:4.619pt);}}
\newcommand{\rhpllMl}{\rhtikzrhombus{\draw[rhrhombipatharrow,-my] (-6.928pt,4.0pt) arc (240:300:4.619pt);}}
\newcommand{\rhpllMr}{\rhtikzrhombus{\draw[rhrhombipatharrow,-my] (-6.928pt,4.0pt) arc (60:0:4.619pt);}}
\newcommand{\rhpllMll}{\rhtikzrhombus{\draw[rhrhombipatharrow,-my] (-6.928pt,4.0pt) arc (240:300:4.619pt) arc (300:360:4.619pt);}}
\newcommand{\rhpplMrrl}{\rhtikzrhombus{\draw[rhrhombipatharrow,-my] (-9.238pt,8.0pt) arc (180:120:4.619pt) arc (120:60:4.619pt) arc (240:300:4.619pt);}}
\newcommand{\rhppulWl}{\rhtikzrhombus{\draw[rhrhombipatharrow,-my] (-4.619pt,16.0pt) arc (180:240:4.619pt);}}
\newcommand{\rhppulWr}{\rhtikzrhombus{\draw[rhrhombipatharrow,-my] (-4.619pt,16.0pt) arc (0:-60:4.619pt);}}
\newcommand{\rhppulWll}{\rhtikzrhombus{\draw[rhrhombipatharrow,-my] (-4.619pt,16.0pt) arc (180:240:4.619pt) arc (240:300:4.619pt);}}
\newcommand{\rhppulWlr}{\rhtikzrhombus{\draw[rhrhombipatharrow,-my] (-4.619pt,16.0pt) arc (180:240:4.619pt) arc (60:0:4.619pt);}}
\newcommand{\rhppulWrl}{\rhtikzrhombus{\draw[rhrhombipatharrow,-my] (-4.619pt,16.0pt) arc (0:-60:4.619pt) arc (120:180:4.619pt);}}
\newcommand{\rhppulWlrr}{\rhtikzrhombus{\draw[rhrhombipatharrow,-my] (-4.619pt,16.0pt) arc (180:240:4.619pt) arc (60:0:4.619pt) arc (0:-60:4.619pt);}}
\newcommand{\rhppulWrll}{\rhtikzrhombus{\draw[rhrhombipatharrow,-my] (-4.619pt,16.0pt) arc (0:-60:4.619pt) arc (120:180:4.619pt) arc (180:240:4.619pt);}}
\newcommand{\rhppulWrlr}{\rhtikzrhombus{\draw[rhrhombipatharrow,-my] (-4.619pt,16.0pt) arc (0:-60:4.619pt) arc (120:180:4.619pt) arc (0:-60:4.619pt);}}
\newcommand{\rhppllMr}{\rhtikzrhombus{\draw[rhrhombipatharrow,-my] (-4.619pt,0.0pt) arc (180:120:4.619pt);}}
\newcommand{\rhppllMrr}{\rhtikzrhombus{\draw[rhrhombipatharrow,-my] (-4.619pt,0.0pt) arc (180:120:4.619pt) arc (120:60:4.619pt);}}
\newcommand{\rhpxulMl}{\rhtikzrhombus{\draw[rhrhombipatharrow,-my] (-11.547pt,12.0pt) arc (240:300:4.619pt);}}
\newcommand{\rhpxulMrr}{\rhtikzrhombus{\draw[rhrhombipatharrow,-my] (-11.547pt,12.0pt) arc (60:0:4.619pt) arc (0:-60:4.619pt);}}
\newcommand{\rhpoulMlXoulMr}{\rhtikzrhombus{\draw[rhrhombipatharrow,-my] (-2.309pt,12.0pt) arc (240:300:4.619pt);\draw[rhrhombipatharrow,-my] (-2.309pt,12.0pt) arc (60:0:4.619pt);}}
\newcommand{\rhpoulMlXolrWl}{\rhtikzrhombus{\draw[rhrhombipatharrow,-my] (-2.309pt,12.0pt) arc (240:300:4.619pt);\draw[rhrhombipatharrow,-my] (2.31pt,4.0pt) arc (60:120:4.619pt);}}
\newcommand{\rhpoulMrXourMl}{\rhtikzrhombus{\draw[rhrhombipatharrow,-my] (-2.309pt,12.0pt) arc (60:0:4.619pt);\draw[rhrhombipatharrow,-my] (2.31pt,12.0pt) arc (120:180:4.619pt);}}
\newcommand{\rhpulWrXpulWl}{\rhtikzrhombus{\draw[rhrhombipatharrow,-my] (-6.928pt,12.0pt) arc (120:60:4.619pt);\draw[rhrhombipatharrow,-my] (-4.619pt,16.0pt) arc (180:240:4.619pt);}}
\date{\today}
\newcommand{\nopar}{}
\let\oldsection=\section
\renewcommand{\section}{\setcounter{theorem}{0}\oldsection}
\let\oldref=\ref
\newcommand{\newref}[1]{\textup{\oldref{#1}}}
\renewcommand{\ref}[1]{\newref{#1}}
\newcommand{\neweqref}[1]{(\textup{\oldref{#1}})}
\renewcommand{\eqref}[1]{\neweqref{#1}}
\newcommand{\N}{\ensuremath{\mathbb{N}}}
\newcommand{\Z}{\ensuremath{\mathbb{Z}}}
\newcommand{\R}{\ensuremath{\mathbb{R}}}
\newcommand{\C}{\ensuremath{\mathbb{C}}}
\newcommand{\gO}{\ensuremath{\mathcal{O}}}
\newcommand{\LRC}[3]{\ensuremath{c_{#1,#2}^{#3}}}
\newcommand{\res}{\ensuremath{R}}
\newcommand{\resf}[1]{\ensuremath{R_{#1}}}
\newcommand{\resfk}[2]{\ensuremath{R_{#1}^{#2}}}
\newcommand{\loz}{\ensuremath{\lozenge}}
\newcommand{\s}[2]{\sigma(#1,#2)}
\newcommand{\sgn}{{\mathrm{sgn}}}
\newcommand{\SUPP}{{\mathrm{supp}}}
\newcommand{\inflow}[3]{\delta(#2,\begin{tikzpicture}[scale=0.3,baseline=-.1cm]\draw[->] (0,0)--(1,0);\end{tikzpicture}#1, #3)}
\newcommand{\outflow}[3]{\delta(#2,#1\begin{tikzpicture}[scale=0.3,baseline=-.1cm]\draw[->] (0,0)--(1,0);\end{tikzpicture}, #3)}
\newcommand{\inom}[3]{\omega(#2,\begin{tikzpicture}[scale=0.3,baseline=-.1cm]\draw[->] (0,0)--(1,0);\end{tikzpicture}#1, #3)}
\newcommand{\outom}[3]{\omega(#2,#1\begin{tikzpicture}[scale=0.3,baseline=-.1cm]\draw[->] (0,0)--(1,0);\end{tikzpicture}, #3)}
\newcommand{\Pin}[2]{\cP_f (#1\begin{tikzpicture}[scale=0.3,baseline=-.1cm]\draw[->] (0,0)--(1,0);\end{tikzpicture}, #2)}
\newcommand{\Pout}[2]{\cP_f (\begin{tikzpicture}[scale=0.3,baseline=-.1cm]\draw[->] (0,0)--(1,0);\end{tikzpicture}#1, #2)}
\newcommand{\sP}{\bf \ensuremath{\#\P}}
\def\GL{\mathsf{GL}}
\def\CP{\sP}
\def\P{\mathrm{\bf P}}
\def\NP{\mathrm{\bf NP}}
\def\la{\lambda}
\newcommand{\pstart}[1]{\textsf{start}({#1})}
\newcommand{\pend}[1]{\textsf{end}({#1})}
\let\lparen=(\let\rparen=)
\DeclareMathOperator{\dist}{dist}
\def\True{\textbf{TRUE}\xspace}
\def\False{\textbf{FALSE}\xspace}
\newcommand{\p}{\ensuremath{\Psi}}
\newcommand\dash{\nobreakdash-\hspace{0pt}}
\newcommand\stturnpath{$s$\dash $t$\dash turnpath\xspace}
\newcommand\stturnpaths{$s$\dash $t$\dash turnpaths\xspace}
\newcommand{\blsquare}{\scalebox{0.7}{\ensuremath{\blacksquare}}} 
\newcommand{\oF}{\overline{F}}
\newcommand{\iflow}{\mathsf{in}}
\newcommand{\oflow}{\mathsf{out}}
\newcommand{\cP}{\mathcal{P}}
\newcommand{\cS}{\mathcal{S}}
\newcommand{\inflo}{\mathsf{inflow}}
\newcommand{\outflo}{\mathsf{outflow}}
\newcommand{\Ke}{{K}}
\newcommand{\eins}{{\bf 1}}
\newtheorem{claim}[theorem]{Claim}
\newtheorem{observation}[theorem]{Observation}
\newtheorem{remark}[theorem]{Remark}
\newtheorem{example}[theorem]{Example}
\newenvironment{prooff}[1][\unskip]{{\em #1}.}{\endproof}
\renewcommand{\paragraph}[1]{

\textbf{#1.}}
\let\brleft=(
\let\brright=)
\title{Deciding Positivity of \mbox{Littlewood--Richardson Coefficients}}
\author{Peter B\"urgisser\thanks{Institute of Mathematics, University of Paderborn, D-33098 Paderborn, Germany, pbuerg@math.upb.de, partially supported by DFG grant BU 1371/3-2.} \and Christian Ikenmeyer\thanks{Institute of Mathematics, University of Paderborn, D-33098 Paderborn, Germany, ciken@math.upb.de, partially supported by DFG grant BU 1371/3-2.}}
\newcommand{\runningtime}{$\gO\big(\ell(\nu)^3 \log\nu_1\big)$}
\begin{document}
\sloppy

\maketitle
\begin{abstract} 
Starting with Knutson and Tao's hive model~\cite{knta:99}, 
we characterize the Littlewood--Richardson coefficient
$\LRC{\lambda}{\mu}{\nu}$ of given partitions $\la,\mu,\nu\in\N^n$
as the number of capacity achieving hive flows on the honeycomb graph.
Based on this, we design a polynomial time algorithm for deciding
$\LRC{\lambda}{\mu}{\nu} >0$. This algorithm is easy to state 
and takes $\gO\big(n^3\log\nu_1\big)$ arithmetic operations and comparisons.
We further show that the capacity achieving hive flows can be seen as 
the vertices of a connected graph, which leads to new structural insights into 
Littlewood--Richardson coefficients. 
\end{abstract}

\begin{keywords} 
Littlewood--Richardson coefficients, hive model, polynomial time algorithm, flows in networks
\end{keywords}

\begin{AMS}
05E05, 22E46, 90C27
\end{AMS}

%
%




\section{Introduction}

Let $\la,\mu,\nu\in\Z^n$ be nonincreasing $n$-tuples of integers. 
The {\em Littlewood--Richardson coefficient} $\LRC{\lambda}{\mu}{\nu}$
is defined as the multiplicity of the irreducible $\GL_n(\C)$-representation $V_\nu$ 
with dominant weight $\nu$ in the tensor product $V_\la\otimes V_\mu$.
These coefficients appear not only in representation theory and 
algebraic combinatorics, but also in topology and enumerative geometry 
(Schubert calculus): 
for instance, they 
determine the multiplication 
in the cohomology ring of the Grassmann varieties.
Littlewood--Richardson coefficients gained further prominence 
due to their role in the proof of Horn's conjecture 
\cite{hero:95,klya:98,knta:99,ktw:04} 
on the relation of the eigenvalues 
of a triple $A,B,C$ of Hermitian matrices satisfying $C=A+B$.
The latter problem is of relevance in perturbation and quantum information theory. 
We refer to Fulton~\cite{fult:00} for an excellent account of these more recent developments.

Different combinatorial characterizations of the Littlewood--Richardson coefficients are known. 
The classic Littlewood--Richardson rule (cf.~\cite{fult:97}) counts certain skew tableaux, 
while in Berenstein and Zelevinsky~\cite{bz:92}, 
the number of integer points of certain polytopes are counted.  
A beautiful characterization was given by Knutson and Tao~\cite{knta:99}, 
who characterized Littlewood--Richardson coefficients either as the number of honeycombs 
or hives with prescribed boundary conditions. 

The focus of this paper is on the complexity of computing the Littlewood--Richardson coefficient 
$\LRC{\lambda}{\mu}{\nu}$ on input $\la,\mu,\nu$. Without loss of generality we assume that 
the components of $\la,\mu,\nu$ are nonnegative integers and put 
$|\la| := \sum_i \la_i$. Moreover we write 
$\ell(\la)$ for the number of nonzero components of~$\la$.
Then $|\nu| = |\la| + |\mu|$ and $\nu_1\geq \max\{\la_1,\mu_1\}$ are necessary conditions for $\LRC{\lambda}{\mu}{\nu} > 0$. 
We think of $\la,\mu,\nu$ as encoded in binary 
and interpret 
$\sum_{i=1}^n (\log\la_i + \log\mu_i + \log\nu_i) \le 3 n\log\nu_1$
as a measure of the input size.
All the algorithms derived from the above mentioned characterizations of Littlewood--Richardson coefficients 
take exponential time in the worst case. 
Narayanan~\cite{nara:06} proved that this is unavoidable:
the computation of $\LRC{\la}{\mu}{\nu}$ is a $\CP$-complete problem.
Hence there does not exist a polynomial time algorithm
for computing $\LRC{\la}{\mu}{\nu}$ under the widely believed hypothesis $\P\ne\NP$.


\smallskip

\noindent {\bf Main results.} 
We first characterize $\LRC{\la}{\mu}{\nu}$ as the number of 
{\em capacity achieving hive flows} on the honeycomb graph~$G$, 
cf.\ Figures~\ref{fig:diagrams}--\ref{fig:degengraph}.  
Besides capacity constraints given by $\la,\mu,\nu$, these flows have to satisfy 
rhombus inequalities corresponding to the ones considered in~\cite{knta:99, buc:00}.  
We then develop a polynomial time algorithm  (Algorithm~\ref{alg:lrpcsa}) 
for deciding $\LRC{\la}{\mu}{\nu}>0$ 
with $\gO\big(n^3\log\nu_1\big)$ arithmetic operations and comparisons. 
This is basically a capacity scaling Ford-Fulkerson algorithm~\cite{ff:62} on well-chosen residual networks.
The algorithm is easy to state and implement: 
we encourage the reader to try out our Java applet at 
{\tt http://www-math.upb.de/agpb/flowapplet/flowapplet.html}.
We also show that the set of capacity achieving hive flows 
is the vertex set of a natural connected graph, which is  
relevant for efficiently enumerating these flows. 
In fact, our work is the basis of a follow-up paper~\cite{ike:12}, 
in which more algorithmic insights are obtained, notably an algorithm 
for deciding $\LRC{\la}{\mu}{\nu} \ge t$ in time polynomial in 
$n\log \nu_1$ and~$t$.
This implies that ``small'' Littlewood--Richardson coefficients 
can be efficiently computed. 
In \cite{ike:12} we also prove a conjecture on stretched
Littlewood--Richardson coefficients posed by King, Tollu, and Toumazet
in~\cite{ktt:04}.

\smallskip

\noindent {\bf Motivation and previous work.} 
Our investigations are motivated by Geometric Complexity Theory, 
an approach towards proving fundamental complexity lower bounds 
by means of algebraic geometry and representation theory, 
that was initiated by Mulmuley and Sohoni~\cite{gct1,gct2} 
(see \cite {mulmuley:11} for recent pointers to the literature). 
To the best of our knowledge, the existence of a polynomial time 
algorithm for deciding $\LRC{\la}{\mu}{\nu}>0$ was first pointed out in~\cite{deloera:06},
and one day later in~\cite{GCT3}.
Indeed, by the saturation property, 
$\LRC{\la}{\mu}{\nu}>0$ is equivalent to 
$\exists N\, \LRC {N\la}{N\mu}{N\nu}>0$, 
which can be rephrased as the feasibility problem of a certain rational polyhedron,
whose elements are called \emph{hives}.
Feasibility of rational polyhedra is a basic problem in linear programming,  
well-known to be solvable in polynomial time, cf.~\cite{grlos:93}.
More specifically, the existence of a hive with prescribed boundary 
conditions can be expressed as the feasibility of a system
$Ax \le b$ of linear inequalities, where the entries of the 
matrix~$A$ are in $\{-1,0,1\}$ and the components of the vector~$b$
are either zero or among the components of $\la,\mu,\nu$. 
For the format $M\times N$ of the matrix $A$ we have $M,N=\gO(n^2)$. 
The basic ellipsoid method (cf.~\cite[p.~80]{grlos:93}) 
for solving this feasibility problem takes 
$\gO\big( M N^3 \ell\big)$ arithmetic steps and comparisons, 
where $\ell= \gO(n^2 \log\nu_1)$ is the encoding length of the 
linear program.  This gives a bound of 
$\gO\big(n^{10}\log\nu_1\big)$, which is considerably worse 
than the bound $\gO\big(n^{3}\log\nu_1\big)$ proven for 
Algorithm~\ref{alg:lrpcsa} in this paper. 
We also note that standard interior point methods at least require 
$n^9\log (n \nu_1)$ arithmetic operations, cf.~\cite[Chap. 10]{bucu:13}. 

The starting point for the present work was a question in~\cite{GCT3} asking for a combinatorial algorithm for
deciding $\LRC{\la}{\mu}{\nu}>0$ in polynomial time, using ideas 
similar to the max-flow or weighted matching problems in combinatorial optimization. 

The algorithm in this paper is a considerable improvement over the one  
presented by the authors at FPSAC 2009~\cite{ike:08,bi:09}, 
both with regard to simplicity and running time.
The reason is that there, before each augmentation step, the flow had
to be substituted by a nondegenerate flow 
using a costly routine. 
(Nondegenerate meaning that small triangles and small rhombi are the only flatspaces, cf.~\cite{buc:00}
and Remark~\ref{re:comparison}.)
The present algorithm does not suffer from this deficiency anymore.

\smallskip

\noindent {\bf Outline of paper.}
Section~\ref{se:flowLRC} describes the setting and introduces the main terminology. 
We define the notion of hive flows on honeycomb graphs and associate with a triple 
$\la,\mu,\nu$ of partitions the polytope~$B(\la,\mu,\nu)$ of bounded hive flows, 
along with a linear function~$\delta$ measuring the overall throughput of a flow. 
A flow $f\in B(\la,\mu,\nu)$ is called \emph{capacity achieving} if $\delta(f) =|\nu|$. 
We denote by $P(\la,\mu,\nu)$ the polytope consisting of these flows and by 
$P(\la,\mu,\nu)_\Z$ the set of its integral points. 
It turns out that the Littlewood--Richardson coefficient $\LRC{\la}{\mu}{\nu}$ 
counts the elements of $P(\la,\mu,\nu)_\Z$
(Proposition~\ref{pro:flowdescription}). 
  
In Section~\ref{se:graph-str} we obtain some structural insights into the set of hive flows. 
We show that $P(\la,\mu,\nu)_\Z$ 
is the vertex set of a natural connected graph 
(\emph{Connectedness Theorem}~\ref{thm:connectedness}).
The connectedness 
immediately implies the property  
$\LRC{\la}{\mu}{\nu} = 1 \Rightarrow \forall N\,  \LRC{N\la}{N\mu}{N\nu} = 1$, 
that was conjectured by Fulton and proved in~\cite{ktw:04} 
(Corollary~\ref{thm:fultonconj}).
The connectedness of $P(\la,\mu,\nu)_\Z$ is also relevant for efficiently enumerating the points of~$P(\la,\mu,\nu)_\Z$
and for proving the implication
$\LRC{\la}{\mu}{\nu} = 2 \Rightarrow \forall N\,  \LRC{N\la}{N\mu}{N\nu} = N+1$,
which was conjectured by King, Tollu, and Toumazet~\cite{ktt:04}, cf.~\cite{ike:12}.

Proposition~\ref{pro:flowdescription} suggests to decide $\LRC \la \mu \nu > 0$ 
by optimizing the overall throughput function~$\delta$ 
on the polytope~$B(\la,\mu,\nu)$ of bounded hive flows. 
We imitate the basic Ford-Fulkerson idea and construct, 
for a given integral hive flow~$f$, 
a ``residual digraph'' $\resf f$, such that 
$f$~optimizes $\delta$ on $B(\la,\mu,\nu)$ iff $\resf f$ does not contain an $s$-$t$-path.
In Section~\ref{se:res} we define  the residual digraph $\resf f$ 
and study the  partition of the triangular graph into $f$\dash flatspaces. 
We present and analyze a first max-flow algorithm for deciding $\LRC \la \mu \nu > 0$ (Algorithm~\ref{alg:lrpa}).
The proof of correctness of this algorithm requires an in-depth understanding of the properties of hives 
and it has two main ingredients. 
The {\em Shortest Path Theorem}~\ref{thm:shopath} states that the rhombus inequalities are not violated after 
augmenting the current flow~$f$ by a shortest path in the residual network~$\resf f$.
This is remarkable since, unlike in the usual max-flow situation, the polytopes of hive flows are not integral, 
cf.~\cite{buc:00}. 
The other ingredient, needed for the optimality criterion, 
is the {\em Rerouting Theorem}~\ref{thm:rerouting}, which 
tells us how to replace an augmenting flow direction~$d$ 
by a flow in the residual network without changing the overall throughput.
This amounts to a rerouting of~$d$ 
along the borders of the flatspaces of the current flow~$f$
(cf.\ Figure~\ref{fig:innerandborder}).  
Here the main difficulty is the analysis of the degenerate situation of large flatspaces, 
a topic not pursued in detail in the previous papers~\cite{knta:99,buc:00}. 

In Section~\ref{sec:polytime} we state and analyze 
our polynomial time Algorithm~\ref{alg:lrpcsa} 
for deciding the positivity of Littlewood--Richardson coefficients.   

The remainder of the paper is devoted to the combinatorially quite intricate proofs 
of the Rerouting Theorem, the Shortest Path Theorem, and the Connectedness Theorem.

\section{Flow description of LR coefficients}
\label{se:flowLRC}


\subsection{Flows on digraphs}
\label{se:flows}

We fix some terminology regarding flows on directed graphs, compare~\cite{amo:93}.
Let $D$ be a digraph with vertex set $V(D)$ and edge set $E(D)$. 
We assume that $s,t \in V(D)$ are two different distinguished vertices,
called source and target, respectively.   
Let $e_-:=u$ denote the vertex where the edge~$e$ starts and 
$e_+:=v$ the vertex where $e$ ends. 
The {\em inflow} and {\em outflow} 
of a map $f\colon E(D)\rightarrow \R$ 
at a vertex $v\in V(D)$ are defined as 
$$
\inflo(v,f) := \sum\limits_{e_+=v} f(e),\quad 
\outflo(v,f) := \sum\limits_{e_-=v} f(e),
$$
respectively. 
A {\em flow on $D$} is defined as a map
$f\colon E(D)\rightarrow \R$ that satisfies 
Kirchhoff's conservation laws: 
$\inflo(v,f) = \outflo(v,f)$ for all 
$v \in V(D) \setminus \{s,t\}$. 

The set of flows on $D$ is a vector space that we denote by $F(D)$.
A flow is called {\em integral} if it takes only integer values and 
we denote by $F(D)_\Z$ the group of integral flows on $D$.
The quantity
$\delta(f) := \sum_{e_- = s} f(e) - \sum_{e_+ = s} f(e)$  
is called the \emph{overall throughput} of the flow~$f$.

By a {\em walk~$p$} in $D$ we understand a sequence 
$x_0,\ldots,x_\ell$ of vertices of~$D$ such that 
$(x_{i-1},x_i) \in E$ for all $1\le i\le\ell$. 
A {\em path~$p$} in $D$ is defined as a walk such that 
the vertices $x_0,\ldots,x_\ell$ are pairwise distinct.
We will say that  $x_0,\ldots,x_\ell$ are the {\em vertices used by~$p$}.  
The path~$p$ is called an 
{\em $s$-$t$-path} if $x_0=s$ and $x_\ell=t$; 
$p$ is called a {\em $t$-$s$-path} if $x_0=t$ and $x_\ell=s$. 
A sequence $x_0,\ldots,x_\ell$ of vertices of $D$ is called a 
{\em cycle~$c$} if $x_0,\ldots,x_{\ell-1}$ are pairwise distinct,
$x_{\ell}=x_0$, and  $(x_{i-1},x_i) \in E$ 
for all $1\le i\le\ell$. 
Again we say that $x_0,\ldots,x_\ell$ are the vertices used by~$c$. 
We call $c$ a {\em proper cycle} if $c$ does not use $s$ or $t$.
It will be sometimes useful to identify a path or a cycle with the set of its 
edges $\{ (x_0,x_1),\ldots,(x_{\ell-1},x_\ell)\}$.
Since the starting vertex~$x_0$ of a cycle is not relevant, this does not harm. 
By a {\em complete path~$p$ in~$D$} we understand an 
$s$-$t$-path, $t$-$s$-path, or a cycle in~$D$. 
(It is not excluded that the cycle passes through $s$ or $t$.)

A complete path~$p$ in $D$ defines a flow~$f$ on $D$ by setting 
$f(e) := 1$ if $e\in p$ and $f(e):=0$ otherwise. 
It will be convenient to denote this flow with~$p$ as well. 
We note that $\delta(p)=1$ for an $s$-$t$-path~$p$,
$\delta(p)=-1$ for a $t$-$s$-path~$p$,
and $\delta(c)=0$ for a cycle~$c$. 

A flow is called {\em nonnegative} if $f(e)\ge 0$ for all edges $e\in E$.
We call 
$\SUPP(f):=\{e\in E(D) \mid f(e) \ne 0\}$ the {\em support} of~$f$.

An important method for analyzing flows is the fact that they can 
be decomposed into paths and cycles~\cite{amo:93}.

\begin{lemma}\label{le:flow-decomp}
For any nonnegative flow $f\in F(D)$
there exists a family $p_1,\ldots,p_m$ 
of complete paths in~$D$ contained in $\SUPP(f)$, 
and positive real numbers
$\alpha_1,\ldots,\alpha_m$ such that 
$f= \sum_{i=1}^m \alpha_i p_i$ 
Moreover, if the flow $f$ is integral, 
then the $\alpha_i$ may be assumed to be integers.~\endproof
\end{lemma}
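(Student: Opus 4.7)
The plan is to proceed by induction on the size of the support $\SUPP(f)$. The base case $\SUPP(f)=\emptyset$ is trivial: take the empty sum. For the inductive step, the key claim is that $\SUPP(f)$, viewed as a directed subgraph of~$D$, contains at least one complete path~$p$ (in the sense of the paper: an $s$-$t$-path, a $t$-$s$-path, or a cycle). Once such a~$p$ is produced, I set $\alpha := \min_{e\in p} f(e) > 0$ and pass to $f' := f - \alpha\,p$. By construction $f'$ is still a flow, still nonnegative, and $\SUPP(f')\subsetneq\SUPP(f)$ because at least one edge where $f$ attained the minimum $\alpha$ drops out. Invoking the inductive hypothesis on $f'$ and prepending the term $\alpha\,p$ yields the required decomposition.

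To produce the complete path~$p$ inside $S:=\SUPP(f)$, I split into two cases. If $S$ contains a directed cycle, take $p$ to be any such cycle (it may pass through $s$ or $t$, but this is allowed). If $S$ is acyclic, I use Kirchhoff's law to locate endpoints: any vertex $v\in V(S)$ with no incoming edge in $S$ satisfies $\inflo(v,f)=0<\outflo(v,f)$, so by conservation $v\in\{s,t\}$; symmetrically, any vertex $w\in V(S)$ with no outgoing edge in $S$ lies in $\{s,t\}$. Because $S$ is a nonempty finite DAG, there exists a source $v$, and following outgoing edges of $S$ forward from $v$ never revisits a vertex (acyclicity) and only halts at a sink $w\in\{s,t\}$; at every intermediate vertex, conservation guarantees that an outgoing edge in $S$ is available so the walk can be extended. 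Since the resulting walk is a simple path in a DAG, we have $v\neq w$, and hence $p$ is an $s$-$t$- or $t$-$s$-path in~$S$.

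Integrality is preserved automatically: if $f$ is integer-valued, then $\alpha=\min_{e\in p} f(e)$ is a positive integer, so $f'=f-\alpha p$ is again a nonnegative integral flow of strictly smaller support, and the inductive hypothesis applied to $f'$ produces integer multipliers for the remaining paths. The main obstacle is the DAG subcase, where one must rule out the degenerate possibility that the walk terminates at its starting vertex $v$; this is handled by the observation that acyclicity of $S$ forbids a closed walk through~$v$, forcing $v\neq w$ and thus producing a genuine $s$-$t$- or $t$-$s$-path rather than something that fails to qualify as a complete path.
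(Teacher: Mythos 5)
Your proof is correct and is the standard flow-decomposition argument; the paper itself gives no proof for this lemma (it is stated with a terminating $\endproof$ and attributed to the textbook reference on network flows), so there is nothing in the paper to diverge from. Your treatment of the acyclic case — using Kirchhoff's law to force every source and every sink of $\SUPP(f)$ into $\{s,t\}$, then walking forward from a source, which in a finite DAG must reach a sink without revisiting a vertex — is exactly how the integrality claim and the strict decrease of support are usually established, and your observation that $v\neq w$ by acyclicity closes the only real gap.
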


We will study flows in two rather different situations. 
The residual digraph $R$ introduced in Section~\ref{se:res}
has the property that it never contains an edge $(u,v)$ 
and its reverse edge $(v,u)$. Only nonnegative flows on $R$ 
will be of interest. 

On the other hand, we also need to look at flows 
on digraphs resulting from a undirected graph $G$ 
by replacing each of its undirected edges $\{u,v\}$ by 
the directed edge $e=(u,v)$ and its reverse  $-e := (v,u)$.
We shall denote the resulting digraph also by $G$. 
To a flow~$f$ on~$G$ we assign its 
{\em reduced representative} $\tilde{f}$ defined by 
$\tilde{f}(e) := f(e) - \min\{f(e),f(-e)\}$.
Hence $\tilde{f}(e) = f(e) -f(-e) \ge 0$ and $\tilde{f}(-e) =0$ 
if $f(e) \ge f(-e)$.
It will be convenient to interpret 
$f$ and $\tilde{f}$ as manifestations of the same flow.
Formally, we consider the linear subspace 
$N(G) := \{f \in \R^{E(G)} \mid \forall e \in E(G): f(e)=f(-e)\}$
of ``null flows'' and the factor space 
\begin{equation}\label{eq:def-oFG}
 \oF(G) := F(G) / N(G).
\end{equation}
We call the elements of $\oF(G)$ {\em flow classes on $G$} 
(or simply flows) and denote them by the same symbols as for flows. 
No confusion should arise from this abuse of notation in the context at hand. 
We usually identify flow classes with their reduced representative. 
We note that the overall throughput function factors to a linear function
$\delta\colon \oF(G) \to \R$. 
A flow class is called {\em integral} if its reduced representative is integral 
and we denote by $\oF(G)_\Z$ the group of integral flow classes on $G$.

We remark that in the literature on flows, the subtle distinction between flows and their classes 
is not relevant, as the goal usually is to optimize the throughput of a flow
subject to certain capacity constraints. But in the context of LR coefficients, 
we are interested in {\em counting} the number of capacity achieving flow classes, 
so that this distinction is necessary. 

\subsection{Flows on the honeycomb graph $G$}
\label{se:honey}

We start with a triangular array of vertices, $n+1$ on each side, as seen in Figure~\ref{fig:triangulararray}.
The resulting planar graph $\Delta$ shall be called the {\em triangular graph} with parameter~$n$, 
we denote its vertex set with $V(\Delta)$ and its edge set with $E(\Delta)$. 
A triangle consisting of three edges in~$\Delta$ is called a \emph{hive triangle}.
Note that there are two types of hive triangles: upright and downright oriented ones. 
A \emph{rhombus} is defined to be the union of an upright and a downright hive triangle which share a common side.
In contrast to the usual geometric definition of the term \emph{rhombus}
we use this term here in this very restricted sense only.
Note that the angles at the corners of a rhombus are either acute of $60^\circ$ or obtuse of $120^\circ$.
Two distinct rhombi are called \emph{overlapping} if they share a hive triangle.
\begin{figure}[h] 
  \begin{center}
      \subfigure[The triangular graph $\Delta$ for $n=5$.]
        {\scalebox{2}{\FIGtriangulararray}
        \nopar\label{fig:triangulararray}} \hspace{0.5cm}
      \subfigure[The honeycomb graph $G$. Source, target and its incident edges are omitted.]
        {\scalebox{1}{\FIGdualtriangulararray}
        \nopar\label{fig:dualtriangulararray}} \hspace{0.5cm}
      \subfigure[The throughput capacities.] 
        {
\scalebox{0.78}{
\begin{tikzpicture}[scale=5]\draw[rhrhombidraw] (0.0pt,0.0pt) -- (12.5pt,21.65pt) -- (25.0pt,0.0pt) -- cycle;\draw[rhrhombidraw] (2.5pt,4.33pt) -- (22.5pt,4.33pt) ;\draw[rhrhombidraw] (5.0pt,8.66pt) -- (20.0pt,8.66pt) ;\draw[rhrhombidraw] (7.5pt,12.99pt) -- (17.5pt,12.99pt) ;\draw[rhrhombidraw] (10.0pt,17.32pt) -- (15.0pt,17.32pt) ;\draw[rhrhombidraw] (15.0pt,17.32pt) -- (5.0pt,0.0pt) ;\draw[rhrhombidraw] (17.5pt,12.99pt) -- (10.0pt,0.0pt) ;\draw[rhrhombidraw] (20.0pt,8.66pt) -- (15.0pt,0.0pt) ;\draw[rhrhombidraw] (22.5pt,4.33pt) -- (20.0pt,0.0pt) ;\draw[rhrhombidraw] (20.0pt,0.0pt) -- (10.0pt,17.32pt) ;\draw[rhrhombidraw] (15.0pt,0.0pt) -- (7.5pt,12.99pt) ;\draw[rhrhombidraw] (10.0pt,0.0pt) -- (5.0pt,8.66pt) ;\draw[rhrhombidraw] (5.0pt,0.0pt) -- (2.5pt,4.33pt) ;\draw[rhrhombithickside] (12.5pt,21.65pt) -- (15.0pt,17.32pt);\draw[->,rhrhombiarrow] (15.915pt,20.735pt) -- (11.585pt,18.235pt);\draw[rhrhombithickside] (15.0pt,17.32pt) -- (17.5pt,12.99pt);\draw[->,rhrhombiarrow] (18.415pt,16.405pt) -- (14.085pt,13.905pt);\node at (17.5pt,21.65pt) {{$\leq \la_1$}};\node at (20.0pt,17.32pt) {{$\leq \la_2$}};\draw[rhrhombithickside] (17.5pt,12.99pt) -- (20.0pt,8.66pt);\draw[->,rhrhombiarrow] (20.915pt,12.075pt) -- (16.585pt,9.575pt);\draw[rhrhombithickside] (20.0pt,8.66pt) -- (22.5pt,4.33pt);\draw[->,rhrhombiarrow] (23.415pt,7.745pt) -- (19.085pt,5.245pt);\draw[rhrhombithickside] (22.5pt,4.33pt) -- (25.0pt,0.0pt);\draw[->,rhrhombiarrow] (25.915pt,3.415pt) -- (21.585pt,0.915pt);\draw[rhrhombithickside] (25.0pt,0.0pt) -- (20.0pt,0.0pt);\draw[->,rhrhombiarrow] (22.5pt,-2.5pt) -- (22.5pt,2.5pt);\draw[rhrhombithickside] (20.0pt,0.0pt) -- (15.0pt,0.0pt);\draw[->,rhrhombiarrow] (17.5pt,-2.5pt) -- (17.5pt,2.5pt);\draw[rhrhombithickside] (15.0pt,0.0pt) -- (10.0pt,0.0pt);\draw[->,rhrhombiarrow] (12.5pt,-2.5pt) -- (12.5pt,2.5pt);\draw[rhrhombithickside] (10.0pt,0.0pt) -- (5.0pt,0.0pt);\draw[->,rhrhombiarrow] (7.5pt,-2.5pt) -- (7.5pt,2.5pt);\draw[rhrhombithickside] (5.0pt,0.0pt) -- (0.0pt,0.0pt);\draw[->,rhrhombiarrow] (2.5pt,-2.5pt) -- (2.5pt,2.5pt);\node at (22.5pt,12.99pt) {{$\leq \la_3$}};\node at (25.0pt,8.66pt) {{$\leq \la_4$}};\node at (27.5pt,4.33pt) {{$\leq \la_5$}};\node at (7.5pt,21.65pt) {{$\leq \nu_1$}};\node at (5.0pt,17.32pt) {{$\leq \nu_2$}};\node at (2.5pt,12.99pt) {{$\leq \nu_3$}};\node at (0.0pt,8.66pt) {{$\leq \nu_4$}};\node at (-2.5pt,4.33pt) {{$\leq \nu_5$}};\node at (22.5pt,-4.33pt) {\rotatebox{45}{$\leq \mu_1$}};\node at (17.5pt,-4.33pt) {\rotatebox{45}{$\leq \mu_2$}};\node at (12.5pt,-4.33pt) {\rotatebox{45}{$\leq \mu_3$}};\node at (7.5pt,-4.33pt) {\rotatebox{45}{$\leq \mu_4$}};\node at (2.5pt,-4.33pt) {\rotatebox{45}{$\leq \mu_5$}};\draw[rhrhombithickside] (12.5pt,21.65pt) -- (10.0pt,17.32pt);\draw[->,rhrhombiarrow] (13.415pt,18.235pt) -- (9.085pt,20.735pt);\draw[rhrhombithickside] (10.0pt,17.32pt) -- (7.5pt,12.99pt);\draw[->,rhrhombiarrow] (10.915pt,13.905pt) -- (6.585pt,16.405pt);\draw[rhrhombithickside] (7.5pt,12.99pt) -- (5.0pt,8.66pt);\draw[->,rhrhombiarrow] (8.415pt,9.575pt) -- (4.085pt,12.075pt);\draw[rhrhombithickside] (5.0pt,8.66pt) -- (2.5pt,4.33pt);\draw[->,rhrhombiarrow] (5.915pt,5.245pt) -- (1.585pt,7.745pt);\draw[rhrhombithickside] (2.5pt,4.33pt) -- (0.0pt,0.0pt);\draw[->,rhrhombiarrow] (3.415pt,0.915pt) -- (-0.915pt,3.415pt);\end{tikzpicture}
}
        \nopar\label{fig:triangulararraywithflow}}
    \caption{Graph constructions.} 
    \nopar\label{fig:diagrams}
  \end{center}
\end{figure}
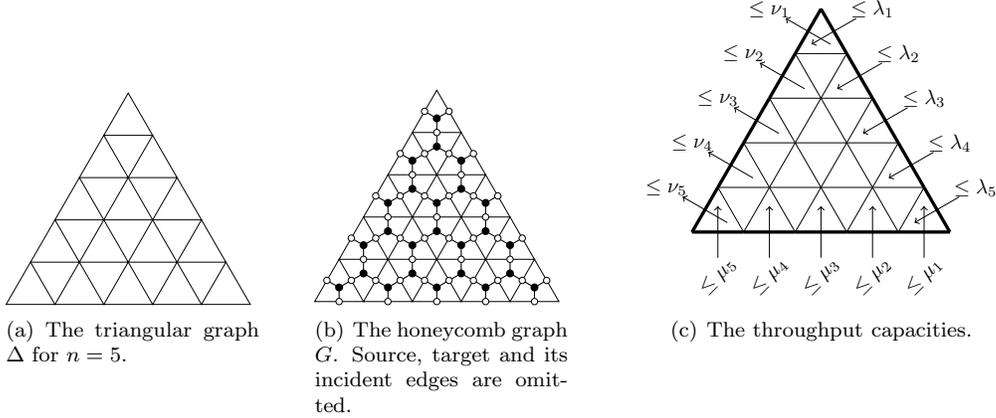

To realize the dual graph of $\Delta$, as in~\cite{buc:00}, 
we introduce a black vertex in the middle of each hive triangle
and a white vertex on each hive triangle side, see Figure~\ref{fig:dualtriangulararray}.
Moreover, in each hive triangle~$T$, we introduce edges connecting 
the three white vertices of $T$ with the black vertex. 
Additionally (not depicted in Figure~\ref{fig:dualtriangulararray}),  
we introduce a source vertex~$s$ and a target vertex~$t$. 
The source $s$ is connected by an edge with each white vertex~$v$ 
on the right or on the bottom border of $\Delta$,
and the target~$t$ is connected by an edge with each white vertex~$v$ 
on the left border of $\Delta$.
Clearly, the resulting (undirected) graph~$G$ is bipartite and planar. 
We shall call $G$ the {\em honeycomb graph} with parameter~$n$. 

We study now the vector space $\oF(G)$ of flow classes on $G$ 
introduced in Section~\ref{se:flows}. Recall that for this, 
we have to replace each edge of $G$ by the corresponding two directed edges.  
Correspondingly, we will consider $G$ as a directed graph.
Any complete path~$p$ in the digraph $G$ defines a flow 
and thus a flow class on $G$, that we denote by~$p$ as well. 
According to Lemma~\ref{le:flow-decomp} we can write 
each flow class $f\in\oF(G)$ as a nonnegative linear combination of complete paths. 
(Note that the reduced representative of any flow on~$G$ is nonnegative.) 


In order to characterize the flow class $f\in \oF(G)$ in a concise way, we introduce the 
notion of the throughput of $f$ through edges of $\Delta$. 
For each edge $k\in E(\Delta)$, 
there is exactly one upright hive triangle having $k$ as a side: 
let $e_k\in E(G)$ denote the directed edge in this triangle pointing from the white vertex on~$k$ 
towards the black vertex in this upright triangle. 
Then we call $\delta(k,f) :=  f(e_k)- f(-e_k)$ the {\em throughput of $f$ through $k$}, 
which is clearly independent of the choice of the representative.
As for the choice of sign: this should be interpreted as the total flow of~$f$ 
{\em going into the upright hive triangle} through~$k$. 
Note that $\oF(G)\to\R, f\mapsto \delta(k,f)$ is a linear form.

It is obvious that a flow class~$f$ on $G$ is completely determined 
by the throughput function $\delta\colon E(\Delta)\to\R, k\mapsto\delta(k,f)$. 
Furthermore, Kirchhoff's conservation laws 
translate to the closedness condition
\begin{equation}\label{eq:closedness}
\delta(k_1,f) + \delta(k_2,f) + \delta(k_3,f) =0
\end{equation}
holding for each hive triangle (upright or downright) 
with sides denoted by $k_1,k_2,k_3$. 
So we see that the vector space $\oF(G)$ of flow classes on~ $G$ can be 
identified with the subvector space $Z\subseteq\R^{E(\Delta)}$ 
consisting of the functions $\delta$ satisfying~\eqref{eq:closedness} 
for all hive triangles. 
Moreover, under this identification, 
integral flow classes~$f$ correspond to functions 
in the subgroup $Z_\Z$ consisting of 
functions $\delta$ taking integer values. 


By adding up \eqref{eq:closedness} for all upright hive triangles
and subtracting \eqref{eq:closedness} for all downright hive triangles,
taking into account the cancelling of throughputs on all 
inner sides~$k$, we see that the sum of $\delta(k,f)$ over 
all border edges~$k$ of $\Delta$ vanishes. 
Therefore, we can express the overall throughput $\delta(f)$ as 
\begin{equation}\label{eq:overall-thrput}
 \delta(f) \ =\  \sum_{k\in E_r \cup E_b} \delta(k,f) \ =\ -\sum_{k'\in E_\ell} \delta(k',f) ,
\end{equation}
where $E_\ell$, $E_r$, and $E_b$ denotes the set of edges of $\Delta$ 
on the left side, right side, and bottom side, respectively.

%

The flow classes on $G$ can be characterized in yet another way. 
Let $x_0$ be the top vertex of~$\Delta$
and define the vector space $H$ of functions $h\colon V(\Delta)\to\R$ 
satisfying $h(x_0)=0$. We denote by $H_\Z$ the subgroup of
functions~$h\in H$ taking integer values.

For a moment, think of the edges~$k$ of $\Delta$ as oriented  
such that all upright hive triangles get clockwise oriented. 
Consider the linear map 
$\partial\colon H \to \R^{E(\Delta)}, h \mapsto \delta$ defined by 
$\delta(k) = h(k_+) - h(k_-)$, 
where $k$ points from $k_-$ to $k_+$. 
Then it is obvious that $\partial$ is injective, and it is 
straightforward to check that $\mathrm{im}\partial\subseteq Z$. 
In order to show equality, suppose $\delta\in Z$. 
For a vertex $x\in V(\Delta)$, choose a directed path~$p$ 
(in the sense of the above chosen orientations)
from the top vertex~$x_0$ to $x$. 
The closedness condition~\eqref{eq:closedness} 
easily implies that the sum
$h(x) :=\sum_{k\subseteq p} \delta(k)$
is independent of the choice of~$p$. 
It follows that $\partial(h)=\delta$. 
 
So we have a linear isomorphism $\partial\colon H\to Z$, which  
induces an isomorphism $H_\Z \to Z_\Z$. 

\begin{remark}
The reader familiar with basic algebraic topology will recognize 
$\partial$ as a coboundary map of the simplicial complex provided
by $\Delta$, and hence $\mathrm{im}\partial = Z$ as a consequence of the 
fact that the triangle underlying $\Delta$ is simply connected. 
\end{remark}



\subsection{Hives and hive flows}



Following~\cite{knta:99,buc:00} we define a {\em hive} on $\Delta$
as a function $h\in H$ 
such that for all rhombi~$\varrho$, 
the sum of the values of~$h$ at the two obtuse vertices of~$\varrho$  
is greater than or equal to the sum of the values of~$h$ at the two acute vertices 
of $\varrho$. In pictorial notation, 
\begin{equation}\label{eq:hive-inequ}
 \sigma(\rhc,h) := h\big(\rhdwest\big)+h\big(\rhdeast\big) - h\big(\rhdnorth\big) - h\big(\rhdsouth\big) \ \ge\  0
\end{equation}
where $\rhdwest, \rhdeast, \rhdnorth, \rhdsouth\in V(\Delta)$ denote the corner vertices of $\rhc$.
We call the $\sigma(\varrho,h)$ the {\em slack} of the rhombus $\varrho$ with respect to the hive~$h$. 


If one interprets $h(v)$ as the height of a point over~$v\in V(\Delta)$ and interpolates these points linearly over 
each hive triangle of $\Delta$ one gets a continuous function $h\colon\Delta\to\R$.
(Here the triangle $\Delta$ is to be interpreted as a convex subset of $\R^2$.)
Then the conditions \eqref{eq:hive-inequ} mean that 
$h$~is a concave function. 
The function $h$ is linear over a rhombus $\varrho$ iff $\s \varrho h =0$, 
in which case we call the rhombus {\em $\varrho$ $h$-flat}.

\begin{lemma}\label{le:bbound}
For a hive $h\in H$ and $x\in V(\Delta)$ we have $\min_{\partial\Delta} h \ \le\   h(x) \ \le\ n\max_{\partial\Delta} h$,  
where $\partial\Delta$ denotes the boundary of the convex set $\Delta\subseteq\R^2$. 
\end{lemma}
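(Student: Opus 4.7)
The plan is to work with the piecewise-linear interpolation $\phi\colon\Delta\to\R$ of $h$ on the hive triangulation; as the excerpt notes, the hive inequalities are precisely the condition for $\phi$ to be concave on the convex set $\Delta$. The lower bound then follows immediately from concavity: a concave function on a compact convex set attains its minimum at an extreme point, and the extreme points of $\Delta$ are its three corners, all of which lie on $\partial\Delta$. Hence $h(x)=\phi(x)\ge \min_\Delta\phi$, which is the smallest of the three corner values of $h$ and in particular is $\ge\min_{\partial\Delta}h$.

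For the upper bound, set $M:=\max_{\partial\Delta}h$. Because $\phi$ is concave on $\Delta$ and is affine between consecutive lattice vertices along every lattice direction, the restriction of $h$ to each lattice line is a discretely concave sequence (slopes non-increasing). In particular, along the right side of $\Delta$, from $x_0$ down to the bottom-right corner, the first slope equals the value of $h$ at the first right-boundary vertex below $x_0$ and is therefore $\le M$; concavity then forces every slope along the right side to be $\le M$. The key inductive slope claim is: for each $j=0,1,\ldots,n-1$, the first step slope of $h$ along the lattice line that starts at the $j$-th vertex down the right side of $\Delta$ and travels down-left into $\Delta$ is $\le M$. The case $j=0$ gives the left side of $\Delta$, whose first slope is $\le M$. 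For the induction step, one applies the hive inequality for the rhombus whose short diagonal is the horizontal lattice edge at level $j$ joining columns $j{-}1$ and $j$: its obtuse vertices are the endpoints of that edge, its acute vertices are the up-pointing apex one level higher (which lies on the right side) and the down-pointing apex one level lower in column $j$. Rearranging the rhombus inequality shows that the first slope on the column-$j$ down-left line is dominated by the first slope on the column-$(j{-}1)$ down-left line, closing the induction. Discrete concavity along each such line then propagates the bound to every one of its slopes.

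For an arbitrary lattice vertex $x$ at level $i$ and column $j$, the strategy is to connect $x_0$ to $x$ by a length-$i$ lattice path: first take $j$ down-right steps along the right side from $x_0$ to the $j$-th right-side vertex, then $i-j$ down-left steps along the column-$j$ line to $x$. Each step slope is $\le M$ by the bounds above, so telescoping yields $h(x)=h(x_0)+\sum(\text{slopes})\le 0+iM\le nM$. The main technical obstacle is locating the right rhombus and structuring the induction so as to carry the first-slope bound across successive lattice columns; once this is in place, the rest of the argument is routine telescoping together with the standard fact that a concave function on a compact convex set attains its minimum at an extreme point.
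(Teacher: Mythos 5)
Your proof is correct, and both your lower-bound argument (a concave function on a compact convex set attains its minimum at an extreme point, here a corner of $\Delta$) and the reliance on concavity of the interpolant match the paper. Your upper-bound argument, however, takes a genuinely different route. The paper's proof is much shorter: writing $a=h(x(1,0))$, $b=h(x(1,1))$ for the two vertices adjacent to the apex $x(0,0)$, it observes that the subgraph $S$ of the concave interpolant is convex and the interpolant is affine on the top hive triangle, so $S$ lies below the affine plane through $((0,0),0)$, $((1,0),a)$, $((1,1),b)$; evaluating that plane at a vertex immediately gives $h(x(m,i))\le am+(b-a)i\le m\max\{a,b\}\le n\max_{\partial\Delta}h$. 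You instead bound $h(x)$ by telescoping along a length-$m$ lattice path from the apex (down the right side, then down-left along a fixed column) and show each step is $\le M:=\max_{\partial\Delta}h$; this requires discrete concavity along the right side and along each column, plus a column-by-column induction, driven by the rhombus inequality with horizontal short diagonal, to seed the first down-left slope in each column. Both arguments are valid and give the same bound; the paper's single secant-plane observation replaces your induction and is cleaner, whereas your version makes explicit exactly which rhombus inequalities are invoked, which is closer in spirit to the local bookkeeping used in the rest of the paper.
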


\begin{proof} 
Let $x(m,i)$ denote the vertex of $\Delta$ in the $m$th line parallel to the ground side (counting from the top) 
and on the $i$th side parallel to the left side (counting from the left), for $0\le i \le m\le n$. 
So $x(0,0)$ is the top vertex and $h(x(0,0))=0$ for $h\in H$. 
Put $a:=h(x(1,0))$ and $b:=h(x(1,1))$. 

Since $h$ is a concave function, its subgraph 
$S:=\{ (x,y) \in \Delta\times\R \mid y \le h(x)\}$ is convex. 
Hence $S$ is bounded from above by the plane spanned by $((0,0),0)$, $((1,0),a)$, and $((1,1),b)$.
This plane's height at $x(m,i)$ is $a m + (b-a) i$.
This implies that 
$h(x(m,i)) \le a m + (b-a) i$ 
for all $h\in H$. 
Therefore,
$h(x(m,i)) \le m\max\{a,b\} $, 
proving the upper bound. 

The lower bound follows easily from the convexity of $S$.
\qquad\end{proof}


In this paper, it will be extremely helpful to have some graphical way 
of describing rhombi and throughputs.
We shall denote a rhombus $\varrho$ of $\Delta$ by the pictogram $\rhc$, 
even though $\varrho$ may lie in any of the three positions 
``$\rhc$'', ``\ \rotatebox{120}{$\rhc$}'' or ``\raisebox{0.2cm}{\rotatebox{240}{$\rhc$}}\ \ ''
obtained by rotating with a multiple of $60^\circ$. 
Let $\rhsc$ denote the edge~$k$ of~$\Delta$ given by the \emph{diagonal} of $\varrho$ 
connecting its two obtuse angles.
Then we denote by $\rhacM (f) := \delta(k,f)$ the throughput of $f$ through~$k$ 
(going into the upright hive triangle). 
Similarly, we define the throughput 
$ \rhacW (f) := - \delta(k,f)$. 
The advantage of this notation is that 
if the throughput is positive, then the flow goes 
in the direction of the arrow.
For instance, using the symbolic notation, 
we note the following consequence of the flow conservation laws: 
\begin{equation}\label{eq:fl-conv}
 \rhaollW(f) + \rhaolrW(f) \ =\  \rhaoulW(f) + \rhaourW(f) .
\end{equation}


If $f$ is the flow corresponding to the hive $h\in H$
under the isomorphisms $H\simeq Z\simeq \oF(G)$, 
then \eqref{eq:hive-inequ} 
and the definition of the coboundary map~$\partial$ imply that
$$
 \sigma(\rhc,h)  \ =\ 
  \Big(h\big(\rhdwest\big)  - h\big(\rhdnorth\big) \Big) + 
  \Big( h\big(\rhdeast\big) - h\big(\rhdsouth\big) \Big)
 \ =\ 
 \rhaoulW(f) +  \rhaolrM(f) .
$$ 
We define now the slack of a rhombus with respect to a flow~$f$ 
as the slack with respect to the corresponding hive~$h$. 

\begin{definition}\label{def:slack}
The \emph{slack} of the rhombus $\rhc$ with respect to $f\in \oF(G)$ is defined as 
$$
\s \rhc f \ :=\ \rhaoulW(f) + \rhaolrM(f).
$$
The rhombus $\rhc$ is called \emph{$f$\dash flat} if $\s \rhc f$ = 0.
\end{definition}

It is clear that $\oF(G)\to\R, f\mapsto \s \varrho f$ is a linear form.
Note also that by~\eqref{eq:fl-conv},
the slack can be written in varous different ways: 
$$
 \s \rhc f \ =\ \rhaoulW(f) +  \rhaolrM(f) \ = \  
 \rhaoulW(f) -  \rhaolrW(f) \ =\  \rhaollW(f) - \rhaourW(f) 
 \ = \ \rhaollW(f) + \rhaourM(f) .
$$



\begin{definition}\label{def:hiveflow}
A flow $f \in \oF(G)$ is called a \emph{hive flow} iff $\s \varrho f \geq 0$ for all rhombi~$\varrho$ in $\Delta$.
\end{definition}

By definition, the hives correspond to the hive flows under the isomorphism $H\simeq \oF(G)$.
Note that the set of hive flows is a cone in $\oF(G)$. 
Figure~\ref{fig:degengraph} provides an example of a hive flow.
We encourage the reader to verify the slack inequalities there to get 
some idea of the nature of these constraints. 
\begin{figure}[h]
\begin{center}
\scalebox{1.180}{
\begin{tikzpicture}\draw[rhrhombidraw] (45.0pt,77.94pt) -- (-90.0pt,-155.88pt) ;\draw[rhrhombidraw] (-135.0pt,-129.9pt) -- (-120.0pt,-155.88pt) ;\draw[rhrhombidraw] (-120.0pt,-103.92pt) -- (-90.0pt,-155.88pt) ;\draw[rhrhombidraw] (-105.0pt,-77.94pt) -- (-60.0pt,-155.88pt) ;\draw[rhrhombidraw] (-90.0pt,-51.96pt) -- (-30.0pt,-155.88pt) ;\draw[rhrhombidraw] (-75.0pt,-25.98pt) -- (0.0pt,-155.88pt) ;\draw[rhrhombidraw] (-60.0pt,0.0pt) -- (30.0pt,-155.88pt) ;\draw[rhrhombidraw] (-45.0pt,25.98pt) -- (60.0pt,-155.88pt) ;\draw[rhrhombidraw] (-30.0pt,51.96pt) -- (90.0pt,-155.88pt) ;\draw[rhrhombidraw] (-15.0pt,77.94pt) -- (120.0pt,-155.88pt) ;\draw[rhrhombidraw] (0.0pt,103.92pt) -- (150.0pt,-155.88pt) ;\draw[rhrhombidraw] (150.0pt,-103.92pt) -- (120.0pt,-155.88pt) ;\draw[rhrhombidraw] (165.0pt,-129.9pt) -- (150.0pt,-155.88pt) ;\draw[rhrhombidraw] (135.0pt,-77.94pt) -- (90.0pt,-155.88pt) ;\draw[rhrhombidraw] (120.0pt,-51.96pt) -- (60.0pt,-155.88pt) ;\draw[rhrhombidraw] (105.0pt,-25.98pt) -- (30.0pt,-155.88pt) ;\draw[rhrhombidraw] (90.0pt,0.0pt) -- (0.0pt,-155.88pt) ;\draw[rhrhombidraw] (75.0pt,25.98pt) -- (-30.0pt,-155.88pt) ;\draw[rhrhombidraw] (60.0pt,51.96pt) -- (-60.0pt,-155.88pt) ;\draw[rhrhombidraw] (30.0pt,103.92pt) -- (-120.0pt,-155.88pt) ;\draw[rhrhombidraw] (165.0pt,-129.9pt) -- (-135.0pt,-129.9pt) ;\draw[rhrhombidraw] (150.0pt,-103.92pt) -- (-120.0pt,-103.92pt) ;\draw[rhrhombidraw] (135.0pt,-77.94pt) -- (-105.0pt,-77.94pt) ;\draw[rhrhombidraw] (120.0pt,-51.96pt) -- (-90.0pt,-51.96pt) ;\draw[rhrhombidraw] (105.0pt,-25.98pt) -- (-75.0pt,-25.98pt) ;\draw[rhrhombidraw] (90.0pt,0.0pt) -- (-60.0pt,0.0pt) ;\draw[rhrhombidraw] (60.0pt,51.96pt) -- (-30.0pt,51.96pt) ;\draw[rhrhombidraw] (45.0pt,77.94pt) -- (-15.0pt,77.94pt) ;\draw[rhrhombidraw] (30.0pt,103.92pt) -- (0.0pt,103.92pt) ;\draw[rhrhombithickside] (0.0pt,103.92pt) -- (75.0pt,-25.98pt);\draw[rhrhombithickside] (75.0pt,25.98pt) -- (-45.0pt,25.98pt);\draw[rhrhombithickside] (75.0pt,25.98pt) -- (15.0pt,-77.94pt);\draw[rhrhombithickside] (-45.0pt,25.98pt) -- (60.0pt,-155.88pt);\draw[rhrhombithickside] (-60.0pt,0.0pt) -- (-15.0pt,-77.94pt);\draw[rhrhombithickside] (-60.0pt,-155.88pt) -- (-15.0pt,-77.94pt);\draw[rhrhombithickside] (-15.0pt,-77.94pt) -- (45.0pt,-77.94pt);\draw[rhrhombithickside] (45.0pt,-77.94pt) -- (90.0pt,0.0pt);\draw[rhrhombithickside] (105.0pt,-25.98pt) -- (75.0pt,-25.98pt);\draw[rhrhombithickside] (45.0pt,-77.94pt) -- (90.0pt,-155.88pt);\draw[rhrhombithickside] (120.0pt,-155.88pt) -- (150.0pt,-103.92pt);\draw[rhrhombidraw] (15.0pt,129.9pt) -- (-150.0pt,-155.88pt) -- (180.0pt,-155.88pt) -- cycle;\fill (15.0pt,129.9pt) circle (0.4pt);\fill (0.0pt,103.92pt) circle (0.4pt);\fill (30.0pt,103.92pt) circle (0.4pt);\fill (-15.0pt,77.94pt) circle (0.4pt);\fill (15.0pt,77.94pt) circle (0.4pt);\fill (45.0pt,77.94pt) circle (0.4pt);\fill (-30.0pt,51.96pt) circle (0.4pt);\fill (0.0pt,51.96pt) circle (0.4pt);\fill (30.0pt,51.96pt) circle (0.4pt);\fill (60.0pt,51.96pt) circle (0.4pt);\fill (-45.0pt,25.98pt) circle (0.4pt);\fill (-15.0pt,25.98pt) circle (0.4pt);\fill (15.0pt,25.98pt) circle (0.4pt);\fill (45.0pt,25.98pt) circle (0.4pt);\fill (75.0pt,25.98pt) circle (0.4pt);\fill (-60.0pt,0.0pt) circle (0.4pt);\fill (-30.0pt,0.0pt) circle (0.4pt);\fill (0.0pt,0.0pt) circle (0.4pt);\fill (30.0pt,0.0pt) circle (0.4pt);\fill (60.0pt,0.0pt) circle (0.4pt);\fill (90.0pt,0.0pt) circle (0.4pt);\fill (-75.0pt,-25.98pt) circle (0.4pt);\fill (-45.0pt,-25.98pt) circle (0.4pt);\fill (-15.0pt,-25.98pt) circle (0.4pt);\fill (15.0pt,-25.98pt) circle (0.4pt);\fill (45.0pt,-25.98pt) circle (0.4pt);\fill (75.0pt,-25.98pt) circle (0.4pt);\fill (105.0pt,-25.98pt) circle (0.4pt);\fill (-90.0pt,-51.96pt) circle (0.4pt);\fill (-60.0pt,-51.96pt) circle (0.4pt);\fill (-30.0pt,-51.96pt) circle (0.4pt);\fill (0.0pt,-51.96pt) circle (0.4pt);\fill (30.0pt,-51.96pt) circle (0.4pt);\fill (60.0pt,-51.96pt) circle (0.4pt);\fill (90.0pt,-51.96pt) circle (0.4pt);\fill (120.0pt,-51.96pt) circle (0.4pt);\fill (-105.0pt,-77.94pt) circle (0.4pt);\fill (-75.0pt,-77.94pt) circle (0.4pt);\fill (-45.0pt,-77.94pt) circle (0.4pt);\fill (-15.0pt,-77.94pt) circle (0.4pt);\fill (15.0pt,-77.94pt) circle (0.4pt);\fill (45.0pt,-77.94pt) circle (0.4pt);\fill (75.0pt,-77.94pt) circle (0.4pt);\fill (105.0pt,-77.94pt) circle (0.4pt);\fill (135.0pt,-77.94pt) circle (0.4pt);\fill (-120.0pt,-103.92pt) circle (0.4pt);\fill (-90.0pt,-103.92pt) circle (0.4pt);\fill (-60.0pt,-103.92pt) circle (0.4pt);\fill (-30.0pt,-103.92pt) circle (0.4pt);\fill (0.0pt,-103.92pt) circle (0.4pt);\fill (30.0pt,-103.92pt) circle (0.4pt);\fill (60.0pt,-103.92pt) circle (0.4pt);\fill (90.0pt,-103.92pt) circle (0.4pt);\fill (120.0pt,-103.92pt) circle (0.4pt);\fill (150.0pt,-103.92pt) circle (0.4pt);\fill (-135.0pt,-129.9pt) circle (0.4pt);\fill (-105.0pt,-129.9pt) circle (0.4pt);\fill (-75.0pt,-129.9pt) circle (0.4pt);\fill (-45.0pt,-129.9pt) circle (0.4pt);\fill (-15.0pt,-129.9pt) circle (0.4pt);\fill (15.0pt,-129.9pt) circle (0.4pt);\fill (45.0pt,-129.9pt) circle (0.4pt);\fill (75.0pt,-129.9pt) circle (0.4pt);\fill (105.0pt,-129.9pt) circle (0.4pt);\fill (135.0pt,-129.9pt) circle (0.4pt);\fill (165.0pt,-129.9pt) circle (0.4pt);\fill (-150.0pt,-155.88pt) circle (0.4pt);\fill (-120.0pt,-155.88pt) circle (0.4pt);\fill (-90.0pt,-155.88pt) circle (0.4pt);\fill (-60.0pt,-155.88pt) circle (0.4pt);\fill (-30.0pt,-155.88pt) circle (0.4pt);\fill (0.0pt,-155.88pt) circle (0.4pt);\fill (30.0pt,-155.88pt) circle (0.4pt);\fill (60.0pt,-155.88pt) circle (0.4pt);\fill (90.0pt,-155.88pt) circle (0.4pt);\fill (120.0pt,-155.88pt) circle (0.4pt);\fill (150.0pt,-155.88pt) circle (0.4pt);\fill (180.0pt,-155.88pt) circle (0.4pt);\draw[butt cap-latex,line width=1pt,black] (27.696pt,119.91pt) -- (14.706pt,112.41pt);\fill[white] (22.5pt,116.91pt) circle (2.5pt);\node at (22.5pt,116.91pt) {\tiny 5};\draw[butt cap-latex,line width=1pt,black] (42.696pt,93.93pt) -- (29.706pt,86.43pt);\fill[white] (37.5pt,90.93pt) circle (2.5pt);\node at (37.5pt,90.93pt) {\tiny 5};\draw[butt cap-latex,line width=1pt,black] (57.696pt,67.95pt) -- (44.706pt,60.45pt);\fill[white] (52.5pt,64.95pt) circle (2.5pt);\node at (52.5pt,64.95pt) {\tiny 5};\draw[butt cap-latex,line width=1pt,black] (72.696pt,41.97pt) -- (59.706pt,34.47pt);\fill[white] (67.5pt,38.97pt) circle (2.5pt);\node at (67.5pt,38.97pt) {\tiny 5};\draw[butt cap-latex,line width=1pt,black] (87.696pt,15.99pt) -- (74.706pt,8.49pt);\fill[white] (82.5pt,12.99pt) circle (2.5pt);\node at (82.5pt,12.99pt) {\tiny 3};\draw[butt cap-latex,line width=1pt,black] (102.696pt,-9.99pt) -- (89.706pt,-17.49pt);\fill[white] (97.5pt,-12.99pt) circle (2.5pt);\node at (97.5pt,-12.99pt) {\tiny 2};\draw[butt cap-latex,line width=1pt,black] (117.696pt,-35.97pt) -- (104.706pt,-43.47pt);\fill[white] (112.5pt,-38.97pt) circle (2.5pt);\node at (112.5pt,-38.97pt) {\tiny 1};\draw[butt cap-latex,line width=1pt,black] (132.696pt,-61.95pt) -- (119.706pt,-69.45pt);\fill[white] (127.5pt,-64.95pt) circle (2.5pt);\node at (127.5pt,-64.95pt) {\tiny 1};\draw[butt cap-latex,line width=1pt,black] (147.696pt,-87.93pt) -- (134.706pt,-95.43pt);\fill[white] (142.5pt,-90.93pt) circle (2.5pt);\node at (142.5pt,-90.93pt) {\tiny 1};\draw[butt cap-latex,line width=1pt,black] (12.696pt,93.93pt) -- (-0.294pt,86.43pt);\fill[white] (7.5pt,90.93pt) circle (2.5pt);\node at (7.5pt,90.93pt) {\tiny 5};\draw[butt cap-latex,line width=1pt,black] (27.696pt,67.95pt) -- (14.706pt,60.45pt);\fill[white] (22.5pt,64.95pt) circle (2.5pt);\node at (22.5pt,64.95pt) {\tiny 5};\draw[butt cap-latex,line width=1pt,black] (42.696pt,41.97pt) -- (29.706pt,34.47pt);\fill[white] (37.5pt,38.97pt) circle (2.5pt);\node at (37.5pt,38.97pt) {\tiny 5};\draw[butt cap-latex,line width=1pt,black] (57.696pt,15.99pt) -- (44.706pt,8.49pt);\fill[white] (52.5pt,12.99pt) circle (2.5pt);\node at (52.5pt,12.99pt) {\tiny 4};\draw[butt cap-latex,line width=1pt,black] (72.696pt,-9.99pt) -- (59.706pt,-17.49pt);\fill[white] (67.5pt,-12.99pt) circle (2.5pt);\node at (67.5pt,-12.99pt) {\tiny 3};\draw[butt cap-latex,line width=1pt,black] (87.696pt,-35.97pt) -- (74.706pt,-43.47pt);\fill[white] (82.5pt,-38.97pt) circle (2.5pt);\node at (82.5pt,-38.97pt) {\tiny 1};\draw[butt cap-latex,line width=1pt,black] (102.696pt,-61.95pt) -- (89.706pt,-69.45pt);\fill[white] (97.5pt,-64.95pt) circle (2.5pt);\node at (97.5pt,-64.95pt) {\tiny 1};\draw[butt cap-latex,line width=1pt,black] (117.696pt,-87.93pt) -- (104.706pt,-95.43pt);\fill[white] (112.5pt,-90.93pt) circle (2.5pt);\node at (112.5pt,-90.93pt) {\tiny 1};\draw[butt cap-latex,line width=1pt,black] (132.696pt,-113.91pt) -- (119.706pt,-121.41pt);\fill[white] (127.5pt,-116.91pt) circle (2.5pt);\node at (127.5pt,-116.91pt) {\tiny 1};\draw[butt cap-latex,line width=1pt,black] (-2.304pt,67.95pt) -- (-15.294pt,60.45pt);\fill[white] (-7.5pt,64.95pt) circle (2.5pt);\node at (-7.5pt,64.95pt) {\tiny 5};\draw[butt cap-latex,line width=1pt,black] (12.696pt,41.97pt) -- (-0.294pt,34.47pt);\fill[white] (7.5pt,38.97pt) circle (2.5pt);\node at (7.5pt,38.97pt) {\tiny 5};\draw[butt cap-latex,line width=1pt,black] (27.696pt,15.99pt) -- (14.706pt,8.49pt);\fill[white] (22.5pt,12.99pt) circle (2.5pt);\node at (22.5pt,12.99pt) {\tiny 4};\draw[butt cap-latex,line width=1pt,black] (42.696pt,-9.99pt) -- (29.706pt,-17.49pt);\fill[white] (37.5pt,-12.99pt) circle (2.5pt);\node at (37.5pt,-12.99pt) {\tiny 4};\draw[butt cap-latex,line width=1pt,black] (57.696pt,-35.97pt) -- (44.706pt,-43.47pt);\fill[white] (52.5pt,-38.97pt) circle (2.5pt);\node at (52.5pt,-38.97pt) {\tiny 3};\draw[butt cap-latex,line width=1pt,black] (72.696pt,-61.95pt) -- (59.706pt,-69.45pt);\fill[white] (67.5pt,-64.95pt) circle (2.5pt);\node at (67.5pt,-64.95pt) {\tiny 1};\draw[butt cap-latex,line width=1pt,black] (87.696pt,-87.93pt) -- (74.706pt,-95.43pt);\fill[white] (82.5pt,-90.93pt) circle (2.5pt);\node at (82.5pt,-90.93pt) {\tiny 1};\draw[butt cap-latex,line width=1pt,black] (102.696pt,-113.91pt) -- (89.706pt,-121.41pt);\fill[white] (97.5pt,-116.91pt) circle (2.5pt);\node at (97.5pt,-116.91pt) {\tiny 1};\draw[butt cap-latex,line width=1pt,black] (117.696pt,-139.89pt) -- (104.706pt,-147.39pt);\fill[white] (112.5pt,-142.89pt) circle (2.5pt);\node at (112.5pt,-142.89pt) {\tiny 1};\draw[butt cap-latex,line width=1pt,black] (-17.304pt,41.97pt) -- (-30.294pt,34.47pt);\fill[white] (-22.5pt,38.97pt) circle (2.5pt);\node at (-22.5pt,38.97pt) {\tiny 5};\draw[butt cap-latex,line width=1pt,black] (-2.304pt,15.99pt) -- (-15.294pt,8.49pt);\fill[white] (-7.5pt,12.99pt) circle (2.5pt);\node at (-7.5pt,12.99pt) {\tiny 4};\draw[butt cap-latex,line width=1pt,black] (12.696pt,-9.99pt) -- (-0.294pt,-17.49pt);\fill[white] (7.5pt,-12.99pt) circle (2.5pt);\node at (7.5pt,-12.99pt) {\tiny 4};\draw[butt cap-latex,line width=1pt,black] (27.696pt,-35.97pt) -- (14.706pt,-43.47pt);\fill[white] (22.5pt,-38.97pt) circle (2.5pt);\node at (22.5pt,-38.97pt) {\tiny 4};\draw[butt cap-latex,line width=1pt,black] (42.696pt,-61.95pt) -- (29.706pt,-69.45pt);\fill[white] (37.5pt,-64.95pt) circle (2.5pt);\node at (37.5pt,-64.95pt) {\tiny 3};\draw[butt cap-latex,line width=1pt,black] (57.696pt,-87.93pt) -- (44.706pt,-95.43pt);\fill[white] (52.5pt,-90.93pt) circle (2.5pt);\node at (52.5pt,-90.93pt) {\tiny 1};\draw[butt cap-latex,line width=1pt,black] (72.696pt,-113.91pt) -- (59.706pt,-121.41pt);\fill[white] (67.5pt,-116.91pt) circle (2.5pt);\node at (67.5pt,-116.91pt) {\tiny 1};\draw[butt cap-latex,line width=1pt,black] (87.696pt,-139.89pt) -- (74.706pt,-147.39pt);\fill[white] (82.5pt,-142.89pt) circle (2.5pt);\node at (82.5pt,-142.89pt) {\tiny 1};\draw[butt cap-latex,line width=1pt,black] (-32.304pt,15.99pt) -- (-45.294pt,8.49pt);\fill[white] (-37.5pt,12.99pt) circle (2.5pt);\node at (-37.5pt,12.99pt) {\tiny 4};\draw[butt cap-latex,line width=1pt,black] (-17.304pt,-9.99pt) -- (-30.294pt,-17.49pt);\fill[white] (-22.5pt,-12.99pt) circle (2.5pt);\node at (-22.5pt,-12.99pt) {\tiny 4};\draw[butt cap-latex,line width=1pt,black] (-2.304pt,-35.97pt) -- (-15.294pt,-43.47pt);\fill[white] (-7.5pt,-38.97pt) circle (2.5pt);\node at (-7.5pt,-38.97pt) {\tiny 4};\draw[butt cap-latex,line width=1pt,black] (12.696pt,-61.95pt) -- (-0.294pt,-69.45pt);\fill[white] (7.5pt,-64.95pt) circle (2.5pt);\node at (7.5pt,-64.95pt) {\tiny 4};\draw[butt cap-latex,line width=1pt,black] (27.696pt,-87.93pt) -- (14.706pt,-95.43pt);\fill[white] (22.5pt,-90.93pt) circle (2.5pt);\node at (22.5pt,-90.93pt) {\tiny 1};\draw[butt cap-latex,line width=1pt,black] (42.696pt,-113.91pt) -- (29.706pt,-121.41pt);\fill[white] (37.5pt,-116.91pt) circle (2.5pt);\node at (37.5pt,-116.91pt) {\tiny 1};\draw[butt cap-latex,line width=1pt,black] (57.696pt,-139.89pt) -- (44.706pt,-147.39pt);\fill[white] (52.5pt,-142.89pt) circle (2.5pt);\node at (52.5pt,-142.89pt) {\tiny 1};\draw[butt cap-latex,line width=1pt,black] (-47.304pt,-9.99pt) -- (-60.294pt,-17.49pt);\fill[white] (-52.5pt,-12.99pt) circle (2.5pt);\node at (-52.5pt,-12.99pt) {\tiny 4};\draw[butt cap-latex,line width=1pt,black] (-32.304pt,-35.97pt) -- (-45.294pt,-43.47pt);\fill[white] (-37.5pt,-38.97pt) circle (2.5pt);\node at (-37.5pt,-38.97pt) {\tiny 4};\draw[butt cap-latex,line width=1pt,black] (-17.304pt,-61.95pt) -- (-30.294pt,-69.45pt);\fill[white] (-22.5pt,-64.95pt) circle (2.5pt);\node at (-22.5pt,-64.95pt) {\tiny 4};\draw[butt cap-latex,line width=1pt,black] (-2.304pt,-87.93pt) -- (-15.294pt,-95.43pt);\fill[white] (-7.5pt,-90.93pt) circle (2.5pt);\node at (-7.5pt,-90.93pt) {\tiny 1};\draw[butt cap-latex,line width=1pt,black] (12.696pt,-113.91pt) -- (-0.294pt,-121.41pt);\fill[white] (7.5pt,-116.91pt) circle (2.5pt);\node at (7.5pt,-116.91pt) {\tiny 1};\draw[butt cap-latex,line width=1pt,black] (27.696pt,-139.89pt) -- (14.706pt,-147.39pt);\fill[white] (22.5pt,-142.89pt) circle (2.5pt);\node at (22.5pt,-142.89pt) {\tiny 1};\draw[butt cap-latex,line width=1pt,black] (-62.304pt,-35.97pt) -- (-75.294pt,-43.47pt);\fill[white] (-67.5pt,-38.97pt) circle (2.5pt);\node at (-67.5pt,-38.97pt) {\tiny 4};\draw[butt cap-latex,line width=1pt,black] (-47.304pt,-61.95pt) -- (-60.294pt,-69.45pt);\fill[white] (-52.5pt,-64.95pt) circle (2.5pt);\node at (-52.5pt,-64.95pt) {\tiny 4};\draw[butt cap-latex,line width=1pt,black] (-32.304pt,-87.93pt) -- (-45.294pt,-95.43pt);\fill[white] (-37.5pt,-90.93pt) circle (2.5pt);\node at (-37.5pt,-90.93pt) {\tiny 4};\draw[butt cap-latex,line width=1pt,black] (-77.304pt,-61.95pt) -- (-90.294pt,-69.45pt);\fill[white] (-82.5pt,-64.95pt) circle (2.5pt);\node at (-82.5pt,-64.95pt) {\tiny 4};\draw[butt cap-latex,line width=1pt,black] (-62.304pt,-87.93pt) -- (-75.294pt,-95.43pt);\fill[white] (-67.5pt,-90.93pt) circle (2.5pt);\node at (-67.5pt,-90.93pt) {\tiny 4};\draw[butt cap-latex,line width=1pt,black] (-47.304pt,-113.91pt) -- (-60.294pt,-121.41pt);\fill[white] (-52.5pt,-116.91pt) circle (2.5pt);\node at (-52.5pt,-116.91pt) {\tiny 4};\draw[butt cap-latex,line width=1pt,black] (-92.304pt,-87.93pt) -- (-105.294pt,-95.43pt);\fill[white] (-97.5pt,-90.93pt) circle (2.5pt);\node at (-97.5pt,-90.93pt) {\tiny 4};\draw[butt cap-latex,line width=1pt,black] (-77.304pt,-113.91pt) -- (-90.294pt,-121.41pt);\fill[white] (-82.5pt,-116.91pt) circle (2.5pt);\node at (-82.5pt,-116.91pt) {\tiny 4};\draw[butt cap-latex,line width=1pt,black] (-62.304pt,-139.89pt) -- (-75.294pt,-147.39pt);\fill[white] (-67.5pt,-142.89pt) circle (2.5pt);\node at (-67.5pt,-142.89pt) {\tiny 4};\draw[butt cap-latex,line width=1pt,black] (-107.304pt,-113.91pt) -- (-120.294pt,-121.41pt);\fill[white] (-112.5pt,-116.91pt) circle (2.5pt);\node at (-112.5pt,-116.91pt) {\tiny 4};\draw[butt cap-latex,line width=1pt,black] (-92.304pt,-139.89pt) -- (-105.294pt,-147.39pt);\fill[white] (-97.5pt,-142.89pt) circle (2.5pt);\node at (-97.5pt,-142.89pt) {\tiny 4};\draw[butt cap-latex,line width=1pt,black] (-122.304pt,-139.89pt) -- (-135.294pt,-147.39pt);\fill[white] (-127.5pt,-142.89pt) circle (2.5pt);\node at (-127.5pt,-142.89pt) {\tiny 4};\draw[butt cap-latex,line width=1pt,black] (-2.304pt,-139.89pt) -- (-15.294pt,-147.39pt);\fill[white] (-7.5pt,-142.89pt) circle (2.5pt);\node at (-7.5pt,-142.89pt) {\tiny 1};\draw[butt cap-latex,line width=1pt,black] (-17.304pt,-113.91pt) -- (-30.294pt,-121.41pt);\fill[white] (-22.5pt,-116.91pt) circle (2.5pt);\node at (-22.5pt,-116.91pt) {\tiny 1};\draw[butt cap-latex,line width=1pt,black] (-32.304pt,-139.89pt) -- (-45.294pt,-147.39pt);\fill[white] (-37.5pt,-142.89pt) circle (2.5pt);\node at (-37.5pt,-142.89pt) {\tiny 1};\draw[butt cap-latex,line width=1pt,black] (-2.304pt,87.93pt) -- (-15.294pt,95.43pt);\fill[white] (-7.5pt,90.93pt) circle (2.5pt);\node at (-7.5pt,90.93pt) {\tiny 9};\draw[butt cap-latex,line width=1pt,black] (-17.304pt,61.95pt) -- (-30.294pt,69.45pt);\fill[white] (-22.5pt,64.95pt) circle (2.5pt);\node at (-22.5pt,64.95pt) {\tiny 9};\draw[butt cap-latex,line width=1pt,black] (-32.304pt,35.97pt) -- (-45.294pt,43.47pt);\fill[white] (-37.5pt,38.97pt) circle (2.5pt);\node at (-37.5pt,38.97pt) {\tiny 9};\draw[butt cap-latex,line width=1pt,black] (-47.304pt,9.99pt) -- (-60.294pt,17.49pt);\fill[white] (-52.5pt,12.99pt) circle (2.5pt);\node at (-52.5pt,12.99pt) {\tiny 7};\draw[butt cap-latex,line width=1pt,black] (-62.304pt,-15.99pt) -- (-75.294pt,-8.49pt);\fill[white] (-67.5pt,-12.99pt) circle (2.5pt);\node at (-67.5pt,-12.99pt) {\tiny 4};\draw[butt cap-latex,line width=1pt,black] (-77.304pt,-41.97pt) -- (-90.294pt,-34.47pt);\fill[white] (-82.5pt,-38.97pt) circle (2.5pt);\node at (-82.5pt,-38.97pt) {\tiny 4};\draw[butt cap-latex,line width=1pt,black] (-92.304pt,-67.95pt) -- (-105.294pt,-60.45pt);\fill[white] (-97.5pt,-64.95pt) circle (2.5pt);\node at (-97.5pt,-64.95pt) {\tiny 4};\draw[butt cap-latex,line width=1pt,black] (-107.304pt,-93.93pt) -- (-120.294pt,-86.43pt);\fill[white] (-112.5pt,-90.93pt) circle (2.5pt);\node at (-112.5pt,-90.93pt) {\tiny 4};\draw[butt cap-latex,line width=1pt,black] (-122.304pt,-119.91pt) -- (-135.294pt,-112.41pt);\fill[white] (-127.5pt,-116.91pt) circle (2.5pt);\node at (-127.5pt,-116.91pt) {\tiny 4};\draw[butt cap-latex,line width=1pt,black] (-137.304pt,-145.89pt) -- (-150.294pt,-138.39pt);\fill[white] (-142.5pt,-142.89pt) circle (2.5pt);\node at (-142.5pt,-142.89pt) {\tiny 4};\draw[butt cap-latex,line width=1pt,black] (12.696pt,61.95pt) -- (-0.294pt,69.45pt);\fill[white] (7.5pt,64.95pt) circle (2.5pt);\node at (7.5pt,64.95pt) {\tiny 9};\draw[butt cap-latex,line width=1pt,black] (-2.304pt,35.97pt) -- (-15.294pt,43.47pt);\fill[white] (-7.5pt,38.97pt) circle (2.5pt);\node at (-7.5pt,38.97pt) {\tiny 9};\draw[butt cap-latex,line width=1pt,black] (-17.304pt,9.99pt) -- (-30.294pt,17.49pt);\fill[white] (-22.5pt,12.99pt) circle (2.5pt);\node at (-22.5pt,12.99pt) {\tiny 8};\draw[butt cap-latex,line width=1pt,black] (-32.304pt,-15.99pt) -- (-45.294pt,-8.49pt);\fill[white] (-37.5pt,-12.99pt) circle (2.5pt);\node at (-37.5pt,-12.99pt) {\tiny 7};\draw[butt cap-latex,line width=1pt,black] (-47.304pt,-41.97pt) -- (-60.294pt,-34.47pt);\fill[white] (-52.5pt,-38.97pt) circle (2.5pt);\node at (-52.5pt,-38.97pt) {\tiny 4};\draw[butt cap-latex,line width=1pt,black] (-62.304pt,-67.95pt) -- (-75.294pt,-60.45pt);\fill[white] (-67.5pt,-64.95pt) circle (2.5pt);\node at (-67.5pt,-64.95pt) {\tiny 4};\draw[butt cap-latex,line width=1pt,black] (-77.304pt,-93.93pt) -- (-90.294pt,-86.43pt);\fill[white] (-82.5pt,-90.93pt) circle (2.5pt);\node at (-82.5pt,-90.93pt) {\tiny 4};\draw[butt cap-latex,line width=1pt,black] (-92.304pt,-119.91pt) -- (-105.294pt,-112.41pt);\fill[white] (-97.5pt,-116.91pt) circle (2.5pt);\node at (-97.5pt,-116.91pt) {\tiny 4};\draw[butt cap-latex,line width=1pt,black] (-107.304pt,-145.89pt) -- (-120.294pt,-138.39pt);\fill[white] (-112.5pt,-142.89pt) circle (2.5pt);\node at (-112.5pt,-142.89pt) {\tiny 4};\draw[butt cap-latex,line width=1pt,black] (27.696pt,35.97pt) -- (14.706pt,43.47pt);\fill[white] (22.5pt,38.97pt) circle (2.5pt);\node at (22.5pt,38.97pt) {\tiny 9};\draw[butt cap-latex,line width=1pt,black] (12.696pt,9.99pt) -- (-0.294pt,17.49pt);\fill[white] (7.5pt,12.99pt) circle (2.5pt);\node at (7.5pt,12.99pt) {\tiny 8};\draw[butt cap-latex,line width=1pt,black] (-2.304pt,-15.99pt) -- (-15.294pt,-8.49pt);\fill[white] (-7.5pt,-12.99pt) circle (2.5pt);\node at (-7.5pt,-12.99pt) {\tiny 8};\draw[butt cap-latex,line width=1pt,black] (42.696pt,9.99pt) -- (29.706pt,17.49pt);\fill[white] (37.5pt,12.99pt) circle (2.5pt);\node at (37.5pt,12.99pt) {\tiny 8};\draw[butt cap-latex,line width=1pt,black] (27.696pt,-15.99pt) -- (14.706pt,-8.49pt);\fill[white] (22.5pt,-12.99pt) circle (2.5pt);\node at (22.5pt,-12.99pt) {\tiny 8};\draw[butt cap-latex,line width=1pt,black] (12.696pt,-41.97pt) -- (-0.294pt,-34.47pt);\fill[white] (7.5pt,-38.97pt) circle (2.5pt);\node at (7.5pt,-38.97pt) {\tiny 8};\draw[butt cap-latex,line width=1pt,black] (57.696pt,-15.99pt) -- (44.706pt,-8.49pt);\fill[white] (52.5pt,-12.99pt) circle (2.5pt);\node at (52.5pt,-12.99pt) {\tiny 8};\draw[butt cap-latex,line width=1pt,black] (42.696pt,-41.97pt) -- (29.706pt,-34.47pt);\fill[white] (37.5pt,-38.97pt) circle (2.5pt);\node at (37.5pt,-38.97pt) {\tiny 8};\draw[butt cap-latex,line width=1pt,black] (27.696pt,-67.95pt) -- (14.706pt,-60.45pt);\fill[white] (22.5pt,-64.95pt) circle (2.5pt);\node at (22.5pt,-64.95pt) {\tiny 8};\draw[butt cap-latex,line width=1pt,black] (72.696pt,-41.97pt) -- (59.706pt,-34.47pt);\fill[white] (67.5pt,-38.97pt) circle (2.5pt);\node at (67.5pt,-38.97pt) {\tiny 8};\draw[butt cap-latex,line width=1pt,black] (57.696pt,-67.95pt) -- (44.706pt,-60.45pt);\fill[white] (52.5pt,-64.95pt) circle (2.5pt);\node at (52.5pt,-64.95pt) {\tiny 8};\draw[butt cap-latex,line width=1pt,black] (102.696pt,-41.97pt) -- (89.706pt,-34.47pt);\fill[white] (97.5pt,-38.97pt) circle (2.5pt);\node at (97.5pt,-38.97pt) {\tiny 8};\draw[butt cap-latex,line width=1pt,black] (87.696pt,-67.95pt) -- (74.706pt,-60.45pt);\fill[white] (82.5pt,-64.95pt) circle (2.5pt);\node at (82.5pt,-64.95pt) {\tiny 8};\draw[butt cap-latex,line width=1pt,black] (72.696pt,-93.93pt) -- (59.706pt,-86.43pt);\fill[white] (67.5pt,-90.93pt) circle (2.5pt);\node at (67.5pt,-90.93pt) {\tiny 8};\draw[butt cap-latex,line width=1pt,black] (117.696pt,-67.95pt) -- (104.706pt,-60.45pt);\fill[white] (112.5pt,-64.95pt) circle (2.5pt);\node at (112.5pt,-64.95pt) {\tiny 8};\draw[butt cap-latex,line width=1pt,black] (102.696pt,-93.93pt) -- (89.706pt,-86.43pt);\fill[white] (97.5pt,-90.93pt) circle (2.5pt);\node at (97.5pt,-90.93pt) {\tiny 8};\draw[butt cap-latex,line width=1pt,black] (87.696pt,-119.91pt) -- (74.706pt,-112.41pt);\fill[white] (82.5pt,-116.91pt) circle (2.5pt);\node at (82.5pt,-116.91pt) {\tiny 8};\draw[butt cap-latex,line width=1pt,black] (132.696pt,-93.93pt) -- (119.706pt,-86.43pt);\fill[white] (127.5pt,-90.93pt) circle (2.5pt);\node at (127.5pt,-90.93pt) {\tiny 8};\draw[butt cap-latex,line width=1pt,black] (117.696pt,-119.91pt) -- (104.706pt,-112.41pt);\fill[white] (112.5pt,-116.91pt) circle (2.5pt);\node at (112.5pt,-116.91pt) {\tiny 8};\draw[butt cap-latex,line width=1pt,black] (102.696pt,-145.89pt) -- (89.706pt,-138.39pt);\fill[white] (97.5pt,-142.89pt) circle (2.5pt);\node at (97.5pt,-142.89pt) {\tiny 8};\draw[butt cap-latex,line width=1pt,black] (147.696pt,-119.91pt) -- (134.706pt,-112.41pt);\fill[white] (142.5pt,-116.91pt) circle (2.5pt);\node at (142.5pt,-116.91pt) {\tiny 8};\draw[butt cap-latex,line width=1pt,black] (132.696pt,-145.89pt) -- (119.706pt,-138.39pt);\fill[white] (127.5pt,-142.89pt) circle (2.5pt);\node at (127.5pt,-142.89pt) {\tiny 8};\draw[butt cap-latex,line width=1pt,black] (162.696pt,-145.89pt) -- (149.706pt,-138.39pt);\fill[white] (157.5pt,-142.89pt) circle (2.5pt);\node at (157.5pt,-142.89pt) {\tiny 8};\draw[butt cap-latex,line width=1pt,black] (-17.304pt,-41.97pt) -- (-30.294pt,-34.47pt);\fill[white] (-22.5pt,-38.97pt) circle (2.5pt);\node at (-22.5pt,-38.97pt) {\tiny 7};\draw[butt cap-latex,line width=1pt,black] (-2.304pt,-67.95pt) -- (-15.294pt,-60.45pt);\fill[white] (-7.5pt,-64.95pt) circle (2.5pt);\node at (-7.5pt,-64.95pt) {\tiny 7};\draw[butt cap-latex,line width=1pt,black] (-32.304pt,-67.95pt) -- (-45.294pt,-60.45pt);\fill[white] (-37.5pt,-64.95pt) circle (2.5pt);\node at (-37.5pt,-64.95pt) {\tiny 4};\draw[butt cap-latex,line width=1pt,black] (-47.304pt,-93.93pt) -- (-60.294pt,-86.43pt);\fill[white] (-52.5pt,-90.93pt) circle (2.5pt);\node at (-52.5pt,-90.93pt) {\tiny 4};\draw[butt cap-latex,line width=1pt,black] (-62.304pt,-119.91pt) -- (-75.294pt,-112.41pt);\fill[white] (-67.5pt,-116.91pt) circle (2.5pt);\node at (-67.5pt,-116.91pt) {\tiny 4};\draw[butt cap-latex,line width=1pt,black] (-77.304pt,-145.89pt) -- (-90.294pt,-138.39pt);\fill[white] (-82.5pt,-142.89pt) circle (2.5pt);\node at (-82.5pt,-142.89pt) {\tiny 4};\draw[butt cap-latex,line width=1pt,black] (-17.304pt,-93.93pt) -- (-30.294pt,-86.43pt);\fill[white] (-22.5pt,-90.93pt) circle (2.5pt);\node at (-22.5pt,-90.93pt) {\tiny 4};\draw[butt cap-latex,line width=1pt,black] (-32.304pt,-119.91pt) -- (-45.294pt,-112.41pt);\fill[white] (-37.5pt,-116.91pt) circle (2.5pt);\node at (-37.5pt,-116.91pt) {\tiny 4};\draw[butt cap-latex,line width=1pt,black] (-47.304pt,-145.89pt) -- (-60.294pt,-138.39pt);\fill[white] (-52.5pt,-142.89pt) circle (2.5pt);\node at (-52.5pt,-142.89pt) {\tiny 4};\draw[butt cap-latex,line width=1pt,black] (12.696pt,-93.93pt) -- (-0.294pt,-86.43pt);\fill[white] (7.5pt,-90.93pt) circle (2.5pt);\node at (7.5pt,-90.93pt) {\tiny 4};\draw[butt cap-latex,line width=1pt,black] (-2.304pt,-119.91pt) -- (-15.294pt,-112.41pt);\fill[white] (-7.5pt,-116.91pt) circle (2.5pt);\node at (-7.5pt,-116.91pt) {\tiny 4};\draw[butt cap-latex,line width=1pt,black] (-17.304pt,-145.89pt) -- (-30.294pt,-138.39pt);\fill[white] (-22.5pt,-142.89pt) circle (2.5pt);\node at (-22.5pt,-142.89pt) {\tiny 4};\draw[butt cap-latex,line width=1pt,black] (27.696pt,-119.91pt) -- (14.706pt,-112.41pt);\fill[white] (22.5pt,-116.91pt) circle (2.5pt);\node at (22.5pt,-116.91pt) {\tiny 4};\draw[butt cap-latex,line width=1pt,black] (12.696pt,-145.89pt) -- (-0.294pt,-138.39pt);\fill[white] (7.5pt,-142.89pt) circle (2.5pt);\node at (7.5pt,-142.89pt) {\tiny 4};\draw[butt cap-latex,line width=1pt,black] (42.696pt,-145.89pt) -- (29.706pt,-138.39pt);\fill[white] (37.5pt,-142.89pt) circle (2.5pt);\node at (37.5pt,-142.89pt) {\tiny 4};\draw[butt cap-latex,line width=1pt,black] (42.696pt,-93.93pt) -- (29.706pt,-86.43pt);\fill[white] (37.5pt,-90.93pt) circle (2.5pt);\node at (37.5pt,-90.93pt) {\tiny 6};\draw[butt cap-latex,line width=1pt,black] (57.696pt,-119.91pt) -- (44.706pt,-112.41pt);\fill[white] (52.5pt,-116.91pt) circle (2.5pt);\node at (52.5pt,-116.91pt) {\tiny 6};\draw[butt cap-latex,line width=1pt,black] (72.696pt,-145.89pt) -- (59.706pt,-138.39pt);\fill[white] (67.5pt,-142.89pt) circle (2.5pt);\node at (67.5pt,-142.89pt) {\tiny 6};\draw[butt cap-latex,line width=1pt,black] (87.696pt,-15.99pt) -- (74.706pt,-8.49pt);\fill[white] (82.5pt,-12.99pt) circle (2.5pt);\node at (82.5pt,-12.99pt) {\tiny 9};\draw[butt cap-latex,line width=1pt,black] (72.696pt,9.99pt) -- (59.706pt,17.49pt);\fill[white] (67.5pt,12.99pt) circle (2.5pt);\node at (67.5pt,12.99pt) {\tiny 9};\draw[butt cap-latex,line width=1pt,black] (15.0pt,97.92pt) -- (15.0pt,112.92pt);\fill[white] (15.0pt,103.92pt) circle (2.5pt);\node at (15.0pt,103.92pt) {\tiny 5};\draw[butt cap-latex,line width=1pt,black] (30.0pt,71.94pt) -- (30.0pt,86.94pt);\fill[white] (30.0pt,77.94pt) circle (2.5pt);\node at (30.0pt,77.94pt) {\tiny 5};\draw[butt cap-latex,line width=1pt,black] (45.0pt,45.96pt) -- (45.0pt,60.96pt);\fill[white] (45.0pt,51.96pt) circle (2.5pt);\node at (45.0pt,51.96pt) {\tiny 5};\draw[butt cap-latex,line width=1pt,black] (60.0pt,19.98pt) -- (60.0pt,34.98pt);\fill[white] (60.0pt,25.98pt) circle (2.5pt);\node at (60.0pt,25.98pt) {\tiny 5};\draw[butt cap-latex,line width=1pt,black] (0.0pt,71.94pt) -- (0.0pt,86.94pt);\fill[white] (0.0pt,77.94pt) circle (2.5pt);\node at (0.0pt,77.94pt) {\tiny 4};\draw[butt cap-latex,line width=1pt,black] (15.0pt,45.96pt) -- (15.0pt,60.96pt);\fill[white] (15.0pt,51.96pt) circle (2.5pt);\node at (15.0pt,51.96pt) {\tiny 4};\draw[butt cap-latex,line width=1pt,black] (-15.0pt,45.96pt) -- (-15.0pt,60.96pt);\fill[white] (-15.0pt,51.96pt) circle (2.5pt);\node at (-15.0pt,51.96pt) {\tiny 4};\draw[butt cap-latex,line width=1pt,black] (30.0pt,19.98pt) -- (30.0pt,34.98pt);\fill[white] (30.0pt,25.98pt) circle (2.5pt);\node at (30.0pt,25.98pt) {\tiny 4};\draw[butt cap-latex,line width=1pt,black] (0.0pt,19.98pt) -- (0.0pt,34.98pt);\fill[white] (0.0pt,25.98pt) circle (2.5pt);\node at (0.0pt,25.98pt) {\tiny 4};\draw[butt cap-latex,line width=1pt,black] (-30.0pt,19.98pt) -- (-30.0pt,34.98pt);\fill[white] (-30.0pt,25.98pt) circle (2.5pt);\node at (-30.0pt,25.98pt) {\tiny 4};\draw[butt cap-latex,line width=1pt,black] (-15.0pt,-6.0pt) -- (-15.0pt,9.0pt);\fill[white] (-15.0pt,0.0pt) circle (2.5pt);\node at (-15.0pt,0.0pt) {\tiny 4};\draw[butt cap-latex,line width=1pt,black] (15.0pt,-6.0pt) -- (15.0pt,9.0pt);\fill[white] (15.0pt,0.0pt) circle (2.5pt);\node at (15.0pt,0.0pt) {\tiny 4};\draw[butt cap-latex,line width=1pt,black] (45.0pt,-6.0pt) -- (45.0pt,9.0pt);\fill[white] (45.0pt,0.0pt) circle (2.5pt);\node at (45.0pt,0.0pt) {\tiny 4};\draw[butt cap-latex,line width=1pt,black] (30.0pt,-31.98pt) -- (30.0pt,-16.98pt);\fill[white] (30.0pt,-25.98pt) circle (2.5pt);\node at (30.0pt,-25.98pt) {\tiny 4};\draw[butt cap-latex,line width=1pt,black] (0.0pt,-31.98pt) -- (0.0pt,-16.98pt);\fill[white] (0.0pt,-25.98pt) circle (2.5pt);\node at (0.0pt,-25.98pt) {\tiny 4};\draw[butt cap-latex,line width=1pt,black] (15.0pt,-57.96pt) -- (15.0pt,-42.96pt);\fill[white] (15.0pt,-51.96pt) circle (2.5pt);\node at (15.0pt,-51.96pt) {\tiny 4};\draw[butt cap-latex,line width=1pt,black] (-45.0pt,-6.0pt) -- (-45.0pt,9.0pt);\fill[white] (-45.0pt,0.0pt) circle (2.5pt);\node at (-45.0pt,0.0pt) {\tiny 3};\draw[butt cap-latex,line width=1pt,black] (-30.0pt,-31.98pt) -- (-30.0pt,-16.98pt);\fill[white] (-30.0pt,-25.98pt) circle (2.5pt);\node at (-30.0pt,-25.98pt) {\tiny 3};\draw[butt cap-latex,line width=1pt,black] (-15.0pt,-57.96pt) -- (-15.0pt,-42.96pt);\fill[white] (-15.0pt,-51.96pt) circle (2.5pt);\node at (-15.0pt,-51.96pt) {\tiny 3};\draw[butt cap-latex,line width=1pt,black] (0.0pt,-83.94pt) -- (0.0pt,-68.94pt);\fill[white] (0.0pt,-77.94pt) circle (2.5pt);\node at (0.0pt,-77.94pt) {\tiny 3};\draw[butt cap-latex,line width=1pt,black] (15.0pt,-109.92pt) -- (15.0pt,-94.92pt);\fill[white] (15.0pt,-103.92pt) circle (2.5pt);\node at (15.0pt,-103.92pt) {\tiny 3};\draw[butt cap-latex,line width=1pt,black] (-15.0pt,-109.92pt) -- (-15.0pt,-94.92pt);\fill[white] (-15.0pt,-103.92pt) circle (2.5pt);\node at (-15.0pt,-103.92pt) {\tiny 3};\draw[butt cap-latex,line width=1pt,black] (30.0pt,-135.9pt) -- (30.0pt,-120.9pt);\fill[white] (30.0pt,-129.9pt) circle (2.5pt);\node at (30.0pt,-129.9pt) {\tiny 3};\draw[butt cap-latex,line width=1pt,black] (0.0pt,-135.9pt) -- (0.0pt,-120.9pt);\fill[white] (0.0pt,-129.9pt) circle (2.5pt);\node at (0.0pt,-129.9pt) {\tiny 3};\draw[butt cap-latex,line width=1pt,black] (-30.0pt,-135.9pt) -- (-30.0pt,-120.9pt);\fill[white] (-30.0pt,-129.9pt) circle (2.5pt);\node at (-30.0pt,-129.9pt) {\tiny 3};\draw[butt cap-latex,line width=1pt,black] (45.0pt,-161.88pt) -- (45.0pt,-146.88pt);\fill[white] (45.0pt,-155.88pt) circle (2.5pt);\node at (45.0pt,-155.88pt) {\tiny 3};\draw[butt cap-latex,line width=1pt,black] (15.0pt,-161.88pt) -- (15.0pt,-146.88pt);\fill[white] (15.0pt,-155.88pt) circle (2.5pt);\node at (15.0pt,-155.88pt) {\tiny 3};\draw[butt cap-latex,line width=1pt,black] (-15.0pt,-161.88pt) -- (-15.0pt,-146.88pt);\fill[white] (-15.0pt,-155.88pt) circle (2.5pt);\node at (-15.0pt,-155.88pt) {\tiny 3};\draw[butt cap-latex,line width=1pt,black] (-45.0pt,-161.88pt) -- (-45.0pt,-146.88pt);\fill[white] (-45.0pt,-155.88pt) circle (2.5pt);\node at (-45.0pt,-155.88pt) {\tiny 3};\draw[butt cap-latex,line width=1pt,black] (75.0pt,-161.88pt) -- (75.0pt,-146.88pt);\fill[white] (75.0pt,-155.88pt) circle (2.5pt);\node at (75.0pt,-155.88pt) {\tiny 5};\draw[butt cap-latex,line width=1pt,black] (60.0pt,-135.9pt) -- (60.0pt,-120.9pt);\fill[white] (60.0pt,-129.9pt) circle (2.5pt);\node at (60.0pt,-129.9pt) {\tiny 5};\draw[butt cap-latex,line width=1pt,black] (45.0pt,-109.92pt) -- (45.0pt,-94.92pt);\fill[white] (45.0pt,-103.92pt) circle (2.5pt);\node at (45.0pt,-103.92pt) {\tiny 5};\draw[butt cap-latex,line width=1pt,black] (30.0pt,-83.94pt) -- (30.0pt,-68.94pt);\fill[white] (30.0pt,-77.94pt) circle (2.5pt);\node at (30.0pt,-77.94pt) {\tiny 5};\draw[butt cap-latex,line width=1pt,black] (45.0pt,-57.96pt) -- (45.0pt,-42.96pt);\fill[white] (45.0pt,-51.96pt) circle (2.5pt);\node at (45.0pt,-51.96pt) {\tiny 5};\draw[butt cap-latex,line width=1pt,black] (60.0pt,-31.98pt) -- (60.0pt,-16.98pt);\fill[white] (60.0pt,-25.98pt) circle (2.5pt);\node at (60.0pt,-25.98pt) {\tiny 5};\draw[butt cap-latex,line width=1pt,black] (75.0pt,-6.0pt) -- (75.0pt,9.0pt);\fill[white] (75.0pt,0.0pt) circle (2.5pt);\node at (75.0pt,0.0pt) {\tiny 6};\draw[butt cap-latex,line width=1pt,black] (90.0pt,-31.98pt) -- (90.0pt,-16.98pt);\fill[white] (90.0pt,-25.98pt) circle (2.5pt);\node at (90.0pt,-25.98pt) {\tiny 7};\draw[butt cap-latex,line width=1pt,black] (105.0pt,-57.96pt) -- (105.0pt,-42.96pt);\fill[white] (105.0pt,-51.96pt) circle (2.5pt);\node at (105.0pt,-51.96pt) {\tiny 7};\draw[butt cap-latex,line width=1pt,black] (75.0pt,-57.96pt) -- (75.0pt,-42.96pt);\fill[white] (75.0pt,-51.96pt) circle (2.5pt);\node at (75.0pt,-51.96pt) {\tiny 7};\draw[butt cap-latex,line width=1pt,black] (120.0pt,-83.94pt) -- (120.0pt,-68.94pt);\fill[white] (120.0pt,-77.94pt) circle (2.5pt);\node at (120.0pt,-77.94pt) {\tiny 7};\draw[butt cap-latex,line width=1pt,black] (90.0pt,-83.94pt) -- (90.0pt,-68.94pt);\fill[white] (90.0pt,-77.94pt) circle (2.5pt);\node at (90.0pt,-77.94pt) {\tiny 7};\draw[butt cap-latex,line width=1pt,black] (60.0pt,-83.94pt) -- (60.0pt,-68.94pt);\fill[white] (60.0pt,-77.94pt) circle (2.5pt);\node at (60.0pt,-77.94pt) {\tiny 7};\draw[butt cap-latex,line width=1pt,black] (135.0pt,-109.92pt) -- (135.0pt,-94.92pt);\fill[white] (135.0pt,-103.92pt) circle (2.5pt);\node at (135.0pt,-103.92pt) {\tiny 7};\draw[butt cap-latex,line width=1pt,black] (105.0pt,-109.92pt) -- (105.0pt,-94.92pt);\fill[white] (105.0pt,-103.92pt) circle (2.5pt);\node at (105.0pt,-103.92pt) {\tiny 7};\draw[butt cap-latex,line width=1pt,black] (75.0pt,-109.92pt) -- (75.0pt,-94.92pt);\fill[white] (75.0pt,-103.92pt) circle (2.5pt);\node at (75.0pt,-103.92pt) {\tiny 7};\draw[butt cap-latex,line width=1pt,black] (120.0pt,-135.9pt) -- (120.0pt,-120.9pt);\fill[white] (120.0pt,-129.9pt) circle (2.5pt);\node at (120.0pt,-129.9pt) {\tiny 7};\draw[butt cap-latex,line width=1pt,black] (90.0pt,-135.9pt) -- (90.0pt,-120.9pt);\fill[white] (90.0pt,-129.9pt) circle (2.5pt);\node at (90.0pt,-129.9pt) {\tiny 7};\draw[butt cap-latex,line width=1pt,black] (105.0pt,-161.88pt) -- (105.0pt,-146.88pt);\fill[white] (105.0pt,-155.88pt) circle (2.5pt);\node at (105.0pt,-155.88pt) {\tiny 7};\draw[butt cap-latex,line width=1pt,black] (135.0pt,-161.88pt) -- (135.0pt,-146.88pt);\fill[white] (135.0pt,-155.88pt) circle (2.5pt);\node at (135.0pt,-155.88pt) {\tiny 8};\draw[butt cap-latex,line width=1pt,black] (165.0pt,-161.88pt) -- (165.0pt,-146.88pt);\fill[white] (165.0pt,-155.88pt) circle (2.5pt);\node at (165.0pt,-155.88pt) {\tiny 8};\draw[butt cap-latex,line width=1pt,black] (150.0pt,-135.9pt) -- (150.0pt,-120.9pt);\fill[white] (150.0pt,-129.9pt) circle (2.5pt);\node at (150.0pt,-129.9pt) {\tiny 8};\draw[butt cap-latex,line width=1pt,black] (57.696pt,35.97pt) -- (44.706pt,43.47pt);\fill[white] (52.5pt,38.97pt) circle (2.5pt);\node at (52.5pt,38.97pt) {\tiny {1\!0}};\draw[butt cap-latex,line width=1pt,black] (42.696pt,61.95pt) -- (29.706pt,69.45pt);\fill[white] (37.5pt,64.95pt) circle (2.5pt);\node at (37.5pt,64.95pt) {\tiny {1\!0}};\draw[butt cap-latex,line width=1pt,black] (27.696pt,87.93pt) -- (14.706pt,95.43pt);\fill[white] (22.5pt,90.93pt) circle (2.5pt);\node at (22.5pt,90.93pt) {\tiny {1\!0}};\draw[butt cap-latex,line width=1pt,black] (12.696pt,113.91pt) -- (-0.294pt,121.41pt);\fill[white] (7.5pt,116.91pt) circle (2.5pt);\node at (7.5pt,116.91pt) {\tiny {1\!0}};\end{tikzpicture}
}
    \caption{A hive flow for $n=11$, 
                   $\la =  (5,5,5,5,3,2,1,1,1,0,0)$, 
                   $\mu=(8,8,7,5,3,3,3,3,0,0,0)$, 
                  $\nu=(10,9,9,9,7,4,4,4,4,4,4)$, 
                   and its corresponding partition of $\Delta$ into flatspaces. 
                   The numbers give the throughputs through edges of $\Delta$ in the directions of the arrows. 
                   The properties of Lemma~\protect\ref{lem:flatspaceprop} and Lemma~\ref{obs:borderentranceexit} are readily verified.}
    \nopar\label{fig:degengraph}
\end{center}
\end{figure}
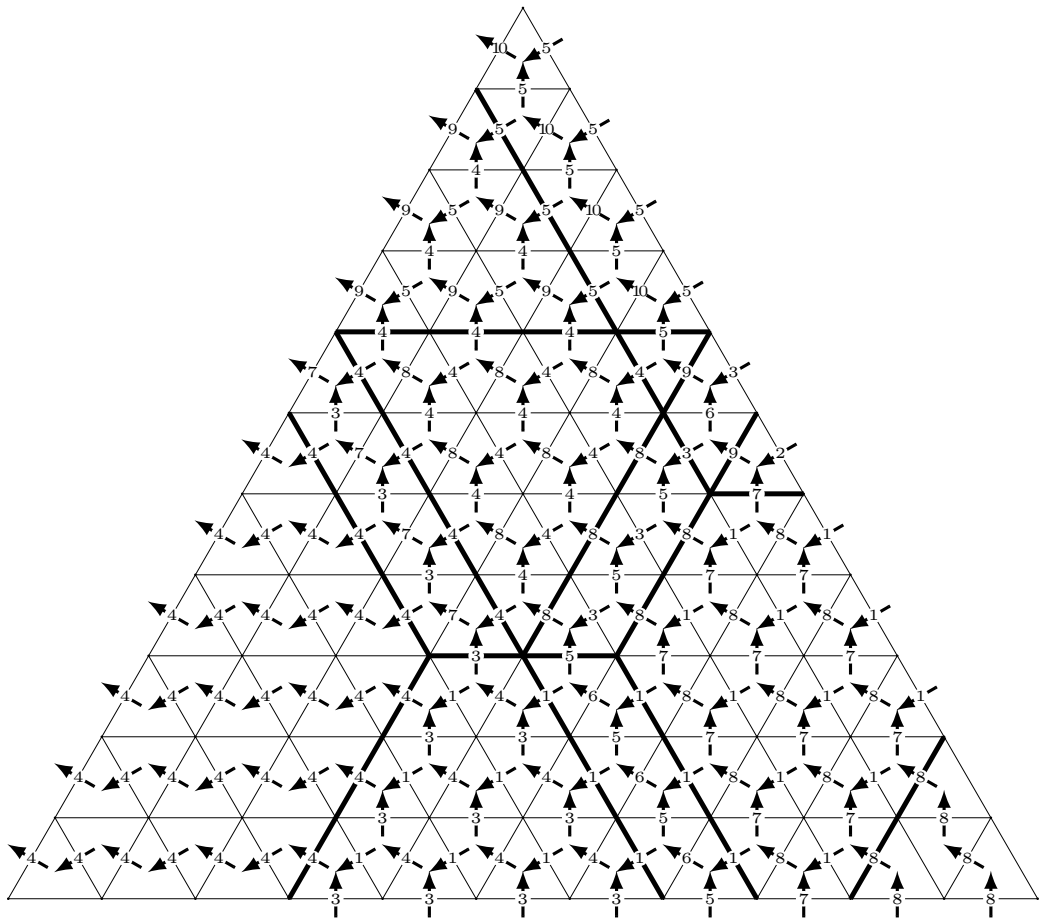


We formulate now capacity constraints for the throughputs of hive flows 
at the border of~$\Delta$, depending on a chosen triple $\la,\mu,\nu\in\N^n$ of partitions 
satisfying $|\nu|=|\la|+|\mu|$.
Hereby, we treat the left border of $\Delta$ differently from the 
right and bottom border of $\Delta$ with regard to orientations. 
To the $i$th border edge~$k$ of~$\Delta$ on the right border of~$\Delta$, counted from top to bottom, 
we assign the \emph{throughput capacity} $b(k):=\la_i$, see Figure~\ref{fig:triangulararraywithflow}.
Further, we set $b(k):= \mu_i$ for the $i$th edge~$k$ on the bottom border of~$\Delta$, counted from right to left.
Finally, we set $b(k'):= \nu_i$ for the $i$th edge~$k'$ on the left border of~$\Delta$, counted from top to bottom.
Recall that $\delta(k,f)$ denotes the throughput of a flow $f$ into $\Delta$, 
while $-\delta(k',f)$ denotes the throughput of~$f$ out of $\Delta$.



\begin{definition}\label{def:hiveflowpolytope}
Let $\la,\mu,\nu\in\N^n$ be a triple of partitions satisfying $|\nu|=|\la|+|\mu|$.
The \emph{polytope of bounded hive flows} $B:=B(\la,\mu,\nu) \subseteq \oF(G)$ 
is defined to be the set of hive flows $f\in\oF(G)$ satisfying 
$$
 0\le\delta(k,f) \leq b(k) \ \mbox{ and }\  
 0\le-\delta(k',f) \leq b(k') 
$$
for all border edges~$k$ on the right or bottom border of $\Delta$, 
and for all border edges~$k'$ on the left border of $\Delta$.
The \emph{polytope of capacity achieving hive flows} $P:=P(\la,\mu,\nu)$ 
consists of those $f\in B(\la,\mu,\nu)$ for which  
$\delta(k,f) = b(k)$ and $-\delta(k',f) = b(k')$ 
for all $k$ and $k'$ as above.
We also set $B_\Z := B \cap \oF(G)_\Z$ and $P_\Z := P \cap \oF(G)_\Z$.
\end{definition}

Lemma~\ref{le:bbound} and the isomorphism $\oF(G)_\Z\simeq H_\Z$ 
imply that $B$ is bounded and thus $B$ and~$P$ are indeed polytopes. 


We note that by~\eqref{eq:overall-thrput}, we have 
$\delta(f) \le |\nu|$ for any $f\in  B(\la,\mu,\nu)$.
Moreover, $f\in B(\la,\mu,\nu)$ is capacity achieving iff $\delta(f) = |\nu|$.

%
%
Knutson and Tao~\cite{knta:99} (see also \cite{buc:00}) characterized 
the Littlewood--Richardson coefficient $\LRC \la \mu \nu$ as the number 
of integral hives taking fixed values on the border vertices of~$\Delta$, 
prescribed by the partitions $\la,\mu,\nu$. 
Their description via the isomorphism $\oF(G)_\Z \simeq H_\Z$ immediately translates 
to the following fundamental result.

\begin{proposition}
\label{pro:flowdescription}
The Littlewood--Richardson coefficient $\LRC \la \mu \nu$ equals the number of capacity achieving integral hive flows,
i.e., $\LRC \la \mu \nu = |P(\la,\mu,\nu)_\Z|$. \endproof
\end{proposition}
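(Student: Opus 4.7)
The proof plan is to reduce to the Knutson--Tao hive characterization of $\LRC\la\mu\nu$ via the isomorphism $\oF(G)_\Z \simeq H_\Z$ already constructed in Section~\ref{se:honey} (restricted from the isomorphism $\oF(G)\simeq Z\simeq H$ given by the coboundary map $\partial$). Recall that Knutson and Tao showed $\LRC\la\mu\nu$ equals the number of integer-valued hives $h \in H_\Z$ taking prescribed boundary values: namely, if $x(m,i)$ denotes the vertex in the $m$th horizontal line at position $i$ from the left (with $h(x(0,0))=0$), then on the left edge the successive differences should be $\nu_1,\nu_2,\ldots,\nu_n$ from top to bottom, on the right edge they should be $\la_1,\la_2,\ldots,\la_n$ from top to bottom, and on the bottom edge they should be $\mu_n,\ldots,\mu_2,\mu_1$ from left to right.

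My first step is to verify that an integral element $h \in H_\Z$ is a hive in the sense of Definition (following \eqref{eq:hive-inequ}) if and only if its image $f \in \oF(G)_\Z$ under $\partial$ is a hive flow in the sense of Definition~\ref{def:hiveflow}. This is essentially immediate from the computation already given in the text, which shows $\sigma(\varrho,h) = \rhaoulW(f) + \rhaolrM(f) = \s \varrho f$ under the isomorphism. So $h$ satisfies all rhombus inequalities iff $f$ satisfies $\s\varrho f \ge 0$ for every rhombus $\varrho$.

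The second step is to translate the Knutson--Tao boundary conditions into the capacity-achieving conditions of Definition~\ref{def:hiveflowpolytope}. For a border edge $k$ of $\Delta$ connecting consecutive vertices $x_-, x_+$ (with the orientation chosen so that $\partial(h)(k) = h(x_+) - h(x_-)$), the quantity $\delta(k,f)$ equals this signed difference up to a sign determined by whether $k$ belongs to the right/bottom border or to the left border. Unwinding the sign conventions set up right before \eqref{eq:overall-thrput} and the definition of $b(k)$, the Knutson--Tao boundary values become precisely $\delta(k,f) = b(k) = \la_i$ on the $i$th right-border edge, $\delta(k,f) = b(k) = \mu_i$ on the $i$th bottom-border edge, and $-\delta(k',f) = b(k') = \nu_i$ on the $i$th left-border edge. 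These are exactly the equations defining $P(\la,\mu,\nu)$.

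Combining these two steps, the bijection $H_\Z \to \oF(G)_\Z$ restricts to a bijection between the set of integral hives with the Knutson--Tao boundary values and $P(\la,\mu,\nu)_\Z$. Hence $\LRC\la\mu\nu = |P(\la,\mu,\nu)_\Z|$. The only real obstacle is the bookkeeping of signs and orientations in step two: one must be careful that the convention ``$\delta(k,f)$ measures flow into the upright hive triangle adjacent to $k$'' combined with the chosen orientation underlying $\partial$ produces the correct sign on each of the three borders of $\Delta$, so that the nonnegative boundary differences dictated by the partitions match $b(k)$ on the right and bottom and $-\delta(k',f)$ on the left, as in Definition~\ref{def:hiveflowpolytope}.
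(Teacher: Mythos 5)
Your proposal is correct and follows the same route the paper takes: the paper gives no separate proof text (the \texttt{\textbackslash endproof} sits at the end of the statement) and simply asserts in the preceding sentence that the Knutson--Tao hive characterization ``immediately translates'' via the isomorphism $\oF(G)_\Z\simeq H_\Z$. Your two steps---that $\partial$ carries hives to hive flows because $\sigma(\varrho,h)=\s\varrho f$, and that the prescribed boundary differences of an integral hive become exactly the capacity-achieving equalities $\delta(k,f)=b(k)$ and $-\delta(k',f)=b(k')$ after unwinding the sign conventions---are precisely the bookkeeping the paper is leaving implicit.
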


To advocate the advantage of the flow interpretation of Littlewood--Richardson coefficients,
we show in the next section that $P_\Z := P(\la,\mu,\nu)_\Z$ 
can be interpreted as the set of vertices
of a graph in a natural way.  
This will be important for searching and enumerating~$P_\Z$
in an efficient way. 
Our investigations will be purely structural though. 
We leave the (more complicated) algorithmic aspects 
of searching to the forthcoming paper~\cite{ike:12}. 


%


\section{Properties of hive flows} 
\label{se:graph-str}

We recall that any complete path~$p$ defines a flow on $G$, denoted 
by the same symbol. In order to describe the slack of a rhombus with 
respect to $p$, we introduce some further terminology. 


\begin{definition}\label{def:turn}
A \emph{turn} is defined to be a path in $G$ of length~2 that lies inside $\Delta$, 
starts at a white vertex and ends with a different white vertex, 
see Figure~\ref{fig:dualtriangulararray}. 
\end{definition}

Note that there are six turns in each hive triangle. 
We shall denote turns pictorially by $\rhpoulMl$, $\rhpoulMr$ etc.\  
with the obvious interpretation. 
Similarly, $\rhpourMll$ and $\rhpoulMrl$ stand for a path consisting of four edges. 

In order to describe the different ways a complete path~$p$ may pass a rhombus~$\varrho$, 
we consider the following sets of paths in $\varrho$. 

\begin{definition}\label{def:slack-contr}
The sets of paths, interpreted as subsets of $E(G)$,  
$$
 \p_+(\rhc) := \{\rhpourMr, \rhpollWr, \rhpourMll, \rhpollWll\},\quad
 \p_-(\rhc) := \{\rhpoulMl, \rhpolrWl, \rhpoulMrr, \rhpolrWrr\},\mbox{ and } 
 \p_0(\rhc) := \{\rhpoulMrl,\rhpourMlr,\rhpollWlr,\rhpolrWrl\}
$$
are called the sets of  of 
\emph{positive},
\emph{negative}, and 
\emph{neutral slack contributions} of 
the rhombus $\rhc$, respectively. 
\end{definition}

For later use the reader should remember that the turns in $\p_+(\rhc)$ 
at the acute angles are clockwise, while the concatenations of two turns 
at the obtuse angles are counterclockwise. 

The verification of the following is immediate using Definition~\ref{def:slack} of 
the slack. 

\begin{observation}
Let $p$ be a complete path in $G$ and $E_\varrho$ be the set of edges of $G$ contained in a rhombus~$\varrho$. 
Then $p\cap E_\varrho$ is either empty, or it is a union of one or two slack contributions~$q$.
The slack $\s \varrho p$ is obtained by adding $1$, $0$, or $-1$ 
over the contributions~$q$ contained in $p$, according to whether~$q$ is   
is  positive, negative, or neutral.
\end{observation}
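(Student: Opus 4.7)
The plan is to decompose $p\cap E_\varrho$ into maximal connected sub-walks in~$G$, show each is one of the twelve patterns in $\p_+\cup\p_-\cup\p_0$, and verify that the sum of their signed contributions matches $\sigma(\varrho,p)$. Both parts reduce to a finite local check once the right setup is in place.

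For the structural decomposition, enumerate the vertices of~$G$ inside~$\varrho$: the two black vertices $b_U,b_L$ (one per hive triangle), the four outer white vertices $w_{NW},w_{NE},w_{SW},w_{SE}$, and the central white vertex~$w_c$ on the diagonal. The six edges of~$E_\varrho$ all join a black vertex to an adjacent white vertex. Since $p$ is a complete path and $s,t\notin\varrho$, every vertex of~$\varrho$ lying on~$p$ is internal, so $p$ uses both edges at each visited white vertex and exactly two of the three edges at each visited black vertex. The decisive point is that $w_c$ has only the two neighbors $b_U,b_L$ in~$G$, both inside~$\varrho$, so if $p$ visits~$w_c$ it must use both edges $w_c$--$b_U$ and $w_c$--$b_L$, and hence visit both black vertices. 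A case split on which subset of $\{b_U,b_L,w_c\}$ lies on~$p$ then yields four scenarios: $(a)$ no black vertex visited, giving empty intersection; $(b)$ exactly one black vertex visited and $w_c$ not, forcing a single turn across the acute vertex at the top or bottom of~$\varrho$, namely one of $\rhpoulMl,\rhpourMr,\rhpollWr,\rhpolrWl$; $(c)$ both black vertices and $w_c$ visited, giving a single length-$4$ sub-walk $w_1$--$b_U$--$w_c$--$b_L$--$w_2$ with $w_1\in\{w_{NW},w_{NE}\}$, $w_2\in\{w_{SW},w_{SE}\}$, matching one of the eight length-$4$ patterns in $\p_+\cup\p_-\cup\p_0$; $(d)$ both black vertices but not~$w_c$ visited, giving two disjoint top/bottom turns as in case~$(b)$.

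For the slack evaluation, the critical preparation is to rewrite $\sigma(\varrho,f)=\rhaoulW(f)+\rhaolrM(f)$ as a linear form depending only on the edge values of~$f$ on~$E_\varrho$. The summand $\rhaoulW(f)$ already has this shape, equalling $f(b_U\!\to\!w_{NW})-f(w_{NW}\!\to\!b_U)$. The summand $\rhaolrM(f)$ is formally $f(w_{SE}\!\to\!b'_{SE})-f(b'_{SE}\!\to\!w_{SE})$ where $b'_{SE}$ is the black vertex outside~$\varrho$; flow conservation at~$w_{SE}$, whose only neighbors in~$G$ are $b_L$ and $b'_{SE}$, rewrites this as $f(b_L\!\to\!w_{SE})-f(w_{SE}\!\to\!b_L)$. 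Thus for every flow~$f$,
\[
\sigma(\varrho,f)\;=\;\bigl[f(b_U\!\to\!w_{NW})-f(w_{NW}\!\to\!b_U)\bigr]+\bigl[f(b_L\!\to\!w_{SE})-f(w_{SE}\!\to\!b_L)\bigr],
\]
a linear combination of four specific directed edges of~$E_\varrho$. Summing over the edge-disjoint sub-walks in the decomposition of $p\cap E_\varrho$ then yields $\sigma(\varrho,p)=\sum_q\sigma(\varrho,q)$ with no residual from the part of~$p$ outside~$E_\varrho$. A direct inspection of each of the twelve patterns on these four edges confirms $\sigma(\varrho,q)\in\{+1,-1,0\}$ precisely for $q\in\p_+,\p_-,\p_0$: for instance $\rhpoulMl=w_{NW}\!\to\!b_U\!\to\!w_{NE}$ traverses only $(w_{NW}\!\to\!b_U)$ and contributes~$-1$; $\rhpollWr=w_{SW}\!\to\!b_L\!\to\!w_{SE}$ traverses only $(b_L\!\to\!w_{SE})$ and contributes~$+1$; and the length-$4$ pattern $\rhpoulMrl=w_{NW}\!\to\!b_U\!\to\!w_c\!\to\!b_L\!\to\!w_{SE}$ traverses both, contributing~$-1+1=0$.

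The chief subtlety lies in the conservation-based rewriting above. Without it, $\sigma(\varrho,p)$ ostensibly depends on edges at~$w_{SE}$ lying outside~$E_\varrho$, and a naive sub-walk decomposition would appear to leave an uncounted boundary term; once the rewriting is in place, the slack accounting becomes internal to~$E_\varrho$ and additive over sub-walks, reducing the observation to the finite case check already sketched.
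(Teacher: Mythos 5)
Your proposal is correct, and it gives a clean account of what the paper dismisses as immediate: the Observation carries no proof in the text beyond the remark that it follows ``using Definition~\ref{def:slack} of the slack,'' so there is no official argument to compare against. What you supply is exactly the content that remark leaves implicit. The structural step --- classifying $p\cap E_\varrho$ by which of $b_U$, $b_L$, $w_c$ the path visits, using that $w_c$ has degree~$2$ inside $\varrho$ and that $s,t\notin\varrho$ forces every visited vertex to be traversed --- correctly yields the four scenarios (empty; a single acute-angle turn at one black vertex; a single length-$4$ walk through $w_c$; two disjoint acute-angle turns), and these are precisely the twelve patterns in $\p_+\cup\p_-\cup\p_0$ plus unions of two of the four turns. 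The genuinely useful preparatory move is the conservation rewrite at $w_{SE}$, which recasts $\s\varrho\cdot$ as a linear functional supported on the four directed edges $(b_U,w_{NW})$, $(w_{NW},b_U)$, $(b_L,w_{SE})$, $(w_{SE},b_L)\in E_\varrho$; this is what makes the slack additive over the sub-walks and turns the claim into a finite local check, which you carry out correctly.

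Two small points worth a sentence each if you write this up. First, the rewrite at $w_{SE}$ tacitly uses that $w_{SE}$ has exactly the two black neighbours $b_L$ and $b'_{SE}$, i.e.\ that the lower-right side of $\varrho$ is not a border edge of $\Delta$; this is always true because that side belongs to the downright hive triangle of $\varrho$, whereas border edges of $\Delta$ bound only an upright triangle, but it deserves to be said. Second, writing $\s\varrho q$ for a single sub-walk $q$ is a mild abuse --- $q$ is not a flow class, since Kirchhoff fails at its endpoints --- so what you are really evaluating is the extension of the slack to a linear form on $\R^{E(G)}$ given by the four-edge expression. Neither of these is a gap in substance; the proof is sound.
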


We remark that the only situations, 
in which $p\cap E_\varrho$ is a union of two slack contributions,
is when $p$ uses both counterclockwise turns $\rhpoulMl$ and $ \rhpolrWl$ at acute angles, 
or both clockwise turns $\rhpourMr$ and $\rhpollWr$ at acute angles, in which case $\s \varrho p= -2$ or $\s \varrho p = 2$, 
respectively.
It is not possible that $c$ uses both $\rhpoulMl$ and $ \rhpollWr$ since otherwise, 
due to the planarity of $\Delta$, $c$~would have to intersect itself.

\subsection{The support of flows on $G$}
\label{se:antipodal}

Recall the definition of the support $\SUPP(d)$ of a flow class $d\in\oF(G)$.
By the definition, $\SUPP(d)$ cannot contain an edge and its reverse. 
We note the following:
$$
\big( \rhpcMl \subseteq \SUPP(d) \ \text{ or } \ \rhpcMr \subseteq
\SUPP(d)\big) \quad \Longleftrightarrow \quad \rhacM(d) > 0 .
$$


Recall from Definition~\ref{def:slack-contr} 
the sets $\Psi_+(\varrho)$, $\Psi_-(\varrho)$, and $\Psi_0(\varrho)$ of 
positive, negative, and neutral slack contributions of a rhombus $\varrho$, respectively, 
interpreted as sets of directed edges of $G$.
We assign to any slack contribution $p\in \Psi_+(\varrho)\cup\Psi_-(\varrho)$ 
of a rhombus $\varrho$ 
its {\em antipodal contribution} $p'\in \Psi_+(\varrho)\cup\Psi_-(\varrho)$, 
which is defined by reversing $p$ and then applying a rotation of $180^\circ$.
For instance, $\rhpolrWl$ is the antipodal contribution of $\rhpourMr$ and 
$\rhpoulMrr$ 
is the antipodal contribution of $\rhpourMll$.
Clearly, $p\mapsto p'$ is an involution. 

The following lemma on antipodal contributions will be of great use.

\begin{lemma}
\label{cla:negimpliespos}
Let $d\in\oF(G)$ such that $\s \varrho d \geq 0$ for a rhombus~$\varrho$.
If $p\subseteq\SUPP(d)$ for a negative slack contribution~$p$ of $\varrho$, 
then $p'\subseteq\SUPP(d)$ for its antipodal contribution~$p'$. 
\end{lemma}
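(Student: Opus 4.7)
The plan is to work with the reduced representative $\tilde d$ of $d$, so that for every pair of antiparallel directed edges in $G$ at most one direction carries positive flow. Let $b_u$ and $b_d$ denote the two black centres of $\varrho$ (in the upright triangle $T_u$ and the downright triangle $T_d$ that make up $\varrho$), and let $w$ denote the white vertex on the diagonal of $\varrho$. First I would encode the contribution of $\tilde d$ to $\varrho$ by six signed throughputs: let $x_1,x_2,x_3$ be the net flows into $b_u$ coming from its three neighbouring white vertices on the NW side, NE side and diagonal, respectively, and define $y_1,y_2,y_3$ analogously at $b_d$ for the SW side, SE side and diagonal. Kirchhoff's laws at $b_u$, $b_d$ and $w$ immediately yield the three linear relations $x_1+x_2+x_3=0$, $y_1+y_2+y_3=0$, and $x_3+y_3=0$.

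Next, by unfolding the definitions of $\rhaoulW$ and $\rhaolrM$ and applying Kirchhoff at the two border white vertices on the NW and SE sides of $\varrho$, I would establish the clean identity
\begin{equation*}
\sigma(\varrho,d) \ =\ -x_1-y_2 \ =\ x_2+y_1.
\end{equation*}
The hypothesis $p\subseteq\SUPP(\tilde d)$ then translates directly into sign conditions on the subset of $\{x_i,y_i\}$ indexed by the directed edges of~$p$, and the conclusion $p'\subseteq\SUPP(\tilde d)$ translates similarly for~$p'$.

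At this point I would run through the four negative contributions $\rhpoulMl,\rhpolrWl,\rhpoulMrr,\rhpolrWrr$. The $180^\circ$ rotation of $\varrho$ swaps $\rhpoulMl\leftrightarrow\rhpolrWl$ and $\rhpoulMrr\leftrightarrow\rhpolrWrr$, and at the level of the variables it acts by $(x_1,x_2,x_3)\leftrightarrow(y_2,y_1,y_3)$, which preserves both the identity for~$\sigma$ and the antipodality map. It therefore suffices to treat the two cases $p=\rhpoulMl$ and $p=\rhpoulMrr$. For each of them the hypothesis on $p$ fixes two (respectively four) of the six signs; the double-turn case has the special feature that the two middle edges of $p$ and $p'$ (those incident to $w$) coincide, so in both cases $p'$ imposes exactly two new sign conditions. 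One of those new signs follows at once from $\sigma(\varrho,d)\ge 0$, and the other from a short manipulation of the Kirchhoff relations. For instance, if $p=\rhpoulMl$ then $x_1>0$ and $x_2<0$; the slack inequality gives $y_2\le-x_1<0$, and then, using $y_3=-x_3=x_1+x_2$,
\begin{equation*}
y_1 \ =\ -y_2-y_3 \ =\ -y_2-x_1-x_2 \ \ge\ x_1-x_1-x_2 \ =\ -x_2 \ >\ 0,
\end{equation*}
which is exactly the pair of conditions $y_1>0$, $y_2<0$ needed for $p'=\rhpollWr\subseteq\SUPP(\tilde d)$. The double-turn case $p=\rhpoulMrr$ proceeds in the same spirit: from $x_1>0$, $x_3<0$, $y_3>0$, $y_1<0$ and $\sigma\ge 0$ one derives $y_3=-y_1-y_2\ge-y_1+x_1>x_1$, whence $x_2=-x_1+y_3>0$, and together with $y_2\le-x_1<0$ this yields the two sign conditions demanded by $p'=\rhpourMll$.

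I expect the main obstacle to be organizational rather than conceptual: one has to read off the precise direction of each of the directed edges making up the eight relevant slack-contribution pictograms, and keep careful track of the sign conventions throughout. Once this bookkeeping is in place, each of the two cases reduces to the short algebraic manipulation illustrated above.
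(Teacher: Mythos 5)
Your proof is correct and follows essentially the same route as the paper's: both are direct verifications via Kirchhoff's conservation laws at the two black vertices and the diagonal white vertex, combined with the slack identity $\sigma(\varrho,d)=-x_1-y_2=x_2+y_1$ (the paper carries out the identical arithmetic using the throughput pictograms $\rhaoulM$, $\rhaollW$, etc.\ rather than your cleaner $x_i,y_i$ parametrization, but the computations in both single-turn and double-turn cases match term for term). Your explicit appeal to the $180^\circ$ rotation to cut the four cases to two is a nice touch that the paper leaves implicit.
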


\begin{proof}
1. Suppose that $\rhpoulMl \subseteq \SUPP(d)$, which means $\delta_1 := \rhaoulM(d) > 0$ and $\delta_2 := \rhaourW(d) > 0$.
Since $\delta_3 := \rhaolrM(d) - \rhaoulM(d) = \s \rhc d \geq 0$ we get $\rhaolrM(d) = \delta_1+\delta_3 > 0$.
Moreover, 
$\rhaollW(d)=\rhaolrM(d) - \rhacW(d) = (\delta_1+\delta_3)-(\delta_1-\delta_2) = \delta_3+\delta_2 > 0$.
Altogether, $\rhpollWr \subseteq \SUPP(d)$.

2. Suppose that $\rhpoulMrr \subseteq \SUPP(d)$, which means
$\delta_1 := \rhaoulM(d) > 0$, $\delta_2 := \rhaollM(d) >0$, and $\delta_3 := \rhacW(d) >0$.
Hence $\rhaourM(d) = \delta_3-\delta_1$.
We have $\delta_4 := \rhaolrM(d) - \rhaoulM(d) = \s \rhc d \geq 0$ and thus $\rhaolrM(d) = \delta_1+\delta_4 > 0$.
Therefore $\delta_3=\delta_2+(\delta_1+\delta_4)$ and thus 
$\rhaourM(d) = \delta_3-\delta_1 = (\delta_2+\delta_1+\delta_4)-\delta_1=\delta_2+\delta_4 > 0$.
Altogether, $\rhpourMll \subseteq\SUPP(d)$.
\qquad\end{proof}

Applying Lemma~\ref{cla:negimpliespos} successively  
can provide important information about the support of a flow class $d$. 
This is stated in the following lemma on ``flow propagation''. 

It will be convenient to use symbols like $\rhrhoul$, $\rhrholl$ etc., which stand for the rhombi in the positions relative to $\rhc$ 
as indicated by the shaded regions.

\begin{lemma}
\label{lem:flowpropagation}
Given $d \in \oF(G)$ such that $\s \rhc d \geq 0$ and $\rhpoulMl \subseteq \SUPP(d)$.
Then $\rhpollWr \subseteq \SUPP(d)$.
If additionally $\s \rhrholl d \geq 0$, then $\rhpllMl \subseteq \SUPP(d)$.
Similarly, if additionally $\s \rhrholr d \geq 0$, then $\rhpolrMl \subseteq \SUPP(d)$.
\end{lemma}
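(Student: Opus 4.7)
The first claim follows immediately from Lemma~\ref{cla:negimpliespos} applied to the rhombus~$\rhc$ with the negative slack contribution~$\rhpoulMl$: its antipodal is $\rhpollWr$, so the hypotheses $\s{\rhc}{d}\ge 0$ and $\rhpoulMl\subseteq\SUPP(d)$ immediately yield $\rhpollWr\subseteq\SUPP(d)$. Reading off the explicit computation in the first case of that proof, I will also keep the two quantitative bounds $\rhaollW(d)>0$ and $\rhaolrM(d)>0$, which will drive the remaining claims.

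For the second claim the plan is to repeat the same kind of calculation, now on the neighbouring rhombus~$\rhrholl$. Labelling its corners so that $w,s$ are obtuse and $w',e$ are acute, I expand
\[
\s{\rhrholl}{d} \;=\; h(w)+h(s)-h(w')-h(e) \;=\; \bigl(h(w)-h(w')\bigr) + \bigl(h(s)-h(e)\bigr)
\]
and convert each height difference into a throughput along the corresponding edge of~$\Delta$ via $\delta(k,f)=h(k_+)-h(k_-)$ with the clockwise-around-upright-triangles orientation of Section~\ref{se:honey}. A brief check then gives $\s{\rhrholl}{d} = -\rhallW(d)-\rhaolrM(d)$, and combining this identity with $\rhaolrM(d)>0$ forces $\rhallW(d)<0$; equivalently, the throughput entering the upright triangle of~$\rhrholl$ through its outer upper-left side is strictly positive. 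The turn $\rhpllMl$ is exactly the two-edge path that enters this triangle there and leaves through the shared side $w$-$s$, and its two directed $G$-edges carry throughputs $-\rhallW(d)>0$ and $\rhaollW(d)>0$, so $\rhpllMl\subseteq\SUPP(d)$.

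The third claim is handled by a completely symmetric calculation on~$\rhrholr$ (whose obtuse corners are $s,e$ and whose acute corners are $w,e'$): the analogous expansion gives $\s{\rhrholr}{d} = -\rhaollW(d)-\rhalrM(d)$, so $\rhaollW(d)>0$ together with $\s{\rhrholr}{d}\ge 0$ forces $\rhalrM(d)<0$, and paired with $\rhaolrM(d)>0$ this places both directed $G$-edges of $\rhpolrMl$ in $\SUPP(d)$. The main obstacle I expect is the bookkeeping when transferring the ``$M$/$W$'' throughput convention from~$\rhc$ to the tilted rhombi $\rhrholl$ and~$\rhrholr$: which triangle adjacent to a given $\Delta$-edge plays the role of the ``upright neighbour'' changes from one rhombus to the next, and this controls the signs relating $\rhacM,\rhacW$ and the analogous quantities for the neighbouring rhombi. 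Once these correspondences are pinned down, the proof is a direct mimic of Lemma~\ref{cla:negimpliespos}, needing nothing beyond the slack inequalities and the closedness relation~\eqref{eq:closedness}.
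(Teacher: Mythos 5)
Your proof is correct, but it takes a genuinely different route from the paper's. The paper argues by flow conservation: since $\rhpollWr\subseteq\SUPP(d)$, one of $\rhpllMl$ or $\rhppllMr$ must lie in $\SUPP(d)$; in the second case it concatenates to the negative contribution $\rhppllMrr$ of $\rhrholl$ and invokes Lemma~\ref{cla:negimpliespos} a \emph{second} time to get the antipodal $\rhpllMll\subseteq\SUPP(d)$, hence $\rhpllMl\subseteq\SUPP(d)$. You instead compute the slack of $\rhrholl$ explicitly as $\s{\rhrholl}{d}=-\rhallW(d)-\rhaolrM(d)$ (which I checked, including the sign conventions for the tilted rhombus), so that $\s{\rhrholl}{d}\ge 0$ together with the bound $\rhaolrM(d)>0$ extracted from the proof of Lemma~\ref{cla:negimpliespos} forces $\rhallW(d)<0$; paired with $\rhaollW(d)>0$, this puts both directed $G$-edges of $\rhpllMl$ in $\SUPP(d)$ with no case split. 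The symmetric identity $\s{\rhrholr}{d}=-\rhaollW(d)-\rhalrM(d)$ for the third claim is also correct. The two arguments are of comparable length; yours replaces the paper's flow-conservation dichotomy and second application of the antipodal lemma by a single slack identity for the adjacent rhombus, which is arguably cleaner, at the cost of some sign bookkeeping, while the paper's stays within the ``flow propagation'' vocabulary and reuses Lemma~\ref{cla:negimpliespos} as a black box. A stylistic point: the macros $\rhallW$ and $\rhalrM$ appear in the paper only as figure annotations and never in a mathematical display, so if this argument were spliced into the text one would more naturally express these throughputs as $\pm\delta(k,d)$ for the relevant edges $k$ of $\Delta$.
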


For an example on how Lemma~\ref{lem:flowpropagation} can be used, see Figure~\ref{fig:flowpropagation}.

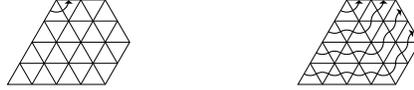
\begin{figure}[h]
\begin{center}
\begin{tikzpicture}\draw[rhrhombidraw] (0.0pt,0.0pt) -- (36.952pt,0.0pt) -- (46.19pt,16.0002pt) -- (36.952pt,32.0004pt) -- (18.476pt,32.0004pt) -- cycle;\draw[thin,-my] (16.1665pt,28.0004pt) arc (240:300:4.619pt) arc (300:360:4.619pt);\draw[rhrhombidraw] (18.476pt,32.0004pt) -- (36.952pt,0.0pt) ;\draw[rhrhombidraw] (27.714pt,32.0004pt) -- (41.571pt,8.0001pt) ;\draw[rhrhombidraw] (13.857pt,24.0003pt) -- (27.714pt,0.0pt) ;\draw[rhrhombidraw] (9.238pt,16.0002pt) -- (18.476pt,0.0pt) ;\draw[rhrhombidraw] (4.619pt,8.0001pt) -- (9.238pt,0.0pt) ;\draw[rhrhombidraw] (4.619pt,8.0001pt) -- (41.571pt,8.0001pt) ;\draw[rhrhombidraw] (9.238pt,16.0002pt) -- (46.19pt,16.0002pt) ;\draw[rhrhombidraw] (13.857pt,24.0003pt) -- (41.571pt,24.0003pt) ;\draw[rhrhombidraw] (41.571pt,24.0003pt) -- (27.714pt,0.0pt) ;\draw[rhrhombidraw] (36.952pt,32.0004pt) -- (18.476pt,0.0pt) ;\draw[rhrhombidraw] (27.714pt,32.0004pt) -- (9.238pt,0.0pt) ;\end{tikzpicture}
\hspace{2cm}
\begin{tikzpicture}\draw[rhrhombidraw] (0.0pt,0.0pt) -- (36.952pt,0.0pt) -- (46.19pt,16.0002pt) -- (36.952pt,32.0004pt) -- (18.476pt,32.0004pt) -- cycle;\draw[thin,-my] (16.1665pt,28.0004pt) arc (240:300:4.619pt) arc (300:360:4.619pt);\draw[thin,-my] (11.5475pt,20.0003pt) arc (240:300:4.619pt) arc (120:60:4.619pt) arc (240:300:4.619pt) arc (300:360:4.619pt) arc (180:120:4.619pt) arc (300:360:4.619pt);\draw[thin,-my] (6.9285pt,12.0002pt) arc (240:300:4.619pt) arc (120:60:4.619pt) arc (240:300:4.619pt) arc (120:60:4.619pt) arc (240:300:4.619pt) arc (300:360:4.619pt) arc (180:120:4.619pt) arc (300:360:4.619pt) arc (180:120:4.619pt);\draw[thin,-my] (2.3095pt,4.0001pt) arc (240:300:4.619pt) arc (120:60:4.619pt) arc (240:300:4.619pt) arc (120:60:4.619pt) arc (240:300:4.619pt) arc (120:60:4.619pt) arc (240:300:4.619pt) arc (300:360:4.619pt) arc (180:120:4.619pt) arc (300:360:4.619pt) arc (180:120:4.619pt);\draw[rhrhombidraw] (4.619pt,8.0001pt) -- (9.238pt,0.0pt) ;\draw[rhrhombidraw] (9.238pt,16.0002pt) -- (18.476pt,0.0pt) ;\draw[rhrhombidraw] (13.857pt,24.0003pt) -- (27.714pt,0.0pt) ;\draw[rhrhombidraw] (18.476pt,32.0004pt) -- (36.952pt,0.0pt) ;\draw[rhrhombidraw] (27.714pt,32.0004pt) -- (41.571pt,8.0001pt) ;\draw[rhrhombidraw] (4.619pt,8.0001pt) -- (41.571pt,8.0001pt) ;\draw[rhrhombidraw] (9.238pt,16.0002pt) -- (46.19pt,16.0002pt) ;\draw[rhrhombidraw] (13.857pt,24.0003pt) -- (41.571pt,24.0003pt) ;\draw[rhrhombidraw] (27.714pt,32.0004pt) -- (9.238pt,0.0pt) ;\draw[rhrhombidraw] (36.952pt,32.0004pt) -- (18.476pt,0.0pt) ;\draw[rhrhombidraw] (41.571pt,24.0003pt) -- (27.714pt,0.0pt) ;\end{tikzpicture}
    \caption{The rhombi of the pentagon have nonnegative slack with respect to the flow~$d$. If the turns in the left picture are in $\SUPP(d)$, then, 
      by applying Lemma~\protect\ref{lem:flowpropagation} several times, we see that all the turns in the right picture are in $\SUPP(d)$.}
    \nopar\label{fig:flowpropagation}
\end{center}
\end{figure}

\begin{prooff}[Proof of Lemma~\textup{\ref{lem:flowpropagation}}]
The first assertion is a direct application of Lemma~\ref{cla:negimpliespos}.
Suppose that $\s \rhrholl d \geq 0$.
Since $\rhpollWr \subseteq \SUPP(d)$,
flow conservation implies that $\rhpllMl \subseteq \SUPP(d)$ or $\rhppllMr \subseteq \SUPP(d)$.
We want to show $\rhpllMl \subseteq \SUPP(d)$.
If $\rhppllMr \subseteq \SUPP(d)$, then $\rhppllMrr \subseteq \SUPP(d)$ and $\rhppllMrr$ is a negative contribution in~$\rhrholl$.
Hence by Lemma~\ref{cla:negimpliespos}, we have $\rhpllMll \subseteq \SUPP(d)$.
The other assertion is proved analogously.
\qquad\end{prooff}

\subsection{The graph of capacity achieving integral hive flows}
\label{se:Graph-CA-hive-flows}

Fix $\la,\mu,\nu$ and recall the polytopes $B$ and $P$ from Definition~\ref{def:hiveflowpolytope}.
We show now that $P_\Z$ can be naturally seen as the vertex set of a graph. 

\begin{definition}
We say that $f,g\in P_\Z$ are {\em neighbours} iff $g-f$ is a cycle in $G$.
The resulting graph with the set of vertices $P_\Z$ is also denoted by $P_\Z$. 
\end{definition}

The neighbour relation is clearly symmetric. 
We also remark that a cycle of the form $g-f$ 
must be proper, i.e., it neither uses the source or target. 
The reason is that the flow $g-f$ vanishes on the edges touching the 
border of $\Delta$, as $f$ and $g$ are both capacity achieving. 

For an explicit characterization of the neighbour relation 
we need the following concepts.


\begin{definition}\label{def:f-secure}
Let $f\in B$ and $c$ be a proper cycle in $G$.

1. We call a rhombus $\varrho$ 
{\em nearly $f$-flat} iff $\s\varrho f = 1$. 

2. $c$ is called {\em $f$-hive preserving} iff 
$c$ does not use negative contributions in $f$-flat rhombi. 

3. $c$ is called {\em $f$-secure} 
iff $c$ is $f$-hive preserving and 
$c$ does not use both counterclockwise turns at acute angles in 
nearly $f$-flat rhombi (\rhpoulMlXolrWl).
\end{definition}

We remark that $c$ is $f$-hive preserving iff 
$f + \varepsilon c \in B$ for sufficiently small $\varepsilon>0$. 


\begin{proposition}\label{pro:char-f-secure}
Assume $f\in P_\Z$. 
If $g\in P_\Z$ is a neighbour of $f$, then $g-f$ is an $f$-secure cycle.
Conversely, if $c$ is an $f$-secure cycle, then $f+c\in P_\Z$ is a neighbour of~$f$. 
\end{proposition}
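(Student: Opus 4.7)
The plan is to prove both directions by analyzing, for each rhombus $\varrho$, how the intersection $c\cap E_\varrho$ contributes to the slack. The key linearity $\s\varrho (f+c) = \s\varrho f + \s\varrho c$ combined with the observation preceding Section~3.1 (which restricts $c\cap E_\varrho$ to be empty, a single slack contribution, or one of the two ``double acute turn'' configurations with $\s\varrho c = \pm 2$) will drive every case check.

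For the forward direction, assume $g\in P_\Z$ is a neighbour of $f$, set $c:=g-f$, and recall from the remark after the definition of neighbour that $c$ is a proper cycle. First I would verify that $c$ is $f$-hive preserving: if $c$ used a negative slack contribution in some $f$-flat rhombus $\varrho$, the observation forces $\s\varrho c\in\{-1,-2\}$, hence $\s\varrho g = 0 + \s\varrho c <0$, contradicting that $g$ is a hive flow. Next, for a nearly $f$-flat rhombus $\varrho$, if $c$ used both counterclockwise turns at acute angles (\rhpoulMlXolrWl), then $\s\varrho c = -2$ and $\s\varrho g = 1-2=-1<0$, again a contradiction. This establishes that $c$ is $f$-secure.

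For the converse, let $c$ be an $f$-secure cycle and set $g:=f+c$. Integrality of $g$ is immediate since $f$ is integral and $c$ is a unit cycle. To see that $g$ achieves capacity, I use that $c$ is proper: each border white vertex has degree $2$ in $G$ with one edge to $s$ or $t$, so $c$ cannot pass through a border white vertex without using the forbidden edge; hence $\delta(k,c)=0$ on every boundary edge $k$ of $\Delta$, so $\delta(k,g)=\delta(k,f)=b(k)$. For the hive inequalities I distinguish three cases on the slack of~$\varrho$ at $f$: if $\s\varrho f\ge 2$, then $\s\varrho g\ge 2-2=0$ trivially; if $\s\varrho f=1$, then the $f$-security condition rules out the $-2$ configuration, so $\s\varrho c\ge -1$ and $\s\varrho g\ge 0$; if $\s\varrho f=0$, then $f$-hive preservation forbids negative contributions in $\varrho$, so $\s\varrho c\ge 0$ and $\s\varrho g\ge 0$. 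Therefore $g\in P_\Z$, and since $g-f=c$ is a cycle, $g$ is a neighbour of~$f$.

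The only non-routine piece is organizing the case analysis for nearly $f$-flat rhombi in both directions, where I must appeal to the observation that the only way a cycle intersects $\varrho$ in two slack contributions is via the two specific acute-angle pairings of slack $\pm 2$; this is exactly what makes condition~(3) of $f$-security both necessary and sufficient.
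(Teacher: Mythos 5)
Your proof is correct and follows essentially the same approach as the paper's: it reduces both directions to a case analysis on $\s\varrho f$, using linearity of slack and the Observation constraining $\s\varrho c\in\{-2,\dots,2\}$ for a complete path. Where the paper dismisses the converse with ``the argument can be reversed,'' you have usefully spelled out the missing detail that a proper cycle has zero throughput through every border edge of $\Delta$ (via the degree-$2$ observation at border white vertices), so that $g=f+c$ inherits the capacity-achieving property from~$f$; this is exactly the converse of the paper's earlier remark that $g-f$ is proper because it vanishes at the boundary, and it is needed to conclude $g\in P_\Z$ rather than merely that $g$ is a hive flow.
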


\begin{proof}
Assume that $f$ and $g$ are neighbours in $P_\Z$, so 
$c:=g-f$ is a proper cycle in~$G$. 
Hence $\s \varrho {f+c} \ge 0$ for each rhombus~$\varrho$.
This implies that $\s \varrho c \ge 0$ for each $f$-flat rhombus, 
that is, $c$ is $f$-hive preserving. 
Moreover, if $\s \varrho f = 1$, then $\s \varrho c \ge -1$.
Hence $c$ is $f$-secure.
The argument can be reversed.
\qquad\end{proof}


Apparently, the symmetry of the neighbour relation in $P_\Z$ does not seem to be obvious 
from the characterization in Proposition~\ref{pro:char-f-secure}. 

Before continuing, we state a useful observation.
The union of two overlapping rhombi $\varrho_1$ and $\varrho_2$ forms a trapezoid.
Glueing together two such trapezoids $(\varrho_1,\varrho_2)$ and $(\varrho'_1,\varrho'_2)$ at their longer side,
we get a hexagon. 
The verification of the following {\em hexagon equality} is
straightforward and left to the reader: 
$\s {\varrho_1} f + \s {\varrho_2} f = \s {\varrho'_1} f + \s {\varrho'_2} f$ 
for any flow $f\in \oF(G)$. In pictorial notation, the hexagon equality can be succinctly expressed as 
\begin{equation}
\label{cla:BZ}
\s {
\begin{tikzpictured}\fill[rhrhombifill] (9.238pt,0.0pt) -- (4.619pt,8.0001pt) -- (9.238pt,16.0002pt) -- (13.857pt,8.0001pt) -- cycle;\fill (0.0pt,0.0pt) circle (0.4pt);\fill (9.238pt,0.0pt) circle (0.4pt);\fill (4.619pt,8.0001pt) circle (0.4pt);\fill (-4.619pt,8.0001pt) circle (0.4pt);\fill (13.857pt,8.0001pt) circle (0.4pt);\fill (0.0pt,16.0002pt) circle (0.4pt);\fill (9.238pt,16.0002pt) circle (0.4pt);\draw[rhrhombidraw] (0.0pt,16.0002pt) -- (9.238pt,0.0pt) ;\end{tikzpictured}
} f + \s {
\begin{tikzpictured}\fill[rhrhombifill] (13.857pt,8.0001pt) -- (9.238pt,16.0002pt) -- (0.0pt,16.0002pt) -- (4.619pt,8.0001pt) -- cycle;\fill (0.0pt,0.0pt) circle (0.4pt);\fill (9.238pt,0.0pt) circle (0.4pt);\fill (4.619pt,8.0001pt) circle (0.4pt);\fill (-4.619pt,8.0001pt) circle (0.4pt);\fill (13.857pt,8.0001pt) circle (0.4pt);\fill (0.0pt,16.0002pt) circle (0.4pt);\fill (9.238pt,16.0002pt) circle (0.4pt);\draw[rhrhombidraw] (9.238pt,0.0pt) -- (0.0pt,16.0002pt) ;\end{tikzpictured}
} f = \s {
\begin{tikzpictured}\fill[rhrhombifill] (0.0pt,16.0002pt) -- (-4.619pt,8.0001pt) -- (0.0pt,0.0pt) -- (4.619pt,8.0001pt) -- cycle;\fill (0.0pt,0.0pt) circle (0.4pt);\fill (9.238pt,0.0pt) circle (0.4pt);\fill (4.619pt,8.0001pt) circle (0.4pt);\fill (-4.619pt,8.0001pt) circle (0.4pt);\fill (13.857pt,8.0001pt) circle (0.4pt);\fill (0.0pt,16.0002pt) circle (0.4pt);\fill (9.238pt,16.0002pt) circle (0.4pt);\draw[rhrhombidraw] (0.0pt,16.0002pt) -- (9.238pt,0.0pt) ;\end{tikzpictured}
} f + \s {
\begin{tikzpictured}\fill[rhrhombifill] (-4.619pt,8.0001pt) -- (0.0pt,0.0pt) -- (9.238pt,0.0pt) -- (4.619pt,8.0001pt) -- cycle;\fill (0.0pt,0.0pt) circle (0.4pt);\fill (9.238pt,0.0pt) circle (0.4pt);\fill (4.619pt,8.0001pt) circle (0.4pt);\fill (-4.619pt,8.0001pt) circle (0.4pt);\fill (13.857pt,8.0001pt) circle (0.4pt);\fill (0.0pt,16.0002pt) circle (0.4pt);\fill (9.238pt,16.0002pt) circle (0.4pt);\draw[rhrhombidraw] (0.0pt,16.0002pt) -- (9.238pt,0.0pt) ;\end{tikzpictured}
} f,
\end{equation}

As an immediate consequence we obtain the following. 

\begin{corollary}
\label{cor:BZ2}
For all hive flows $f$, if
\begin{tikzpictured}\fill[rhrhombifill] (9.238pt,0.0pt) -- (4.619pt,8.0001pt) -- (9.238pt,16.0002pt) -- (13.857pt,8.0001pt) -- cycle;\fill (0.0pt,0.0pt) circle (0.4pt);\fill (9.238pt,0.0pt) circle (0.4pt);\fill (4.619pt,8.0001pt) circle (0.4pt);\fill (-4.619pt,8.0001pt) circle (0.4pt);\fill (13.857pt,8.0001pt) circle (0.4pt);\fill (0.0pt,16.0002pt) circle (0.4pt);\fill (9.238pt,16.0002pt) circle (0.4pt);\draw[rhrhombidraw] (0.0pt,16.0002pt) -- (9.238pt,0.0pt) ;\end{tikzpictured}
and
\begin{tikzpictured}\fill[rhrhombifill] (13.857pt,8.0001pt) -- (9.238pt,16.0002pt) -- (0.0pt,16.0002pt) -- (4.619pt,8.0001pt) -- cycle;\fill (0.0pt,0.0pt) circle (0.4pt);\fill (9.238pt,0.0pt) circle (0.4pt);\fill (4.619pt,8.0001pt) circle (0.4pt);\fill (-4.619pt,8.0001pt) circle (0.4pt);\fill (13.857pt,8.0001pt) circle (0.4pt);\fill (0.0pt,16.0002pt) circle (0.4pt);\fill (9.238pt,16.0002pt) circle (0.4pt);\draw[rhrhombidraw] (9.238pt,0.0pt) -- (0.0pt,16.0002pt) ;\end{tikzpictured}
are $f$\dash flat, then also
\begin{tikzpictured}\fill[rhrhombifill] (0.0pt,16.0002pt) -- (-4.619pt,8.0001pt) -- (0.0pt,0.0pt) -- (4.619pt,8.0001pt) -- cycle;\fill (0.0pt,0.0pt) circle (0.4pt);\fill (9.238pt,0.0pt) circle (0.4pt);\fill (4.619pt,8.0001pt) circle (0.4pt);\fill (-4.619pt,8.0001pt) circle (0.4pt);\fill (13.857pt,8.0001pt) circle (0.4pt);\fill (0.0pt,16.0002pt) circle (0.4pt);\fill (9.238pt,16.0002pt) circle (0.4pt);\draw[rhrhombidraw] (0.0pt,16.0002pt) -- (9.238pt,0.0pt) ;\end{tikzpictured}
and
\begin{tikzpictured}\fill[rhrhombifill] (-4.619pt,8.0001pt) -- (0.0pt,0.0pt) -- (9.238pt,0.0pt) -- (4.619pt,8.0001pt) -- cycle;\fill (0.0pt,0.0pt) circle (0.4pt);\fill (9.238pt,0.0pt) circle (0.4pt);\fill (4.619pt,8.0001pt) circle (0.4pt);\fill (-4.619pt,8.0001pt) circle (0.4pt);\fill (13.857pt,8.0001pt) circle (0.4pt);\fill (0.0pt,16.0002pt) circle (0.4pt);\fill (9.238pt,16.0002pt) circle (0.4pt);\draw[rhrhombidraw] (0.0pt,16.0002pt) -- (9.238pt,0.0pt) ;\end{tikzpictured}
are $f$\dash flat. \endproof
\end{corollary}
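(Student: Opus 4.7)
The plan is to apply the hexagon equality \eqref{cla:BZ} directly, and then exploit the sign constraint that comes from $f$ being a hive flow.

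First I would invoke the hexagon equality \eqref{cla:BZ}, which asserts that for any flow $f\in\oF(G)$, the sum of the slacks of the two right rhombi of the hexagon equals the sum of the slacks of the two left rhombi. By hypothesis, both rhombi on the right are $f$-flat, so each of their slacks equals $0$, and hence the left-hand side of \eqref{cla:BZ} is $0$. Consequently the sum of the slacks of the two rhombi on the left also vanishes.

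Second, I would use the hive-flow assumption. By Definition~\ref{def:hiveflow}, every rhombus~$\varrho$ satisfies $\s{\varrho}{f}\ge 0$. A sum of two nonnegative numbers that equals zero forces both summands to be zero. Therefore the two rhombi on the left each have slack~$0$, i.e., they are $f$-flat, which is what we wanted to prove.

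There is no real obstacle here: the statement is a one-line consequence of the hexagon equality \eqref{cla:BZ} combined with the nonnegativity of slack for hive flows. The only mild care needed is to match the pictorial positions of the four rhombi correctly with the four terms in \eqref{cla:BZ}, which is immediate from the shaded diagrams.
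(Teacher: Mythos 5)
Your argument is correct and is exactly what the paper intends: the hexagon equality \eqref{cla:BZ} forces the two remaining slacks to sum to zero, and since $f$ is a hive flow both slacks are nonnegative (Definition~\ref{def:hiveflow}), hence both vanish. The paper marks Corollary~\ref{cor:BZ2} as an immediate consequence and omits the write-up; your proof is the intended one-liner, with no substantive difference.
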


\begin{remark}
The slacks of rhombi are exactly the numbers in Berenstein-Zelevinsky triangles \cite{pv:05} and 
the hexagon equality~\eqref{cla:BZ} is just the condition for their validity.
\end{remark}

The next result tells us how $f$-secure cycles may arise. 

\begin{theorem}\label{pro:shortestplanarcycle}
Let $f\in B_\Z$ and $c$ be an $f$\dash hive preserving cycle in $G$ of minimal length. 
Then $c$ is $f$-secure. 
\end{theorem}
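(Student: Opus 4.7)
My plan is to proceed by contradiction. Assume $c$ is an $f$-hive preserving cycle of minimum length that uses both counterclockwise turns $\rhpoulMl$ and $\rhpolrWl$ at the acute vertices of some nearly $f$-flat rhombus $\varrho$; I will produce a strictly shorter $f$-hive preserving cycle, contradicting minimality. Let $b_{\mathrm{top}}, b_{\mathrm{bot}}$ denote the two black vertices of $\varrho$ and $w_{ul}, w_{ur}, w_{ll}, w_{lr}, w_m$ its five white vertices (with $w_m$ on the diagonal). Because $c$ is simple and each of $b_{\mathrm{top}}, b_{\mathrm{bot}}$ has only three neighbors in $G$, the two turns force the factorization
\[
c \ =\ (w_{ul} \to b_{\mathrm{top}} \to w_{ur}) \cdot B \cdot (w_{lr} \to b_{\mathrm{bot}} \to w_{ll}) \cdot D,
\]
where $B$ and $D$ are simple subpaths of $c$ avoiding $\{b_{\mathrm{top}}, b_{\mathrm{bot}}, w_m\}$; indeed $w_m$ is avoided because its only $G$-neighbors are $b_{\mathrm{top}}, b_{\mathrm{bot}}$, and both are already saturated by the two acute turns.

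Using the edges through $w_m$, I define two candidate cycles
\begin{align*}
c_1 &:= (w_{ul} \to b_{\mathrm{top}} \to w_m \to b_{\mathrm{bot}} \to w_{ll}) \cdot D, \\
c_2 &:= B \cdot (w_{lr} \to b_{\mathrm{bot}} \to w_m \to b_{\mathrm{top}} \to w_{ur}).
\end{align*}
Both are simple cycles in $G$. Moreover $c = c_1 + c_2$ in $\oF(G)$, since the antiparallel pairs $(b_{\mathrm{top}}, w_m)+(w_m, b_{\mathrm{top}})$ and $(w_m, b_{\mathrm{bot}})+(b_{\mathrm{bot}}, w_m)$ are null flows. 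Because $|B|, |D| \ge 2$ (no pair of edges in $G$ can connect the relevant boundary vertices of $\varrho$ while staying outside $\varrho$), both $c_1$ and $c_2$ are strictly shorter than $c$. It therefore suffices to establish that at least one of $c_1, c_2$ is $f$-hive preserving.

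This last step is the crux. By linearity of slack, $\s{\varrho'}{c_1} + \s{\varrho'}{c_2} = \s{\varrho'}{c} \ge 0$ for every $f$-flat rhombus $\varrho'$. Each $c_i$ differs from $c$ only on the four edges of $c$ inside $\varrho$ (exchanged for a path through $w_m$) and on exactly one of $B, D$; hence any rhombus where $c_i$ could have strictly negative slack must either (a) share a hive triangle with $\varrho$, being one of $\varrho, \varrho_1, \varrho_2, \varrho_3, \varrho_4$, or (b) be traversed by the omitted subpath ($B$ for $c_1$, $D$ for $c_2$). The rhombus $\varrho$ itself is excluded because it is not $f$-flat. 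The four neighboring rhombi in case (a) are controlled by the hexagon equality~\eqref{cla:BZ} together with Corollary~\ref{cor:BZ2}, which link the slacks of the five rhombi around $\varrho$ and force the sign changes introduced by $c_1$ and $c_2$ to be compatible. In case (b), the other cycle still carries the full slack contribution of the omitted subpath, so any violation for one $c_i$ is cancelled by a matching positive slack for the other. A careful case distinction along these lines, which I expect to be the main technical burden, rules out the possibility of $c_1$ and $c_2$ failing simultaneously, producing the desired shorter $f$-hive preserving cycle and hence the contradiction.
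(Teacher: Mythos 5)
The decomposition $c = c_1 + c_2$ in $\oF(G)$ through $w_m$ is correct, and so is the observation that both $c_1$ and $c_2$ are strictly shorter simple cycles. But the step you explicitly defer --- ``a careful case distinction\ldots rules out the possibility of $c_1$ and $c_2$ failing simultaneously'' --- is not a technical loose end; it is the entire content of the theorem, and your sketch of why it should hold does not work. The linearity identity $\s{\varrho'}{c_1} + \s{\varrho'}{c_2} = \s{\varrho'}{c} \ge 0$ only shows that, for each fixed $f$\dash flat rhombus $\varrho'$, at most one of the two cycles can have negative slack \emph{at that $\varrho'$}; it says nothing about $c_1$ violating the hive constraint in one rhombus while $c_2$ violates it in a different one. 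And indeed the two cycles have disjoint potential failure sites: the only rhombi where $c_i$ can acquire a negative contribution not already present in $c$ are among the four overlapping $\varrho$, and for $c_1$ these are $\rhrhoul$, $\rhrholl$ while for $c_2$ they are $\rhrhour$, $\rhrholr$. Nothing in the hexagon equality \eqref{cla:BZ} or Corollary~\ref{cor:BZ2} obviously forbids, say, $\rhrhoul$ and $\rhrholr$ from both being $f$\dash flat with $c$ approaching $w_{ul}$ and leaving $w_{ur}$ in directions that make both reroutings cross a flat diagonal badly. So your crux claim is unproven, and it is not clear it is even true without further global input.

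The paper's proof abandons the idea of a second rerouting precisely because the local picture is not enough. It reroutes via $\rhpoulMrr$ (your $c_1$) once; if that fails, one of the two conditions (A) $\rhrhoul$ $f$\dash flat and $c$ uses $\rhpulWrl$, or (B) $\rhrholl$ $f$\dash flat and $c$ uses $\rhpolrWlr$, must hold. Then, crucially, it combines the hexagon equality with the fact that $c$ avoids negative contributions in $f$\dash flat rhombi to show that the diagonal $\rhsc$ of $\varrho$ meets the diagonal of \emph{another} bad (nearly $f$\dash flat, doubly-counterclockwise-traversed) rhombus at a shared endpoint. Iterating this gives a chain of bad diagonals, which by planarity must close up into a region; but $c$ would then have to run both inside and outside this region, a contradiction. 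This global chain/planarity argument is what is missing from your two-sided local rerouting approach, and without it the proof does not go through.
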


\begin{proof}
We argue by contradiction. Suppose that $c$ is an $f$-hive preserving cycle in $G$ of minimal length, 
but not $f$-secure. So there is a nearly $f$-flat rhombus $\rhc$ in which $c$ uses both turns $\rhpoulMl$ and $\rhpolrWl$. 
Let us call such rhombi {\em bad}.

Since $c$ has minimal length, it cannot be rerouted via $\rhpoulMrr$.
Hence we cannot be in the following case, in which $c$~can easily be rerouted via $\rhpoulMrr$:
\begin{equation*}
\mbox{\Big($\rhrhoul$ is not $f$\dash flat or $c$ uses $\rhppulWll$\Big) \quad and\quad
\Big($\rhrholl$ is not $f$\dash flat or $c$ uses $\rhpcWrr$\Big)}.
\end{equation*}
In the remaining two cases
\begin{equation}\label{eq:A-B}
\mbox{(A) $\rhrhoul$ is $f$\dash flat and $c$ uses $\rhpulWrl$\quad or\quad
(B) $\rhrholl$ is $f$\dash flat and $c$ uses $\rhpolrWlr$}
\end{equation}
the $f$\dash hive preserving cycle~$c$ cannot be rerouted via $\rhpoulMrr$ by Definition~\ref{def:f-secure}(2).

Let us assume that we are in the situation (A). 
So we have the bad, nearly $f$-flat rhombus $\rhc$ 
and the shaded $f$-flat rhombus.
The hexagon equality~\eqref{cla:BZ} implies that
either $\rhrhpl$ is $f$\dash flat and $\rhrhll$ is nearly $f$\dash flat,
or $\rhrhpl$ is nearly $f$\dash flat and $\rhrhll$ is $f$\dash flat.
These two possibilities are indicated on the left and right side of the following picture, respectively,
where the shaded rhombi are $f$\dash flat and  
the diagonals of nearly $f$\dash flat rhombi are drawn thick.
Further, parts of~$c$ which run in $f$-flat rhombi, are drawn with straight arrows:
\begin{center}
\scalebox{2}{
\begin{tikzpicture}\fill[thick,fill=black!60] (-13.857pt,8.0001pt) -- (-18.476pt,0.0pt) -- (-13.857pt,-8.0001pt) -- (-9.238pt,0.0pt) -- cycle;\fill[rhrhombifill] (-9.238pt,0.0pt) -- (0.0pt,0.0pt) -- (-4.619pt,8.0001pt) -- (-13.857pt,8.0001pt) -- cycle;\draw[rhrhombidraw] (-4.619pt,-8.0001pt) -- (0.0pt,0.0pt) -- (-4.619pt,8.0001pt) -- (-9.238pt,0.0pt) -- cycle;\draw[rhrhombidraw] (-4.619pt,-8.0001pt) -- (-9.238pt,0.0pt) -- (-18.476pt,0.0pt) -- (-13.857pt,-8.0001pt) -- cycle;\draw[rhrhombithickside] (0.0pt,0.0pt) -- (-9.238pt,0.0pt);\draw[rhrhombithickside] (-9.238pt,0.0pt) -- (-13.857pt,-8.0001pt);\draw[thin,-my] (-11.5475pt,4.0001pt) --  (-2.3095pt,4.0001pt);;\draw[thin,-my] (-2.3095pt,-4.0001pt) arc (60:120:4.619pt);\end{tikzpicture}
\hspace{0.5cm}
\begin{tikzpicture}\fill[thick,fill=black!60] (-18.476pt,0.0pt) -- (-9.238pt,0.0pt) -- (-4.619pt,-8.0001pt) -- (-13.857pt,-8.0001pt) -- cycle;\fill[rhrhombifill] (-9.238pt,0.0pt) -- (0.0pt,0.0pt) -- (-4.619pt,8.0001pt) -- (-13.857pt,8.0001pt) -- cycle;\draw[rhrhombidraw] (-4.619pt,-8.0001pt) -- (0.0pt,0.0pt) -- (-4.619pt,8.0001pt) -- (-9.238pt,0.0pt) -- cycle;\draw[rhrhombithickside] (0.0pt,0.0pt) -- (-9.238pt,0.0pt);\draw[thin,-my] (-11.5475pt,4.0001pt) --  (-2.3095pt,4.0001pt);;\draw[thin,-my] (-2.3095pt,-4.0001pt) arc (60:120:4.619pt);\draw[rhrhombidraw] (-13.857pt,8.0001pt) -- (-18.476pt,0.0pt) -- (-13.857pt,-8.0001pt) -- (-9.238pt,0.0pt) -- cycle;\draw[rhrhombithickside] (-9.238pt,0.0pt) -- (-18.476pt,0.0pt);\end{tikzpicture}
}
\end{center}
The fact that $c$ uses no negative contributions in $f$-flat rhombi (and no vertex of $G$ twice)  
forces $c$ to run exactly as depicted in the following picture:
\begin{center}
\scalebox{2}{
\begin{tikzpicture}\fill[thick,fill=black!60] (-13.857pt,8.0001pt) -- (-18.476pt,0.0pt) -- (-13.857pt,-8.0001pt) -- (-9.238pt,0.0pt) -- cycle;\fill[rhrhombifill] (-9.238pt,0.0pt) -- (0.0pt,0.0pt) -- (-4.619pt,8.0001pt) -- (-13.857pt,8.0001pt) -- cycle;\draw[rhrhombidraw] (-4.619pt,-8.0001pt) -- (0.0pt,0.0pt) -- (-4.619pt,8.0001pt) -- (-9.238pt,0.0pt) -- cycle;\draw[rhrhombidraw] (-4.619pt,-8.0001pt) -- (-9.238pt,0.0pt) -- (-18.476pt,0.0pt) -- (-13.857pt,-8.0001pt) -- cycle;\draw[rhrhombithickside] (0.0pt,0.0pt) -- (-9.238pt,0.0pt);\draw[rhrhombithickside] (-9.238pt,0.0pt) -- (-13.857pt,-8.0001pt);\draw[thin,-my] (-11.5475pt,4.0001pt) --  (-2.3095pt,4.0001pt);;\draw[thin,-my] (-2.3095pt,-4.0001pt) arc (60:120:4.619pt);\draw[thin,-my] (-16.1665pt,-4.0001pt) --  (-11.5475pt,4.0001pt);;\draw[thin,-my] (-6.9285pt,-4.0001pt) arc (120:180:4.619pt);\end{tikzpicture}
\hspace{0.5cm}
\begin{tikzpicture}\fill[thick,fill=black!60] (-18.476pt,0.0pt) -- (-9.238pt,0.0pt) -- (-4.619pt,-8.0001pt) -- (-13.857pt,-8.0001pt) -- cycle;\fill[rhrhombifill] (-9.238pt,0.0pt) -- (0.0pt,0.0pt) -- (-4.619pt,8.0001pt) -- (-13.857pt,8.0001pt) -- cycle;\draw[rhrhombidraw] (-4.619pt,-8.0001pt) -- (0.0pt,0.0pt) -- (-4.619pt,8.0001pt) -- (-9.238pt,0.0pt) -- cycle;\draw[rhrhombithickside] (0.0pt,0.0pt) -- (-9.238pt,0.0pt);\draw[thin,-my] (-11.5475pt,4.0001pt) --  (-2.3095pt,4.0001pt);;\draw[thin,-my] (-2.3095pt,-4.0001pt) arc (60:120:4.619pt);\draw[rhrhombidraw] (-13.857pt,8.0001pt) -- (-18.476pt,0.0pt) -- (-13.857pt,-8.0001pt) -- (-9.238pt,0.0pt) -- cycle;\draw[rhrhombithickside] (-9.238pt,0.0pt) -- (-18.476pt,0.0pt);\draw[thin,-my] (-6.9285pt,-4.0001pt) --  (-16.1665pt,-4.0001pt);;\draw[thin,-my] (-16.1665pt,4.0001pt) arc (240:300:4.619pt);\end{tikzpicture}
}
\end{center}
Hence the second nearly $f$\dash flat rhombus in the left and right picture, respectively, is bad as well.

So we see that the diagonal $\rhsc$ of the bad rhombus $\rhc$ shares a vertex $\rhdwest$ with the diagonal of another
bad rhombus and that both diagonals either lie on the same line or include an angle of $120^\circ$.
By symmetry, the same conclusion can be drawn in the case (B). 

By induction, this implies that there is a region bounded by diagonals of bad rhombi.
This is impossible, because $c$ would have to run both inside and outside of this region.
\qquad\end{proof}

The following is an important insight into the structure of $P_\Z$. 
We postpone the proof to Section~\ref{sec:connectedness}.

\begin{theorem}[Connectedness Theorem]
\label{thm:connectedness}
The graph $P_\Z$ is connected.
\end{theorem}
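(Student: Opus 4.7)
The plan is to prove connectedness by induction on a suitable $\ell_1$-style distance. Given distinct $f,g \in P_\Z$, set $d := g - f$ and let $\tilde d \in \oF(G)$ be its reduced representative. Since both $f$ and $g$ are capacity achieving, $\tilde d$ has $\delta(k, \tilde d) = 0$ on every boundary edge $k$ of $\Delta$, so $\tilde d$ is nonzero only on edges of $G$ not incident to $s$ or $t$, and it is a nonnegative integral flow with overall throughput zero. By the integral flow decomposition (Lemma~\ref{le:flow-decomp}), $\tilde d$ is a sum of proper cycles, so when $\tilde d \neq 0$ the support $\SUPP(\tilde d)$ contains at least one proper cycle. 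The distance measure I would induct on is $\|\tilde d\|_1 := \sum_{e \in E(G)} \tilde d(e) \in \Z_{\geq 0}$.

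The inductive step reduces to finding an $f$-secure proper cycle $c$ with $c \subseteq \SUPP(\tilde d)$. Once such a $c$ is found, Proposition~\ref{pro:char-f-secure} yields that $f + c \in P_\Z$ is a neighbour of $f$. Since $c \subseteq \SUPP(\tilde d)$ and $\tilde d$ is nonnegative and integral, the flow $\tilde d - c$ is nonnegative and represents $g - (f+c)$, hence the reduced representative of $g - (f+c)$ has $\ell_1$-norm at most $\|\tilde d\|_1 - |c| < \|\tilde d\|_1$, and the induction proceeds until $g$ is reached.

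To produce $c$, I would first ensure $\SUPP(\tilde d)$ contains a proper $f$-hive preserving cycle. Starting from any proper cycle $c_0 \subseteq \SUPP(\tilde d)$, whenever $c_0$ uses a negative slack contribution $p$ at an $f$-flat rhombus $\varrho$, the identity $\s \varrho {\tilde d} = \s \varrho g - \s \varrho f = \s \varrho g \geq 0$ combined with Lemma~\ref{cla:negimpliespos} forces the antipodal positive contribution $p'$ into $\SUPP(\tilde d)$, so that $c_0$ can be locally rerouted through $p'$ while staying inside $\SUPP(\tilde d)$. After finitely many such moves I obtain an $f$-hive preserving cycle in $\SUPP(\tilde d)$. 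I then take $c$ of minimum length among all proper $f$-hive preserving cycles in $\SUPP(\tilde d)$ and argue $c$ is $f$-secure by mimicking the topological argument of Theorem~\ref{pro:shortestplanarcycle}: a bad nearly $f$-flat rhombus in $c$ (using both counterclockwise acute-angle turns) would, via the hexagon equality (Corollary~\ref{cor:BZ2}) and the flow propagation of Lemma~\ref{lem:flowpropagation}, propagate into a chain of further bad rhombi whose diagonals bound a closed region of $\Delta$, contradicting the planarity of the cycle $c$.

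The main obstacle is running this minimum-length argument inside the restricted digraph $\SUPP(\tilde d)$ rather than the whole of $G$: in Theorem~\ref{pro:shortestplanarcycle} the short-cutting rerouting is allowed anywhere in $G$, whereas here I must keep the rerouted subpath in $\SUPP(\tilde d)$. The crucial tool will be the nonnegativity of the slack of $\tilde d$ at the $f$-flat and nearly $f$-flat rhombi that appear along the chain of bad rhombi, fed through Lemma~\ref{lem:flowpropagation} to guarantee that the short-cut edges themselves lie in $\SUPP(\tilde d)$. Carrying out this case analysis around the chain of bad rhombi, and checking that all needed rerouting paths remain inside $\SUPP(\tilde d)$, is where the bulk of the technical work is expected.
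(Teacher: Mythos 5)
Your overall strategy — induct on a norm of $g-f$, and at each step add an $f$-secure cycle that decreases the distance — matches the paper's (Section 7 proves exactly this via Proposition~\ref{pro:connectedness:existscycle}). But there is a genuine gap in how you propose to produce the cycle.

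You require the cycle $c$ to lie \emph{entirely} in $\SUPP(\tilde d)$, and justify the existence of an $f$-hive preserving such $c$ by claiming that whenever a cycle $c_0\subseteq\SUPP(\tilde d)$ uses a negative contribution $p$ in an $f$-flat rhombus, you can ``locally reroute through the antipodal contribution $p'$.'' This step does not work as stated: $p$ and its antipodal $p'$ share only the white vertex on the diagonal of the rhombus and have \emph{different} other endpoints (e.g.\ the antipodal of the turn $\rhpoulMl$ at one acute angle is the turn $\rhpollWr$ at the opposite acute angle). So $p'$ is not a replacement path from $\pstart p$ to $\pend p$, and knowing $p'\subseteq\SUPP(\tilde d)$ gives no local detour for $c_0$. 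You would need a different argument to show that $\SUPP(\tilde d)$ contains a proper $f$-hive preserving (let alone $f$-secure) cycle at all, and there is good reason to doubt that this always holds: the paper's own construction in Situation~2 of Section~\ref{sec:connectedness} is forced to insert ``swerves'' — a clockwise turn followed by a counterclockwise turn — whose edges are generally \emph{not} in $\SUPP(d)$, precisely because it cannot continue inside $\SUPP(d)$. The paper then only shows that \emph{more than half} of the edges of $c$ lie in $\SUPP(d)$ (Lemma~\ref{cla:connectedness:morethanhalf}), which suffices for the distance to drop but is strictly weaker than full containment. Your requirement $c\subseteq\SUPP(\tilde d)$ is therefore likely too strong.

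A second, related gap: even granting an $f$-hive preserving cycle in $\SUPP(\tilde d)$, you want to upgrade it to an $f$-secure one by taking a minimum-length such cycle and rerunning the argument of Theorem~\ref{pro:shortestplanarcycle}. That argument's short-cutting reroutings are performed freely in $G$; restricting them to $\SUPP(\tilde d)$ is the exact obstacle you flag, and it is not merely technical — the shortcuts are of precisely the kind (swerve-like detours) that the paper has to take \emph{outside} $\SUPP(d)$. Lemma~\ref{lem:flowpropagation} gives you the turns you mention, but not the full two-turn reroute contributions, so the chain-of-bad-rhombi argument does not close as sketched. The paper's Situation~1/Situation~2 split (whether $d$ crosses an $f$-flatspace border) is what makes the construction tractable: inside a single flatspace the in-support argument works and forces a hexagonal flatspace; across a border one resorts to Algorithm~\ref{alg:connectedness} with swerves and the more-than-half criterion. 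You would need some analogue of that dichotomy, or a genuinely new idea, to make the all-in-support approach go through.
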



As an application of our insights, we obtain the following characterization of 
multiplicity freeness. Recall that 
$\LRC \la \mu \nu = |P(\la,\mu,\nu)_\Z|$. 

\begin{proposition}\label{pro:char-mf} 
Suppose that $f\in P(\la,\mu,\nu)_\Z$. 
Then we have $\LRC \la \mu \nu > 1$ iff there exists an $f$\dash hive preserving cycle in $G$. 
\end{proposition}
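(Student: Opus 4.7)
The strategy is to combine Proposition~\ref{pro:flowdescription}, which gives $\LRC{\la}{\mu}{\nu} = |P(\la,\mu,\nu)_\Z|$, with the characterization of neighbors in $P_\Z$ provided by Proposition~\ref{pro:char-f-secure}. Since $f\in P_\Z$, the condition $\LRC{\la}{\mu}{\nu} > 1$ is equivalent to the existence of some $g\in P_\Z$ with $g\neq f$, and the task reduces to translating this existence into the existence of an $f$-hive preserving cycle in $G$.

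For the forward direction, I would apply the Connectedness Theorem~\ref{thm:connectedness}: given $g\in P_\Z$ distinct from $f$, connectedness of $P_\Z$ produces a neighbor $g'$ of $f$ with $g'\neq f$. Then Proposition~\ref{pro:char-f-secure} says that $c := g'-f$ is an $f$-secure cycle, and in particular it is $f$-hive preserving. Note that the cycle $c$ is automatically proper because both $f$ and $g'$ are capacity achieving, so $g'-f$ vanishes on the edges incident to $s$ and~$t$. For the backward direction, starting from the hypothesis that some $f$-hive preserving cycle exists, I choose one of minimal length, call it $c$. Theorem~\ref{pro:shortestplanarcycle} promotes such a minimal $c$ to an $f$-secure cycle, and the second half of Proposition~\ref{pro:char-f-secure} then guarantees that $f+c \in P_\Z$ is a neighbor of $f$. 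The flow class of a (nontrivial) cycle is nonzero, so $f+c\neq f$, which gives $|P_\Z| \geq 2$ and hence $\LRC{\la}{\mu}{\nu} > 1$.

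The heavy lifting has already been done in earlier sections: the Connectedness Theorem and Theorem~\ref{pro:shortestplanarcycle} are the deep structural inputs, while Proposition~\ref{pro:char-f-secure} supplies the dictionary between graph neighbors in $P_\Z$ and $f$-secure cycles. No substantial new obstacle arises; the only point requiring a brief check is that the cycle produced in the backward direction yields a genuinely different capacity achieving integral hive flow, which follows at once from the nonvanishing of the reduced representative of a cycle.
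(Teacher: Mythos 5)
Your proof is correct and follows essentially the same route as the paper: Proposition~\ref{pro:flowdescription} translates the claim into $|P_\Z| \geq 2$, Theorem~\ref{pro:shortestplanarcycle} handles the backward direction by promoting a minimal-length $f$-hive preserving cycle to an $f$-secure one, and the Connectedness Theorem~\ref{thm:connectedness} together with Proposition~\ref{pro:char-f-secure} handles the forward direction via the existence of a neighbour. The one small addition you make---explicitly checking that $f+c\neq f$ via the nonvanishing of the reduced representative of a cycle---is a detail the paper leaves implicit, but is correct.
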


\begin{proof}
If there exists an $f$-hive preserving cycle in $G$, then there is also one of minimal length, 
call it $c$. 
Theorem~\ref{pro:shortestplanarcycle} implies that $c$ is $f$-secure. 
Proposition~\ref{pro:char-f-secure} tells us that $f+c\in P_\Z$. 
It follows that $|P_\Z| \ge 2$. 

Conversely, assume that $|P_\Z| \ge 2$. 
Since $f\in P_\Z$ and $P_\Z$ is connected 
by Theorem~\ref{thm:connectedness}, 
there exists a neighbour $g\in P_\Z$ of $f$. 
Proposition~\ref{pro:char-f-secure} tells us that
$g-f$ is an $f$-secure cycle. 
\qquad\end{proof}

A proof of Fulton's conjecture, first shown in~\cite{ktw:04} by different methods, 
is obtained as an easy consequence.

\begin{corollary}\label{thm:fultonconj}
If $\LRC {\la} {\mu} {\nu} = 1$, then  
$\LRC {N\la} {N\mu} {N\nu} = 1$ for all $N\ge 1$. 
\end{corollary}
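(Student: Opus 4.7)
The plan is to leverage Proposition~\ref{pro:char-mf} together with the simple observation that rescaling a hive flow by a positive integer does not change which rhombi are flat. Concretely, suppose $\LRC{\la}{\mu}{\nu}=1$ and let $f$ be the unique element of $P(\la,\mu,\nu)_\Z$ guaranteed by Proposition~\ref{pro:flowdescription}. By Proposition~\ref{pro:char-mf}, there is no $f$-hive preserving cycle in $G$.

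Next I would consider the scaled flow $Nf$. Since $\delta(\cdot,\cdot)$ and $\sigma(\varrho,\cdot)$ are linear in the flow argument, we have $\delta(k,Nf)=N\delta(k,f)$ for every edge $k\in E(\Delta)$ and $\s{\varrho}{Nf}=N\s{\varrho}{f}\ge 0$ for every rhombus $\varrho$. Hence $Nf$ is an integral hive flow whose boundary throughputs match the capacities prescribed by $N\la,N\mu,N\nu$, so $Nf\in P(N\la,N\mu,N\nu)_\Z$.

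The key observation is that, because $N>0$, a rhombus $\varrho$ is $Nf$-flat if and only if it is $f$-flat. Consequently, a proper cycle $c$ in $G$ is $Nf$-hive preserving if and only if it is $f$-hive preserving, as the condition only refers to which rhombi are flat. Since no $f$-hive preserving cycle exists, no $Nf$-hive preserving cycle exists either. Applying the reverse direction of Proposition~\ref{pro:char-mf} to $Nf\in P(N\la,N\mu,N\nu)_\Z$ yields $\LRC{N\la}{N\mu}{N\nu}\le 1$, and since $Nf$ itself witnesses positivity, we conclude $\LRC{N\la}{N\mu}{N\nu}=1$.

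There is no real obstacle here; the entire weight of the argument has been absorbed into Proposition~\ref{pro:char-mf} and, through it, the Connectedness Theorem~\ref{thm:connectedness}. The only thing to be careful about is making the scaling argument cleanly, which amounts to noting linearity of slack and throughput in the flow.
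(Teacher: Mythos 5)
Your proof is correct and follows the same route as the paper: you observe that $f$-flat rhombi coincide with $Nf$-flat rhombi (so $f$-hive preserving cycles are exactly $Nf$-hive preserving cycles) and then apply Proposition~\ref{pro:char-mf} in both directions. The paper's proof is simply a more compressed version of the same argument, leaving the verification that $Nf\in P(N\la,N\mu,N\nu)_\Z$ implicit.
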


\begin{proof}
By definition, 
$c$ is an $f$-hive preserving cycle in $G$ iff
$c$ is an $Nf$-hive preserving cycle in $G$. 
Now apply Proposition~\ref{pro:char-mf}.  
\qquad\end{proof}
 

The characterization in Proposition~\ref{pro:char-mf} 
points to a way of algorithmically deciding whether $\LRC \la \mu \nu > 1$.
However, it is not obvious how to efficiently search for $f$-hive preserving cycles 
in the graph~$G$.  
For this, and even for the simpler task of deciding $\LRC \la \mu \nu > 0$, 
we have to construct suitable ``residual digraphs'', which brings us to the 
topic of the next section. 
More details on the complexity of testing $\LRC \la \mu \nu > 1$ 
can be found in \cite{ike:12}.


\section{The residual digraph $\resf f$}\label{se:res}

Proposition~\ref{pro:flowdescription} suggests to decide $\LRC \la \mu \nu > 0$ 
by solving the problem of optimizing the linear (overall throughput) function $\delta$ 
on the polytope~$B=B(\la,\mu,\nu)$ of bounded hive flows. 
In fact, we will show later that the optimum is always obtained at an integral flow.
Maximizing a certain linear functional on the hive polytope and showing that the 
maximum is attained at an integer point is the basic idea in~\cite{knta:99, buc:00}. 
However, they do not present any algorithmic result.

We follow a Ford-Fulkerson approach and try to construct for a given integral hive flow $f\in B_\Z$ 
a digraph $\resf f$, such that adding an $s$\dash $t$\dash path $p$ in $\resf f$ to $f$ 
leads to a bounded hive flow. 
We have to guarantee that $f+p$ does not lead to negative slacks of rhombi 
so that $f+p$ is a hive flow. 
On the other hand, we want to make sure that $f$ is optimal, 
when there is no $s$\dash $t$\dash path in $\resf f$.


\subsection{Turnpaths and turncycles} 
\label{sec:tp-tc}

The intuition is to consider paths in $G$  
in which each node remembers its predecessor. 
This can be formally achieved by studying paths 
in an auxiliary digraph that we define next. 

Recall that a turn is a path in $G$ of length~2 that lies inside $\Delta$, 
starts at a white vertex and ends with a different white vertex. 

\begin{definition}\label{def:turnedge}
A {\em turnedge} is an ordered pair of turns that can be concatenated to a path in $G$.
\end{definition}

Note that a turnedge defines a path in $G$ of length~$4$. 
We write turnedges pictorially like
$\rhpoulMrr:=\big(\rhpoulMr,\rhpcWr\big)$ etc. 
We construct now the auxiliary digraph $\res$.

\begin{definition}\label{def:res}
The {\em digraph $\res$} has as vertices the turns, 
henceforth called \emph{turnvertices}, 
and the source and target of $G$.
The edges of $\res$ are the turnedges and the following additional edges: 
the digraph~$\res$ contains an edge $(s,\vartheta)$ from the source $s$ to any turnvertex~$\vartheta$
starting at the right or bottom border of~$\Delta$. 
Vice versa, for any turnvertex~$\vartheta'$ pointing at the right or bottom border of $\Delta$, 
there is a turnedge $(\vartheta',s)$ in $\res$. 
Similarly, for the target~$t$, 
there are edges~$(\vartheta,t)$ for each turnvertex~$\vartheta$ pointing at the left border of~$\Delta$
and vice versa, there are edges~$(t,\vartheta')$ for each turnvertex~$\vartheta'$ starting at the left border of~$\Delta$.
\end{definition}

The reader should check that $\res$~never contains an edge and its reverse. 
In fact, the digraph $\res$ is rather complicated, 
for instance one can show that it is not planar for $n\ge 2$.

We have a well-defined notion of flows on $\res$ as 
$\res$ is a digraph with two distinguished vertices $s$ and $t$.
We assign now to a flow~$f$ on $R$ a flow class $\tilde{f}$ on $G$
by defining the corresponding throughput map 
$E(\Delta) \to \R, k\mapsto \delta(k,\tilde{f})$ as follows.
An edge~$k$ of $\Delta$ lies in exactly one upright hive triangle
\begin{tikzpicture}\draw[rhrhombidraw] (0.0pt,0.0pt) -- (9.238pt,0.0pt) -- (4.619pt,8.0001pt) -- cycle;\end{tikzpicture}. 
Let 
\begin{tikzpicture}\draw[rhrhombidraw] (0.0pt,0.0pt) -- (9.238pt,0.0pt) -- (4.619pt,8.0001pt) -- cycle;\draw[thin,-my] (2.3095pt,4.0001pt) arc (60:0:4.619pt);\end{tikzpicture}
and 
\begin{tikzpicture}\draw[rhrhombidraw] (0.0pt,0.0pt) -- (9.238pt,0.0pt) -- (4.619pt,8.0001pt) -- cycle;\draw[thin,-my] (6.9285pt,4.0001pt) arc (120:180:4.619pt);\end{tikzpicture}
denote the two turns in 
\begin{tikzpicture}\draw[rhrhombidraw] (0.0pt,0.0pt) -- (9.238pt,0.0pt) -- (4.619pt,8.0001pt) -- cycle;\end{tikzpicture}
pointing towards the white vertex on~$k$. Further, let 
\begin{tikzpicture}\draw[rhrhombidraw] (0.0pt,0.0pt) -- (9.238pt,0.0pt) -- (4.619pt,8.0001pt) -- cycle;\draw[thin,-my] (4.619pt,0.0pt) arc (0:60:4.619pt);\end{tikzpicture}
and 
\begin{tikzpicture}\draw[rhrhombidraw] (0.0pt,0.0pt) -- (9.238pt,0.0pt) -- (4.619pt,8.0001pt) -- cycle;\draw[thin,-my] (4.619pt,0.0pt) arc (180:120:4.619pt);\end{tikzpicture}
denote the turns obtained when reversing 
\begin{tikzpicture}\draw[rhrhombidraw] (0.0pt,0.0pt) -- (9.238pt,0.0pt) -- (4.619pt,8.0001pt) -- cycle;\draw[thin,-my] (2.3095pt,4.0001pt) arc (60:0:4.619pt);\end{tikzpicture}
and
\begin{tikzpicture}\draw[rhrhombidraw] (0.0pt,0.0pt) -- (9.238pt,0.0pt) -- (4.619pt,8.0001pt) -- cycle;\draw[thin,-my] (6.9285pt,4.0001pt) arc (120:180:4.619pt);\end{tikzpicture}
. 
We define 
\begin{equation}\label{eq:def-delta-pi}
 \delta(k,\tilde{f}) \ =\ \rhacM (f) \ :=\ 
\inflo(
\begin{tikzpicture}\draw[rhrhombidraw] (0.0pt,0.0pt) -- (9.238pt,0.0pt) -- (4.619pt,8.0001pt) -- cycle;\draw[thin,-my] (4.619pt,0.0pt) arc (0:60:4.619pt);\end{tikzpicture}
 ,f) +\inflo(
\begin{tikzpicture}\draw[rhrhombidraw] (0.0pt,0.0pt) -- (9.238pt,0.0pt) -- (4.619pt,8.0001pt) -- cycle;\draw[thin,-my] (4.619pt,0.0pt) arc (180:120:4.619pt);\end{tikzpicture}
 ,f) 
 -\outflo(
\begin{tikzpicture}\draw[rhrhombidraw] (0.0pt,0.0pt) -- (9.238pt,0.0pt) -- (4.619pt,8.0001pt) -- cycle;\draw[thin,-my] (2.3095pt,4.0001pt) arc (60:0:4.619pt);\end{tikzpicture}
 ,f) - \outflo(
\begin{tikzpicture}\draw[rhrhombidraw] (0.0pt,0.0pt) -- (9.238pt,0.0pt) -- (4.619pt,8.0001pt) -- cycle;\draw[thin,-my] (6.9285pt,4.0001pt) arc (120:180:4.619pt);\end{tikzpicture}
 ,f).
\end{equation}
More explicitly,
$$
\rhacM (f) =  f(\rhpollWll) + f(\rhpollWlr) + f(\rhpolrWrr) + f(\rhpolrWrl) 
                    - f(\rhpoulMrr) - f(\rhpoulMrl) - f(\rhpourMll) - f(\rhpourMlr) .
$$

From~\eqref{eq:def-delta-pi} it is straightforward to check that the 
closedness condition~\eqref{eq:closedness} 
is satisfied in the hive triangle
\begin{tikzpicture}\draw[rhrhombidraw] (0.0pt,0.0pt) -- (9.238pt,0.0pt) -- (4.619pt,8.0001pt) -- cycle;\end{tikzpicture}. 

Therefore, the flow class $\tilde{f}\in\oF(G)$ 
is well defined by~\eqref{eq:def-delta-pi}. 
So we have defined the linear map 
\begin{equation}\label{eq:defpi}
 \pi\colon F(\res) \to \oF(G), f\mapsto\tilde{f} 
\end{equation}
that moreover maps integral flows to integral flows. 

Let us stress that we are only interested in the 
{\em cone $\Ke(\res)$ of nonnegative flows} on $\res$. 
We define the {\em slack} $\s \varrho f$ of rhombus $\varrho$ 
with respect to a flow $f\in \Ke(\res)$ 
by $\s \varrho f : = \s \varrho {\pi(f)}$. 
Similarly, we define the {\em throughput} 
$\rhacM (f) := \rhacM (\pi(f))$ of~$f$ through an edge~$k$, 
and we call $\delta(f) := \delta(\pi(f))$ 
the {\em overall throughput} $\delta(f)$ 
of $f\in \Ke(\res)$.
 
For the sake of clarity, paths and cycles in $\res$ 
shall be called {\em turnpaths} and {\em turncycles}. 
Correspondingly, we have the notions of 
$s$\dash $t$\dash turnpaths, $t$\dash $s$\dash turnpaths.
By a {\em complete turnpaths} we understand 
an $s$\dash $t$\dash turnpaths, a $t$\dash $s$\dash turnpaths, 
or a turncycle (which may pass through $s$ or $t$). 
A complete turnpath~$p$ defines a flow on $\res$,
again denoted by~$p$,  
by putting the flow value of 1 on each turnedge used.

\begin{example}
The flow $\pi(p)$ induced by a complete turnpath~$p$ in $\res$
is not necessarily given by a complete path on $G$.  
E.g., it is possible that $p$ uses both turnedges 
$\rhpoulMrl$ and $\rhpourMlr$
(which do not share a turnvertex).
Then $\rhacW(\pi(p)) = 2$, while for a complete path $q$ of $G$ 
we always have $\rhacW(q) \in \{-1,0,1\}$. 
\end{example}

If $p$ is a complete turnpath in~$\res$ and $x$ is a turnvertex or turnedge,  
then it will be convenient to write $\eins_p(x)=1$ if $x$ occurs~$p$, 
and $\eins_p(x)=0$ otherwise.

We reconsider now Definition~\ref{def:slack-contr} and interpret the sets 
$\p_+(\rhc)$, $\p_-(\rhc)$, and $\p_0(\rhc)$ of slack contributions of a rhombus $\rhc$ ---instead of as subsets of $E(G)$--- 
as subsets of $V(\res)\cup E(\res)$. Note that these sets are pairwise disjoint. 


\begin{lemma}
\label{lem:slackcalculation}
Let $p$ be a complete turnpath in~$\res$. 
Then we have for any rhombus $\varrho$, 
$$ 
 \s \varrho p = \sum_{x \in \p_+(\varrho)} \eins_p(x) - \sum_{x \in \p_-(\varrho)} \eins_p(x) .
$$
Moreover, $\s \varrho p \in \{-4,-3,\ldots,3,4\}$. \endproof
\end{lemma}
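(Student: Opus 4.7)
The plan is to expand the slack $\s \varrho p = \s \varrho {\pi(p)}$ directly in terms of the turnvertex and turnedge indicators of $p$, and then verify the formula by inspection. First I would rewrite $\s \varrho p = \rhaollW(\pi(p)) + \rhaourM(\pi(p))$, using one of the equivalent forms of the slack noted just after Definition~\ref{def:slack}. Each throughput $\delta(k, \pi(p))$ is then given by~\eqref{eq:def-delta-pi} as a signed sum of $p$-values: the inflows of the two turns ending at the white vertex on~$k$ from inside the upright hive triangle of~$k$, minus the outflows of the two turns starting there. Since $p$ is a simple turnpath or turncycle, every internal turnvertex $\vartheta$ satisfies $\inflo(\vartheta,p) = \outflo(\vartheta,p) = \eins_p(\vartheta)$, and each such inflow or outflow further expands as a sum of $\eins_p(e)$ over turnedges $e$ of $\res$ incident to $\vartheta$.

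Substituting these expansions into the slack yields a signed sum of $\eins_p$-values indexed by turnvertices and turnedges that meet the rhombus~$\varrho$. The main technical step is then to compare this expression with the pictograms of $\p_+(\varrho)$, $\p_-(\varrho)$ and $\p_0(\varrho)$ from Definition~\ref{def:slack-contr}: the two turns of $\p_+(\varrho)$ at acute corners and the two turnedges of $\p_+(\varrho)$ around obtuse corners each acquire coefficient $+1$; symmetrically, the four elements of $\p_-(\varrho)$ get coefficient $-1$; the four neutral contributions in $\p_0(\varrho)$ get coefficient $0$; and all remaining terms cancel pairwise through the flow conservation identities $\eins_p(\vartheta) = \sum_{e_-=\vartheta}\eins_p(e) = \sum_{e_+=\vartheta}\eins_p(e)$ at the internal turnvertices. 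This finite case enumeration, best visualized on a diagram of $\varrho$ with each turn and turnedge labeled by its sign, is the core technical obstacle.

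Finally, the bound $\s\varrho p \in \{-4,-3,\ldots,3,4\}$ is immediate from the displayed formula: since $|\p_+(\varrho)| = |\p_-(\varrho)| = 4$ and $\eins_p(\cdot) \in \{0,1\}$, the signed sum is an integer lying in $[-4,4]$.
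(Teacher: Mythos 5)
Your proposal is correct and follows essentially the same route as the paper: both start from one of the equivalent forms of $\s\varrho p$, expand the throughputs into $\eins_p$-values via~\eqref{eq:def-delta-pi}, use flow conservation at the turnvertices adjacent to the diagonal $\rhsc$ to convert turnvertex indicators into turnedge indicators, and then read off the coefficients against the pictograms of $\p_\pm(\varrho)$ and $\p_0(\varrho)$. The paper carries out the algebra explicitly (in particular using the identity $\eins_p(\rhpoulMr)+\eins_p(\rhpourMl)=\eins_p(\rhpcWl)+\eins_p(\rhpcWr)$ and the four-term rearrangement into differences), and your sketch leaves this bookkeeping to the reader, but the structure, the tools, and the final deduction of the bound from $|\p_\pm(\varrho)|=4$ and $\eins_p\in\{0,1\}$ are identical.
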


\begin{proof}
For each rhombus $\rhc$ we have in pictorial notation 
$\eins_p(\rhpoulMr)+\eins_p(\rhpourMl)=\eins_p(\rhpcWl)+\eins_p(\rhpcWr)$. Moreover,
\begin{align*}
\s \rhc p &= \rhaoulW(p) + \rhaolrM(p) \\
&= \big( \eins_p(\rhpcMl) + \eins_p(\rhpourMr) - \eins_p(\rhpoulMl) - \eins_p(\rhpoulMr) \big) + \big( \eins_p(\rhpcWl) + \eins_p(\rhpollWr) - \eins_p(\rhpolrWr) - \eins_p(\rhpolrWl) \big) \\
&= \big( \eins_p(\rhpcMl) - \eins_p(\rhpolrWr) \big) + \eins_p(\rhpourMr) - \eins_p(\rhpoulMl) + \big( \eins_p(\rhpcWl) - \eins_p(\rhpoulMr) \big) + \eins_p(\rhpollWr) - \eins_p(\rhpolrWl) \\
&= \big( \eins_p(\rhpollWll) + \eins_p(\rhpolrWrl) - \eins_p(\rhpolrWrl) - \eins_p(\rhpolrWrr) \big) + \eins_p(\rhpourMr) - \eins_p(\rhpoulMl)\\
&\hspace{1cm} + \big( \eins_p(\rhpourMll) + \eins_p(\rhpoulMrl) - \eins_p(\rhpoulMrl) - \eins_p(\rhpoulMrr) \big) + \eins_p(\rhpollWr) - \eins_p(\rhpolrWl)\\
&= \big( \eins_p(\rhpollWll)  - \eins_p(\rhpolrWrr) \big) + \eins_p(\rhpourMr) - \eins_p(\rhpoulMl) + \big( \eins_p(\rhpourMll) - \eins_p(\rhpoulMrr) \big) + \eins_p(\rhpollWr) - \eins_p(\rhpolrWl)\\
&= \sum_{x \in \p_+(\loz)} \eins_p(x) - \sum_{x \in \p_-(\loz)} \eins_p(x). 
\end{align*}
The assertion on the possible values of $\s \varrho p$ follows immediately. 
\qquad\end{proof}

\begin{example}
There are complete turnpaths~$p$ and $q$, 
that use all the turnvertices in 
$\p_+(\varrho)$ and $\p_-(\varrho)$, respectively, 
resulting in the slacks $\s \varrho p = 4$ and $\s \varrho q = -4$. 
\end{example}


We construct now the digraph $R_f$ from $R$ by deleting the negative slack contributions in $f$-flat rhombi, 
and removing all edges of $R$ crossing capacity achieving edges of $\Delta$ at the border of $\Delta$. 
Recall from Definition~\ref{def:hiveflowpolytope} 
the definition of the throughput capacities $b(k)$ of the border edges of $\Delta$, 
given by $\la,\mu,\nu$. 

%

\begin{definition}
\label{def:resf} 
Let $f \in B(\la,\mu,\nu)$.
The {\em residual digraph} $\resf f:=\resf f(\la,\mu,\nu)$ is obtained from $\res$
by deleting the turnvertices and turnedges 
in $\p_-(\varrho)$ in $f$-flat rhombi $\varrho$. 
Moreover, for all edges~$k$ on the right and bottom border of $\Delta$ 
satisfying $\delta(k,f) = b(k)$,  we delete all four edges of $\res$ crossing~$k$. 
Similarly,  for all edges~$k'$ on the left border of $\Delta$ 
satisfying $-\delta(k',f) = b(k')$,  we delete all four edges of $\res$ crossing~$k'$. 
%
Let $\cP_f$ denote the set of {\em complete turnpaths in $\resf f$}.
\end{definition}


The following is an immediate consequence of the construction of~$\resf f$ 
and Lemma~\ref{lem:slackcalculation}. 

\begin{lemma}\label{le:spos}
We have $\s \varrho p \ge 0$ for any $p\in\cP_f$ 
and any $f$\dash flat rhombus $\varrho$.\endproof
\end{lemma}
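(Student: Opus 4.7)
The plan is to derive the inequality directly from Lemma~\ref{lem:slackcalculation} combined with the definition of $\resf f$. Concretely, Lemma~\ref{lem:slackcalculation} expresses the slack as a difference
$$
 \s \varrho p \;=\; \sum_{x \in \p_+(\varrho)} \eins_p(x) \;-\; \sum_{x \in \p_-(\varrho)} \eins_p(x),
$$
valid for any complete turnpath $p$ in $\res$ and any rhombus~$\varrho$. Since $\resf f$ is a subdigraph of $\res$, every $p\in\cP_f$ is in particular a complete turnpath in $\res$, so this formula applies.

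The next and only substantive step is to observe that if $\varrho$ is $f$\dash flat, then by Definition~\ref{def:resf} every turnvertex and every turnedge belonging to $\p_-(\varrho)$ has been removed from $\res$ when forming $\resf f$. Consequently, no complete turnpath in $\resf f$ can use any element of $\p_-(\varrho)$; in other words, $\eins_p(x)=0$ for all $x\in\p_-(\varrho)$ whenever $p\in\cP_f$. Therefore the negative term in the slack formula vanishes and we get
$$
 \s \varrho p \;=\; \sum_{x \in \p_+(\varrho)} \eins_p(x) \;\ge\; 0,
$$
which is exactly the desired inequality.

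There is no real obstacle here: the statement is essentially a bookkeeping consequence of how the residual digraph was defined, once the slack has been expressed combinatorially as in Lemma~\ref{lem:slackcalculation}. The only small subtlety worth double\dash checking is the set\dash theoretic interpretation of $\p_-(\varrho)$ as a subset of $V(\res)\cup E(\res)$ (rather than $E(G)$), which was fixed just before Lemma~\ref{lem:slackcalculation} and which is also the convention used in Definition~\ref{def:resf}; this ensures that ``deleting the elements of $\p_-(\varrho)$'' and ``$\eins_p(x)=0$ for $x\in\p_-(\varrho)$'' refer to the same objects.
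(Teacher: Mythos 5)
Your proof is correct and is precisely the argument the paper has in mind: the authors state the lemma as an "immediate consequence of the construction of $\resf f$ and Lemma~\ref{lem:slackcalculation}" and give no further details. You have simply spelled out that immediacy — Lemma~\ref{lem:slackcalculation} gives the signed sum, Definition~\ref{def:resf} removes every element of $\p_-(\varrho)$ for $f$\dash flat $\varrho$, so the negative term vanishes — which matches the intended reasoning exactly.
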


We denote by $\Ke(\resf f)$ the cone of nonnegative flows on $\resf f$. 
As $\resf f$ is a subgraph of $\res$, 
a nonnegative flow $f\in\Ke(\resf f)$ 
can be interpreted as a flow $f\in\Ke(\res)$ with value zero 
on the turnedges not present in~$\resf f$.

Let $f \in B_\Z$ and $p$ be an \stturnpath in $\resf f$. 
Is $f+\pi(p) \in B_\Z$? 

By construction of $\resf f$, if $p$ crosses the border edge~$k$, 
then $\delta(k,f)  < b(k)$ if $k$ is on the right or bottom border of $\Delta$.
Similarly, $-\delta(k',f) < b(k')$ if $k'$ is on the left border of~$\Delta$.  
Thus the flow $f+\pi(p)$ does not violate the border capacity constraints.

In order to see whether $f+\pi(p)$ is a hive flow, 
we note that if $\varrho$ is an $f$\dash flat rhombus, then 
$\s \varrho {f +\pi(p)} = \s \varrho f + \s \varrho {\pi(p)} = \s \varrho p \ge 0$ by 
Lemma~\ref{le:spos}.
However, for rhombi~$\varrho$ that are not $f$\dash flat,  
it may be that 
$\s \varrho f + \s \varrho {\pi(p)} < 0$. 
Fortunately, it turns out that if $p$ is an \stturnpath of minimal length, then this cannot happen!

The proof of the following result is astonishingly delicate and postponed to Section~\ref{sec:shortestpaththeorem}.


\begin{theorem}[Shortest Path Theorem]\label{thm:shopath}
Let $f\in B_\Z$ and let $p$ be a shortest \stturnpath in~$\resf f$. Then $f+\pi(p) \in B_\Z$.
\end{theorem}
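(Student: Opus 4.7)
The plan is to verify the two conditions of membership in $B_\Z$ for $g := f + \pi(p)$: (i) the border capacity constraints, and (ii) the slack inequalities $\s\varrho g \geq 0$ for every rhombus $\varrho$ in $\Delta$. Integrality of $g$ is immediate since $\pi$ sends integral flows to integral flows. Condition (i) is essentially built into the definition of $\resf f$: whenever a border edge $k$ is saturated by $f$ (so $\delta(k,f)=b(k)$ on the right/bottom or $-\delta(k',f)=b(k')$ on the left), all four turnedges of $\res$ crossing that edge have been removed, so $p$ cannot cross a saturated border edge, and the unsaturated ones allow at most one unit of additional throughput from the integral augmentation $\pi(p)$.

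For condition (ii), write $\s\varrho g = \s\varrho f + \s\varrho p$. Two cases are trivial: if $\varrho$ is $f$-flat, then Lemma~\ref{le:spos} gives $\s\varrho p \geq 0$; if $\s\varrho f \geq 4$, then Lemma~\ref{lem:slackcalculation} gives $\s\varrho p \geq -4$. The entire content of the theorem is therefore confined to the intermediate range $\s\varrho f \in \{1,2,3\}$, where we must show that $p$ does not produce a deeper negative contribution than $f$ can absorb.

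My plan for this intermediate case is a rerouting/shortcut argument based on the minimality of $p$. By Lemma~\ref{lem:slackcalculation}, if $\s\varrho p < -\s\varrho f$, then $p$ must use at least $\s\varrho f + 1$ elements of $\Psi_-(\varrho)$ (counted over turns and turnedges). I will analyze the possible patterns of such negative contributions used by $p$ one by one. For each such configuration, the key idea is to produce a strictly shorter $s$-$t$-turnpath $p'$ in $\resf f$, contradicting the choice of $p$. The candidate shortcut replaces the portion of $p$ that enters $\varrho$ through a negative slack contribution by its antipodal route (as in Lemma~\ref{cla:negimpliespos}): the slack $\s\varrho f \geq 1$ together with Lemmas~\ref{cla:negimpliespos} and~\ref{lem:flowpropagation} will guarantee that the antipodal turnvertices and turnedges do not lie in $\Psi_-$ of any $f$-flat rhombus and hence do survive in $\resf f$.

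The main obstacle is the degenerate situation in which a neighbour of $\varrho$ is itself $f$-flat, so the shortcut through the antipodal contribution is a priori forbidden by the deletion rule of Definition~\ref{def:resf}. I plan to handle this by repeatedly propagating slack information across the flatspace containing that neighbour, using Lemma~\ref{lem:flowpropagation} together with the hexagon equality \eqref{cla:BZ} (Corollary~\ref{cor:BZ2}). The outcome should be that either a longer but legitimate rerouting exists around the entire flatspace (still producing a shorter total turnpath because $p$, by minimality, cannot itself wander along the boundary of the flatspace), or one finds an $f$-hive preserving shortcut that avoids the problematic rhombi altogether. A finite iteration of this propagation, bounded by the parameter $n$, exhausts the flatspace and produces the contradiction. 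The combinatorics has to be checked separately for each of the three orientations of $\varrho$ and for each of the (small but nontrivial) number of patterns of negative contributions consistent with $p$ being a simple turnpath, but the underlying mechanism in every case is the same: negative slack contributions used by a shortest augmenting turnpath are always accompanied, via slack propagation across $f$, by positive contributions that permit a strictly shorter alternative.
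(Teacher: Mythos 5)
Your high-level strategy --- invoke minimality of $p$ and exhibit a strictly shorter rerouting whenever some rhombus threatens to go negative --- is indeed the engine behind the paper's argument, and your reduction to the range $\s\varrho f\in\{1,2,3\}$ via Lemmas~\ref{le:spos} and~\ref{lem:slackcalculation} is a correct and useful observation. But from there the proposal diverges from what can actually be made to work, in two substantive ways.

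First, you are missing the organizing device that makes the case analysis tractable. The paper does not attack all ``bad'' rhombi at once; it introduces $\varepsilon:=\max\{t\mid f+t\pi(p)\in B\}$, shows a \emph{critical} rhombus (not $f$-flat but $(f+\varepsilon\pi(p))$-flat) exists, and then studies only the \emph{first} critical rhombus visited by $p$. That ``first'' is load-bearing: several of the contradiction arguments (e.g.\ Claims~\ref{cla:uppernotgflat}, \ref{cla:notrigid}, \ref{cla:enterrerou}) work precisely because if a nearby rhombus were also critical, $p$ would have visited it earlier. Without this localization you have no hook to order the analysis, and ``analyze the possible patterns one by one'' does not by itself control the search: the number of patterns is not just determined by the local $\Psi_-$-usage in $\varrho$, because the validity of a reroute depends on which neighboring and next-to-neighboring rhombi are $f$-flat, and that is not bounded by a small constant without the first-critical-rhombus device. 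Moreover, the paper first has to establish structural facts about how a shortest $p$ behaves (Proposition~\ref{cla:noreverse}, Lemma~\ref{cla:nonflatflow}, Proposition~\ref{pro:special} on special/rigid rhombi) before the main case analysis can even begin; none of these appear in your plan.

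Second, your proposed mechanism for the degenerate case --- ``repeatedly propagating slack information across the flatspace\dots a finite iteration of this propagation, bounded by $n$, exhausts the flatspace and produces the contradiction'' --- is asserted, not demonstrated, and it is not what the paper does. The paper's argument at the degenerate stage stays local (rhombi $\rhrhoul$, $\rhrholl$, $\rhrhll$, $\rhrhpl$, $\rhrhxul$, $\rhrhxll$ around the first critical rhombus), using the hexagon equality~\eqref{cla:BZ} and integrality of $f$ to pin down $\s\rhc f=1$, $\varepsilon=\tfrac12$ (Claim~\ref{cla:direct}), and then the enter/leave-reroutably dichotomy (Claims~\ref{cla:basicreroute}--\ref{claim:fino}) to force a contradiction. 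You also lean on Lemma~\ref{cla:negimpliespos} ``as in'' an antipodal shortcut, but that lemma's hypothesis is $\s\varrho d\ge 0$; the flow $\pi(p)$ fails it exactly in the rhombi you care about, so the lemma can at best be applied to $f$, not to $p$, and the link from statements about $\SUPP(f)$ to the survival of turnedges in $\resf f$ still needs an argument. In short: the plan names the right obstacle but does not overcome it, and the proposed flatspace-exhaustion iteration is neither the paper's route nor shown to terminate in a contradiction. The delicate combinatorics of Section~\ref{sec:shortestpaththeorem} is the actual content of the theorem, and it is absent here.
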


To investigate in a more general context to what extent the hive conditions are preserved when adding a flow 
$d\in \oF(G)$ to $f\in B$, we make the following definition, extending Definition~\ref{def:f-secure}. 

\begin{definition}\label{def:hive-preserv}
For a hive flow $f \in B$, a flow $d \in \oF(G)$ is called \emph{$f$\dash hive preserving} if 
$f + \varepsilon d \in B$ for sufficiently small $\varepsilon>0$. 
\end{definition}

We note that the set of $f$\dash hive preserving flows forms a cone~$C_f$, 
which was called ``cone of feasible directions'' in~\cite{bi:09}.  

\begin{lemma}\label{lem:direction}
Let $f \in B$ and $d'\in \Ke(\resf f)$. Then $\pi(d')$ is $f$\dash hive preserving.
\end{lemma}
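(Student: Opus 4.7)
\emph{Plan.} Set $d := \pi(d')$ and look for $\varepsilon>0$ with $f+\varepsilon d \in B$, i.e., satisfying every defining inequality from Definition~\ref{def:hiveflowpolytope}: the slack conditions $\s \varrho {f+\varepsilon d}\ge 0$ for every rhombus~$\varrho$ and the border capacity bounds. The first move is to apply Lemma~\ref{le:flow-decomp} to write $d' = \sum_{i=1}^m \alpha_i p_i$ as a nonnegative combination of complete turnpaths $p_i\in\cP_f$. Since $\pi$, $\s\varrho\cdot$, and $\delta(k,\cdot)$ are all linear functionals of the flow, the whole claim then reduces to bounding $\s \varrho {\pi(p_i)}$ and $\delta(k,\pi(p_i))$ summand by summand.

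For the slack inequalities I would distinguish two cases. If $\varrho$ is $f$\dash flat, then $\p_-(\varrho)$ has been deleted from $\resf f$, so none of the $p_i$ uses any of its turnvertices or turnedges; Lemma~\ref{lem:slackcalculation} reduces $\s \varrho {p_i}$ to a sum of indicators of elements of $\p_+(\varrho)$, which is nonnegative. Hence $\s \varrho d \ge 0$ and $\s \varrho {f+\varepsilon d}=\varepsilon\s \varrho d \ge 0$ for every $\varepsilon>0$. If $\varrho$ is not $f$\dash flat, then $\s \varrho f > 0$, while Lemma~\ref{lem:slackcalculation} bounds $|\s \varrho {\pi(p_i)}|\le 4$; hence $|\s \varrho d| \le 4\sum_{i=1}^m\alpha_i$, and any $\varepsilon \le \s \varrho f/(4\sum_i\alpha_i)$ keeps the slack nonnegative.

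The border constraints are handled in the same spirit. At a right or bottom border edge $k$ with $\delta(k,f)=b(k)$, the construction of $\resf f$ removes the four edges of $\res$ crossing $k$, namely the edges $(s,\vartheta)$ and $(\vartheta',s)$ incident to the four turnvertices at the white vertex on~$k$. Substituting this into the defining formula~\eqref{eq:def-delta-pi} for $\pi$ makes the surviving inflow/outflow terms cancel in pairs, forcing $\delta(k,d)=0$, so the saturated bound is preserved with equality; the left border is symmetric. At every other border edge the relevant defining inequality of $B$ is strict, hence preserved under any sufficiently small perturbation of~$f$. Taking $\varepsilon$ to be the minimum of the finitely many thresholds produced in the two previous paragraphs yields $f+\varepsilon d \in B$. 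The one step that requires care in a full write\dash up is identifying the four deleted edges of $\res$ as exactly those contributing to~\eqref{eq:def-delta-pi}; once that is unwound, the rest is a routine linearity argument built on Lemmas~\ref{le:flow-decomp} and~\ref{lem:slackcalculation} together with the construction of $\resf f$.
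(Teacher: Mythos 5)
Your proof is correct and follows the paper's overall strategy (decompose $d'$ via Lemma~\ref{le:flow-decomp}, argue by linearity of the slack and throughput functionals, then take a common $\varepsilon$), with one genuinely different step: the treatment of a saturated border edge~$k$. The paper argues via the contrapositive---no turnpath $p_i$ in $\resf f$ can cross~$k$ when $\delta(k,f)=b(k)$, since the four crossing edges of $\res$ are deleted, so $\delta(k,\pi(p_i))=0$ for every summand---whereas you read off $\delta(k,\pi(d'))=0$ directly from~\eqref{eq:def-delta-pi} without invoking the path decomposition for this part; both rely on the same feature of the construction of $\resf f$. However, the phrase ``the surviving inflow/outflow terms cancel in pairs'' misdescribes what happens: nothing cancels; rather each of the four terms in~\eqref{eq:def-delta-pi} is individually zero. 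After deleting the four edges of $\res$ across~$k$, the two turnvertices starting at the white vertex on~$k$ have no remaining incoming edges in $\resf f$ (their only incoming edge was from the source, because any turn ending at that white vertex shares the black vertex of the upright hive triangle with them and hence cannot be concatenated to a path), and symmetrically the two turnvertices ending there have no remaining outgoing edges; Kirchhoff's law at these four interior vertices of $\resf f$ then forces both the inflow and the outflow terms to vanish for any $d'\in \Ke(\resf f)$. With that phrasing corrected, the argument goes through and matches the paper in substance.
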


\begin{proof}
According to Lemma~\ref{le:flow-decomp}, 
there are complete turnpaths~$p_1,\ldots,p_m\in\cP_f$ 
and $\alpha_1,\ldots,\alpha_m\ge 0$ 
such that 
$d' = \sum_{i=1}^m\alpha_i p_i $. 
Lemma~\ref{le:spos} tells us that $\s \varrho {p_i} \geq 0$ 
if $\varrho$ is $f$-flat. 

By construction of $\resf f$, if~$p_i$ crosses a border edge~$k$ on the right or bottom side of $\Delta$, 
then $\delta(k,f) < b(k)$. 
This implies that 
$\delta(k,f+\varepsilon d') = \delta(k,f) + \varepsilon\sum_i \alpha_i \delta(k,p_i) < b(k)$ 
for sufficiently small $\varepsilon >0$.
The argument is analogous for the left border edges. 

We show now that $f+\varepsilon \pi(d')$ is a hive flow for sufficiently small $\varepsilon>0$.
By the linearity of the slack, 
this means to show that for all rhombi $\varrho$, we have 
$\s \varrho f + \varepsilon \sum_i \alpha_i \s \varrho {p_i} \geq 0$ 
for sufficiently small $\varepsilon>0$. 
In the case $\s \varrho f >0$, this is obvious.
On the other hand, if $\s \varrho f =0$, this follows from $\s \varrho {p_i} \geq 0$. 
\qquad\end{proof}

\subsection{Flatspaces}
\label{sec:flatspaces}

Our goal here is to get a detailed understanding of how turnpaths in $\resf f$ behave. 
For this,we first have to recall the concept of flatspaces from~\cite{knta:99,buc:00}.
In the following we fix $f\in B$.

By a {\em convex set $L$ in the triangular graph $\Delta$} we shall understand a union of hive triangles, which is convex.
It is obvious that the angles at the corners of a convex set $L$ are either acute of $60^\circ$ or obtuse of $120^\circ$.


We call two hive triangles \emph{adjacent} if they share a side and form an $f$-flat rhombus.
This defines a graph whose vertices are the hive triangles.
An {\em$f$-flatspace} is defined to be a connected component of this graph,
see Figure~\ref{fig:degengraph}.
We simply write {\em flatspace} if it is clear, which flow $f$ is meant.
Also, we will identify flatspaces with the union of their hive triangles.

The following was observed in~\cite{buc:00}. 

\begin{lemma}
\label{lem:flatspaceprop}
\begin{remunerate}
\item $f$\dash flatspaces are convex sets.
\item A side of an $f$-flatspace is either on the border of $\Delta$, or it is also a side of a neighbouring flatspace.
\item There are exactly five types of convex sets: triangles, parallelograms, trapezoids, pentagons and hexagons.
\end{remunerate}
\end{lemma}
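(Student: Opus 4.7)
The plan is to use concavity of the piecewise-linear hive function $h\colon\Delta\to\R$ associated to~$f$ for parts (i) and (ii), with Corollary~\ref{cor:BZ2} providing the local combinatorial witness, and to settle (iii) by a convex-polygon angle-sum computation.

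\textbf{Part (i).} I interpret an $f$-flatspace $L$ as the maximal connected region of hive triangles on which $h$ agrees with a single affine function $\ell$. Since $h$ is concave and satisfies $h-\ell\le 0$ globally with equality on $L$, the locus $\{x\in\Delta:h(x)=\ell(x)\}$ is convex, being the contact set of the concave function $h$ with its supporting affine function $\ell$. The argument can alternatively be run entirely via Corollary~\ref{cor:BZ2}: reflex angles of $L$ are ruled out because at an interior vertex $v$ surrounded by hive triangles $T_0,\ldots,T_5$, a reflex angle would force four or five consecutive $T_i$'s to lie in $L$, creating two adjacent $f$-flat rhombi whose opposite trapezoid, by Corollary~\ref{cor:BZ2}, is then also $f$-flat. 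This returns the missing triangles to $L$ and contradicts non-convexity. Pinch configurations (where $L$ meets $v$ along two disjoint angular arcs) are excluded by the concavity argument. At boundary vertices of $\Delta$ there are at most three surrounding triangles, so no non-convexity can arise there.

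\textbf{Part (ii).} Let $e$ be a side of $L$ not lying on $\partial\Delta$, decomposed into consecutive $\Delta$-edges $e_1,\ldots,e_k$ with $L$-side triangles $T_i$ and opposite triangles $T_i'$. At every interior vertex $v$ between $e_i$ and $e_{i+1}$, the collinearity of these two edges together with the convexity of $L$ from (i) forces all three hive triangles of $\Delta$ on the $L$-side at~$v$ to lie in $L$, yielding two adjacent $f$-flat rhombi that form a trapezoid. Corollary~\ref{cor:BZ2} then makes the opposite trapezoid at~$v$ (composed of the three other-side triangles) $f$-flat, placing $T_i'$ and $T_{i+1}'$ in a common flatspace. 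Chaining this along the interior vertices of $e$ shows all $T_i'$'s belong to a single flatspace $L'$, and the convexity of $L'$ granted by (i) ensures $e$ is a maximal straight segment of $\partial L'$ as well.

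\textbf{Part (iii).} The convex polygon $L$ has all interior angles in $\{60^\circ,120^\circ\}$. Writing $k$ for the number of $60^\circ$ corners and $n-k$ for the number of $120^\circ$ corners, the angle-sum identity $60^\circ k + 120^\circ(n-k) = (n-2)\,180^\circ$ simplifies to $k=6-n$, so $n\in\{3,4,5,6\}$. For $n=3$ and $n=6$ there is only one angle sequence, giving the triangle and the hexagon. For $n=5$, the unique cyclic sequence $(60,120,120,120,120)$ yields the pentagon. For $n=4$, the two admissible cyclic sequences $(60,120,60,120)$ and $(60,60,120,120)$ yield, respectively, the parallelogram (two pairs of parallel sides) and the trapezoid (exactly one pair of parallel sides), as confirmed by a short vector-sum check on the side directions $\theta_0,\theta_0+120^\circ,\theta_0+180^\circ,\theta_0+300^\circ$ versus $\theta_0,\theta_0+120^\circ,\theta_0+240^\circ,\theta_0+300^\circ$.

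The key technical subtlety is the pinch-point case in (i); the concavity viewpoint dispatches it cleanly, whereas a fully combinatorial argument via the hexagon equality requires more careful local bookkeeping around the pinch vertex, tracking gradients of $h$ around $v$ rather than applying Corollary~\ref{cor:BZ2} directly.
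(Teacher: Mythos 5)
For part (i), you take a genuinely different route from the paper: the paper invokes Corollary~\ref{cor:BZ2} combinatorially, while you pass to the concave piecewise-linear interpolant $h$ and observe that $L$ is the contact set of $h$ with a supporting affine function $\ell$, hence convex. Your observation about the pinch case is correct and worth keeping: at a vertex $v$ where $L$ meets $v$ only in two antipodal hive triangles, none of the six rhombi around $v$ is $f$-flat, so Corollary~\ref{cor:BZ2} has no flat pair to which to apply; the concavity argument, by contrast, forces the remaining slacks around $v$ to vanish, because $h\le\ell$ and the affine slack identity make them simultaneously $\le 0$ and $\ge 0$. So your treatment of (i) is both valid and, on this point, more complete than the paper's one-line justification.

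For part (ii), the chaining argument via Corollary~\ref{cor:BZ2} at the interior vertices of $e$ correctly puts all opposite triangles $T_i'$ in a single flatspace $L'$, so $e\subseteq\partial L'$. But the concluding step is a genuine gap: convexity of $L'$ alone does not preclude $e$ from being a proper subsegment of a longer side of $L'$ (imagine $L$ a single hive triangle abutting the middle of a long side of a hexagonal $L'$). You still need to show that $\partial L'$ turns at each endpoint $p$ of $e$. One fix: if $L'$'s side extended past $p$, three consecutive triangles of $L'$ at $p$ would form two $f$-flat rhombi, and the hexagon equality~\eqref{cla:BZ} at $p$ would then force the three opposite triangles at $p$ into a common flatspace, necessarily $L$; but then $L$ would extend along the line of $e$ past $p$, contradicting maximality of $e$ on $\partial L$. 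Equivalently, run your chaining argument once more, symmetrically, starting from $L'$.

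Part (iii) is correct; the angle-sum computation $k=6-n$ with $n\in\{3,4,5,6\}$, together with the $n=4$ case distinction between the cyclic sequences $(60,120,60,120)$ and $(60,60,120,120)$, is a clean and more explicit version of the paper's appeal to ``enumeration of convex shapes on the triangular grid.''
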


\begin{proof}
The first and second claim follow from Corollary~\ref{cor:BZ2}.
The third claim is just the enumeration of convex shapes on the triangular grid.
\qquad\end{proof}

We show next that turnpaths in $\resf f$  
can move in $f$\dash flatspaces only in a very limited way: 
namely along the border 
in counterclockwise direction, 
and they can enter and leave the flatspace only through a few distinguished edges that we define next
(cf.\ Figure~\ref{fig:innerandborder}).


\begin{figure}[h]
\begin{center}
\scalebox{2}{
\begin{tikzpicture}\draw[rhrhombidraw] (4.619pt,40.0005pt) -- (-46.19pt,-48.0006pt) -- (55.428pt,-48.0006pt) -- cycle;\draw[rhrhombidraw] (46.19pt,-32.0004pt) -- (36.952pt,-48.0006pt) ;\draw[rhrhombidraw] (-18.476pt,-48.0006pt) -- (-4.619pt,-24.0003pt) -- (-18.476pt,0.0pt) ;\draw[rhrhombidraw] (-4.619pt,-24.0003pt) -- (4.619pt,-24.0003pt) -- (-13.857pt,8.0001pt) ;\draw[rhrhombidraw] (18.476pt,-48.0006pt) -- (4.619pt,-24.0003pt) -- (13.857pt,-24.0003pt) -- (27.714pt,-48.0006pt) ;\draw[rhrhombidraw] (13.857pt,-24.0003pt) -- (23.095pt,-8.0001pt) -- (0.0pt,32.0004pt) ;\draw[rhrhombidraw] (32.333pt,-8.0001pt) -- (23.095pt,-8.0001pt) -- (27.714pt,0.0pt) ;\draw[rhrhombidraw] (4.619pt,-24.0003pt) -- (23.095pt,8.0001pt) -- (-13.857pt,8.0001pt) ;\fill (4.619pt,40.0005pt) circle (0.4pt);\fill (0.0pt,32.0004pt) circle (0.4pt);\fill (9.238pt,32.0004pt) circle (0.4pt);\fill (-4.619pt,24.0003pt) circle (0.4pt);\fill (4.619pt,24.0003pt) circle (0.4pt);\fill (13.857pt,24.0003pt) circle (0.4pt);\fill (-9.238pt,16.0002pt) circle (0.4pt);\fill (0.0pt,16.0002pt) circle (0.4pt);\fill (9.238pt,16.0002pt) circle (0.4pt);\fill (18.476pt,16.0002pt) circle (0.4pt);\fill (-13.857pt,8.0001pt) circle (0.4pt);\fill (-4.619pt,8.0001pt) circle (0.4pt);\fill (4.619pt,8.0001pt) circle (0.4pt);\fill (13.857pt,8.0001pt) circle (0.4pt);\fill (23.095pt,8.0001pt) circle (0.4pt);\fill (-18.476pt,0.0pt) circle (0.4pt);\fill (-9.238pt,0.0pt) circle (0.4pt);\fill (0.0pt,0.0pt) circle (0.4pt);\fill (9.238pt,0.0pt) circle (0.4pt);\fill (18.476pt,0.0pt) circle (0.4pt);\fill (27.714pt,0.0pt) circle (0.4pt);\fill (-23.095pt,-8.0001pt) circle (0.4pt);\fill (-13.857pt,-8.0001pt) circle (0.4pt);\fill (-4.619pt,-8.0001pt) circle (0.4pt);\fill (4.619pt,-8.0001pt) circle (0.4pt);\fill (13.857pt,-8.0001pt) circle (0.4pt);\fill (23.095pt,-8.0001pt) circle (0.4pt);\fill (32.333pt,-8.0001pt) circle (0.4pt);\fill (-27.714pt,-16.0002pt) circle (0.4pt);\fill (-18.476pt,-16.0002pt) circle (0.4pt);\fill (-9.238pt,-16.0002pt) circle (0.4pt);\fill (0.0pt,-16.0002pt) circle (0.4pt);\fill (9.238pt,-16.0002pt) circle (0.4pt);\fill (18.476pt,-16.0002pt) circle (0.4pt);\fill (27.714pt,-16.0002pt) circle (0.4pt);\fill (36.952pt,-16.0002pt) circle (0.4pt);\fill (-32.333pt,-24.0003pt) circle (0.4pt);\fill (-23.095pt,-24.0003pt) circle (0.4pt);\fill (-13.857pt,-24.0003pt) circle (0.4pt);\fill (-4.619pt,-24.0003pt) circle (0.4pt);\fill (4.619pt,-24.0003pt) circle (0.4pt);\fill (13.857pt,-24.0003pt) circle (0.4pt);\fill (23.095pt,-24.0003pt) circle (0.4pt);\fill (32.333pt,-24.0003pt) circle (0.4pt);\fill (41.571pt,-24.0003pt) circle (0.4pt);\fill (-36.952pt,-32.0004pt) circle (0.4pt);\fill (-27.714pt,-32.0004pt) circle (0.4pt);\fill (-18.476pt,-32.0004pt) circle (0.4pt);\fill (-9.238pt,-32.0004pt) circle (0.4pt);\fill (0.0pt,-32.0004pt) circle (0.4pt);\fill (9.238pt,-32.0004pt) circle (0.4pt);\fill (18.476pt,-32.0004pt) circle (0.4pt);\fill (27.714pt,-32.0004pt) circle (0.4pt);\fill (36.952pt,-32.0004pt) circle (0.4pt);\fill (46.19pt,-32.0004pt) circle (0.4pt);\fill (-41.571pt,-40.0005pt) circle (0.4pt);\fill (-32.333pt,-40.0005pt) circle (0.4pt);\fill (-23.095pt,-40.0005pt) circle (0.4pt);\fill (-13.857pt,-40.0005pt) circle (0.4pt);\fill (-4.619pt,-40.0005pt) circle (0.4pt);\fill (4.619pt,-40.0005pt) circle (0.4pt);\fill (13.857pt,-40.0005pt) circle (0.4pt);\fill (23.095pt,-40.0005pt) circle (0.4pt);\fill (32.333pt,-40.0005pt) circle (0.4pt);\fill (41.571pt,-40.0005pt) circle (0.4pt);\fill (50.809pt,-40.0005pt) circle (0.4pt);\fill (-46.19pt,-48.0006pt) circle (0.4pt);\fill (-36.952pt,-48.0006pt) circle (0.4pt);\fill (-27.714pt,-48.0006pt) circle (0.4pt);\fill (-18.476pt,-48.0006pt) circle (0.4pt);\fill (-9.238pt,-48.0006pt) circle (0.4pt);\fill (0.0pt,-48.0006pt) circle (0.4pt);\fill (9.238pt,-48.0006pt) circle (0.4pt);\fill (18.476pt,-48.0006pt) circle (0.4pt);\fill (27.714pt,-48.0006pt) circle (0.4pt);\fill (36.952pt,-48.0006pt) circle (0.4pt);\fill (46.19pt,-48.0006pt) circle (0.4pt);\fill (55.428pt,-48.0006pt) circle (0.4pt);\fill[thick,fill=black!60] (-32.333pt,-40.0005pt) -- (-23.095pt,-40.0005pt) -- (-13.857pt,-24.0003pt) -- (-18.476pt,-16.0002pt) -- cycle;\fill[thick,fill=black!60] (27.714pt,-16.0002pt) -- (23.095pt,-24.0003pt) -- (32.333pt,-40.0005pt) -- (36.952pt,-32.0004pt) -- cycle;\fill[thick,fill=black!60] (0.0pt,0.0pt) -- (4.619pt,-8.0001pt) -- (9.238pt,0.0pt) -- cycle;\fill[thick,fill=black!60] (0.0pt,-32.0004pt) -- (-4.619pt,-40.0005pt) -- (4.619pt,-40.0005pt) -- cycle;\draw[thin,-my] (32.333pt,-48.0006pt) arc (180:120:4.619pt);\draw[thin,-my] (34.6425pt,-44.0006pt) arc (300:360:4.619pt);\draw[thin,-my] (36.952pt,-40.0005pt) arc (180:120:4.619pt);\draw[thin,-my] (39.2615pt,-36.0005pt) arc (300:360:4.619pt);\draw[thin,-my] (41.571pt,-32.0004pt) arc (0:60:4.619pt);\draw[thin,-my] (39.2615pt,-28.0004pt) arc (240:180:4.619pt);\draw[thin,-my] (36.952pt,-24.0003pt) arc (0:60:4.619pt);\draw[thin,-my] (34.6425pt,-20.0003pt) arc (240:180:4.619pt);\draw[thin,-my] (32.333pt,-16.0002pt) arc (0:60:4.619pt);\draw[thin,-my] (30.0235pt,-12.0002pt) arc (60:120:4.619pt);\draw[thin,-my] (25.4045pt,-12.0002pt) arc (300:240:4.619pt);\draw[thin,-my] (20.7855pt,-12.0002pt) --  (16.1665pt,-4.0001pt);;\draw[thin,-my] (16.1665pt,-12.0002pt) --  (20.7855pt,-4.0001pt);;\draw[thin,-my] (16.1665pt,-4.0001pt) arc (240:180:4.619pt);\draw[thin,-my] (13.857pt,0.0pt) arc (0:60:4.619pt);\draw[thin,-my] (11.5475pt,4.0001pt) arc (60:120:4.619pt);\draw[thin,-my] (6.9285pt,4.0001pt) arc (300:240:4.619pt);\draw[thin,-my] (2.3095pt,4.0001pt) arc (60:120:4.619pt);\draw[thin,-my] (-2.3095pt,4.0001pt) arc (300:240:4.619pt);\draw[thin,-my] (-6.9285pt,4.0001pt) arc (60:120:4.619pt);\draw[thin,-my] (-11.5475pt,4.0001pt) arc (120:180:4.619pt);\draw[thin,-my] (-13.857pt,0.0pt) arc (180:240:4.619pt);\draw[thin,-my] (-11.5475pt,-4.0001pt) arc (60:0:4.619pt);\draw[thin,-my] (-9.238pt,-8.0001pt) arc (180:240:4.619pt);\draw[thin,-my] (-6.9285pt,-12.0002pt) arc (60:0:4.619pt);\draw[thin,-my] (-4.619pt,-16.0002pt) arc (180:240:4.619pt);\draw[thin,-my] (-4.619pt,-16.0002pt) arc (180:240:4.619pt);\draw[thin,-my] (-2.3095pt,-20.0003pt) arc (240:300:4.619pt);\draw[thin,-my] (2.3095pt,-20.0003pt) arc (120:60:4.619pt);\draw[thin,-my] (6.9285pt,-20.0003pt) arc (240:300:4.619pt);\draw[thin,-my] (11.5475pt,-20.0003pt) arc (300:360:4.619pt);\draw[thin,-my] (13.857pt,-16.0002pt) arc (180:120:4.619pt);\draw[thin,-my] (20.7855pt,-4.0001pt) arc (300:360:4.619pt);\draw[thin,-my] (23.095pt,0.0pt) arc (0:60:4.619pt);\draw[thin,-my] (20.7855pt,4.0001pt) arc (240:180:4.619pt);\draw[thin,-my] (18.476pt,8.0001pt) arc (0:60:4.619pt);\draw[thin,-my] (16.1665pt,12.0002pt) arc (240:180:4.619pt);\draw[thin,-my] (13.857pt,16.0002pt) arc (0:60:4.619pt);\draw[thin,-my] (11.5475pt,20.0003pt) arc (240:180:4.619pt);\draw[thin,-my] (9.238pt,24.0003pt) arc (0:60:4.619pt);\draw[thin,-my] (6.9285pt,28.0004pt) arc (240:180:4.619pt);\draw[thin,-my] (4.619pt,32.0004pt) arc (0:60:4.619pt);\end{tikzpicture}
}
 \caption{The same situation as in Figure~\protect\ref{fig:degengraph}. The inner triangles are shaded while the border triangles are white.
An exemplary turnpath in $\resf f$ is also depicted, where the two straight arrows represent crossing turnedges.}
    \nopar\label{fig:innerandborder}
\end{center}
\end{figure}

Let $L$ be a convex set and $a$ be one of its sides.
For $k\in E(\Delta)$ we write $k\subseteq a$ to express that $k$ is contained in $a$. 
Let $k_1,\ldots,k_r$ denote the edges contained in~$a$ in clockwise order. 
We call $a_{\to L} := k_1$ the \emph{$L$-entrance edge} of $a$ and 
$a_{L\to} := k_r$ the \emph{$L$-exit edge} of $a$. 
We may have $r=1$ in which case the entrance and exit edges coincide.
Note that if $M$ is a convex set adjacent to $L$, 
sharing with it the joint side $a$,
then the $M$-entrance edge of $a$ is at the same time the $L$-exit edge of $L$,
that is, $a_{\to M} = a_{L\to}$.  

The hive triangles in a convex set~$L$ either touch the border of $L$ or lie inside~$L$.  
Correspondingly, we will speak about {\em border triangles} and 
{\em inner triangles} of $L$. 
Recall from Definition~\ref{def:resf} 
the set $\cP_f$ of complete turnpaths in $\resf f$. 

\begin{proposition}
\label{cla:outerccdir}
Let $p\in\cP_f$ and $L$ be an $f$\dash flatspace. Then: 
\begin{remunerate}
\item $p$ can enter~$L$ only by crossing entrance edges of $L$.
Similarly, $p$ can leave $L$ only by crossing exit edges of $L$.

\item $p$ uses only turnvertices in border triangles of $L$ and traverses the border of~$L$ 
in counterclockwise direction. 
\end{remunerate}
\end{proposition}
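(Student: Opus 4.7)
My plan is to prove parts (1) and (2) together by analyzing the constraints on turnpaths in $\resf f$ near the flatspace $L$. The essential input is Definition~\ref{def:resf}: in every $f$-flat rhombus the four negative slack contributions $\p_-(\varrho)$ are removed from $\resf f$. These four comprise the two counterclockwise turns $\rhpoulMl, \rhpolrWl$ at the acute vertices and the two clockwise across-diagonal turnedges $\rhpoulMrr, \rhpolrWrr$ at the obtuse vertices, and Lemma~\ref{lem:flatspaceprop} tells us that every internal edge of~$L$ is the diagonal of an $f$-flat rhombus whose two triangles both lie in~$L$.

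I will first establish a local transition rule: whenever $p$ uses a turnedge crossing an internal edge $s$ of~$L$ between two $L$-triangles $T, T'$, this turnedge is a length-four across-diagonal contribution of the $f$-flat rhombus $T \cup T'$. Enumerating the eight such contributions and excluding the two $\p_-$ ones, I will verify that the remaining six (two positive and four neutral) all correspond to motion from~$T$ to~$T'$ that proceeds counterclockwise along the boundary $\partial L$ -- the positive ones $\rhpourMll, \rhpollWll$ visibly curve counterclockwise around one of the obtuse endpoints of~$s$, and the neutrals are handled by a case analysis matching the geometric shape of the contribution against the layout of $\partial L$ near~$T$ and~$T'$.

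Next I will show that $p$ uses no turnvertex in an inner triangle of~$L$. For an inner triangle $T$ with three $f$-flat neighbouring rhombi, assume toward contradiction that $\vartheta \in T$ is a turnvertex of~$p$. Iterating the transition rule above, $p$ must leave~$T$ counterclockwise into another triangle of~$L$, and then again counterclockwise, and so on. Because $T$ is strictly interior to $L$ and the trajectory is forced to wind around a vertex of~$T$ within~$L$, a finite-winding argument -- combined with the fact that turnpaths visit no turnvertex twice -- yields a contradiction. The detailed finiteness argument requires splitting by the five possible shapes of~$L$ from Lemma~\ref{lem:flatspaceprop}.

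Finally for part~(1): suppose $p$ enters~$L$ at edge $k_j$ on side $a = k_1 \cdots k_r$ with $j > 1$. The first turnvertex of~$p$ inside~$L$ sits in the border triangle $T_j$ adjacent to~$k_j$. By the previous steps, $p$ either exits~$L$ immediately or proceeds counterclockwise along~$a$. A careful analysis of the possible exit options (comparing the admissible turns at each $T_l$ with the border structure of $L$ at $k_l$) shows that any exit before reaching~$k_1$ is blocked, and that the corner transition past $k_1$ forces $p$ through an inner triangle -- contradicting the inner-triangle claim. Hence $j = 1$, proving the entrance part of~(1); the exit statement follows by applying the same argument to the reverse of~$p$. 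The most subtle step is the winding argument in the inner-triangle analysis, where one must handle each flatspace shape separately and show that the forced counterclockwise continuation of $p$ inside $L$ cannot reach~$\partial L$ before creating a self-intersection.
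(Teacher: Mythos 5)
Your plan takes a genuinely different route from the paper's, and there is a real gap in the inner-triangle step.

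The paper proves part (2) by a short, fully local argument: in an inner triangle $T$, each of the three counterclockwise turnvertices is literally deleted from $\resf f$, because each one is the single-turn negative contribution (a counterclockwise turn at an acute angle) of one of the three $f$-flat rhombi touching $T$; and each of the three clockwise turnvertices is then isolated, because every turnedge incident to it is either itself the deleted $\p_-$ pair of the adjacent rhombus, or it links to one of these already-deleted counterclockwise turnvertices in a neighbouring triangle (here one uses that all rhombi in the small hexagonal neighbourhood of $T$ are $f$-flat, not only the three containing $T$). The conclusion that $p$ avoids inner triangles is immediate from the definition of $\resf f$; there is no winding argument and no case-split on the shape of $L$.

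Your proposal replaces this with a winding argument built on a transition rule asserting that the six non-$\p_-$ across-diagonal contributions of an $f$-flat rhombus "all correspond to motion from $T$ to $T'$ that proceeds counterclockwise along the boundary $\partial L$". That is not correct as stated: the four neutral contributions split evenly into clockwise-then-counterclockwise and counterclockwise-then-clockwise pairs, so they carry no consistent orientation; and the fallback of a "case analysis matching the geometric shape of the contribution against the layout of $\partial L$ near $T$ and $T'$" has nothing to bite on when $T,T'$ are deep in the interior of $L$ and $\partial L$ is not nearby. Since your inner-triangle argument rests entirely on this rule, the central fact that $p$ cannot use a turnvertex in an inner triangle is not established. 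For part (1), the paper argues directly from the three possible shapes of the entering turn, showing in each case that a specific rhombus next to the crossed edge must be a divider, which forces the edge to be $a_{\to L}$; your path-tracking argument from $k_j$ back to $k_1$ is a heavier route, and it also leans on the gapped inner-triangle step.
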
 

\begin{samepage}
\begin{proof}
We call turnvertices, which lie in $L$ and start at entrance edges of~$L$, 
{\em entrance turnvertices}. 
Diagonals of non\dash$f$\dash flat rhombi shall be called \emph{dividers}.
\begin{remunerate}
\item[(1)] If $p$ enters $L$ with a counterclockwise turn
\begin{tikzpicture}\draw[rhrhombidraw] (0.0pt,0.0pt) -- (9.238pt,0.0pt) -- cycle;\draw[thin,-my] (4.619pt,0.0pt) arc (0:60:4.619pt);\end{tikzpicture}
, then
\begin{tikzpicture}\draw[rhrhombidraw] (0.0pt,0.0pt) -- (9.238pt,0.0pt) -- cycle;\draw[thin,-my] (4.619pt,0.0pt) arc (0:60:4.619pt);\draw[rhrhombithickside] (9.238pt,0.0pt) -- (4.619pt,8.0001pt);\end{tikzpicture}
must be a divider. Hence 
\begin{tikzpicture}\draw[rhrhombidraw] (0.0pt,0.0pt) -- (9.238pt,0.0pt) -- cycle;\draw[thin,-my] (4.619pt,0.0pt) arc (0:60:4.619pt);\end{tikzpicture}
is an entrance turnvertex.
\item[(2)] If $p$ enters $L$ with a clockwise and a counterclockwise turn
\begin{tikzpicture}\draw[rhrhombidraw] (0.0pt,0.0pt) -- (9.238pt,0.0pt) -- cycle;\draw[thin,-my] (4.619pt,0.0pt) arc (180:120:4.619pt) arc (300:360:4.619pt);\end{tikzpicture}
, then 
\begin{tikzpicture}\draw[rhrhombidraw] (0.0pt,0.0pt) -- (9.238pt,0.0pt) -- cycle;\draw[thin,-my] (4.619pt,0.0pt) arc (180:120:4.619pt) arc (300:360:4.619pt);\draw[rhrhombithickside] (9.238pt,0.0pt) -- (13.857pt,8.0001pt);\end{tikzpicture}
is a divider and hence
\begin{tikzpicture}\draw[rhrhombidraw] (0.0pt,0.0pt) -- (9.238pt,0.0pt) -- cycle;\draw[thin,-my] (4.619pt,0.0pt) arc (180:120:4.619pt);\end{tikzpicture}
is an entrance turnvertex.
\item[(3)] If $p$ enters $L$ with two clockwise turns
\begin{tikzpicture}\draw[rhrhombidraw] (0.0pt,0.0pt) -- (9.238pt,0.0pt) -- cycle;\draw[thin,-my] (4.619pt,0.0pt) arc (180:120:4.619pt) arc (120:60:4.619pt);\end{tikzpicture}%
, then
\begin{tikzpicture}\draw[rhrhombidraw] (0.0pt,0.0pt) -- (9.238pt,0.0pt) -- cycle;\draw[thin,-my] (4.619pt,0.0pt) arc (180:120:4.619pt) arc (120:60:4.619pt);\draw[rhrhombithickside] (9.238pt,0.0pt) -- (4.619pt,8.0001pt);\end{tikzpicture}
is a divider and hence
\begin{tikzpicture}\draw[rhrhombidraw] (0.0pt,0.0pt) -- (9.238pt,0.0pt) -- cycle;\draw[thin,-my] (4.619pt,0.0pt) arc (180:120:4.619pt);\end{tikzpicture}
is an entrance turnvertex.
\end{remunerate}
Analogous arguments hold for exits with the situations
\begin{tikzpicture}\draw[rhrhombidraw] (4.619pt,8.0001pt) -- (-4.619pt,8.0001pt) -- cycle;\draw[thin,-my] (-2.3095pt,4.0001pt) arc (300:360:4.619pt);\draw[rhrhombithickside] (0.0pt,0.0pt) -- (4.619pt,8.0001pt);\end{tikzpicture}%
, 
\begin{tikzpicture}\draw[rhrhombidraw] (4.619pt,8.0001pt) -- (-4.619pt,8.0001pt) -- cycle;\draw[thin,-my] (4.619pt,0.0pt) arc (0:60:4.619pt) arc (240:180:4.619pt);\draw[rhrhombithickside] (4.619pt,8.0001pt) -- (9.238pt,0.0pt);\end{tikzpicture}
 and 
\begin{tikzpicture}\draw[rhrhombidraw] (4.619pt,8.0001pt) -- (-4.619pt,8.0001pt) -- cycle;\draw[thin,-my] (6.9285pt,4.0001pt) arc (300:240:4.619pt) arc (240:180:4.619pt);\draw[rhrhombithickside] (0.0pt,0.0pt) -- (4.619pt,8.0001pt);\end{tikzpicture}%
. This proves the first assertion.

We now show the second assertion. 
Consider an inner triangle
\begin{tikzpicture}\draw[rhrhombidraw] (0.0pt,0.0pt) -- (9.238pt,0.0pt) -- (4.619pt,8.0001pt) -- cycle;\end{tikzpicture}%
.
All rhombi in the shaded area
\begin{tikzpicturednosmash}\fill[thick,fill=black!20] (-4.619pt,24.0003pt) -- (4.619pt,24.0003pt) -- (13.857pt,8.0001pt) -- (9.238pt,0.0pt) -- (-9.238pt,0.0pt) -- (-13.857pt,8.0001pt) -- cycle;\draw[rhrhombidraw] (0.0pt,16.0002pt) -- (-4.619pt,8.0001pt) -- (4.619pt,8.0001pt) -- cycle;\end{tikzpicturednosmash}
are $f$\dash flat.
By the definition of $\resf f$, 
the counterclockwise turnvertices
\begin{tikzpicture}\draw[rhrhombidraw] (0.0pt,16.0002pt) -- (-4.619pt,8.0001pt) -- (4.619pt,8.0001pt) -- cycle;\draw[thin,-my] (-2.3095pt,12.0002pt) arc (240:300:4.619pt);\end{tikzpicture}%
,
\begin{tikzpicture}\draw[rhrhombidraw] (0.0pt,16.0002pt) -- (-4.619pt,8.0001pt) -- (4.619pt,8.0001pt) -- cycle;\draw[thin,-my] (0.0pt,8.0001pt) arc (0:60:4.619pt);\end{tikzpicture}
and
\begin{tikzpicture}\draw[rhrhombidraw] (0.0pt,16.0002pt) -- (-4.619pt,8.0001pt) -- (4.619pt,8.0001pt) -- cycle;\draw[thin,-my] (2.3095pt,12.0002pt) arc (120:180:4.619pt);\end{tikzpicture}
are not vertices of $\resf f$. For the same reason, the clockwise turnvertices 
\begin{tikzpicture}\draw[rhrhombidraw] (0.0pt,16.0002pt) -- (-4.619pt,8.0001pt) -- (4.619pt,8.0001pt) -- cycle;\draw[thin,-my] (-2.3095pt,12.0002pt) arc (60:0:4.619pt);\end{tikzpicture}%
,
\begin{tikzpicture}\draw[rhrhombidraw] (0.0pt,16.0002pt) -- (-4.619pt,8.0001pt) -- (4.619pt,8.0001pt) -- cycle;\draw[thin,-my] (0.0pt,8.0001pt) arc (180:120:4.619pt);\end{tikzpicture}
and
\begin{tikzpicture}\draw[rhrhombidraw] (0.0pt,16.0002pt) -- (-4.619pt,8.0001pt) -- (4.619pt,8.0001pt) -- cycle;\draw[thin,-my] (2.3095pt,12.0002pt) arc (300:240:4.619pt);\end{tikzpicture}
have no incident turnedge in $\resf f$.
This shows that turnpaths and turncycles in $\resf f$ can only use turnvertices in border triangles. 

Finally, the fact that a counterclockwise turn
{
\begin{tikzpicture}\draw[thin,-my] (-2.3095pt,12.0002pt) arc (240:300:4.619pt);\fill[white] (-4.619pt,8.0001pt) circle (0.1pt);\fill[white] (4.619pt,8.0001pt) circle (0.1pt);\fill[white] (0.0pt,16.0002pt) circle (0.1pt);\end{tikzpicture}
}
in $p$ implies that
{
\begin{tikzpicture}\draw[thin,-my] (-2.3095pt,12.0002pt) arc (240:300:4.619pt);\draw[rhrhombithickside] (-4.619pt,8.0001pt) -- (4.619pt,8.0001pt);\fill[white] (0.0pt,16.0002pt) circle (0.1pt);\end{tikzpicture}
}
is a divider, shows that $p$ traverses the border triangles of $f$\dash flatspaces in counterclockwise direction.
\qquad\end{proof}
\end{samepage}

Let $d\in\oF(G)$ be a flow and $k\in E(G)$ be an edge 
lying at the border of a convex set~$L$. 
If the hive triangle in~$L$ having the side~$k$ is upright, 
we define $\inflow L {k} d := \delta(k,f)$, 
otherwise we set $\inflow L {k} d := -\delta(k,f)$. 
We call $\inflow L {k} d$ the {\em throughput of $d$ into $L$} 
through~$k$. 
It will be convenient to call
$\outflow L {k} d := -\inflow L {k} d$ the {\em throughput of $d$ out of $L$} through~$k$. 

Note that if the convex sets $L$ and $M$ are adjacent, sharing an edge~$k$, then 
$\inflow {M}  {k} d = \outflow L  {k} d$. 

For some of the following properties of throughputs compare Figure~\ref{fig:degengraph}. 

\begin{lemma}
\label{obs:borderentranceexit}
Let $L$ be a convex set  contained in an $f$\dash flatspace 
and $a$ be a side of $L$. Further, let $k_1,\ldots,k_r\in E(\Delta)$ be the edges contained in $a$ 
in clockwise order. 
Then $\inflow L {k_1} f = \ldots =\inflow L {k_r} f$. 
Moreover, if $d\in\oF(G)$ is $f$\dash hive preserving, 
then $\inflow L {k_1} d \geq \ldots \ge \inflow L {k_r} d$.
\end{lemma}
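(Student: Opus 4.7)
The plan is to translate everything into the hive potential picture via the isomorphism $\oF(G)\simeq H$: let $h\in H$ be the hive corresponding to $f$, and let $h_d\in H$ be the potential corresponding to $d$ (so $h_d$ need not be a hive itself). Then $\delta(k,f) = h(k_+) - h(k_-)$ and $\delta(k,d) = h_d(k_+) - h_d(k_-)$ for each oriented edge $k$ of $\Delta$. Since $d$ is $f$\dash hive preserving, passing to the limit in $\s{\varrho}{f+\varepsilon d}\ge 0$ yields $\s{\varrho}{d} = \sigma(\varrho, h_d) \ge 0$ for every $f$\dash flat rhombus $\varrho$. I also observe that throughputs on a border edge of $L$ are, up to a sign determined by the orientation of the adjacent triangle, simply differences of $h$\dash values (resp.\ $h_d$\dash values) at the two endpoints. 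So the entire statement will reduce to comparing successive differences of $h$ and $h_d$ along the vertices of the side $a$.

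The key geometric observation is the following: enumerate the hive triangles of $L$ that have a side on $a$ as $T_1,T_2,\dots,T_s$ (in the clockwise order along $a$); these are all of the same orientation. Because $L$ is convex, between any two consecutive such $T_i,T_{i+1}$ there is a single hive triangle $T_i'$ of the opposite orientation, which also lies in $L$ (otherwise $L$ would fail to be convex at the shared vertex). The two rhombi $T_i\cup T_i'$ and $T_i'\cup T_{i+1}$ are both contained in $L$, hence in the ambient $f$\dash flatspace; by Lemma~\ref{lem:flatspaceprop} flatspaces are convex, and iterating the hexagon equality (Corollary~\ref{cor:BZ2}) shows that every rhombus contained in a flatspace is $f$\dash flat. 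So along $a$ we have a strip of pairs of $f$\dash flat rhombi.

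Part~(1) now follows by writing out the slack relations $\sigma(T_i\cup T_i',h)=0$ and $\sigma(T_i'\cup T_{i+1},h)=0$ at each junction. Labeling the vertices of $a$ in order as $v_0,v_1,\dots,v_r$, both equations force $h(v_i)-h(v_{i-1})$ to equal the increment of $h$ along the parallel edge one row inside $L$, and combining the two gives $h(v_i)-h(v_{i-1}) = h(v_{i+1})-h(v_i)$, so $h$ is affine along $a$. Hence $\delta(k_i,f)$ is the same signed difference for every~$i$, which translates directly into $\inflow{L}{k_1}{f}=\cdots=\inflow{L}{k_r}{f}$. For part~(2), the analogous slack computation applied to $h_d$ yields inequalities $\sigma(T_i\cup T_i',h_d)\ge 0$ and $\sigma(T_i'\cup T_{i+1},h_d)\ge 0$; concatenating them gives $h_d(v_i)-h_d(v_{i-1}) \ge h_d(v_{i+1})-h_d(v_i)$, so the successive differences of $h_d$ along $a$ are non-increasing in the direction pointing into the interior of $L$.

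The main remaining obstacle is not mathematical but bookkeeping: one must check that after accounting for (a) the paper's orientation convention that renders every upright triangle clockwise, and (b) the definition of $\inflow{L}{k}{d}$ with its sign depending on whether the adjacent triangle of $L$ is upright or downright, the chain of differences $h_d(v_i)-h_d(v_{i-1})$ listed in the clockwise order on the boundary of $L$ is precisely $\inflow{L}{k_1}{d}, \inflow{L}{k_2}{d}, \ldots, \inflow{L}{k_r}{d}$. I would verify this in one representative case (say $a$ the bottom side of $L$, so that clockwise traversal of $a$ runs from right to left), and then invoke the obvious symmetry under rotation by multiples of $60^\circ$ to cover the other five possible positions of $a$ relative to $L$.
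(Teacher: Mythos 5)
Your proof is correct and at heart the same as the paper's: both reduce to comparing two adjacent boundary edges by combining the slack (in)equalities of the two overlapping $f$\dash flat rhombi that span them; you phrase this in the hive-potential picture via $\oF(G)\simeq H$, whereas the paper works directly with the throughput formulations $\s{\varrho}{f}=\rhaourM(f)+\rhaollW(f)$, etc. The orientation/sign bookkeeping you postpone does work out exactly as you expect, so there is no gap.
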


\begin{proof}
It is sufficient to show this for adjacent edges $k_1 = \rhsour$ and $k_2 = \rhslr$,
where the rhombi $\rhc$ and $\rhrholr$ are $f$\dash flat.
Since $0 = \s \rhc f = \rhaourM(f)+\rhaollW(f)$ and $0 = \s \rhrholr f = \rhaollW(f)+\rhalrM(f)$,
it follows $\rhaourM(f) = \rhalrM(f)$.

We have $\s\rhc d \ge 0$ and $\s \rhrholr d \ge 0$ 
as $d$ is $f$-hive preserving and $\rhc$ and $\rhrholr$ are $f$-flat.
The second statement follows now similarly as before. 
\qquad\end{proof}

\begin{observation}
\label{obs:bd-cap}
 Let $L$ be an $f$\dash flatspace with a side $a$ lying on the left border of $\Delta$.
Then the maximum of the capacities $b(k)$ (cf.\ Definition~\ref{def:hiveflowpolytope}) 
over all edges $k\subseteq a$ is attained at the exit edge of~$a$.
An analogous statement holds for the right and bottom border and entrance edges. 
\end{observation}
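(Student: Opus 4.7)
My plan is to verify the claim by a direct inspection of the clockwise orientation of the boundary of $L$ along each of the three borders of~$\Delta$, combined with the fact that $\la,\mu,\nu$ are partitions (nonincreasing sequences).

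Since $L\subseteq\Delta$ is convex, any side~$a$ of $L$ lying on a border of $\Delta$ is a connected segment of that border, consisting of consecutive edges $k_1,\ldots,k_r$. Because $L$ sits inside~$\Delta$ on the same side of $a$ as the interior of $\Delta$, the clockwise orientation of the boundary of~$L$ along $a$ agrees with the clockwise orientation of the boundary of $\Delta$ along $a$. A direct look at~$\Delta$ gives: clockwise on the left border runs bottom-to-top, on the right border top-to-bottom, and on the bottom border right-to-left.

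Next, I would combine this with the indexing conventions fixed just before Definition~\ref{def:hiveflowpolytope}: $b(k)=\nu_i$ on the $i$th left-border edge from top to bottom, $b(k)=\la_i$ on the $i$th right-border edge from top to bottom, and $b(k)=\mu_i$ on the $i$th bottom-border edge from right to left. In each case, the partition property makes $b$ nonincreasing in $i$, so the maximum over $k\subseteq a$ is attained at the edge with the smallest index. Matching this with the clockwise directions above: on the left border that smallest-index edge is the topmost one, which is the \emph{last} edge $k_r=a_{L\to}$ (exit); on the right border the smallest-index edge is the topmost one, which is now the \emph{first} edge $k_1=a_{\to L}$ (entrance); on the bottom border it is the rightmost edge, again $k_1=a_{\to L}$. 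This is exactly what the statement asserts.

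I do not expect a serious obstacle: the content is really just bookkeeping of orientations. The only mildly delicate point is making sure that ``clockwise order'' in the definition of $a_{\to L}$ and $a_{L\to}$ is interpreted relative to $L$ (so that the consistency $a_{\to M}=a_{L\to}$ noted for an adjacent flatspace~$M$ works out), and then checking that, on a side $a$ lying on $\partial\Delta$, this agrees with clockwise around~$\Delta$.
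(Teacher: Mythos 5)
Your proposal is correct and follows the same route as the paper's (very terse) proof: combine the monotonicity of $\la,\mu,\nu$ with the indexing conventions for $b(k)$ on the border of $\Delta$ and the clockwise orientation defining entrance/exit edges. You simply make explicit the orientation bookkeeping that the paper dismisses as following "directly."
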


\begin{proof}
This follows directly from the fact that $\nu_1 \geq \ldots\geq \nu_n$
and the definition of the throughput capacities $b(k)$ of the border edges~$k$ of $\Delta$, 
cf.\ Figure~\ref{fig:triangulararraywithflow}. Similarly for $\la$ and~$\mu$.
\qquad\end{proof}


It will be important to decompose the throughput 
$\inflow L k d$ into its positive and negative part.
Recall that $\inflow L k d = - \outflow L k d$.

\begin{definition}\label{def:def-inflow}
Let $d\in\oF(G)$, $L$ be an $f$\dash flatspace, and 
$k\in E(\Delta)$ be an edge at the border of~$L$. 
The  {\em $L$-inflow of $d$ through~$k$} and
{\em $L$-outflow of $d$  through~$k$} are defined as
$$
 \inom L k d := \max\{\inflow L {k} d,0\},\quad 
 \outom L k d := \max\{\outflow L {k} d,0\} .
$$
Further, for a side~$a$ of $L$, we define the 
{\em $L$-inflow of $d$ through~$a$} and the 
{\em $L$-outflow of $d$ through~$a$} by
\begin{equation*}
 \inom L a d \ :=\ \sum_{k\subseteq a} \inom L k d ,\quad 
 \outom L a d \ :=\ \sum_{k\subseteq a} \outom L k  d .
\end{equation*}
We write $\inom \Delta a d :=\inom L a d$ and 
$\outom \Delta a d :=\outom L a d$
if the side~$a$ is on the border of $\Delta$. 
\end{definition}



Note that 
$\inflow L k d = \inom L k d - \outom L k d$.  
Further, if $L$ and $M$ are adjacent convex sets sharing a side~$a$, then
$\inom L k d = \outom M k d$ for $k\subseteq a$ 
and hence 
\begin{equation}\label{eq:in=out}
\inom L a d = \outom M a d .
\end{equation}

The partition of $\Delta$ into $f$\dash flatspaces leads to 
a partition of the border of $\Delta$. 
Let $\cS_f$ denote the set of sides of $f$\dash flatspaces that lie 
on the right or bottom border of $\Delta$.

\begin{lemma}\label{le:deltacomp}
For $f\in B$ and $d\in\oF(G)$ we have 
$$
 \delta(d) = \sum_{a\in\cS_f} \big(\inom \Delta a d  - \outom \Delta a d \big) .
$$
\end{lemma}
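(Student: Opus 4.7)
The plan is to reduce the right-hand side to formula \eqref{eq:overall-thrput} by unwinding the definitions of $\inom{}{}{}$ and $\outom{}{}{}$.

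First, I would use that for any edge $k\in E(\Delta)$ lying on the border of a convex set $L$,
$$
\inom L k d - \outom L k d \;=\; \max\{\inflow L k d,0\} - \max\{-\inflow L k d,0\} \;=\; \inflow L k d.
$$
Summing over $k\subseteq a$ for a side $a$ of an $f$-flatspace $L$, this gives $\inom L a d - \outom L a d = \sum_{k\subseteq a}\inflow L k d$. In particular, when $a\in\cS_f$ is a side on the right or bottom border of $\Delta$, contained in the $f$-flatspace $L(a)$, we have $\inom \Delta a d - \outom \Delta a d = \sum_{k\subseteq a}\inflow{L(a)} k d$.

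Next, I would observe that the sides in $\cS_f$ partition the set $E_r\cup E_b$ of border edges on the right and bottom of $\Delta$: each such edge lies in exactly one $f$-flatspace, hence in exactly one side of $\cS_f$. Therefore,
$$
\sum_{a\in\cS_f}\big(\inom \Delta a d - \outom \Delta a d\big) \;=\; \sum_{k\in E_r\cup E_b}\inflow{L(k)} k d,
$$
where $L(k)$ denotes the $f$-flatspace containing $k$.

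Finally, since each edge $k\in E_r\cup E_b$ is a side of an upright hive triangle (the only hive triangle at $k$ lying inside $\Delta$ is upright), Definition~\ref{def:def-inflow} gives $\inflow{L(k)} k d = \delta(k,d)$. Applying \eqref{eq:overall-thrput} we conclude
$$
\sum_{a\in\cS_f}\big(\inom \Delta a d - \outom \Delta a d\big) \;=\; \sum_{k\in E_r\cup E_b}\delta(k,d) \;=\; \delta(d),
$$
as claimed. There is no real obstacle here; the statement is essentially a bookkeeping identity, and the only mildly nontrivial point is checking that the hive triangles touching the right or bottom border of $\Delta$ are all upright, so that the sign conventions align with those in~\eqref{eq:overall-thrput}.
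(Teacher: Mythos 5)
Your proof is correct and is essentially the same argument as the paper's, just run in the opposite direction: you start from the right-hand side, cancel $\inom L k d - \outom L k d = \inflow L k d$, regroup via the partition of the right/bottom border by $\cS_f$, and invoke \eqref{eq:overall-thrput}, whereas the paper starts from $\delta(d)=\sum_{e_-=s}d(e)-\sum_{e_+=s}d(e)$, passes to $\sum_k \inflow \Delta k d$ over right/bottom border edges via flow conservation at the border white vertices, and then regroups by sides in $\cS_f$. Both hinge on the same two observations — the identity $\inflow L k d = \inom L k d - \outom L k d$ and the fact that the sides in $\cS_f$ partition the right/bottom border edges — so the content is identical.
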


\begin{proof}
By the definition of the overall throughput, and since $s$ is connected in $G$ 
only to the vertices on the 
right or bottom border of $\Delta$, we have
$$
 \delta(d) \ =\ \sum_{e_-=s} d(e) - \sum_{e_+=s} d(e) \ =\  
 \sum_{k} \inflow \Delta k d ,
$$
where the right-hand sum is over all edges~$k\in E(\Delta)$ on the right or bottom border of $\Delta$. 
Recall that 
$\inflow {\Delta} k d = \inom {\Delta} k d - \outom  {\Delta} k d$ 
By Definition~\ref{def:def-inflow},
$$
 \sum_k \inom \Delta k d \ =\ 
 \sum_{a\in\cS_f} \sum_{k\subseteq a} \inom \Delta k d \ =\ 
 \sum_{a\in\cS_f} \inom \Delta a d .
$$
Similarly,
$\sum_k \outom \Delta k d =\sum_{a\in\cS_f} \outom \Delta a d$
and the assertion follows.
\qquad\end{proof}

\subsection{The Rerouting Theorem}\label{se:RRT}

We fix $f\in B$. Recall the set $\cP_f$ of complete turnpaths in $\resf f$
from Definition~\ref{def:resf}.
Let $\cP_{st}$,  $\cP_{ts}$, and $\cP_c$ denote the sets of 
$s$-$t$-turnpaths, $t$-$s$-turnpaths, and turncycles in $\resf f$, respectively.
Then we have the disjoint decomposition
$\cP_f=\cP_{st}\cup\cP_{ts}\cup\cP_{c}$. 
Note that every $p\in\cP_{st}$ enters $\Delta$ through exactly one edge
on the right or bottom side of~$\Delta$, 
and leaves $\Delta$ through exactly one edge on the left side of~$\Delta$
(otherwise $s$ or $t$ would be used more than once). 
Similarly, every $p\in\cP_{ts}$ enters $\Delta$ through exactly one edge
on the left  side of~$\Delta$ and leaves $\Delta$ through 
the right or bottom side of~$\Delta$. 
The reader should also note that turncycles $p\in\cP_c$ may 
pass through $s$ or $t$ (or both of them).

\begin{definition}\label{def:weight-fam}
A {\em weighted family $\varphi$ of complete turnpaths} in $\resf f$ 
is defined as a map $\varphi\colon\cP_f\to \R_{\ge 0}$. 
If $\varphi$ takes values in~$\N$, 
we call $\varphi$ a {\em multiset of complete turnpaths} in $\resf f$. 
In this case, we interpret $\varphi(p)$ as the multiplicity 
with which~$p$ occurs in the multiset~$\varphi$. 
\end{definition}

A weighted family $\varphi$ of complete turnpaths in $\resf f$ 
defines the nonnegative flow 
$\sum_{p\in\cP_f} \varphi(p) p$ in~$\resf f$. 
On the other hand, by Lemma~\ref{le:flow-decomp}, 
any nonnegative flow 
$d'\in F_+(\resf f)$ can be written in this form.

The flow $d':=\sum_p \varphi(p) p$ on $\resf f$ defined by 
the weighted family~$\varphi$ satisfies 
\begin{equation}\label{eq:gaga}
 \delta(d') \ =\  \sum_{p\in\cP_{st}}\varphi(p) - \sum_{p\in\cP_{ts}}\varphi(p) .
\end{equation}

To motivate the next definition, recall from Proposition~\ref{cla:outerccdir} 
that a complete turnpath $p\in\cP_f$ can enter an $f$-flatspace~$L$ only through an 
entrance edge $a_{\to L}$ of a side~$a$ of $L$, and leave only through an exit edge  $a_{L\to}$. 

\begin{definition}\label{def:sw-ew} 
Let $a$ be a side of an $f$-flatspace~$L$. 
We denote by 
$\Pout L a$ the set of $p\in\cP_f$ that enter~$L$ through the 
edge $a_{\to L}$. The set 
$\Pin L a$ denotes the set of $p\in\cP_f$ that exit~$L$ 
through the edge $a_{L\to }$.
For a weighted family $\varphi$ of complete turnpaths 
and an $f$-flatspace $L$
we define the {\em entrance weight} and the 
{\em exit weight} of a side $a$ of $L$ as follows:
\begin{equation*}\label{eq:in-out-ms} 
\inom L a \varphi \ := \sum_{p \in\cP_f(\to L,a)} \varphi(p),\quad
\outom L a \varphi \ := \sum_{p\in\cP_f(L\to,a)} \varphi(p) . 
\end{equation*} 
If the side~$a$ is on the border of $\Delta$ we write  
$\inom \Delta a \varphi :=\inom L a \varphi$, 
$\outom \Delta a \varphi :=\outom L a \varphi$, and 
$\Pout \Delta a := \Pout L a$. 
\end{definition}



The following remarkable result tells us that for 
any $f$-hive preserving flow~$d\in\oF(G)$,  
there is a weighted family $\varphi$ of complete turnpaths 
such that the inflows and outflows of $d$ through the sides~$a$ of the $f$-flatspaces
are given by the entrance weight and exit weight of~$\varphi$ through~$a$, respectively. 

\begin{theorem} [Rerouting Theorem]
\label{thm:rerouting}
Let $f \in B$ and $d \in \oF(G)$ be $f$\dash hive preserving.
Then there exists a weighed family $\varphi$ of complete turnpaths in $\resf f$
such that 
$\inom L a d = \inom L a \varphi$ and 
$\outom L a d = \outom L a \varphi$
for all $f$\dash flatspaces~$L$ and all sides $a$ of $L$. 
If $d$ is integral, then we may assume that $\varphi$ is a multiset. 
\end{theorem}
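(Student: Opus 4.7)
The plan is to construct $\varphi$ by setting up an auxiliary flow problem on the ``quotient graph'' determined by the partition of $\Delta$ into $f$-flatspaces, and then lifting each path/cycle of this flow to a complete turnpath in $\resf f$. First I would reduce to the case $d\geq 0$ by passing to the reduced representative, and note that by Lemma~\ref{le:flow-decomp} we may then decompose $d$ as a nonnegative sum of complete paths in $G$ (integral if $d$ is integral). Rather than try to convert each such path directly into a turnpath, the key is to work with aggregated side-flows $\inom L a d$ and $\outom L a d$.

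Next, I would introduce a multigraph $\mathcal{M}$ whose vertices are the sides of the $f$-flatspaces (with shared sides identified, by equation~\eqref{eq:in=out}), together with $s$ and $t$, where $s$ is attached to the sides on the right/bottom border and $t$ to those on the left border of $\Delta$. For each flatspace $L$ one inserts directed edges $(a,a')$ between sides of $L$, reflecting the counterclockwise routing of turnpath-segments inside $L$ guaranteed by Proposition~\ref{cla:outerccdir}. The prescribed data $\inom L a d$ and $\outom L a d$ then induce a nonnegative flow $F$ on $\mathcal{M}$: the conservation equations are satisfied because $\sum_a\big(\inom L a d-\outom L a d\big)=0$ for each flatspace $L$ (this reduces to the conservation of $d$ across $L$) and because $\inom L a d=\outom M a d$ at each divider shared by $L$ and $M$.

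Having $F$ in hand, I would decompose it into paths and cycles via Lemma~\ref{le:flow-decomp} and lift each component to a complete turnpath in $\resf f$ by routing, inside each visited flatspace $L$, counterclockwise along the border triangles from the entrance edge $a_{\to L}$ of the incoming side to the exit edge $a'_{L\to}$ of the outgoing side. The gluing across dividers is automatic because the clockwise order reverses, i.e., $a_{L\to}=a_{\to M}$ for adjacent $L,M$. Integrality of $\varphi$ when $d$ is integral follows because $F$ is then integral, and Lemma~\ref{le:flow-decomp} yields an integral path/cycle decomposition.

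The main obstacle is to verify that the lifts actually lie in $\resf f$, i.e., that no negative slack contribution in an $f$-flat rhombus and no edge crossing a capacity-achieving border edge is ever used. For the first, the counterclockwise boundary routing inside a flatspace uses only the border triangles and the permitted turns highlighted in Proposition~\ref{cla:outerccdir}, so forbidden negative contributions are automatically avoided; Lemma~\ref{cla:negimpliespos} and the flow-propagation Lemma~\ref{lem:flowpropagation} ensure that when $d$ itself uses a negative contribution, it is compensated by an antipodal positive contribution so that the aggregate routing is well-defined. For the second, $f$-hive preservation together with Lemma~\ref{obs:borderentranceexit} and Observation~\ref{obs:bd-cap} forces $\inom L a d$ and $\outom L a d$ to vanish on any side of $\Delta$ whose entrance (or exit) edge is already capacity-achieving, so the lifted turnpaths never need to cross a deleted border edge. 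Once these two checks are made, the constructed $\varphi$ meets the required entrance- and exit-weight identities.
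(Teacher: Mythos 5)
Your plan splits into two steps: (i) build a quotient flow $F$ on a multigraph whose vertices are flatspace sides, using the data $\inom L a d$, $\outom L a d$; (ii) decompose $F$ into paths/cycles and lift each one into $\resf f$ by routing counterclockwise around each visited flatspace. Step (ii) is where the real difficulty of the theorem is hidden, and your proposal treats it as routine when it is not.

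The obstacle is that inside a single $f$-flatspace $L$, not every pair $(a, a')$ of an entrance edge $a_{\to L}$ and an exit edge $a'_{L\to}$ is joined by a turnpath in $\resf f$ that stays in $L$. For a hexagon every such pair is connected, but for a parallelogram there are only 8 admissible turnpaths among 16 conceivable (entrance, exit) pairs (cf.\ Figure~\ref{fig:tp-parallelogram}), and the counts are similarly restrictive for trapezoids, pentagons, and triangles. So your multigraph $\mathcal{M}$ is missing many edges, and you must verify that the flow $F$ induced by $\inom L a d$, $\outom L a d$ is actually supported on the edges that exist. Equivalently, within each flatspace $L$ you need a \emph{matching} between inflow units and outflow units that only uses realizable (entrance, exit) pairs. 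That matching does not follow from Kirchhoff's laws alone; it has to be extracted from the structure of the hive flow $d$. This is exactly the content of the paper's Canonical Turnpath Theorem~\ref{thm:CTT}, whose proof is a case-by-case induction on convex shapes, and where for trapezoids, pentagons, and triangles one must choose the cutting direction according to the sign structure of $\SUPP(d)$ and invoke Lemma~\ref{lem:flowpropagation} to rule out the ``critical cases''. Your proposal cites Lemma~\ref{cla:negimpliespos} and Lemma~\ref{lem:flowpropagation} in passing, but they only constrain $\SUPP(d)$ at the level of individual rhombi; they do not by themselves force the existence of the required global matching inside a long thin trapezoid or a triangle. Until you supply an argument that the quotient flow $F$ on your multigraph is actually feasible given only the realizable side-pairs, the lifting step does not go through and the proof has a gap.

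Once that inner-flatspace matching result is in hand (as in Theorem~\ref{thm:CTT}), the outer structure of your argument — gluing across dividers using $\inom L a d = \outom M a d$ and $a_{L\to} = a_{\to M}$, and obtaining integrality of $\varphi$ from integrality of the decomposition — matches the paper's gluing step and is fine. Your remark about the deleted border edges is essentially correct: $f$-hive preservation forces $\delta(k,d)\le 0$ (resp.\ $\ge 0$) whenever $\delta(k,f)$ (resp.\ $-\delta(k,f)$) saturates its capacity, so the entrance/exit weights vanish on saturated sides, but this is a side observation, not the core difficulty.
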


Let us draw an immediate consequence. 

\begin{corollary}\label{cor:RRT}
Under the assumptions of Theorem~\ref{thm:rerouting}, 
the nonnegative flow
$d'  := \sum_{p\in\cP_f} \varphi(p) p$
on $\resf f$ satisfies $\delta(d') =\delta(d)$. 
\end{corollary}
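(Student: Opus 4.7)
The plan is a bookkeeping argument reducing everything to an identity about how complete turnpaths cross the right/bottom border of $\Delta$. First I would evaluate $\delta(d')$ using \eqref{eq:gaga} applied to the decomposition $d'=\sum_{p\in\cP_f}\varphi(p)\,p$, which immediately gives
\begin{equation*}
\delta(d') \;=\; \sum_{p\in\cP_{st}}\varphi(p) \;-\; \sum_{p\in\cP_{ts}}\varphi(p).
\end{equation*}
Next I would evaluate $\delta(d)$ via Lemma~\ref{le:deltacomp} and substitute the hypotheses of the Rerouting Theorem applied to those sides $a\in\cS_f$ that lie on the right or bottom border of $\Delta$ (and to their flatspaces~$L$). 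This yields
\begin{equation*}
\delta(d) \;=\; \sum_{a\in\cS_f}\bigl(\inom{\Delta}{a}{d}-\outom{\Delta}{a}{d}\bigr) \;=\; \sum_{a\in\cS_f}\bigl(\inom{\Delta}{a}{\varphi}-\outom{\Delta}{a}{\varphi}\bigr).
\end{equation*}

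The proof therefore reduces to showing the purely combinatorial identity
\begin{equation*}
\sum_{p\in\cP_{st}}\varphi(p) - \sum_{p\in\cP_{ts}}\varphi(p) \;=\; \sum_{a\in\cS_f}\bigl(\inom{\Delta}{a}{\varphi}-\outom{\Delta}{a}{\varphi}\bigr).
\end{equation*}
I would swap the order of summation on the right-hand side and, for each $p\in\cP_f$, count how often $p$ appears in the sets $\Pout{\Delta}{a}$ and $\Pin{\Delta}{a}$ as $a$ ranges over $\cS_f$. By Proposition~\ref{cla:outerccdir}, every crossing of the right/bottom border by $p$ occurs at the entrance edge $a_{\to L}$ or exit edge $a_{L\to}$ of a unique side $a$ of the unique border flatspace $L$ containing that edge, so the counts on the right-hand side are just the number of times $p$ enters and the number of times it leaves $\Delta$ through the right or bottom border.

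The main case analysis, which is the only place requiring real attention, is the behaviour of the various types of complete turnpaths with respect to this border. An $s$-$t$-turnpath enters $\Delta$ exactly once through the right/bottom border and exits through the left border, hence contributes $+1$ on both sides; a $t$-$s$-turnpath does the reverse, contributing $-1$ on both sides. For a turncycle $p\in\cP_c$ I would distinguish whether $p$ uses the source $s$: if not, then $p$ crosses the right/bottom border zero times and contributes $0$; if $s$ lies on $p$, then $p$ uses exactly one turnedge of the form $(s,\vartheta)$ and one of the form $(\vartheta',s)$, so $p$ enters $\Delta$ exactly once and exits exactly once across the right/bottom border, contributing $+1$ to both $\sum_a\inom{\Delta}{a}{\varphi}$ and $\sum_a\outom{\Delta}{a}{\varphi}$, which cancel. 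Use of the vertex $t$ is irrelevant for $\cS_f$. Summing these contributions matches the left-hand side exactly, and the corollary follows. The only delicate point is the cancellation for turncycles passing through $s$; everything else is routine tracking of endpoints.
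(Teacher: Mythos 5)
Your proposal is correct and follows essentially the same route as the paper's proof: both apply \eqref{eq:gaga} to $d'$, apply Lemma~\ref{le:deltacomp} together with the conclusion of Theorem~\ref{thm:rerouting} to $d$, and then reconcile the two expressions by a per-turnpath count of right/bottom border crossings in which the cycles through $s$ cancel. The only stylistic difference is that the paper organizes the cancellation by proving two intermediate identities (one for entrances, one for exits, each carrying the extra term $\sum_{p\in\cP_c,\,s\in p}\varphi(p)$) and subtracting them, whereas you reduce at once to a single combinatorial identity and check it case by case; the content is the same.
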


\begin{proof} 
Recall that $\cS_f$ denotes the set of sides of $f$\dash flatspaces that lie 
on the right or bottom border of $\Delta$.
We have 
$$
 \sum_{p\in\cP_{st}} \varphi(p) + \sum_{p\in\cP_{c}\atop s\in p} \varphi(p) \ =\ 
 \sum_{a\in\cS_f} \sum_{p \in\cP_f(\to \Delta, a)} \varphi(p) \ =\ 
\sum_{a\in\cS_f} \inom \Delta a \varphi \ =\ \sum_{a\in\cS_f}\inom \Delta a d,
$$ 
where we have used Theorem~\ref{thm:rerouting} 
for the last equality. 
Similarly, 
$$
 \sum_{p\in\cP_{ts}} \varphi(p) + \sum_{p\in\cP_{c}\atop s\in p} \varphi(p) \ =\ 
 \sum_{a\in\cS_f} \sum_{p \in\cP_f(\Delta\to, a)} \varphi(p) \ =\ 
 \sum_{a\in\cS_f} \outom \Delta a d .
$$
Subtracting and using~\eqref{eq:gaga} we get 
$$
 \delta(d') \ =\  
 \sum_{p\in\cP_{st}}\varphi(p) - \sum_{p\in\cP_{ts}}\varphi(p) \ =\ 
  \sum_{a\in\cS_f}\inom \Delta a d - \sum_{a\in\cS_f}\outom \Delta a d \ =\ \delta(d), 
$$
where we have used Lemma~\ref{le:deltacomp} for the last equality. 
\qquad\end{proof}

The proof of the Rerouting Theorem is postponed to Section~\ref{sec:reroutingtheorem}. 
The rough idea of the proof is to define a notion of canonical turnpaths within a convex set~$L$, 
that specializes to the complete turnpaths in $\resf f$ restricted to $L$, 
in case $L$ is an $f$-flatspace. 
We shall successively cut $L$ into convex subsets by straight lines 
and recursively build up the required canonical turnpaths by operations 
of concatenation and straightening.

\begin{remark}
\label{re:comparison}
For given $f \in B$, 
let $C_f$ denote the cone of $f$-hive preserving flows on~$G$. 
Lemma~\ref{lem:direction} states that 
$\pi(\Ke(\resf f)) \subseteq C_f$.
Using Proposition~\ref{cla:outerccdir}, it is easy to see that this inclusion may be strict
for some $f \in B$. 
On the other hand, one can show that 
$\pi(\Ke(\resf f)) = C_f$ 
if the hive triangles and rhombi are the only $f$\dash flatspaces. 
Hive flows~$f$ satisfying the latter property were called \emph{shattered} in~\cite{ike:08, bi:09}.
We note that if $f$ is shattered, then
the Rerouting Theorem is not needed, 
and hence the optimality criterion stated in Proposition~\ref{lem:optimalitytest} below
is much easier to prove. 
\end{remark}


\subsection{A first max-flow algorithm}

We have already said that our goal is to maximize the 
overall throughput function $\delta$ on the polytope~$B$ of bounded hive flows.
In order to implement this idea, we need a criterion that tells us when $f\in B$ is optimal.

\begin{proposition}[Optimality Criterion]
\label{lem:optimalitytest}
Let $f\in B$. Then 
$\delta(f) =\max_{g\in B}\delta(g)$ iff 
there exists no \stturnpath in~$\resf f$.
\end{proposition}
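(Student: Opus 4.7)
The plan is to prove both implications, with the forward direction being essentially immediate and the converse relying crucially on the Rerouting Theorem~\ref{thm:rerouting}.

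For the $(\Rightarrow)$ direction by contrapositive, I would take an $s$-$t$-turnpath $p$ in $\resf f$ and use it to strictly improve $f$. By Lemma~\ref{lem:direction}, $\pi(p)$ is $f$-hive preserving, so $f + \varepsilon \pi(p) \in B$ for sufficiently small $\varepsilon > 0$. Since $\pi$ preserves overall throughput by definition, $\delta(\pi(p)) = \delta(p) = 1$ (one unit of flow goes from $s$ to $t$ along the path). Hence $\delta(f + \varepsilon \pi(p)) = \delta(f) + \varepsilon > \delta(f)$, contradicting optimality.

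For the $(\Leftarrow)$ direction, I would take an arbitrary $g \in B$ and show $\delta(g) \le \delta(f)$. Set $d := g - f \in \oF(G)$. Since $B$ is convex and $f, g \in B$, we have $f + \varepsilon d = (1-\varepsilon)f + \varepsilon g \in B$ for every $\varepsilon \in [0,1]$, so $d$ is $f$-hive preserving in the sense of Definition~\ref{def:hive-preserv}. The Rerouting Theorem~\ref{thm:rerouting} then yields a weighted family $\varphi$ of complete turnpaths in $\resf f$ whose inflows and outflows through sides of $f$-flatspaces match those of $d$. Applying Corollary~\ref{cor:RRT} to the associated nonnegative flow $d' = \sum_{p \in \cP_f} \varphi(p)\, p$ on $\resf f$ gives $\delta(d') = \delta(d)$.

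Now I invoke the hypothesis: since $\resf f$ contains no $s$-$t$-turnpath, we have $\varphi(p) = 0$ for all $p \in \cP_{st}$. Formula~\eqref{eq:gaga} then gives
\[
\delta(d) \;=\; \delta(d') \;=\; \sum_{p \in \cP_{st}} \varphi(p) \;-\; \sum_{p \in \cP_{ts}} \varphi(p) \;=\; -\sum_{p \in \cP_{ts}} \varphi(p) \;\le\; 0,
\]
since $\varphi$ takes nonnegative values. Therefore $\delta(g) = \delta(f) + \delta(d) \le \delta(f)$, as required.

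The main obstacle is of course hidden in the appeal to the Rerouting Theorem: the entire content of the optimality criterion rests on being able to decompose an arbitrary $f$-hive preserving flow in a way that respects the partition of $\Delta$ into $f$-flatspaces and routes everything through $\resf f$ with the correct boundary data. Once that combinatorial rerouting is available, the optimality argument reduces to the standard max-flow/min-cut style accounting above.
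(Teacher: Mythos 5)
Your proof is essentially the same as the paper's: the forward direction uses Lemma~\ref{lem:direction} to construct a strictly improving perturbation, and the converse combines the Rerouting Theorem~\ref{thm:rerouting}, Corollary~\ref{cor:RRT}, and equation~\eqref{eq:gaga} to reduce to the sign of $\sum_{p\in\cP_{st}}\varphi(p)-\sum_{p\in\cP_{ts}}\varphi(p)$. The paper phrases the converse contrapositively (starting from $\delta(g)>\delta(f)$ and producing an \stturnpath) whereas you argue it directly (no \stturnpath forces $\delta(d)\le 0$), but the content is identical.
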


\begin{proof}
We call $f \in B$ \emph{optimal} iff $\delta(f)=\max_{g\in B}\delta(g)$.

If $p$ is an \stturnpath in $\resf f$, then by Lemma~\ref{lem:direction}, we have
$f+\varepsilon \pi(p) \in B$ for some $\varepsilon>0$. 
Since $\delta(f+\varepsilon p) = \delta(f) + \varepsilon > \delta(f)$, the flow $f$ is not optimal.

Now suppose that $f$ is not optimal and let $g \in B$ such that $\delta(g)>\delta(f)$. 
Clearly, $d:=g - f$ is $f$-hive preserving and satisfies $\delta(d)>0$.
Let $\varphi$ be the weighted family of complete turnpaths corresponding to~$d$  
as provided by the Rerouting Theorem~\ref{thm:rerouting}, and put
$d':=\sum_p \varphi(p) p$. 
Corollary~\ref{cor:RRT} shows that 
$\delta(d') = \delta(d)>0$ 
and \eqref{eq:gaga} implies that there exists an \stturnpath in~$\resf f$.
\qquad\end{proof}

Consider the following Algorithm~\ref{alg:lrpa}  for deciding positivity of LR 
coefficients. 

\begin{algorithm}
\caption{}\nopar \label{alg:lrpa}
\begin{algorithmic}[1]
\REQUIRE partitions $\la$, $\mu$, $\nu$ with $|\nu|=|\la|+|\mu|$.
\ENSURE \True, if $\LRC \la \mu \nu > 0$. \False otherwise.
    \STATE $f \leftarrow 0$.
    \WHILE{there is a shortest \stturnpath $p$ in $\resf f$}
          \STATE $f \leftarrow f + \pi(p)$.\nopar\label{alg:step}
    \ENDWHILE \nopar \label{alg:test}
    \RETURN whether $\delta(f) = |\nu|$.
\end{algorithmic}
\end{algorithm}

\begin{theorem}
\label{thm:correct}
 Algorithm~\ref{alg:lrpa} returns whether $\LRC \la \mu \nu~>~0$.
\end{theorem}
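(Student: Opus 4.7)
The plan is to combine three results already at our disposal: the Shortest Path Theorem~\ref{thm:shopath}, the Optimality Criterion (Proposition~\ref{lem:optimalitytest}), and the flow description of Littlewood--Richardson coefficients (Proposition~\ref{pro:flowdescription}). The argument reduces to stitching these together with a simple termination bound.

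First I would show that the loop preserves the invariant $f\in B_\Z$. Initially $f=0$ lies in $B_\Z$. For the induction step, assume $f\in B_\Z$ and let $p$ be a shortest \stturnpath in $\resf f$. The Shortest Path Theorem yields $f+\pi(p)\in B(\la,\mu,\nu)$, and integrality is preserved because $\pi$ maps integral flows on $\res$ to integral flows on $G$. Hence the updated $f$ still lies in $B_\Z$.

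Next I would bound the number of iterations. Any \stturnpath $p$ uses exactly one edge from $s$ into $\Delta$ and one edge from $\Delta$ to $t$, so $\pi(p)$ carries exactly one unit of throughput, i.e.\ $\delta(\pi(p))=1$. Thus $\delta(f)$ strictly increases by $1$ in each iteration. Since $f\in B_\Z$ satisfies $\delta(f)\le|\nu|$ (sum the capacities in Definition~\ref{def:hiveflowpolytope} via~\eqref{eq:overall-thrput}), the loop halts after at most $|\nu|$ iterations.

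Finally I would conclude the equivalence. At termination $\resf f$ contains no \stturnpath, so by the Optimality Criterion the integral flow $f$ satisfies $\delta(f)=\max_{g\in B(\la,\mu,\nu)}\delta(g)$. If $\LRC \la \mu \nu >0$, then by Proposition~\ref{pro:flowdescription} there exists some $g\in P(\la,\mu,\nu)_\Z\subseteq B(\la,\mu,\nu)$ with $\delta(g)=|\nu|$; optimality of $f$ then forces $\delta(f)=|\nu|$, so the algorithm returns \True. Conversely, if $\delta(f)=|\nu|$, then every border capacity in Definition~\ref{def:hiveflowpolytope} must be saturated, since the overall throughput equals the sum of these bounds by~\eqref{eq:overall-thrput}; hence $f\in P(\la,\mu,\nu)_\Z$, and $\LRC \la \mu \nu =|P(\la,\mu,\nu)_\Z|\ge 1$ by Proposition~\ref{pro:flowdescription}. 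The real difficulty in the correctness proof is not located here: it is the Shortest Path Theorem, whose verification (postponed to Section~\ref{sec:shortestpaththeorem}) requires a delicate control of the rhombus slacks along a shortest augmenting turnpath.
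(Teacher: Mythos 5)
Your proof is correct and follows essentially the same route as the paper: maintain the invariant $f\in B_\Z$ via the Shortest Path Theorem, then combine the Optimality Criterion with Proposition~\ref{pro:flowdescription} to translate the terminating condition into positivity of $\LRC\la\mu\nu$. The only difference is that you make termination explicit via $\delta(\pi(p))=1$ and $\delta(f)\le|\nu|$; the paper leaves this implicit (and defers the quantitative running-time analysis to the capacity-scaling Algorithm~\ref{alg:lrpcsa}), so your addition is a harmless tightening rather than a change of method.
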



\begin{proof}
Clearly $f$ stays integral during the run of Algorithm~\ref{alg:lrpa}.
The Shortest Path Theorem~\ref{thm:shopath} ensures that 
during the run of Algorithm~\ref{alg:lrpa} 
we always have $f \in B_\Z$.
If Algorithm~\ref{alg:lrpa} returns \True, 
then we know that the final value of $f$ is an integral and capacity
achieving hive flow in $B$. 
Hence Proposition~\ref{pro:flowdescription} implies $\LRC \la \mu \nu > 0$.

On the other hand, if Algorithm~\ref{alg:lrpa} returns \False, we have $\delta(f)<|\nu|$
and according to Proposition~\ref{lem:optimalitytest}, the flow~$f$ 
has the maximum value of $\delta$ among all flows in $B$. 
Hence there is no  capacity achieving flow in $B$ and 
Proposition~\ref{pro:flowdescription} implies that $\LRC \la \mu \nu = 0$.
\qquad\end{proof}

We note the following important integrality property. 

\begin{corollary}\label{cor:findoptimal}
For all $\la,\mu,\nu$, the overall throughput function $\delta$ attains the maximal value 
on $B(\la,\mu,\nu)$ at an integer flow.
\end{corollary}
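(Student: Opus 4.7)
The plan is to derive the corollary directly from the correctness and termination analysis of Algorithm~\ref{alg:lrpa}, combined with the Optimality Criterion. First I would observe that the initial flow $f=0$ lies in $B_\Z$ (all throughput capacities are nonnegative), so the algorithm starts from a legitimate integral hive flow. By the Shortest Path Theorem~\ref{thm:shopath}, each update $f \leftarrow f + \pi(p)$ (where $p$ is a shortest \stturnpath in $\resf f$) preserves membership in $B_\Z$: the new flow is integral because $\pi$ maps integral flows to integral flows, and it is a bounded hive flow by the theorem.

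Next I would verify termination. Each $s$-$t$-turnpath $p$ satisfies $\delta(\pi(p)) = \delta(p) = 1$ (since $p$ crosses exactly one border edge of $\Delta$ on the right or bottom side and exactly one on the left side), so each augmentation strictly increases $\delta(f)$ by one. Since $\delta$ is bounded above on $B$ by $|\nu|$ (by \eqref{eq:overall-thrput} together with the capacity constraints $-\delta(k',f) \le \nu_i$), the loop executes at most $|\nu|$ times and terminates.

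When the loop terminates, the current flow $f \in B_\Z$ has the property that $\resf f$ contains no \stturnpath. By the Optimality Criterion (Proposition~\ref{lem:optimalitytest}), this integral flow satisfies $\delta(f) = \max_{g \in B} \delta(g)$, which is exactly the claim of the corollary.

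The substantial content all resides in the previously established results (the Shortest Path Theorem and the Optimality Criterion, the latter resting on the Rerouting Theorem). The only thing to check here is that the standard max-flow style bookkeeping goes through without surprises: each augmentation is integer-valued, increases the throughput by exactly one, and the stopping condition matches the optimality criterion. There is no real obstacle at this level; the corollary is essentially the statement that the Ford-Fulkerson-style algorithm on this problem produces an integral optimum, which is immediate from the machinery already developed.
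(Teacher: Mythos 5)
Your proof is correct and follows essentially the same route as the paper: run Algorithm~\ref{alg:lrpa}, observe that it maintains $f \in B_\Z$ by the Shortest Path Theorem, and invoke the Optimality Criterion (Proposition~\ref{lem:optimalitytest}) at termination. You supply a termination argument ($\delta$ increases by exactly $1$ per augmentation and is bounded by $|\nu|$) that the paper leaves implicit, but the substance is identical.
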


\begin{proof}
In the last line executed by Algorithm~\ref{alg:lrpa}, 
there exists no \stturnpath in~$\resf f$.
Hence, by Proposition~\ref{lem:optimalitytest}, the integral flow $f$ has the maximal value on $B$. 
\qquad\end{proof}


As an application of the foregoing, we deduce here the saturation property of the Littlewood--Richardson coefficients, 
which was first shown in \cite{knta:99}. 

\begin{corollary}
\label{thm:sat}
$\LRC {N\la} {N\mu} {N\nu} > 0$ for some $N\ge 1$ implies $\LRC \la \mu \nu > 0$.
\end{corollary}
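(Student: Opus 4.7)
The plan is to combine the integrality of the maximum of $\delta$ on $B(\la,\mu,\nu)$ (Corollary~\ref{cor:findoptimal}) with the obvious linear scaling of hive flows by a positive rational factor. This trades in the intricate combinatorial content of the theorem for a purely polytopal argument once the heavy machinery (Shortest Path Theorem, Rerouting Theorem, Optimality Criterion) has been established.

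First I would unpack the hypothesis: if $\LRC{N\la}{N\mu}{N\nu} > 0$, Proposition~\ref{pro:flowdescription} furnishes an integral capacity achieving hive flow $F \in P(N\la,N\mu,N\nu)_\Z$, so in particular $F$ is a hive flow with $\delta(k,F) = b_{N}(k)$ on the right/bottom border and $-\delta(k',F) = b_{N}(k')$ on the left border, where $b_{N}$ denotes the capacities associated to $N\la,N\mu,N\nu$. Since the hive inequalities $\s\varrho{\,\cdot\,} \ge 0$, the closedness relations~\eqref{eq:closedness}, and the border constraints are all homogeneous linear, the rescaled flow $f := F/N \in \oF(G)$ satisfies $0 \le \delta(k,f) \le b(k)$ on right/bottom border edges, $0 \le -\delta(k',f) \le b(k')$ on left border edges, and $\s\varrho f \ge 0$ for every rhombus $\varrho$. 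Hence $f \in B(\la,\mu,\nu)$, although $f$ is generally not integral.

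Next I would compute the overall throughput of $f$. By linearity of $\delta$ we have $\delta(f) = \delta(F)/N$, and $F$ being capacity achieving gives $\delta(F) = |N\nu| = N|\nu|$ via~\eqref{eq:overall-thrput}. Thus $\delta(f) = |\nu|$. Combined with the general upper bound $\delta(g) \le |\nu|$ for all $g \in B(\la,\mu,\nu)$ noted just after Definition~\ref{def:hiveflowpolytope}, this shows that the maximum of the linear functional $\delta$ on $B(\la,\mu,\nu)$ equals $|\nu|$ and is attained (at the rational point $f$).

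Finally I would invoke Corollary~\ref{cor:findoptimal}: since $\delta$ attains its maximum on $B(\la,\mu,\nu)$ at an integer flow, there exists $g \in B(\la,\mu,\nu)_\Z$ with $\delta(g) = |\nu|$, i.e.\ $g \in P(\la,\mu,\nu)_\Z$. By Proposition~\ref{pro:flowdescription}, $\LRC\la\mu\nu = |P(\la,\mu,\nu)_\Z| \ge 1 > 0$, as required. There is essentially no obstacle to overcome here—all the difficulty has been absorbed into Corollary~\ref{cor:findoptimal}, whose proof in turn rests on the Shortest Path Theorem and the Optimality Criterion. The only thing one must be careful about is explicitly verifying that $B(\la,\mu,\nu)$ scales as $B(N\la,N\mu,N\nu) = N \cdot B(\la,\mu,\nu)$, which follows immediately from the linearity of all defining (in)equalities.
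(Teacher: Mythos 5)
Your proof is correct and follows exactly the paper's own argument: rescale the integral capacity achieving hive flow in $B(N\la,N\mu,N\nu)$ by $1/N$ to land in $B(\la,\mu,\nu)$ with $\delta = |\nu|$, then apply Corollary~\ref{cor:findoptimal} to replace it by an integral optimizer and conclude via Proposition~\ref{pro:flowdescription}.
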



\begin{proof} 
If $\LRC {N\la} {N\mu} {N\nu} > 0$, then 
there exists an integral capacity achieving hive flow $f \in B(N\la,N\mu,N\nu)$, 
by Proposition~\ref{pro:flowdescription} .
Hence $\frac f N \in B(\la,\mu,\nu)$ satisfies $\delta(\frac f N)=|\nu|$
and maximizes $\delta$ on $B(\la,\mu,\nu)$.
Even though $\frac f N$ may not be integral, 
Corollary~\ref{cor:findoptimal} implies that there exists 
an integral optimal hive flow $\tilde f \in B(\la,\mu,\nu)$ such 
that $\delta(\tilde f)=|\nu|$. 
Hence $\LRC \la \mu \nu > 0$ by Proposition~\ref{pro:flowdescription}. 
\qquad\end{proof}

\section{A polynomial time algorithm}
\label{sec:polytime}

In this section we use the capacity scaling approach (see, e.g., \cite[ch.~7.3]{amo:93}) 
to turn Algorithm~\ref{alg:lrpa} into a polynomial-time algorithm.
During this method, $f \in B$ stays \emph{$2^\ell$-integral}, 
for $\ell\in\N$, 
which means that all flow values are an integral multiple of $2^\ell$.
The incrementation step in Algorithm~\ref{alg:lrpa}, line~\ref{alg:step}, 
is replaced by adding $2^\ell \pi(p)$.
Further, $\ell$ is decreased in the course of the algorithm.
So our algorithm at first does big increments which over time decrease.

To implement this idea, we will search for a shortest \stturnpath in 
the subgraph $\resfk f \ell$ of $\resf f$ defined next.  
By construction we will have $\resfk f 0 = \resf f$.
Recall that the polytope~$B=B(\la,\mu,\nu)$ has the 
border capacity constraints as in Definition~\ref{def:hiveflowpolytope}.

\begin{definition}
\label{def:resfk}
Let $\ell \in \N$ and let $f \in B$ be $2^\ell$\dash integral.
The digraph $\resfk f \ell$ is obtained from~$\resf f$ by deleting 
all turnedges crossing an edge~$k$ on the right or bottom border 
of $\Delta$ satisfying 
$\delta(k,f)+2^\ell > b(k)$, 
and by deleting all turnedges crossing an edge~$k'$ on the left border 
of $\Delta$ satisfying 
$-\delta(k',f)+2^\ell > b(k')$.
\end{definition}


Algorithm~\ref{alg:lrpcsa} stated below is now fully specified.

\begin{algorithm}[H]
\caption{}\nopar \label{alg:lrpcsa}
\begin{algorithmic}[1]
\REQUIRE partitions $\la$, $\mu$, $\nu$ with $|\nu|=|\la|+|\mu|$ and $\nu_1 \geq \max{\la_1,\mu_1}$.
\ENSURE \True, if $\LRC \la \mu \nu > 0$. \False otherwise.
    \STATE $f \leftarrow 0$.
    \FOR{$\ell$ from $\lceil\log\nu_1\rceil$ down to $0$}
        \WHILE{there is a shortest \stturnpath $p$ in $\resfk f \ell$}
              \STATE $f \leftarrow f + 2^\ell \pi(p)$.
        \ENDWHILE
    \ENDFOR
    \RETURN whether $\delta(f) = |\nu|$.
\end{algorithmic}
\end{algorithm}

It is clear that $f$ stays $2^\ell$\dash integral during the run of Algorithm~\ref{alg:lrpcsa}.

\begin{claim}
 During the run of Algorithm~\ref{alg:lrpcsa}, the flow $f$ always is in $B$.
\end{claim}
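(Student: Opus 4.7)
The plan is to proceed by induction on the number of updates $f\leftarrow f+2^\ell\pi(p)$ executed in Algorithm~\ref{alg:lrpcsa}. Initially $f=0\in B$ and is $2^\ell$-integral for every $\ell$. For the inductive step I assume $f\in B$ is $2^\ell$-integral and $p$ is a shortest $s$-$t$-turnpath in $\resfk f \ell$; the goal is to show $f':=f+2^\ell\pi(p)\in B$.

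The border-capacity constraints for $f'$ follow directly from Definition~\ref{def:resfk}: the turnpath $p$ enters $\Delta$ through a unique right/bottom edge~$k$, and the construction of $\resfk f \ell$ forces $\delta(k,f)+2^\ell\le b(k)$; analogously for the unique left exit edge, while every other border throughput is unchanged. Hence only the rhombus inequalities $\sigma(\varrho,f')\ge 0$ have to be verified. Since $f$ is $2^\ell$-integral, each slack $\sigma(\varrho,f)$ is a nonnegative $2^\ell$-integer and so is either $0$ (the $f$-flat case) or at least $2^\ell$. For $f$-flat~$\varrho$, Lemma~\ref{le:spos} applied to $p\in\cP_f$ gives $\sigma(\varrho,\pi(p))\ge 0$, and hence $\sigma(\varrho,f')=2^\ell\sigma(\varrho,\pi(p))\ge 0$. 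For non-flat~$\varrho$ it therefore suffices to show $\sigma(\varrho,\pi(p))\ge -1$, since then $\sigma(\varrho,f')\ge 2^\ell-2^\ell=0$.

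This bound is obtained from the Shortest Path Theorem~\ref{thm:shopath} by rescaling. Set $g:=f/2^\ell$, which is integer-valued, and define the rescaled capacity vector $b'(k):=\lfloor b(k)/2^\ell\rfloor$. Then $g$ is an integral bounded hive flow with respect to~$b'$: the hive inequalities read $\sigma(\varrho,g)=2^{-\ell}\sigma(\varrho,f)\ge 0$, and $\delta(k,g)$ is an integer with $\delta(k,g)\le b(k)/2^\ell$, hence $\delta(k,g)\le b'(k)$, and analogously on the left border. A short $2^\ell$-integrality calculation shows that the deletion condition $\delta(k,f)+2^\ell>b(k)$ used in Definition~\ref{def:resfk} is equivalent to the saturation condition $\delta(k,g)=b'(k)$; combined with the obvious coincidence of $f$-flat and $g$-flat rhombi, this identifies $\resfk f \ell$ with the residual digraph $\resf g$ of the rescaled instance. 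Therefore $p$ is a shortest $s$-$t$-turnpath in $\resf g$, and the Shortest Path Theorem yields $\sigma(\varrho,g)+\sigma(\varrho,\pi(p))\ge 0$ for every rhombus~$\varrho$. For $\varrho$ not $f$-flat we have $\sigma(\varrho,g)\ge 1$, yielding the required bound $\sigma(\varrho,\pi(p))\ge -1$.

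The main obstacle is the identification $\resfk f \ell=\resf g$, which relies on the $2^\ell$-integrality of $\delta(k,f)$ to convert the ``remaining capacity less than $2^\ell$'' condition defining $\resfk f \ell$ into the saturation $\delta(k,g)=b'(k)$ defining $\resf g$. Once this is in place, the remaining arguments are a clean invocation of Lemma~\ref{le:spos} and of the Shortest Path Theorem applied to~$g$.
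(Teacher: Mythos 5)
Your proof is correct and follows essentially the same route as the paper: rescale by $2^\ell$, identify $\resfk f \ell$ with the residual digraph of the rescaled instance, and invoke the Shortest Path Theorem. The only difference is that you unpack the conclusion in more detail (checking border constraints directly, and splitting the rhombus check into $f$-flat vs.\ non-flat cases to extract the intermediate bound $\sigma(\varrho,\pi(p))\ge -1$), whereas the paper simply notes $\tfrac{f}{2^\ell}+\pi(p)\in B(\tilde\la,\tilde\mu,\tilde\nu)$ and multiplies through by $2^\ell$ to conclude $f+2^\ell\pi(p)\in B(2^\ell\tilde\la,2^\ell\tilde\mu,2^\ell\tilde\nu)\subseteq B$; this handles all constraints in one stroke and makes your case split and the bound $-1$ unnecessary.
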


\begin{proof}
Given a $2^\ell$\dash integral hive flow $f \in B=B(\la,\mu,\nu)$.
First we note that the set of \stturnpaths on $\resfk f \ell$ equals
the set of \stturnpaths on 
$\resf{\tilde{f}}(\tilde{\la},\tilde{\mu},\tilde{\nu})$, 
where
$\tilde{f} := f/2^\ell$, 
$\tilde{\la} := \lfloor\frac \la {2^\ell}\rfloor$, 
$\tilde{\mu} := \lfloor\frac \mu {2^\ell}\rfloor$, 
$\tilde{\nu} := \lfloor\frac \nu {2^\ell}\rfloor$,  
and division and rounding of partitions is defined componentwise.
Let $p$ be a shortest \stturnpath on $\resfk f \ell$ and hence
also a shortest \stturnpath on 
$\resf{\tilde{f}}(\tilde{\la},\tilde{\mu},\tilde{\nu})$.
According to the Shortest Path Theorem~\ref{thm:shopath} we have 
$\frac f {2^\ell}+\pi(p) \in B(\tilde{\la},\tilde{\mu},\tilde{\nu})$.  
Therefore, we obtain 
$f+2^\ell \pi(p) \in B(2^\ell \tilde{\la},2^\ell \tilde{\mu},2^\ell \tilde{\nu}) \subseteq B(\la,\mu,\nu)=B$.
\qquad\end{proof}

The last iteration of the for-loop of Algorithm~\ref{alg:lrpcsa} (where $\ell=0$) operates like Algorithm~\ref{alg:lrpa} 
and hence Theorem~\ref{thm:correct} 
implies that Algorithm~\ref{alg:lrpcsa} works according to its specification.

For the analysis of the running time we need the following auxiliary result, 
relying on the Rerouting Theorem~\ref{thm:rerouting}.

\begin{lemma}
\label{lem:scaling}
Let $f \in B_\Z$ be $2^\ell$-integral and $\ell \in \N$ be such that $\resfk f \ell$ has no \stturnpath.
Then $\delta_\text{max} - \delta(f) < 3n 2^\ell$, where 
$\delta_\text{max} := \max_{ g \in B} \delta(g)$.
\end{lemma}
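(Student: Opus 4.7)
The plan is to compare $f$ with an integral optimum $g \in B$, apply the Rerouting Theorem to the difference, and then exploit the fact that the cuts defining $\resfk f \ell$ block every augmenting turnpath. By Corollary~\ref{cor:findoptimal} we may take $g \in B_\Z$ with $\delta(g) = \delta_\text{max}$, so $d := g - f$ is an integral, $f$\dash hive preserving flow with $\delta(d) = \delta_\text{max} - \delta(f)$. The Rerouting Theorem~\ref{thm:rerouting} produces a multiset $\varphi$ of complete turnpaths in $\resf f$ satisfying $\inom L a d = \inom L a \varphi$ and $\outom L a d = \outom L a \varphi$ on every side of every $f$\dash flatspace. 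Corollary~\ref{cor:RRT} then gives
\begin{equation*}
\delta_\text{max} - \delta(f) \;=\; \sum_{p\in\cP_{st}}\varphi(p) \;-\; \sum_{p\in\cP_{ts}}\varphi(p) \;\le\; \sum_{p\in\cP_{st}}\varphi(p),
\end{equation*}
so it suffices to bound the total weight of $s$\dash $t$\dash turnpaths in $\varphi$.

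The next step is to record how $\resfk f \ell$ differs from $\resf f$: exactly the turnedges crossing ``near\dash saturated'' border edges are removed, meaning those $k$ on the right or bottom with $b(k) - \delta(k,f) < 2^\ell$ and those $k'$ on the left with $b(k') + \delta(k',f) < 2^\ell$. Since $\resfk f \ell$ admits no \stturnpath while each $p \in \cP_{st}$ lives in $\resf f$, every such $p$ must either enter $\Delta$ through a near\dash saturated right/bottom edge or leave $\Delta$ through a near\dash saturated left edge. This splits $\cP_{st}$ into two classes, which I will bound separately.

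For the class blocked at entry: by Proposition~\ref{cla:outerccdir} every $p \in \cP_{st}$ enters $\Delta$ through the entrance edge $a_{\to L}$ of some side $a$ of an $f$\dash flatspace $L$ meeting the right/bottom border. Restricting to those $a$ whose entrance edge is near\dash saturated and grouping paths by $a$, the Rerouting Theorem yields $\inom \Delta a \varphi = \inom \Delta a d$. Using $g \in B$ this is bounded by $\sum_{k \subseteq a}(b(k) - \delta(k,f))$; Lemma~\ref{obs:borderentranceexit} forces $\delta(k,f)$ to be constant in $k$ along $a$ (by flatness of $L$), while Observation~\ref{obs:bd-cap} places the maximum of $b(k)$ over $k \subseteq a$ at the entrance edge. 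Together with the fact that $b(k) - \delta(k,f)$ is an integer strictly less than $2^\ell$, this gives $\inom \Delta a d \le r(a)(2^\ell - 1)$, where $r(a)$ is the number of edges on $a$. Because the sides on the right/bottom border partition $2n$ edges, summing contributes at most $2n(2^\ell - 1)$. An entirely symmetric argument on the left border---now with ``exit edge'' in place of ``entrance edge'' throughout, which is what Observation~\ref{obs:bd-cap} supplies there---handles the class blocked at exit and contributes at most $n(2^\ell - 1)$. Adding the two bounds yields $\delta_\text{max} - \delta(f) \le 3n(2^\ell - 1) < 3n\cdot 2^\ell$.

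The main obstacle I expect is keeping the bookkeeping honest: separating the contributions of $\cP_{st}$ from those of turncycles through $s$ or $t$ (which are also counted by $\inom \Delta a \varphi$ but do no harm because $\varphi$ is nonnegative), verifying that the distinguished edge $a_{\to L}$ of a flatspace\dash side on the border is precisely the edge at which Observation~\ref{obs:bd-cap} achieves its maximum capacity, and checking the sign conventions at the left border where $\inflow{\Delta}{k}{\cdot}$ carries the opposite sign to $\delta(k,\cdot)$. Once these identifications are pinned down, the remainder of the argument is the standard capacity\dash scaling cut estimate of Ford-Fulkerson type.
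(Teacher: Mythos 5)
Your proof is correct and follows essentially the same route as the paper's: apply the Rerouting Theorem to $d = g - f$, reduce to bounding $\sum_{p\in\cP_{st}}\varphi(p)$, observe that absence of an \stturnpath in $\resfk f\ell$ forces every $p\in\cP_{st}$ to cross a near-saturated border edge, and then bound the contribution per flatspace side using Proposition~\ref{cla:outerccdir}, Lemma~\ref{obs:borderentranceexit}, and Observation~\ref{obs:bd-cap}. The only cosmetic difference is that you invoke Corollary~\ref{cor:findoptimal} to take $g$ integral; this is harmless but unnecessary, since the Rerouting Theorem (and the paper's argument) works with any $g\in B$ and the resulting weighted family $\varphi$.
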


\begin{proof}
Let $g\in B$ with $\delta(g) = \delta_\text{max}$ and put 
$d:=g-f \in\oF(G)$. Hence 
$\delta_\text{max} - \delta(f) = \delta(d)$. 
Let $\varphi$ be the family of complete weighted turnpaths
corresponding to $d$ as provided by the Rerouting Theorem~\ref{thm:rerouting}.
We decompose the set 
$\cP_f=\cP_{st}\cup\cP_{ts}\cup\cP_{c}$ 
of complete turnpaths in~$\resf f$
into the sets $\cP_{st}$,  $\cP_{ts}$, and $\cP_c$ of 
$s$\dash $t$\dash turnpaths, $t$\dash $s$\dash turnpaths, and turncycles, respectively.
Then the flow $d':=\sum_p \varphi(p) p$ on $\resf f$ defined by 
$\varphi$ satisfies by~\eqref{eq:gaga} and Corollary~\ref{cor:RRT}, 
\begin{equation}\tag{i}\label{eq:starstar}
 \delta(d) \ =\  \delta(d') \ =\  
  \sum_{p\in\cP_{st}}\varphi(p) - \sum_{p\in\cP_{ts}}\varphi(p) 
  \ \le\ \sum_{p\in\cP_{st}}\varphi(p) .
\end{equation}

A turnpath $p\in\cP_{st}$ enters~$\Delta$ exactly once 
(through the right or bottom border) 
and leaves~$\Delta$ exactly once (through the left border). 
For an edge~$k$ on the right or bottom border of~$\Delta$,  
let $\cP_f(k)$ denote the set of $p\in\cP_f$ that enter $\Delta$ 
through~$k$. 
Further, for an edge~$k'$ on the left border of~$\Delta$, 
let $\cP_{f}(k')$ denote the set of $p\in\cP_{st}$ that leave $\Delta$ 
through~$k'$. 

We call an edge $k$ on the right or bottom border of $\Delta$ \emph{small}, 
if $\delta(k,f)+2^\ell > b(k)$. 
Let $\mathscr{E}$ denote the set of these edges. 
Note that for $k\in \mathscr{E}$ we have 
\begin{equation}\tag{ii}\label{eq:star}
\delta(k,d) \ = \  \delta(k,g) - \delta(k,f) \ \le\ 
 b(k) - \delta(k,f) \ <\ 2^\ell.  
\end{equation}
Similarly, we call an  edge $k' \in E(\Delta)$ 
on the left border of~$\Delta$ \emph{small}, 
if $-\delta(k',f)+2^\ell > b(k')$ and denote the set of these edges by~$\mathscr{E}'$. 
Border edges that are not small are called \emph{big}.

The point is that an \stturnpath $p\in\cP_{st}$ in $\resf f$ 
that crosses two big edges 
is also an \stturnpath in $\resfk f \ell$.
Hence, by our assumption, there are no 
\stturnpaths in $\resf f$ that cross two big edges.
We conclude that for all $p\in\cP_{st}$, there exists $k\in \mathscr{E}\cup \mathscr{E}'$ such that 
$p \in\cP_f(k)$. 
Therefore, 
\begin{equation}\tag{iii}\label{eq:sumsum}
 \sum_{p\in\cP_{st}} \varphi(p) \ \le\ 
 \sum_{k\in \mathscr{E}} \sum_{p\in \cP_{f} (k)} \varphi(p) + \sum_{k'\in \mathscr{E}'} \sum_{p\in \cP_{f} (k')} \varphi(p) .
\end{equation}To bound the right-hand sums, suppose first that $k\in \mathscr{E}$. 
By Proposition~\ref{cla:outerccdir}, 
$p\in \cP_{st} (k)$ implies that 
$k$ is the entrance edge of the side~$a$ of an $f$-flatspace $L$,
in which case $k=a_{\to L}$. 
We have $\cP_f(k) = \Pout L a$ and hence, 
by Definition~\ref{def:sw-ew},
$$
 \sum_{p\in \cP_{f} (k)}  \varphi(p) \ =\  
 \inom L a \varphi  = \inom L a d ,
$$
where the last equality is guaranteed by the Rerouting Theorem~\ref{thm:rerouting}. 

Lemma~\ref{obs:borderentranceexit} and Observation~\ref{obs:bd-cap} 
imply that, since $k$ is a small edge, 
all the other edges contained in $a$ are small as well.
Note that $\inflow L {\tilde k} d < 2^\ell$ implies $\inom L {\tilde k} d < 2^\ell$ for all edges $\tilde k \subseteq a$ by Definition~\ref{def:def-inflow}.
Therefore we can use~\eqref{eq:star} to deduce
$$
 \inom L a d \ =\  \sum_{\tilde{k}\subseteq a} \inom L {\tilde{k}} d \ <\ |a| \, 2^\ell ,
$$ 
where $|a|$ denotes the number of edges of $\Delta$ contained in $a$. 
Summarizing, we conclude for $k\in \mathscr{E}$, 
\begin{equation*}
 \sum_{p\in \cP_f(k)}  \varphi(p) \ <\  |a|\, 2^\ell ,
\end{equation*}
where $a$ is the side of the $f$-flatspace, in which $k$ lies.

The same bound holds for $k'\in \mathscr{E}'$ by an analogous argument. 
Combining these bounds with~\eqref{eq:starstar} and~\eqref{eq:sumsum} 
we obtain 
$$
 \delta(d) = \delta(d') \ \le\ \sum_{p\in\cP_{st}} \varphi(p) \ <\  3n 2^\ell 
$$
since there are $3n$ edges on the border of $\Delta$. 
\qquad\end{proof}

\begin{theorem}\label{thm:main}
 Algorithm~\ref{alg:lrpcsa} decides the positivity of the Littlewood--Richardson coefficient $\LRC \la \mu \nu$ with 
\runningtime arithmetic operations and comparisons.
\end{theorem}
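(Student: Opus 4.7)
The plan is to verify that correctness is already essentially given and to focus on the running-time bound. Correctness follows from the observation, already made just before the theorem, that the iteration $\ell=0$ of the outer loop has $\resfk f 0 = \resf f$ and therefore executes precisely Algorithm~\ref{alg:lrpa} (starting from the $f \in B_\Z$ accumulated in earlier phases rather than from zero). Theorem~\ref{thm:correct} then guarantees that the final return value is correct.

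For the running time, first I would bound the size of the residual digraph. Since $\Delta$ contains $n^2$ hive triangles, each carrying six turnvertices, $\res$ has $\gO(n^2)$ vertices; since every turnvertex has constant out-degree, $\res$ also has $\gO(n^2)$ edges. The subgraph $\resfk f \ell$ is obtained by vertex/edge deletions and hence is of the same size. Consequently a shortest \stturnpath in $\resfk f \ell$ can be found (or its nonexistence certified) by breadth-first search in $\gO(n^2)$ arithmetic operations and comparisons, and the update $f \leftarrow f + 2^\ell \pi(p)$ also takes $\gO(n^2)$ time.

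The heart of the analysis is bounding the number of augmentations per phase. Any augmentation in phase $\ell$ adds $2^\ell \pi(p)$ for an \stturnpath $p$ and therefore increases $\delta(f)$ by exactly $2^\ell$, as $\delta(\pi(p))=1$ for any \stturnpath. In the initial phase, $\ell = \lceil\log\nu_1\rceil$, we start with $f=0$ and have $\delta_\text{max} \le |\nu| \le n\nu_1 \le n\cdot 2^\ell$, yielding at most $n$ augmentations. For any subsequent $\ell$, the preceding phase $\ell+1$ terminated because $\resfk f {\ell+1}$ had no \stturnpath, so Lemma~\ref{lem:scaling} implies $\delta_\text{max} - \delta(f) < 3n\cdot 2^{\ell+1} = 6n\cdot 2^\ell$ at the start of phase $\ell$, giving at most $6n$ augmentations in that phase.

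Combining these bounds, the outer loop has $\lceil\log\nu_1\rceil + 1 = \gO(\log\nu_1)$ iterations, each containing $\gO(n)$ augmentations, each of cost $\gO(n^2)$, for a total of $\gO(n^3\log\nu_1)$ arithmetic operations and comparisons. The only genuinely nontrivial ingredient in this argument is Lemma~\ref{lem:scaling}, whose proof relies on the Rerouting Theorem~\ref{thm:rerouting}; granted this lemma, the analysis is routine capacity-scaling bookkeeping in the style of~\cite{amo:93}, and the main obstacle has already been surmounted in the preceding sections.
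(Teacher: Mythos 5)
Your proof is correct and follows essentially the same route as the paper: correctness via the $\ell=0$ phase coinciding with Algorithm~\ref{alg:lrpa}, a per-phase augmentation bound from Lemma~\ref{lem:scaling}, and $\gO(n^2)$ per BFS on an $\gO(n^2)$-size residual digraph. The only cosmetic difference is in the first phase: you bound the number of augmentations by $n$ via $\delta_{\max}\le|\nu|\le n\cdot 2^\ell$, whereas the paper asserts at most one augmentation can occur there; both bounds are $\gO(n)$, so the final $\gO(n^3\log\nu_1)$ complexity is unaffected.
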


\begin{proof}
Again let $\delta_\text{max} := \max_{g\in B} \delta(g)$.
After ending the while-loop for the value~$\ell$, there is no 
\stturnpath in $\resfk f \ell$ and hence 
$\delta_\text{max} - \delta(f) < 3n 2^\ell = 6n\, 2^{\ell-1}$.
Hence in the next iteration of the while-loop, for the value $\ell-1$, 
at most $6n$ \stturnpaths can be found. 
Moreover, note that the initial value of $\ell$ is so large,  
that in the first iteration of the while\dash loop at most one \stturnpath can be found.

So Algorithm~\ref{alg:lrpcsa} finds at most $6n\lceil\log\nu_1\rceil$ many \stturnpaths 
and searches at most $\log\nu_1$ many times for an \stturnpath without finding one.
Note that searching for a shortest \stturnpath requires at most time $\gO(n^2)$ using breadth-first-search, 
since there are $\gO(n^2)$ turnvertices and turnedges.
Hence we get a total running time of $\gO\big(n^3 \log\nu_1\big)$.
\qquad\end{proof}

\section{Proof of the Rerouting Theorem}
\label{sec:reroutingtheorem}

We will easily derive the Rerouting Theorem~\ref{thm:rerouting} 
by a glueing argument from the Canonical Turnpath Theorem~\ref{thm:CTT} 
below.
The latter is a general result for hive flows on convex sets in 
the triangular graph. In the next subsection we introduce 
the necessary terminology to state the Canonical Turnpath Theorem, 
which is then proved by induction in the remainder of this section, 
considering separately the different possible shapes of 
convex sets. 

\subsection{Canonical turnpaths in convex sets}

Let $L$ be a convex set in the triangular graph $\Delta$. 
We define the graph~$G_L$ as the induced subgraph of 
the honeycomb graph~$G$ obtained by restricting to 
the set of vertices lying in~$L$ (including the vertices on the 
border of $L$, but omitting $s$ and $t$). 
A {\em flow on $G_L$} is defined as a map 
$E(G_L)\to\R$ satisfying the flow conservation laws 
at all vertices of $G_L$ that do not lie at the border of $L$. 
The vector space  $\oF(G_L)$ of {\em flow classes on $G_L$}
is defined as in~\eqref{eq:def-oFG} by factoring out the 
null flows.
As in Definition~\ref{def:hiveflow}, we define a
 {\em hive flow~$f$ on $L$} as a flow class in $\oF(G_L)$ 
satisfying $\s\varrho f \ge 0$ for all rhombi $\varrho$ lying in $L$. 
Similarly, we define the notion of a 
{\em flow on $\res$ (or $\resf f$) restricted to $L$}, 
by restricting to the subgraph induced by the 
turnvertices lying in $L$. 



For a fixed convex set $L$ we are going to define a set $\cP_L$ 
of distinguished turnpaths $p$ in~$L$ starting at 
some entrance edge $a_{\to L}$ and ending at some exit edge $b_{L\to}$. 
The goal is to achieve that $p$ is a turnpath in $\resf f$ 
whenever $L$ is an $f$-flatspace. 
Proposition~\ref{cla:outerccdir} provides 
the guiding principle for making the right definition.

Let $r \ge 3$ and $a_1,\ldots,a_r$ be a sequence of successive sides of~$L$ 
in counterclockwise order, where $a_2,\ldots,a_{r-1}$ are different. 
Further, we assume that the angles between 
$a_{i-1}$ and $a_i$ are obtuse for $i=3,\ldots,r-1$.  
We then form a unique turnpath~$p$ moving within the border triangles of $L$ 
from $(a_1)_{\to L}$ to $(a_r)_{L\to}$ in counterclockwise direction, 
cf.\ Figure~\ref{fig:def-can-path}. 
\begin{figure}[h]
\begin{center}
\scalebox{2}{
\begin{tikzpicture}\draw[thin,-my] (2.3095pt,4.0001pt) arc (240:300:4.619pt);\draw[thin,-my] (6.9285pt,4.0001pt) arc (120:60:4.619pt);\draw[thin,-my] (11.5475pt,4.0001pt) arc (240:300:4.619pt);\draw[thin,-my] (16.1665pt,4.0001pt) arc (120:60:4.619pt);\draw[thin,-my] (20.7855pt,4.0001pt) arc (240:300:4.619pt);\draw[thin,-my] (25.4045pt,4.0001pt) arc (120:60:4.619pt);\draw[thin,-my] (30.0235pt,4.0001pt) arc (240:300:4.619pt);\draw[thin,-my] (34.6425pt,4.0001pt) arc (300:360:4.619pt);\draw[thin,-my] (36.952pt,8.0001pt) arc (180:120:4.619pt);\draw[thin,-my] (39.2615pt,12.0002pt) arc (300:360:4.619pt);\draw[thin,-my] (41.571pt,16.0002pt) arc (180:120:4.619pt);\draw[thin,-my] (43.8805pt,20.0003pt) arc (300:360:4.619pt);\fill (13.857pt,24.0003pt) circle (0.4pt);\fill (9.238pt,16.0002pt) circle (0.4pt);\fill (4.619pt,8.0001pt) circle (0.4pt);\fill (0.0pt,0.0pt) circle (0.4pt);\fill (9.238pt,0.0pt) circle (0.4pt);\fill (13.857pt,8.0001pt) circle (0.4pt);\fill (18.476pt,16.0002pt) circle (0.4pt);\fill (23.095pt,24.0003pt) circle (0.4pt);\fill (32.333pt,24.0003pt) circle (0.4pt);\fill (27.714pt,16.0002pt) circle (0.4pt);\fill (23.095pt,8.0001pt) circle (0.4pt);\fill (18.476pt,0.0pt) circle (0.4pt);\fill (27.714pt,0.0pt) circle (0.4pt);\fill (32.333pt,8.0001pt) circle (0.4pt);\fill (36.952pt,16.0002pt) circle (0.4pt);\fill (41.571pt,24.0003pt) circle (0.4pt);\fill (50.809pt,24.0003pt) circle (0.4pt);\fill (46.19pt,16.0002pt) circle (0.4pt);\fill (41.571pt,8.0001pt) circle (0.4pt);\fill (36.952pt,0.0pt) circle (0.4pt);\draw[rhrhombidraw] (-13.857pt,24.0003pt) -- (0.0pt,0.0pt) -- (36.952pt,0.0pt) -- (50.809pt,24.0003pt) -- cycle;\draw[rhrhombidraw] (-55.428pt,0.0pt) -- (-41.571pt,24.0003pt) -- (-78.523pt,24.0003pt) -- (-92.38pt,0.0pt) -- cycle;\fill (-78.523pt,24.0003pt) circle (0.4pt);\fill (-69.285pt,24.0003pt) circle (0.4pt);\fill (-60.047pt,24.0003pt) circle (0.4pt);\fill (-50.809pt,24.0003pt) circle (0.4pt);\fill (-41.571pt,24.0003pt) circle (0.4pt);\fill (-83.142pt,16.0002pt) circle (0.4pt);\fill (-73.904pt,16.0002pt) circle (0.4pt);\fill (-64.666pt,16.0002pt) circle (0.4pt);\fill (-55.428pt,16.0002pt) circle (0.4pt);\fill (-46.19pt,16.0002pt) circle (0.4pt);\fill (-87.761pt,8.0001pt) circle (0.4pt);\fill (-78.523pt,8.0001pt) circle (0.4pt);\fill (-69.285pt,8.0001pt) circle (0.4pt);\fill (-60.047pt,8.0001pt) circle (0.4pt);\fill (-50.809pt,8.0001pt) circle (0.4pt);\fill (-92.38pt,0.0pt) circle (0.4pt);\fill (-83.142pt,0.0pt) circle (0.4pt);\fill (-73.904pt,0.0pt) circle (0.4pt);\fill (-64.666pt,0.0pt) circle (0.4pt);\fill (-55.428pt,0.0pt) circle (0.4pt);\fill (4.619pt,24.0003pt) circle (0.4pt);\fill (0.0pt,16.0002pt) circle (0.4pt);\fill (-4.619pt,8.0001pt) circle (0.4pt);\fill (-4.619pt,24.0003pt) circle (0.4pt);\fill (-9.238pt,16.0002pt) circle (0.4pt);\fill (-13.857pt,24.0003pt) circle (0.4pt);\draw[thin,-my] (-2.3095pt,4.0001pt) arc (120:60:4.619pt);\draw[thin,-my] (-90.0705pt,4.0001pt) arc (240:300:4.619pt);\draw[thin,-my] (-85.4515pt,4.0001pt) arc (120:60:4.619pt);\draw[thin,-my] (-80.8325pt,4.0001pt) arc (240:300:4.619pt);\draw[thin,-my] (-76.2135pt,4.0001pt) arc (120:60:4.619pt);\draw[thin,-my] (-71.5945pt,4.0001pt) arc (240:300:4.619pt);\draw[thin,-my] (-66.9755pt,4.0001pt) arc (120:60:4.619pt);\draw[thin,-my] (-62.3565pt,4.0001pt) arc (240:300:4.619pt);\draw[thin,-my] (-57.7375pt,4.0001pt) arc (300:360:4.619pt);\draw[thin,-my] (-55.428pt,8.0001pt) arc (180:120:4.619pt);\draw[thin,-my] (-53.1185pt,12.0002pt) arc (300:360:4.619pt);\draw[thin,-my] (-50.809pt,16.0002pt) arc (180:120:4.619pt);\draw[thin,-my] (-48.4995pt,20.0003pt) arc (300:360:4.619pt);\end{tikzpicture}
}
    \caption{Canonical turnpaths in a parallelogram and in a trapezoid. 
                    The left hand turnpath starts with a counterclockwise turn (acute angle) and the righthand turnpath 
                    starts with a clockwise turn (obtuse angle).}
    \nopar\label{fig:def-can-path}
\end{center}
\end{figure}
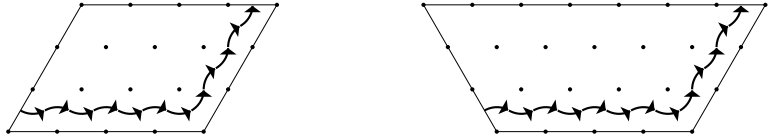 
The turnpath~$p$ alternatively takes clockwise and counterclockwise turns, except at the 
(obtuse) angles of~$L$ between $a_{i-1}$ and $a_i$ (for $i=3,\ldots,i-1$), 
where $p$ takes two consecutive counterclockwise turns to go around. 
If $a_1,a_2$ form an acute angle, then $p$ starts with a counterclockwise turn, 
otherwise $p$ starts with a clockwise turn. 
Moreover, if $a_{r-1},a_r$ form an acute angle, then $p$ ends with a counterclockwise turn, 
otherwise $p$ ends with a clockwise turn.
We call the resulting turnpath a {\em canonical turnpath} of $L$. 
We shall also consider the turnpaths consisting of a single clockwise turnvertex 
at an acute angle as a canonical turnpath of $L$.

\begin{definition}\label{def:PL}
The symbol $\cP_L$ denotes the set of all canonical turnpaths of
the convex set~$L$.
For $p\in\cP_L$, we denote by $\pstart{p}=(a_1)_{\to L}$ the edge of~$\Delta$ from which 
$p$~starts and by $\pend{p}=(a_r)_{L\to}$ the edge of~$\Delta$ where $p$ ends.
\end{definition}

\begin{example}\label{ex:can-tp}
A triangle has exactly six canonical turnpaths, 
cf.\ Figure~\ref{fig:tp-triangle}. 
A parallelogram has exactly eight canonical turnpaths, 
cf.\ Figure~\ref{fig:tp-parallelogram}. 
In particular, this holds true for rhombi. 
A trapezoid has exactly nine canonical turnpaths, 
cf.\ Figure~\ref{fig:tp-trapezoid}. 
A pentagon has exactly 16 canonical turnpaths,  
cf.\ Figure~\ref{fig:tp-pentagon}. 
A hexagon has six canonical turnpaths up to rotations, 
which makes a total of 36 turnpaths, see 
Figure~\ref{fig:tp-hexagon}. 
\end{example}

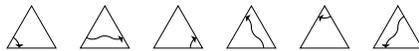
\begin{figure}[h]
\begin{center}
\begin{tikzpicture}\draw[rhrhombidraw] (-110.856pt,0.0pt) -- (-101.618pt,16.0002pt) -- (-92.38pt,0.0pt) -- cycle;\draw[rhrhombidraw] (-83.142pt,0.0pt) -- (-73.904pt,16.0002pt) -- (-64.666pt,0.0pt) -- cycle;\draw[rhrhombidraw] (-55.428pt,0.0pt) -- (-46.19pt,16.0002pt) -- (-36.952pt,0.0pt) -- cycle;\draw[rhrhombidraw] (-27.714pt,0.0pt) -- (-18.476pt,16.0002pt) -- (-9.238pt,0.0pt) -- cycle;\draw[rhrhombidraw] (0.0pt,0.0pt) -- (9.238pt,16.0002pt) -- (18.476pt,0.0pt) -- cycle;\draw[rhrhombidraw] (27.714pt,0.0pt) -- (36.952pt,16.0002pt) -- (46.19pt,0.0pt) -- cycle;\draw[thin,-my] (-108.5465pt,4.0001pt) arc (60:0:4.619pt);\draw[thin,-my] (-80.8325pt,4.0001pt) arc (240:300:4.619pt) arc (120:60:4.619pt) arc (240:300:4.619pt);\draw[thin,-my] (-41.571pt,0.0pt) arc (180:120:4.619pt);\draw[thin,-my] (-13.857pt,0.0pt) arc (0:60:4.619pt) arc (240:180:4.619pt) arc (0:60:4.619pt);\draw[thin,-my] (11.5475pt,12.0002pt) arc (300:240:4.619pt);\draw[thin,-my] (39.2615pt,12.0002pt) arc (120:180:4.619pt) arc (0:-60:4.619pt) arc (120:180:4.619pt);\end{tikzpicture}
    \caption{The six canonical turnpaths in a triangle.}
    \nopar\label{fig:tp-triangle}
\end{center}
\end{figure}

\begin{figure}[h]
\begin{center}
\begin{tikzpicture}\draw[rhrhombidraw] (-120.094pt,0.0pt) -- (-92.38pt,0.0pt) -- (-83.142pt,16.0002pt) -- (-110.856pt,16.0002pt) -- cycle;\draw[rhrhombidraw] (-83.142pt,0.0pt) -- (-55.428pt,0.0pt) -- (-46.19pt,16.0002pt) -- (-73.904pt,16.0002pt) -- cycle;\draw[rhrhombidraw] (-46.19pt,0.0pt) -- (-18.476pt,0.0pt) -- (-9.238pt,16.0002pt) -- (-36.952pt,16.0002pt) -- cycle;\draw[rhrhombidraw] (-9.238pt,0.0pt) -- (18.476pt,0.0pt) -- (27.714pt,16.0002pt) -- (0.0pt,16.0002pt) -- cycle;\draw[rhrhombidraw] (27.714pt,0.0pt) -- (55.428pt,0.0pt) -- (64.666pt,16.0002pt) -- (36.952pt,16.0002pt) -- cycle;\draw[rhrhombidraw] (64.666pt,0.0pt) -- (92.38pt,0.0pt) -- (101.618pt,16.0002pt) -- (73.904pt,16.0002pt) -- cycle;\draw[rhrhombidraw] (101.618pt,0.0pt) -- (129.332pt,0.0pt) -- (138.57pt,16.0002pt) -- (110.856pt,16.0002pt) -- cycle;\draw[rhrhombidraw] (138.57pt,0.0pt) -- (166.284pt,0.0pt) -- (175.522pt,16.0002pt) -- (147.808pt,16.0002pt) -- cycle;\draw[thin,-my] (-117.7845pt,4.0001pt) arc (60:0:4.619pt);\draw[thin,-my] (-80.8325pt,4.0001pt) arc (240:300:4.619pt) arc (120:60:4.619pt) arc (240:300:4.619pt) arc (120:60:4.619pt) arc (240:300:4.619pt) arc (120:60:4.619pt);\draw[thin,-my] (-43.8805pt,4.0001pt) arc (240:300:4.619pt) arc (120:60:4.619pt) arc (240:300:4.619pt) arc (120:60:4.619pt) arc (240:300:4.619pt) arc (300:360:4.619pt) arc (180:120:4.619pt) arc (300:360:4.619pt);\draw[thin,-my] (13.857pt,0.0pt) arc (180:120:4.619pt) arc (300:360:4.619pt) arc (180:120:4.619pt) arc (300:360:4.619pt);\draw[thin,-my] (62.3565pt,12.0002pt) arc (240:180:4.619pt);\draw[thin,-my] (99.3085pt,12.0002pt) arc (60:120:4.619pt) arc (300:240:4.619pt) arc (60:120:4.619pt) arc (300:240:4.619pt) arc (60:120:4.619pt) arc (300:240:4.619pt);\draw[thin,-my] (136.2605pt,12.0002pt) arc (60:120:4.619pt) arc (300:240:4.619pt) arc (60:120:4.619pt) arc (300:240:4.619pt) arc (60:120:4.619pt) arc (120:180:4.619pt) arc (0:-60:4.619pt) arc (120:180:4.619pt);\draw[thin,-my] (152.427pt,16.0002pt) arc (0:-60:4.619pt) arc (120:180:4.619pt) arc (0:-60:4.619pt) arc (120:180:4.619pt);\end{tikzpicture}
\\
\begin{tikzpicture}\draw[rhrhombidraw] (-110.856pt,0.0pt) -- (-106.237pt,8.0001pt) -- (-96.999pt,8.0001pt) -- (-101.618pt,0.0pt) -- cycle;\draw[rhrhombidraw] (-92.38pt,0.0pt) -- (-87.761pt,8.0001pt) -- (-78.523pt,8.0001pt) -- (-83.142pt,0.0pt) -- cycle;\draw[rhrhombidraw] (-73.904pt,0.0pt) -- (-69.285pt,8.0001pt) -- (-60.047pt,8.0001pt) -- (-64.666pt,0.0pt) -- cycle;\draw[rhrhombidraw] (-55.428pt,0.0pt) -- (-50.809pt,8.0001pt) -- (-41.571pt,8.0001pt) -- (-46.19pt,0.0pt) -- cycle;\draw[rhrhombidraw] (-36.952pt,0.0pt) -- (-32.333pt,8.0001pt) -- (-23.095pt,8.0001pt) -- (-27.714pt,0.0pt) -- cycle;\draw[rhrhombidraw] (-18.476pt,0.0pt) -- (-13.857pt,8.0001pt) -- (-4.619pt,8.0001pt) -- (-9.238pt,0.0pt) -- cycle;\draw[rhrhombidraw] (0.0pt,0.0pt) -- (4.619pt,8.0001pt) -- (13.857pt,8.0001pt) -- (9.238pt,0.0pt) -- cycle;\draw[rhrhombidraw] (18.476pt,0.0pt) -- (23.095pt,8.0001pt) -- (32.333pt,8.0001pt) -- (27.714pt,0.0pt) -- cycle;\draw[thin,-my] (-108.5465pt,4.0001pt) arc (60:0:4.619pt);\draw[thin,-my] (-90.0705pt,4.0001pt) arc (240:300:4.619pt) arc (120:60:4.619pt);\draw[thin,-my] (-71.5945pt,4.0001pt) arc (240:300:4.619pt) arc (300:360:4.619pt);\draw[thin,-my] (-50.809pt,0.0pt) arc (180:120:4.619pt) arc (300:360:4.619pt);\draw[thin,-my] (-25.4045pt,4.0001pt) arc (240:180:4.619pt);\draw[thin,-my] (-6.9285pt,4.0001pt) arc (60:120:4.619pt) arc (300:240:4.619pt);\draw[thin,-my] (11.5475pt,4.0001pt) arc (60:120:4.619pt) arc (120:180:4.619pt);\draw[thin,-my] (27.714pt,8.0001pt) arc (0:-60:4.619pt) arc (120:180:4.619pt);\end{tikzpicture}
    \caption{Top row: The eight canonical turnpaths in a parallelogram. Bottom row:
    The same list in the special case of a rhombus.
    }
    \nopar\label{fig:tp-parallelogram}
\end{center}
\end{figure}
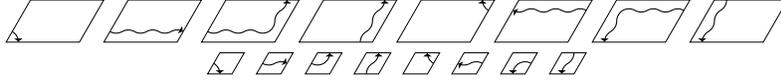

\begin{figure}[h]
\begin{center}
\scalebox{0.965}{
\begin{tikzpicture}\draw[rhrhombidraw] (-120.094pt,0.0pt) -- (-110.856pt,16.0002pt) -- (-92.38pt,16.0002pt) -- (-83.142pt,0.0pt) -- cycle;\draw[rhrhombidraw] (-73.904pt,0.0pt) -- (-64.666pt,16.0002pt) -- (-46.19pt,16.0002pt) -- (-36.952pt,0.0pt) -- cycle;\draw[rhrhombidraw] (-27.714pt,0.0pt) -- (-18.476pt,16.0002pt) -- (0.0pt,16.0002pt) -- (9.238pt,0.0pt) -- cycle;\draw[rhrhombidraw] (18.476pt,0.0pt) -- (27.714pt,16.0002pt) -- (46.19pt,16.0002pt) -- (55.428pt,0.0pt) -- cycle;\draw[rhrhombidraw] (64.666pt,0.0pt) -- (73.904pt,16.0002pt) -- (92.38pt,16.0002pt) -- (101.618pt,0.0pt) -- cycle;\draw[rhrhombidraw] (110.856pt,0.0pt) -- (120.094pt,16.0002pt) -- (138.57pt,16.0002pt) -- (147.808pt,0.0pt) -- cycle;\draw[rhrhombidraw] (157.046pt,0.0pt) -- (166.284pt,16.0002pt) -- (184.76pt,16.0002pt) -- (193.998pt,0.0pt) -- cycle;\draw[rhrhombidraw] (203.236pt,0.0pt) -- (212.474pt,16.0002pt) -- (230.95pt,16.0002pt) -- (240.188pt,0.0pt) -- cycle;\draw[rhrhombidraw] (249.426pt,0.0pt) -- (258.664pt,16.0002pt) -- (277.14pt,16.0002pt) -- (286.378pt,0.0pt) -- cycle;\draw[thin,-my] (-117.7845pt,4.0001pt) arc (60:0:4.619pt);\draw[thin,-my] (-71.5945pt,4.0001pt) arc (240:300:4.619pt) arc (120:60:4.619pt) arc (240:300:4.619pt) arc (120:60:4.619pt) arc (240:300:4.619pt) arc (120:60:4.619pt) arc (240:300:4.619pt);\draw[thin,-my] (4.619pt,0.0pt) arc (180:120:4.619pt);\draw[thin,-my] (50.809pt,0.0pt) arc (0:60:4.619pt) arc (240:180:4.619pt) arc (0:60:4.619pt) arc (240:180:4.619pt);\draw[thin,-my] (96.999pt,0.0pt) arc (0:60:4.619pt) arc (240:180:4.619pt) arc (0:60:4.619pt) arc (60:120:4.619pt) arc (300:240:4.619pt) arc (60:120:4.619pt) arc (300:240:4.619pt);\draw[thin,-my] (143.189pt,0.0pt) arc (0:60:4.619pt) arc (240:180:4.619pt) arc (0:60:4.619pt) arc (60:120:4.619pt) arc (300:240:4.619pt) arc (60:120:4.619pt) arc (120:180:4.619pt) arc (0:-60:4.619pt) arc (120:180:4.619pt);\draw[thin,-my] (187.0695pt,12.0002pt) arc (300:240:4.619pt) arc (60:120:4.619pt) arc (300:240:4.619pt) arc (60:120:4.619pt) arc (300:240:4.619pt);\draw[thin,-my] (233.2595pt,12.0002pt) arc (300:240:4.619pt) arc (60:120:4.619pt) arc (300:240:4.619pt) arc (60:120:4.619pt) arc (120:180:4.619pt) arc (0:-60:4.619pt) arc (120:180:4.619pt);\draw[thin,-my] (263.283pt,16.0002pt) arc (0:-60:4.619pt) arc (120:180:4.619pt) arc (0:-60:4.619pt) arc (120:180:4.619pt);\end{tikzpicture}
}
    \caption{The nine canonical turnpaths in a trapezoid.}
    \nopar\label{fig:tp-trapezoid}
\end{center}
\end{figure}
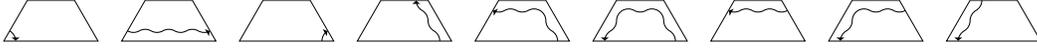

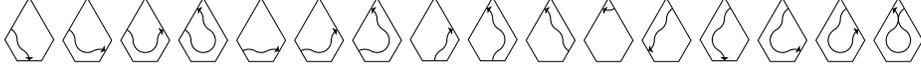
\begin{figure}[h]
\begin{center}
\begin{tikzpicture}\draw[rhrhombidraw] (-4.619pt,8.0001pt) -- (0.0pt,0.0pt) -- (9.238pt,0.0pt) -- (13.857pt,8.0001pt) -- (4.619pt,24.0003pt) -- cycle;\draw[thin,-my] (-2.3095pt,12.0002pt) arc (60:0:4.619pt) arc (180:240:4.619pt) arc (60:0:4.619pt);\end{tikzpicture}
\begin{tikzpicture}\draw[rhrhombidraw] (-4.619pt,8.0001pt) -- (0.0pt,0.0pt) -- (9.238pt,0.0pt) -- (13.857pt,8.0001pt) -- (4.619pt,24.0003pt) -- cycle;\draw[thin,-my] (-2.3095pt,12.0002pt) arc (60:0:4.619pt) arc (180:240:4.619pt) arc (240:300:4.619pt) arc (120:60:4.619pt);\end{tikzpicture}
\begin{tikzpicture}\draw[rhrhombidraw] (-4.619pt,8.0001pt) -- (0.0pt,0.0pt) -- (9.238pt,0.0pt) -- (13.857pt,8.0001pt) -- (4.619pt,24.0003pt) -- cycle;\draw[thin,-my] (-2.3095pt,12.0002pt) arc (60:0:4.619pt) arc (180:240:4.619pt) arc (240:300:4.619pt) arc (300:360:4.619pt) arc (180:120:4.619pt);\end{tikzpicture}
\begin{tikzpicture}\draw[rhrhombidraw] (-4.619pt,8.0001pt) -- (0.0pt,0.0pt) -- (9.238pt,0.0pt) -- (13.857pt,8.0001pt) -- (4.619pt,24.0003pt) -- cycle;\draw[thin,-my] (-2.3095pt,12.0002pt) arc (60:0:4.619pt) arc (180:240:4.619pt) arc (240:300:4.619pt) arc (300:360:4.619pt) arc (0:60:4.619pt) arc (240:180:4.619pt) arc (0:60:4.619pt);\end{tikzpicture}
\begin{tikzpicture}\draw[rhrhombidraw] (-4.619pt,8.0001pt) -- (0.0pt,0.0pt) -- (9.238pt,0.0pt) -- (13.857pt,8.0001pt) -- (4.619pt,24.0003pt) -- cycle;\draw[thin,-my] (-2.3095pt,4.0001pt) arc (120:60:4.619pt) arc (240:300:4.619pt) arc (120:60:4.619pt);\end{tikzpicture}
\begin{tikzpicture}\draw[rhrhombidraw] (-4.619pt,8.0001pt) -- (0.0pt,0.0pt) -- (9.238pt,0.0pt) -- (13.857pt,8.0001pt) -- (4.619pt,24.0003pt) -- cycle;\draw[thin,-my] (-2.3095pt,4.0001pt) arc (120:60:4.619pt) arc (240:300:4.619pt) arc (300:360:4.619pt) arc (180:120:4.619pt);\end{tikzpicture}
\begin{tikzpicture}\draw[rhrhombidraw] (-4.619pt,8.0001pt) -- (0.0pt,0.0pt) -- (9.238pt,0.0pt) -- (13.857pt,8.0001pt) -- (4.619pt,24.0003pt) -- cycle;\draw[thin,-my] (-2.3095pt,4.0001pt) arc (120:60:4.619pt) arc (240:300:4.619pt) arc (300:360:4.619pt) arc (0:60:4.619pt) arc (240:180:4.619pt) arc (0:60:4.619pt);\end{tikzpicture}
\begin{tikzpicture}\draw[rhrhombidraw] (-4.619pt,8.0001pt) -- (0.0pt,0.0pt) -- (9.238pt,0.0pt) -- (13.857pt,8.0001pt) -- (4.619pt,24.0003pt) -- cycle;\draw[thin,-my] (4.619pt,0.0pt) arc (180:120:4.619pt) arc (300:360:4.619pt) arc (180:120:4.619pt);\end{tikzpicture}
\begin{tikzpicture}\draw[rhrhombidraw] (-4.619pt,8.0001pt) -- (0.0pt,0.0pt) -- (9.238pt,0.0pt) -- (13.857pt,8.0001pt) -- (4.619pt,24.0003pt) -- cycle;\draw[thin,-my] (4.619pt,0.0pt) arc (180:120:4.619pt) arc (300:360:4.619pt) arc (0:60:4.619pt) arc (240:180:4.619pt) arc (0:60:4.619pt);\end{tikzpicture}
\begin{tikzpicture}\draw[rhrhombidraw] (-4.619pt,8.0001pt) -- (0.0pt,0.0pt) -- (9.238pt,0.0pt) -- (13.857pt,8.0001pt) -- (4.619pt,24.0003pt) -- cycle;\draw[thin,-my] (11.5475pt,4.0001pt) arc (240:180:4.619pt) arc (0:60:4.619pt) arc (240:180:4.619pt) arc (0:60:4.619pt);\end{tikzpicture}
\begin{tikzpicture}\draw[rhrhombidraw] (-4.619pt,8.0001pt) -- (0.0pt,0.0pt) -- (9.238pt,0.0pt) -- (13.857pt,8.0001pt) -- (4.619pt,24.0003pt) -- cycle;\draw[thin,-my] (6.9285pt,20.0003pt) arc (300:240:4.619pt);\end{tikzpicture}
\begin{tikzpicture}\draw[rhrhombidraw] (-4.619pt,8.0001pt) -- (0.0pt,0.0pt) -- (9.238pt,0.0pt) -- (13.857pt,8.0001pt) -- (4.619pt,24.0003pt) -- cycle;\draw[thin,-my] (6.9285pt,20.0003pt) arc (120:180:4.619pt) arc (0:-60:4.619pt) arc (120:180:4.619pt) arc (0:-60:4.619pt);\end{tikzpicture}
\begin{tikzpicture}\draw[rhrhombidraw] (-4.619pt,8.0001pt) -- (0.0pt,0.0pt) -- (9.238pt,0.0pt) -- (13.857pt,8.0001pt) -- (4.619pt,24.0003pt) -- cycle;\draw[thin,-my] (6.9285pt,20.0003pt) arc (120:180:4.619pt) arc (0:-60:4.619pt) arc (120:180:4.619pt) arc (180:240:4.619pt) arc (60:0:4.619pt);\end{tikzpicture}
\begin{tikzpicture}\draw[rhrhombidraw] (-4.619pt,8.0001pt) -- (0.0pt,0.0pt) -- (9.238pt,0.0pt) -- (13.857pt,8.0001pt) -- (4.619pt,24.0003pt) -- cycle;\draw[thin,-my] (6.9285pt,20.0003pt) arc (120:180:4.619pt) arc (0:-60:4.619pt) arc (120:180:4.619pt) arc (180:240:4.619pt) arc (240:300:4.619pt) arc (120:60:4.619pt);\end{tikzpicture}
\begin{tikzpicture}\draw[rhrhombidraw] (-4.619pt,8.0001pt) -- (0.0pt,0.0pt) -- (9.238pt,0.0pt) -- (13.857pt,8.0001pt) -- (4.619pt,24.0003pt) -- cycle;\draw[thin,-my] (6.9285pt,20.0003pt) arc (120:180:4.619pt) arc (0:-60:4.619pt) arc (120:180:4.619pt) arc (180:240:4.619pt) arc (240:300:4.619pt) arc (300:360:4.619pt) arc (180:120:4.619pt);\end{tikzpicture}
\begin{tikzpicture}\draw[rhrhombidraw] (-4.619pt,8.0001pt) -- (0.0pt,0.0pt) -- (9.238pt,0.0pt) -- (13.857pt,8.0001pt) -- (4.619pt,24.0003pt) -- cycle;\draw[thin,-my] (6.9285pt,20.0003pt) arc (120:180:4.619pt) arc (0:-60:4.619pt) arc (120:180:4.619pt) arc (180:240:4.619pt) arc (240:300:4.619pt) arc (300:360:4.619pt) arc (0:60:4.619pt) arc (240:180:4.619pt) arc (0:60:4.619pt);\end{tikzpicture}
    \caption{The 16 canonical turnpaths in a pentagon.}
    \nopar\label{fig:tp-pentagon}
\end{center}
\end{figure}

\begin{figure}[h]
\begin{center}
\begin{tikzpicture}\draw[rhrhombidraw] (-101.618pt,0.0pt) -- (-83.142pt,0.0pt) -- (-73.904pt,16.0002pt) -- (-83.142pt,32.0004pt) -- (-101.618pt,32.0004pt) -- (-110.856pt,16.0002pt) -- cycle;\draw[rhrhombidraw] (-64.666pt,16.0002pt) -- (-55.428pt,0.0pt) -- (-36.952pt,0.0pt) -- (-27.714pt,16.0002pt) -- (-36.952pt,32.0004pt) -- (-55.428pt,32.0004pt) -- cycle;\draw[rhrhombidraw] (-18.476pt,16.0002pt) -- (-9.238pt,0.0pt) -- (9.238pt,0.0pt) -- (18.476pt,16.0002pt) -- (9.238pt,32.0004pt) -- (-9.238pt,32.0004pt) -- cycle;\draw[rhrhombidraw] (27.714pt,16.0002pt) -- (36.952pt,0.0pt) -- (55.428pt,0.0pt) -- (64.666pt,16.0002pt) -- (55.428pt,32.0004pt) -- (36.952pt,32.0004pt) -- cycle;\draw[rhrhombidraw] (73.904pt,16.0002pt) -- (83.142pt,0.0pt) -- (101.618pt,0.0pt) -- (110.856pt,16.0002pt) -- (101.618pt,32.0004pt) -- (83.142pt,32.0004pt) -- cycle;\draw[rhrhombidraw] (120.094pt,16.0002pt) -- (129.332pt,0.0pt) -- (147.808pt,0.0pt) -- (157.046pt,16.0002pt) -- (147.808pt,32.0004pt) -- (129.332pt,32.0004pt) -- cycle;\draw[thin,-my] (-103.9275pt,4.0001pt) arc (120:60:4.619pt) arc (240:300:4.619pt) arc (120:60:4.619pt) arc (240:300:4.619pt) arc (120:60:4.619pt);\draw[thin,-my] (-57.7375pt,4.0001pt) arc (120:60:4.619pt) arc (240:300:4.619pt) arc (120:60:4.619pt) arc (240:300:4.619pt) arc (300:360:4.619pt) arc (180:120:4.619pt) arc (300:360:4.619pt) arc (180:120:4.619pt);\draw[thin,-my] (-11.5475pt,4.0001pt) arc (120:60:4.619pt) arc (240:300:4.619pt) arc (120:60:4.619pt) arc (240:300:4.619pt) arc (300:360:4.619pt) arc (180:120:4.619pt) arc (300:360:4.619pt) arc (0:60:4.619pt) arc (240:180:4.619pt) arc (0:60:4.619pt) arc (240:180:4.619pt);\draw[thin,-my] (34.6425pt,4.0001pt) arc (120:60:4.619pt) arc (240:300:4.619pt) arc (120:60:4.619pt) arc (240:300:4.619pt) arc (300:360:4.619pt) arc (180:120:4.619pt) arc (300:360:4.619pt) arc (0:60:4.619pt) arc (240:180:4.619pt) arc (0:60:4.619pt) arc (60:120:4.619pt) arc (300:240:4.619pt) arc (60:120:4.619pt) arc (300:240:4.619pt);\draw[thin,-my] (80.8325pt,4.0001pt) arc (120:60:4.619pt) arc (240:300:4.619pt) arc (120:60:4.619pt) arc (240:300:4.619pt) arc (300:360:4.619pt) arc (180:120:4.619pt) arc (300:360:4.619pt) arc (0:60:4.619pt) arc (240:180:4.619pt) arc (0:60:4.619pt) arc (60:120:4.619pt) arc (300:240:4.619pt) arc (60:120:4.619pt) arc (120:180:4.619pt) arc (0:-60:4.619pt) arc (120:180:4.619pt) arc (0:-60:4.619pt);\draw[thin,-my] (127.0225pt,4.0001pt) arc (120:60:4.619pt) arc (240:300:4.619pt) arc (120:60:4.619pt) arc (240:300:4.619pt) arc (300:360:4.619pt) arc (180:120:4.619pt) arc (300:360:4.619pt) arc (0:60:4.619pt) arc (240:180:4.619pt) arc (0:60:4.619pt) arc (60:120:4.619pt) arc (300:240:4.619pt) arc (60:120:4.619pt) arc (120:180:4.619pt) arc (0:-60:4.619pt) arc (120:180:4.619pt) arc (180:240:4.619pt) arc (60:0:4.619pt) arc (180:240:4.619pt) arc (60:0:4.619pt);\end{tikzpicture}
    \caption{The six canonical turnpaths in a hexagon starting from a fixed side.}
    \nopar\label{fig:tp-hexagon}
\end{center}
\end{figure}
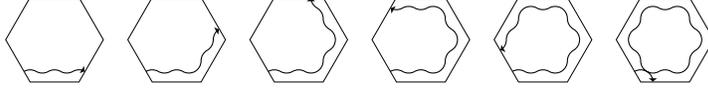


\begin{lemma}\label{le:canpa}
Let $f$ be a hive flow and $L$ be one of its $f$\dash flatspaces. 
Then any canonical turnpath $p\in \cP_L$ of $L$ is a turnpath in $\resf f$. 
\end{lemma}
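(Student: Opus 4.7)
My plan is to verify that every turnvertex and every turnedge used by $p$ survives in the residual digraph $\resf f$. Since $p$ stays within $L$ by construction and never touches the source or target or crosses a border edge of $\Delta$, the only deletions of Definition~\ref{def:resf} that can affect $p$ are the removals of $\p_-(\varrho)$ for $f$-flat rhombi $\varrho$. So the task reduces to showing that $p$ uses no element of $\p_-(\varrho)$ for any $f$-flat $\varrho$. A preliminary reduction narrows the relevant rhombi: if $\varrho$ is $f$-flat and shares even one hive triangle with $L$, then by the very definition of $\varrho$ being $f$-flat its two triangles are joined by an edge of the flatspace graph, so both lie in $L$; and conversely, iterated application of Corollary~\ref{cor:BZ2} together with Lemma~\ref{lem:flatspaceprop} ensures that every rhombus $\varrho \subseteq L$ is $f$-flat. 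Hence it suffices to check that $p$ avoids $\p_-(\varrho)$ for each rhombus $\varrho \subseteq L$.

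I would then run a local case analysis on the turnvertices and turnedges of $p$, consulting Definition~\ref{def:slack-contr}. Recall that $\p_-(\varrho)$ contains exactly two counterclockwise single turns at the acute vertices of $\varrho$ and two double-clockwise turnedges at the obtuse vertices of $\varrho$. By inspection of the canonical turnpath one verifies: at an intermediate obtuse corner of $L$, the two border triangles meeting at the corner form a rhombus $\varrho \subseteq L$, and $p$ passes through it by a double-counterclockwise turnedge around the obtuse vertex, which is precisely a $\p_+(\varrho)$ pattern; the single-clockwise canonical turnpath at an acute corner of $L$ is clockwise at an acute vertex of the surrounding rhombus, again a $\p_+$ pattern; turnedges along the interior of a side $a_i$, consisting of alternating clockwise and counterclockwise turns in adjacent border triangles, match the mixed-direction $\p_0(\varrho)$ patterns when they sit inside an $f$-flat rhombus $\varrho \subseteq L$. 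In every case the pattern is ruled out of $\p_-(\varrho)$.

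The delicate point is the "first counterclockwise turn at an acute start" of a multi-turn canonical turnpath. This turn is not at the acute corner of $L$ itself, because the entrance edge $(a_1)_{\to L}$ lies at the opposite end of $a_1$ from the $a_1$-$a_2$ corner; rather, it occurs at the first interior vertex visited by $p$, which by the local geometry is an obtuse vertex of the first relevant rhombus inside $L$. A counterclockwise turn around an obtuse vertex is a $\p_+$ element, not a $\p_-$ one, so the first turn is safe. A careful rotational bookkeeping is needed to match the $\rhc$-pictograms defining $\p_\pm(\varrho)$ to the concrete orientation of each rhombus $\varrho \subseteq L$ that contains a turn of $p$; this bookkeeping is the main combinatorial obstacle, but once it is set up, each of the three local cases yields its verification by direct inspection.
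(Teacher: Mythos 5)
Your setup is sound: the reduction to showing $p$ avoids $\p_-(\varrho)$ for all $f$-flat $\varrho$, and the preliminary observation that such a $\varrho$ must either be contained in $L$ or not be $f$-flat at all, both hold and track the paper's argument. The problem is in the case analysis, and it is a genuine gap.

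Each counterclockwise turnvertex $v$ used by $p$ lives in a unique hive triangle $T$ and wraps around a unique vertex $w$ of $T$. Let $k$ be the edge of $T$ opposite $w$, and $\varrho(v)$ the rhombus with diagonal $k$. Then $v$ is a $\p_-$-turnvertex of exactly one rhombus, namely $\varrho(v)$, because $w$ is an acute corner of $\varrho(v)$ and the single-turn elements of $\p_-$ are precisely the counterclockwise turns at acute corners. The thing you must establish, for every counterclockwise turnvertex of $p$, is that this particular rhombus $\varrho(v)$ is \emph{not} contained in $L$ --- equivalently, that $k$ is a border edge of $L$. Your analysis never examines $\varrho(v)$: instead you observe that $v$ sits at an obtuse vertex of some \emph{other} rhombus inside $L$ and declare safety. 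That observation is true but beside the point, since it says nothing about $\varrho(v)$. Moreover, the claim that ``a counterclockwise turn around an obtuse vertex is a $\p_+$ element'' is simply false: the single-turn elements of $\p_+$ are clockwise (at acute corners), and the counterclockwise patterns in $\p_+$ are the double turnedges at obtuse corners; a lone counterclockwise turn is never in $\p_+(\varrho)$ for any $\varrho$. (Incidentally, the entrance edge $(a_1)_{\to L}=k_1$ is the first edge of $a_1$ in clockwise order, which places it next to the $a_1$-$a_2$ corner, not at the opposite end as you assert --- but correcting that would not by itself close the gap.)

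The paper's proof is shorter and supplies exactly the missing fact. Since $p$ alternates orientations and only doubles up on counterclockwise turns at obtuse corners, it never takes two consecutive clockwise turns, so the double-clockwise turnedges of $\p_-$ are automatically avoided. And whenever $p$ uses a counterclockwise turnvertex, the diagonal $\rhsc$ of the rhombus where that turn sits at an acute corner lies on the border of the flatspace: the canonical turnpath hugs the border of $L$, and the edge of the hosting triangle opposite the wrapped vertex is always a border edge. That is the geometric assertion your argument needs; the obtuse-vertex observation cannot substitute for it.
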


\begin{proof}
We need to ensure that $p$ uses no negative contributions in $f$\dash flat rhombi.
But $p$~does not use any pair of successive clockwise turnvertices at all.
And whenever $p$ uses a counterclockwise turnvertex $\rhpoulMl$, then $\rhsc$ is at the border of an $f$\dash flatspace.
\qquad\end{proof}



%


We have to extend some of the notions introduced in Section~\ref{se:RRT}.

\begin{definition}\label{def:enter-exit-L}
A {\em multiset~$\varphi$ of canonical turnpaths} in a convex set~$L$ 
is defined as a map $\varphi\colon\cP_L\to \N_{\ge 0}$. 
Let $a$ be a side of $L$. 
The number of turnpaths of~$\varphi$ starting from $a_{\to L}$ and ending at $a_{L\to}$, respectively, is denoted by  
\begin{equation*}
\inom L a \varphi \ := \sum_{\pstart{p}= a_{\to L}}\varphi(p),\quad
\outom L  a \varphi \ := \sum_{\pend{p}= a_{L\to}} \varphi(p) .
\vspace{-0.5cm}
\end{equation*}
\end{definition}


Note that 
the weighted sum 
$\sum_{p\in\cP_L} \varphi(p) p$
defines a nonnegative flow~$d'_L$ on~$\res$ restricted to~$L$. 
Moreover, if $L$ is an $f$\dash flatspace, then 
$d'_L$~is a flow on $\resf f$ restricted to~$L$, 
as a consequence of Lemma~\ref{le:canpa}.

\begin{definition}\label{def:compatible}
Let $d$ be a hive flow on a convex set~$L$. 
A multiset~$\varphi$ of canonical turnpaths on~$L$ 
is called {\em compatible with $d$}, if 
$\inom L a \varphi = \inom L a d$ and $\outom L a \varphi = \outom L a d$ 
for all edges~$a$ of $L$.
\end{definition}

The key result, amenable to an inductive proof along~$L$,  
is the following.

\begin{theorem}[Canonical Turnpath Theorem]\label{thm:CTT}
Let $L$ be a convex set and $d$ be an integral hive flow on $L$. 
Then there exists a multiset~$\varphi$ of canonical turnpaths 
on~$L$ which is {\em compatible with $d$}.
\end{theorem}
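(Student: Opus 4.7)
The plan is to prove Theorem~\ref{thm:CTT} by strong induction on the number of hive triangles in the convex set~$L$, following the strategy sketched in the paragraph preceding the statement. The base case is a single hive triangle. Here $d$ is determined by three integers $\alpha_1, \alpha_2, \alpha_3$ with $\alpha_1+\alpha_2+\alpha_3=0$ (Kirchhoff), where $\alpha_i := \inflow{L}{a_i}{d}$. At least one $\alpha_i$ is nonnegative and at least one nonpositive; by assigning, for each $i < j$ with $\alpha_i > 0$ and $\alpha_j<0$, an appropriate positive multiplicity to the unique canonical turnpath going from $a_i$ to $a_j$ in the list of Figure~\ref{fig:tp-triangle}, we obtain a compatible multiset directly. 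The inductive step is where the real work lies.

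For the inductive step, I choose a straight cut along one of the three hive-grid directions that separates $L$ into two smaller convex subsets $L_1$ and~$L_2$ (such a line always exists since $L$ has more than one hive triangle and is one of the five shapes from Lemma~\ref{lem:flatspaceprop}). The restrictions $d|_{L_1}$ and $d|_{L_2}$ are again integral hive flows, so by induction there are compatible multisets $\varphi_1,\varphi_2$. Let $a$ denote the shared side. Since $L_1$ and $L_2$ sit on opposite sides of~$a$, the exit edge $a_{L_1\to}$ coincides with the entrance edge $a_{\to L_2}$, and by~\eqref{eq:in=out} together with compatibility,
\[
\outom{L_1}{a}{\varphi_1} \ =\ \outom{L_1}{a}{d|_{L_1}} \ =\ \inom{L_2}{a}{d|_{L_2}} \ =\ \inom{L_2}{a}{\varphi_2}.
\]
This equality lets me pair each turnpath in $\varphi_1$ ending at $a_{L_1\to}$ with a turnpath in $\varphi_2$ starting at $a_{\to L_2}$; concatenating each such pair, and leaving the remaining turnpaths (those in $\varphi_1,\varphi_2$ not touching $a$) untouched, produces a multiset~$\tilde\varphi$ of turnpaths in~$L$ whose entrance and exit weights on every side of~$L$ match $\inom{L}{\cdot}{d}$ and $\outom{L}{\cdot}{d}$.

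The hard part is that a concatenated turnpath in~$\tilde\varphi$ need not be canonical in~$L$: it may detour along the cut~$a$ in both~$L_1$ and~$L_2$, or it may alternate turn orientations incorrectly across~$a$, violating Definition~\ref{def:PL}. This is remedied by a \emph{straightening} procedure. Consider two concatenated turnpaths $p,q\in\tilde\varphi$ that both cross~$a$; by rerouting their pieces relative to the cut---replacing two detours that meet along~$a$ with two straight crossings that do not touch~$a$ beyond the traversal edge---I obtain two new turnpaths $p',q'$ whose total contribution to every entrance/exit weight on sides of $L$ is unchanged. A similar local move handles a single turnpath that detours along~$a$ only on one side by shifting its crossing edge. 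I will verify that each straightening move strictly decreases a suitable complexity measure (e.g., total number of turnedges used along the cut~$a$ in the interior of~$L$), so the procedure terminates. Throughout, each move replaces canonical turnpaths by canonical turnpaths (using Example~\ref{ex:can-tp} to check canonicity) and integer multiplicities are exchanged for integer multiplicities, preserving the integrality claim.

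The main obstacle I anticipate is the bookkeeping in the straightening step: each of the five shapes from Lemma~\ref{lem:flatspaceprop} gives rise to several subcases depending on where the cut lies relative to the obtuse corners of~$L$, and one must check that the straightening move always produces turnpaths that remain canonical with respect to the \emph{global} shape of~$L$ (not just of~$L_1$ or~$L_2$). To control this combinatorial explosion I will organize the cases by (i)~which of the three grid directions the cut takes, and (ii)~whether the cut endpoints land in the interior of a side of~$L$ or at a vertex of~$L$. The Rerouting Theorem~\ref{thm:rerouting} then follows immediately from Theorem~\ref{thm:CTT} by taking $L=\Delta$, restricting $d$ to each $f$-flatspace, applying the theorem to obtain local multisets $\varphi_L$, and glueing them via the matching equality~\eqref{eq:in=out} across shared sides of adjacent flatspaces---in particular ensuring that for $f$-flatspaces, the canonical turnpaths from Lemma~\ref{le:canpa} are turnpaths in $\resf f$.
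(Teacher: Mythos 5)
Your overall strategy — induction on size, cutting $L$ by a grid line, gluing compatible multisets via \eqref{eq:in=out}, then straightening concatenations into canonical turnpaths of $L$ — matches the paper's skeleton. But the proposal leans on a claim that is false in general: that every concatenation of canonical turnpaths across the cut can be straightened. The paper shows that this is true for parallelograms (Proposition~\ref{pro:straight-parallelogram}), but \emph{not} for trapezoids, pentagons or triangles. For those shapes there are explicit \emph{critical cases} (Figure~\ref{fig:critical-trapezoid}, Figure~\ref{fig:penta}, Figure~\ref{fig:triangle}) where a concatenated turnpath starting at an entrance edge and ending at an exit edge of~$L$ simply has no canonical counterpart in~$L$. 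No ``complexity-decreasing rerouting along the cut'' will repair this, because the obstruction is global: the offending turnpath wraps around in a way no canonical turnpath of~$L$ does.

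The missing idea is that the cut is \emph{not arbitrary}: one must prove that among the few allowed cuts (e.g.\ the two ways to cut a height-1 trapezoid into a trapezoid and a rhombus, or the three ways to strip a triangle by a height-1 trapezoid), at least one avoids the critical case. This requires analyzing the hive flow $d$ itself, not just the multisets $\varphi_i$: for trapezoids and pentagons it uses Lemma~\ref{obs:borderentranceexit} (monotonicity of throughputs along a side) plus Definition~\ref{def:def-inflow} to show the two candidate cuts cannot both produce a critical case; for triangles the paper further needs Observation~\ref{obs:trap-rhomb} (tracing how a full-length canonical turnpath in a height-1 trapezoid arises) and the flow-propagation Lemma~\ref{le:flow-ext} to locate a "good" side to strip. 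Finally, note the hexagon case is handled without any cutting at all: the paper observes that for a hexagon every entrance edge is connected to every exit edge by some canonical turnpath, so one can pair off the inflow and outflow lists directly — cutting a hexagon would produce a pentagon-plus-trapezoid and reintroduce the straightening difficulties you are trying to avoid. So the proposal as written would fail precisely at the point you flag as "the hard part": the straightening step is not always possible, and without the flow-theoretic choice of cut, the induction does not close.
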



\begin{lemma}
The Canonical Turnpath Theorem~\ref{thm:CTT} implies the 
Rerouting Theorem~\ref{thm:rerouting}. 
\end{lemma}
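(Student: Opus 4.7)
The plan is to prove the Rerouting Theorem by applying the Canonical Turnpath Theorem~\ref{thm:CTT} to each $f$-flatspace separately and then gluing the resulting local multisets of canonical turnpaths across shared sides. I first carry out the integral case, from which the general case follows by scaling.

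Suppose $d \in \oF(G)_\Z$. For each $f$-flatspace $L$, the restriction $d_L$ of $d$ to the induced subgraph $G_L$ is an integral hive flow on $L$: every rhombus $\varrho$ inside $L$ is $f$-flat, so the $f$-hive preserving condition $\s\varrho f + \varepsilon \s\varrho d \geq 0$ for small $\varepsilon > 0$ forces $\s\varrho d \geq 0$. The Canonical Turnpath Theorem yields a multiset $\varphi_L$ of canonical turnpaths on $L$ with $\inom L a {\varphi_L} = \inom L a d$ and $\outom L a {\varphi_L} = \outom L a d$ for every side $a$ of $L$. To glue, fix a side $a$ shared by adjacent flatspaces $L$ and $M$. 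Equation~\eqref{eq:in=out} gives $\outom L a d = \inom M a d$, so compatibility yields $\outom L a {\varphi_L} = \inom M a {\varphi_M}$; since $a_{L\to} = a_{\to M}$, I can pair the canonical turnpaths in $\varphi_L$ exiting at $a_{L\to}$ bijectively with those in $\varphi_M$ entering at $a_{\to M}$. The last turnvertex of one and the first turnvertex of the other share the white vertex on $a_{L\to}$ and use distinct black vertices (one in a border triangle of $L$, one in a border triangle of $M$), so the pair concatenates to a valid turnedge in $\res$. For a side of $L$ lying on the right or bottom border of $\Delta$, I analogously pair the corresponding endpoints with the source $s$ via the $s$-edges of $\res$; for a side on the left border, with the target $t$. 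Following the pairings yields a decomposition of the union of the $\varphi_L$'s into a multiset $\varphi$ of complete turnpaths in $\res$.

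Next, I verify that $\varphi$ lies in the residual digraph $\resf f$, not merely in $\res$. Inside each flatspace this holds by Lemma~\ref{le:canpa}. At internal shared sides, the concatenating turnedge crosses no border of $\Delta$ and avoids negative slack contributions in $f$-flat rhombi (the rhombus straddling $L$ and $M$ is not $f$-flat, because otherwise $L$ and $M$ would be the same flatspace). At a right/bottom-border side $a$ of $L$ supporting an $s$-edge actually used by $\varphi$, the positivity of $\inom L a {\varphi_L} = \inom L a d$ yields some $k \subseteq a$ with $\delta(k, d) > 0$, so the $f$-hive preserving property gives $\delta(k, f) < b(k)$; combined with the constancy of $\delta(\cdot, f)$ along $a$ (Lemma~\ref{obs:borderentranceexit}) and $b(a_{\to L}) \geq b(k)$ (Observation~\ref{obs:bd-cap}), this forces $\delta(a_{\to L}, f) < b(a_{\to L})$, so the $s$-edge at $a_{\to L}$ survives in $\resf f$. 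Exit endpoints on the right/bottom border and both kinds of endpoints on the left border are handled analogously, using that maximal left-border capacities occur at exit edges. The required side-level compatibility $\inom L a \varphi = \inom L a d$, $\outom L a \varphi = \outom L a d$ holds immediately from the compatibilities of the local $\varphi_L$. For a general $d$, choose a positive integer $N$ so that $Nd$ is integral, apply the integral case, and divide by $N$ to obtain a weighted family compatible with $d$; irrational entries are handled by continuity, since the set of compatible weighted families is defined by finitely many linear equations.

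The main obstacle is the border bookkeeping in the preceding paragraph: one must relate the aggregate per-side quantities $\inom L a d, \outom L a d$ that CTT controls to the per-edge throughputs $\delta(k, d)$ and border capacities $b(k)$ that govern the deletions defining $\resf f$. Discharging this requires simultaneously invoking Lemma~\ref{obs:borderentranceexit} (inflows into a flatspace side are constant for $f$ and monotone non-increasing for $d$) and Observation~\ref{obs:bd-cap} (maximal border capacities occur at entrance edges on the right/bottom border and at exit edges on the left border). A minor subtlety is that the pairing at a shared side is not canonical when several canonical turnpaths meet at the same endpoint, but any choice of pairing yields a valid decomposition into complete turnpaths because only the aggregate per-side counts enter the conclusion.
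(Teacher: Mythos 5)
Your proof follows the paper's own strategy essentially verbatim: apply the Canonical Turnpath Theorem to each $f$-flatspace to obtain locally compatible multisets $\varphi_L$, set up arbitrary bijections at shared sides using $\outom L a {\varphi_L} = \inom M a {\varphi_M}$, observe that the straddling rhombus is not $f$-flat so the gluing turnedges lie in $\resf f$, close off at the border via $s$/$t$-edges, then deduce side-level compatibility and finish with the integral-to-rational-to-real reduction. Where you go further than the paper is in the border bookkeeping: the paper disposes of the completion step with the single sentence ``These turnpaths can be extended to complete turnpaths,'' whereas you try to verify that the needed $s$/$t$-edges survive the deletions in $\resf f$. Your argument for an entrance edge on the right/bottom border is sound (positivity of $\inom L a d$ propagates to $a_{\to L}$ by the monotonicity in Lemma~\ref{obs:borderentranceexit}, yielding $\delta(a_{\to L},f) < b(a_{\to L})$ because $\delta(a_{\to L},d)>0$ and $f+\varepsilon d\in B$). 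But the word ``analogously'' for the remaining three border cases deserves caution: for an \emph{exit} endpoint on the right/bottom border, $\outom L a d>0$ combined with $f+\varepsilon d\in B$ gives $\delta(a_{L\to},f)>0$, which is a strict inequality at the \emph{lower} capacity bound, not the upper one, and Definition~\ref{def:resf} only deletes at $\delta(k,f)=b(k)$ --- so the same chain does not directly close this case, and likewise for entrance endpoints on the left border. This concern is really about the precise formulation of $\resf f$ rather than about your gluing construction, and the paper's own proof is no more explicit at this point; but you should not present the four border cases as interchangeable when only two of them follow by the argument you spelled out.
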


\begin{proof} 
We first note that it suffices to prove the Rerouting Theorem~\ref{thm:rerouting} 
for an integral flow $d\in\oF(G)$.  
Indeed, then it trivially must hold for a rational $d$.  
A standard continuity argument then shows the assertion for a real~$d$. 

So let $f\in B$ and $d\in\oF(G)$ be integral and $f$-hive preserving. 
Theorem~\ref{thm:CTT} applied to every $f$\dash flatspace~$L$ and
the hive flow $d$ restricted to $L$ yields a multiset~$\varphi_L$ of 
canonical turnpaths on $L$ compatible with $d$ restricted to $L$.
By Lemma~\ref{le:canpa}, canonical turnpaths of $L$ are in $\resf f$.

Suppose that $L$ and $M$ are adjacent $f$-flatspaces sharing the side~$a$. 
Then we have, using Theorem~\ref{thm:CTT} and~\eqref{eq:in=out}, 
\begin{equation}\label{eq:comprel}
 \inom L a {\varphi_L} = \inom L a d = \outom M a d = \outom M a {\varphi_M} .
\end{equation}
We set up an arbitrary bijection between the turnpaths $p_M$ in $\varphi_{M}$ 
ending at $a_{M\to}$ and the turnpaths $p_L$ in $\varphi_{L}$ 
starting from $a_{\to L} =a_{M\to}$ and concatenate these turnpaths correspondingly. 
It is essential to note that the additional turnedges used for joining $p_M$ and $p_L$ 
lie in~$\resf f$, since the rhombus with diagonal $a_{\to L }$ is not $f$\dash flat!

Similarly, we have
$\inom M a {\varphi_M} = \outom L a {\varphi_L}$ and we concatenate the turnpaths in~$\varphi_{L}$ 
ending at $a_{L\to}$ with the turnpaths in $\varphi_{M}$ 
starting from $a_{\to M} = a_{L\to}$ correspondingly. 

Doing so for all sides~$a$ shared by different $f$\dash flatspaces, 
we obtain a multiset of turncycles in $\resf f$ 
and a multiset of turnpaths in $\resf f$ going from a side of $\Delta$ to a side of $\Delta$. 
These turnpaths can be extended to complete turnpaths. 
Altogether, we obtain a multiset $\varphi$ of complete turnpaths in~$\resf f$. 

Then we have $\inom L a \varphi = \inom L a {\varphi_L}$ and 
$\outom L a \varphi = \outom L a {\varphi_L}$ 
for any side~$a$ of an $f$-flatspace~$L$. Hence,
$\inom L a \varphi = \inom L a d$ and 
$\outom L a \varphi = \outom L a d$ by~\eqref{eq:comprel}. 
So the multiset~$\varphi$ is as required.  
\qquad\end{proof}



In the subsequent sections we shall prove Theorem~\ref{thm:CTT} 
for the five possible shapes of $L$. Although the arguments are quite similar for 
the different shapes, there are subtle differences. 
We begin with the case of parallelograms. 


\subsection{Parallelograms}


By the {\em size} of a convex set~$L$ we understand the number 
of hive triangles contained in~$L$. 
We will prove the Canonical Turnpath Theorem~\ref{thm:CTT}
for parallelograms~$L$ by induction on the size of~$L$. 
The induction start is provided by the following lemma.



\begin{lemma}\label{le:CTT-rhombus}
The assertion of the Canonical Turnpath Theorem~\ref{thm:CTT} 
is true if $L=\varrho$ is a rhombus. 
More specifically, if $d$ is an integral hive flow on $\varrho$, 
then there is a multiset $\varphi$ of canonical turnpaths compatible with $d$,  
such that for all $p\in\Psi_+(\varrho)$ occuring in $\varphi$ we have 
$p\subseteq\SUPP(d)$.
\end{lemma}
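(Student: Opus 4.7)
The plan is to construct $\varphi$ explicitly by solving a small transportation-like linear system. First, I would enumerate the $8$ canonical turnpaths of the rhombus $\varrho$ (see the bottom row of Figure~\ref{fig:tp-parallelogram}) and classify them by their slack contributions: two \emph{single-turnvertex} turnpaths (a clockwise turn at each acute corner, each lying in $\p_+(\varrho)$); four $r=3$ turnpaths through a single intermediate side (each using only slack contributions from $\p_0(\varrho)$); and two $r=4$ turnpaths traversing both obtuse corners (each containing a positive slack contribution such as $\rhpourMll$ or $\rhpollWll$). Setting $I_a := \inom L a d$ and $O_a := \outom L a d$ at each of the four sides $a$ of $\varrho$, the compatibility requirements on $\varphi$ become a linear system in the $8$ nonnegative integer multiplicities.

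By flow conservation $\sum_a I_a = \sum_a O_a$ this system has rank $7$, so its integer solutions form a one-parameter family parametrized by an integer $t$. Nonnegativity of all $8$ multiplicities translates into $t$ lying in an integer interval whose endpoints depend explicitly on the throughputs of $d$; I would verify that the hive inequality $\s \varrho d \geq 0$ together with the throughput relations inside $\varrho$ (following from the flow conservation laws of Section~\ref{se:honey}) ensures this interval is nonempty and contains an integer point. The total weight of canonical turnpaths in $\varphi$ carrying a positive slack contribution is forced to equal exactly $\s \varrho d$, so the hive inequality enters the feasibility analysis precisely at this step.

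For the additional support condition, I would choose $t$ at an extremal endpoint of its valid interval to route the $\s \varrho d$ units of positive-slack turnpaths only through positive slack contributions that lie in $\SUPP(d)$. The key tool is Lemma~\ref{cla:negimpliespos}: if a positive-slack canonical turnpath $\tilde{p}$ is forced into $\varphi$ with positive multiplicity, then the positive slack contribution $p \subseteq \tilde{p}$ in $\p_+(\varrho)$ must lie in $\SUPP(d)$, since the alternative would force a negative slack contribution into $\SUPP(d)$, contradicting $\s \varrho d \geq 0$. The main obstacle will be the explicit case analysis across configurations of throughputs: separating $\s \varrho d = 0$ (where only the four $r=3$ turnpaths are used and the support condition is vacuous) from $\s \varrho d > 0$ (where each positive-slack canonical turnpath in $\varphi$ must be matched to a specific positive slack contribution actually present in $\SUPP(d)$), and handling the subtle bookkeeping at the endpoints of the interval for $t$.
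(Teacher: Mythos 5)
Your transportation-LP approach is genuinely different from the paper's. The paper works bottom-up from $\SUPP(d)$: whenever a negative contribution $p\in\Psi_-(\varrho)$ lies in $\SUPP(d)$, Lemma~\ref{cla:negimpliespos} produces the antipodal $p'\in\Psi_+(\varrho)$ in $\SUPP(d)$; since $\s \varrho {p+p'}=0$ one subtracts $p+p'$ and iterates, yielding $d=\sum_i m_i(p_i+p'_i)+h$ with $\SUPP(h)\subseteq\SUPP(d)$ and no negative contribution in $\SUPP(h)$. Then $h$ is a nonnegative integer combination of turnpaths from $\Psi_+(\varrho)\cup\Psi_0(\varrho)$ supported in $\SUPP(d)$, and each antipodal pair $p_i+p'_i$ is traded for two neutral contributions with the same side-incidence, so every $\Psi_+$-turnpath that survives in $\varphi$ comes from $h$ and the support condition is built in from the start. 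Your route has to re-earn that conclusion at the end, which is precisely where it runs into trouble.

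There is a genuine gap in your treatment of the support condition. The sentence ``the alternative would force a negative slack contribution into $\SUPP(d)$, contradicting $\s \varrho d \geq 0$'' misreads Lemma~\ref{cla:negimpliespos}: a negative slack contribution in $\SUPP(d)$ does \emph{not} contradict $\s \varrho d \geq 0$; the lemma's content is that in exactly that situation the antipodal positive contribution is \emph{also} in $\SUPP(d)$, so no contradiction is available. Moreover ``choose $t$ at an extremal endpoint'' is not well-defined and only one of the two endpoints works. For the two single-turn members of $\Psi_+(\varrho)$ (clockwise at the acute corners), a positive multiplicity in $\varphi$ indeed forces both of their edges into $\SUPP(d)$, because it forces a positive in-flow and out-flow at their start and end sides. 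But for the double-counterclockwise members $\rhpourMll,\rhpollWll$ of $\Psi_+(\varrho)$ the throughput across the diagonal of $\varrho$ is not determined by the boundary data alone: with $\rhaourM(d)>0$ and $\rhaolrM(d)>0$ but $\rhaoulM(d)+\rhaourM(d)\le 0$ one has $\rhpourMll\not\subseteq\SUPP(d)$, yet the $t$-endpoint that zeroes out a single-turn multiplicity still assigns $\rhpourMll$ positive weight. You would need to take the other endpoint --- the one zeroing out a $\rhpourMll$/$\rhpollWll$ multiplicity --- and then verify that the $\Psi_+$-turnpath that remains there does have the correct diagonal throughput sign; your sketch gestures at this but does not do it.
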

 
\begin{proof}
The canonical turnpaths in a rhombus~$\varrho$ are exactly the 
eight contributions in $\Psi_+(\varrho)\cup\Psi_0(\varrho)$,
see Figure~\ref{fig:tp-parallelogram}. 
 

Given an integral hive flow~$d$ on $\varrho$.
If $p\subseteq\SUPP(d)$ for some $p\in\Psi_-(\varrho)$, 
then  $p'\subseteq\SUPP(d)$ by Lemma~\ref{cla:negimpliespos} on antipodal contributions. 
Since $\s \varrho {p+p'} =0$ it follows that $d-(p+p')$ is a hive flow. 
So we can successively subtract flows of the form $p+p'$ from $d$ 
to arrive at a flow decomposition
$d= \sum_i m_i (p_i +p'_i) + h$, 
where $m_i\in\N$, 
$h$ is a hive flow on~$\varrho$, 
and $p\not\subseteq\SUPP(h)$ 
for all $p\in \Psi_-(\varrho)$. 
Moreover, $\SUPP(h)\subseteq\SUPP(d)$ by construction. 
 
It is straighforward to check that~$h$ 
must be a nonnegative integer linear combination of 
turnpaths $p\in\Psi_+(\varrho)\cup\Psi_0(\varrho)$ such that 
$p\subseteq\SUPP(h)$.

Now we replace the sums $p_i+p'_i$ by sums $n_i+n'_i$ 
of two neutral slack contributions as follows:
we replace 
$\rhpourMr +\rhpolrWl$ by $\rhpourMlr+\rhpolrWrl$, 
we replace 
$\rhpourMll+\rhpoulMrr$ by $\rhpourMlr+\rhpoulMrl$, 
and similarly in the situations rotated by $180^\circ$. 
Since this exchange does not alter the number of turnpaths
entering and leaving a side of $\varrho$,
this leads to a multiset of canonical turnpaths of $\varrho$ 
satisfing the desired requirements. 
%
%
%
\qquad\end{proof}
 

The induction step will be based on the following result on
\emph{straightening} canonical turnpaths.

\begin{proposition}\label{pro:straight-parallelogram}
Let $L$ be a parallelogram cut into two parallelograms $L_1$ and $L_2$ 
by a straight line parallel to one of the sides of~$L$. 
Further let $p$ be a turnpath going from the side~$a$ of~$L$ to 
the side~$b$ of~$L$ such that~$p$ is either a canonical turnpath of $L_1$, 
or $p$ is obtained by concatenating a canonical turnpath~$p_1$ of $L_1$ 
with a canonical turnpath~$p_2$ of $L_2$. 
Then $p$ can be \emph{straightened}, that is, 
there exists a canonical turnpath of $L$ going from $a_{\to L}$ to $b_{L \to}$.
\end{proposition}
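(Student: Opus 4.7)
My approach is a case analysis based on the orientation of the cut relative to the sides of $L$, and on whether $p$ lies entirely inside $L_1$ or is a genuine concatenation $p_1 \cdot p_2$. Denote the sides of $L$ counterclockwise by $s_1, s_2, s_3, s_4$ and assume without loss of generality that the cut is parallel to $s_1$ and $s_3$, with $L_1$ containing $s_1$. The shared side $c$ of $L_1$ and $L_2$ lies on the cut line, so $L_1$ has sides $s_1$, an upper portion $s_2'$ of $s_2$, $c$, and an upper portion $s_4'$ of $s_4$, while $L_2$ has the symmetric decomposition $c, s_2'', s_3, s_4''$. Since the cut is parallel to $s_1$, the four corners of $L_1$ alternate acute and obtuse in exactly the same pattern as $L$.

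If $p$ lies entirely in $L_1$, then the sides $a, b$ of $L$ that $p$ connects are distinct from the cut $c$, so they come from $\{s_1, s_2, s_4\}$, with the sub-sides $s_2', s_4'$ of $L_1$ contained in $s_2, s_4$ respectively. I observe that each of $s_2'$ and $s_4'$ shares with its parent side exactly one corner of $L$ (the one lying in $L_1$), so the entrance (resp.\ exit) edge of $s_i'$ in $L_1$ coincides with that of $s_i$ in $L$ whenever the appropriate corner matches. Combining this with the classification of canonical turnpaths in a parallelogram (exactly 8, recorded in Example~\ref{ex:can-tp}) then yields a canonical turnpath of $L$ going from $a_{\to L}$ to $b_{L\to}$ matching the pair $(a,b)$ determined by $p$.

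In the concatenation case $p = p_1 \cdot p_2$, I will use that $p_1$ ends at $c_{L_1\to}$ and $p_2$ starts at $c_{\to L_2}$, and that joinability forces these two edges of $\Delta$ to coincide. I will enumerate the possibilities for the starting side of $p_1$ (among $s_1, s_2', s_4'$) and the ending side of $p_2$ (among $s_2'', s_3, s_4''$); for each compatible combination, the unique canonical turnpath of $L$ from $a_{\to L}$ to $b_{L\to}$ predicted by the classification traverses exactly the same sides of $L$ as $p_1 \cdot p_2$. The straightening operation then replaces the turns along $c$ by a continued counterclockwise traversal along the true border of $L$.

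The main obstacle will be verifying, in the concatenation case, that the entrance and exit edges agree correctly across the cut, since $L_1$ and $L_2$ have two new corners at the endpoints of $c$ that do not belong to $L$. The cut introduces one acute and one obtuse corner on each sub-parallelogram. At each new acute corner, the canonical-turnpath structure forbids the inward-pointing turnvertex, so $p_1$ and $p_2$ necessarily leave (resp.\ enter) $c$ at the endpoint associated with the obtuse new corner; at the obtuse new corner the canonical turnpath always continues along the border. Tracking these two situations across all enumerated combinations will ensure $a_{\to L_1} = a_{\to L}$ and $b_{L_2\to} = b_{L\to}$, so that the straightened canonical turnpath of $L$ has the required entrance and exit edges.
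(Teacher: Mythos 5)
Your approach mirrors the paper's: both reduce the claim to an explicit case‐by‐case verification of the finitely many canonical turnpaths involved. However, the final paragraph contains a concrete error that undermines the argument. You assert that tracking the two corner types "will ensure $a_{\to L_1} = a_{\to L}$ and $b_{L_2\to} = b_{L\to}$," and earlier that the canonical turnpath of $L$ "traverses exactly the same sides of $L$ as $p_1\cdot p_2$." Neither statement holds in general. For instance, one of the canonical turnpaths of $L_1$ ending at the cut $c$ is the single clockwise turn at the new acute corner on $c$; it starts at the $L_1$-entrance edge of the truncated side $s_2'$, which lies next to the corner created by the cut, whereas $(s_2)_{\to L}$ lies next to a corner of $L$ inside $L_2$, so the two edges are distinct whenever the cut is proper. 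Likewise, when $p = (s_2', c, s_4')$ lies entirely in $L_1$ and runs along $c$, neither its entrance nor its exit edge matches those of $L$, and the corresponding canonical turnpath $(s_2, s_3, s_4)$ of $L$ traverses a side ($s_3$) that $p$ never touches. These are precisely the situations the paper handles by "stretching," "moving parallely," and "shrinking" in two of the four $L_1$-internal cases and two of the six concatenation cases.

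The conclusion of the proposition is still true in every case, but the justification must come from checking directly that, for each pair of start/end sides $(a,b)$ that a canonical turnpath of $L_1$ or a concatenation can realize, a canonical turnpath of $L$ from $a_{\to L}$ to $b_{L\to}$ exists — with the turnpath shifted, lengthened, or shortened as needed. Your proposal hints at that enumeration but then replaces it with the edge-coincidence claim, which is false and so cannot carry the argument; the verification of the individual cases (what the paper delegates to its figures) remains to be done.
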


\begin{proof}
It suffices to check the various cases.
Recall the possible canonical turnpaths in a parallelogram from Figure~\ref{fig:tp-parallelogram}. 
Figure~\ref{fig:A} shows how to treat 
the four possible canonical turnpaths of $L_1$ going from a side of $L$ 
to a side of $L$. 
Note that only in two of these four cases, the turnpath has to be changed 
(by ``stretching'' or moving parallely). 
Figure~\ref{fig:B} shows how to treat 
the six possible cases of a concatenation~$p$ of a turnpath in $L_1$ 
with one in $L_2$. Only in two of the six cases, 
the turnpath has to be changed (by ``shrinking''). 
\qquad\end{proof}

\begin{figure}[h]
\begin{center}
    \subfigure[The modifications of the four canonical turnpaths of $L_1$
going from a side of $L$ to a side of~$L$.]
      {\hspace{2cm}%
\begin{tikzpicture}\draw[rhrhombidraw] (-110.856pt,0.0pt) -- (-106.237pt,8.0001pt) -- (-87.761pt,8.0001pt) -- (-92.38pt,0.0pt) -- cycle;\draw[rhrhombidraw] (-96.999pt,8.0001pt) -- (-101.618pt,0.0pt) ;\draw[thin,-my] (-101.618pt,8.0001pt) arc (0:-60:4.619pt) arc (120:180:4.619pt);\draw[decorate,decoration={snake,amplitude=.6mm,segment length=1.7mm, post length=.4mm},thin,-my] (-85.4515pt,4.0001pt) --  (-76.2135pt,4.0001pt);;\draw[rhrhombidraw] (-73.904pt,0.0pt) -- (-69.285pt,8.0001pt) -- (-50.809pt,8.0001pt) -- (-55.428pt,0.0pt) -- cycle;\draw[thin,-my] (-64.666pt,8.0001pt) arc (0:-60:4.619pt) arc (120:180:4.619pt);\draw[rhrhombidraw] (-110.856pt,-16.0002pt) -- (-106.237pt,-8.0001pt) -- (-87.761pt,-8.0001pt) -- (-92.38pt,-16.0002pt) -- cycle;\draw[rhrhombidraw] (-110.856pt,-32.0004pt) -- (-106.237pt,-24.0003pt) -- (-87.761pt,-24.0003pt) -- (-92.38pt,-32.0004pt) -- cycle;\draw[rhrhombidraw] (-110.856pt,-48.0006pt) -- (-106.237pt,-40.0005pt) -- (-87.761pt,-40.0005pt) -- (-92.38pt,-48.0006pt) -- cycle;\draw[rhrhombidraw] (-73.904pt,-16.0002pt) -- (-69.285pt,-8.0001pt) -- (-50.809pt,-8.0001pt) -- (-55.428pt,-16.0002pt) -- cycle;\draw[rhrhombidraw] (-73.904pt,-32.0004pt) -- (-69.285pt,-24.0003pt) -- (-50.809pt,-24.0003pt) -- (-55.428pt,-32.0004pt) -- cycle;\draw[rhrhombidraw] (-73.904pt,-48.0006pt) -- (-69.285pt,-40.0005pt) -- (-50.809pt,-40.0005pt) -- (-55.428pt,-48.0006pt) -- cycle;\draw[decorate,decoration={snake,amplitude=.6mm,segment length=1.7mm, post length=.4mm},thin,-my] (-85.4515pt,-12.0002pt) --  (-76.2135pt,-12.0002pt);;\draw[decorate,decoration={snake,amplitude=.6mm,segment length=1.7mm, post length=.4mm},thin,-my] (-85.4515pt,-28.0004pt) --  (-76.2135pt,-28.0004pt);;\draw[decorate,decoration={snake,amplitude=.6mm,segment length=1.7mm, post length=.4mm},thin,-my] (-85.4515pt,-44.0006pt) --  (-76.2135pt,-44.0006pt);;\draw[rhrhombidraw] (-96.999pt,-8.0001pt) -- (-101.618pt,-16.0002pt) ;\draw[rhrhombidraw] (-96.999pt,-24.0003pt) -- (-101.618pt,-32.0004pt) ;\draw[rhrhombidraw] (-96.999pt,-40.0005pt) -- (-101.618pt,-48.0006pt) ;\draw[thin,-my] (-108.5465pt,-12.0002pt) arc (60:0:4.619pt);\draw[thin,-my] (-108.5465pt,-28.0004pt) arc (240:300:4.619pt) arc (300:360:4.619pt);\draw[thin,-my] (-106.237pt,-48.0006pt) arc (180:120:4.619pt) arc (300:360:4.619pt);\draw[thin,-my] (-60.047pt,-48.0006pt) arc (180:120:4.619pt) arc (300:360:4.619pt);\draw[thin,-my] (-71.5945pt,-28.0004pt) arc (240:300:4.619pt) arc (120:60:4.619pt) arc (240:300:4.619pt) arc (300:360:4.619pt);\draw[thin,-my] (-71.5945pt,-12.0002pt) arc (60:0:4.619pt);\end{tikzpicture}%
      \hspace{2cm}
      \nopar\label{fig:A}} \hspace{1cm}
    \subfigure[The modifications of the six possible cases of a
concatenation~$p$ of a turnpath in $L_1$
                       with one in $L_2$.]
      {\hspace{2cm}%
\begin{tikzpicture}\draw[rhrhombidraw] (-110.856pt,0.0pt) -- (-106.237pt,8.0001pt) -- (-87.761pt,8.0001pt) -- (-92.38pt,0.0pt) -- cycle;\draw[rhrhombidraw] (-96.999pt,8.0001pt) -- (-101.618pt,0.0pt) ;\draw[decorate,decoration={snake,amplitude=.6mm,segment length=1.7mm, post length=.4mm},thin,-my] (-85.4515pt,4.0001pt) --  (-76.2135pt,4.0001pt);;\draw[rhrhombidraw] (-73.904pt,0.0pt) -- (-69.285pt,8.0001pt) -- (-50.809pt,8.0001pt) -- (-55.428pt,0.0pt) -- cycle;\draw[rhrhombidraw] (-110.856pt,-16.0002pt) -- (-106.237pt,-8.0001pt) -- (-87.761pt,-8.0001pt) -- (-92.38pt,-16.0002pt) -- cycle;\draw[rhrhombidraw] (-110.856pt,-32.0004pt) -- (-106.237pt,-24.0003pt) -- (-87.761pt,-24.0003pt) -- (-92.38pt,-32.0004pt) -- cycle;\draw[rhrhombidraw] (-110.856pt,-48.0006pt) -- (-106.237pt,-40.0005pt) -- (-87.761pt,-40.0005pt) -- (-92.38pt,-48.0006pt) -- cycle;\draw[rhrhombidraw] (-73.904pt,-16.0002pt) -- (-69.285pt,-8.0001pt) -- (-50.809pt,-8.0001pt) -- (-55.428pt,-16.0002pt) -- cycle;\draw[rhrhombidraw] (-73.904pt,-32.0004pt) -- (-69.285pt,-24.0003pt) -- (-50.809pt,-24.0003pt) -- (-55.428pt,-32.0004pt) -- cycle;\draw[rhrhombidraw] (-73.904pt,-48.0006pt) -- (-69.285pt,-40.0005pt) -- (-50.809pt,-40.0005pt) -- (-55.428pt,-48.0006pt) -- cycle;\draw[decorate,decoration={snake,amplitude=.6mm,segment length=1.7mm, post length=.4mm},thin,-my] (-85.4515pt,-12.0002pt) --  (-76.2135pt,-12.0002pt);;\draw[decorate,decoration={snake,amplitude=.6mm,segment length=1.7mm, post length=.4mm},thin,-my] (-85.4515pt,-28.0004pt) --  (-76.2135pt,-28.0004pt);;\draw[decorate,decoration={snake,amplitude=.6mm,segment length=1.7mm, post length=.4mm},thin,-my] (-85.4515pt,-44.0006pt) --  (-76.2135pt,-44.0006pt);;\draw[rhrhombidraw] (-96.999pt,-8.0001pt) -- (-101.618pt,-16.0002pt) ;\draw[rhrhombidraw] (-96.999pt,-24.0003pt) -- (-101.618pt,-32.0004pt) ;\draw[rhrhombidraw] (-96.999pt,-40.0005pt) -- (-101.618pt,-48.0006pt) ;\draw[rhrhombidraw] (-110.856pt,-64.0009pt) -- (-106.237pt,-56.0008pt) -- (-87.761pt,-56.0008pt) -- (-92.38pt,-64.0009pt) -- cycle;\draw[rhrhombidraw] (-110.856pt,-80.0011pt) -- (-106.237pt,-72.001pt) -- (-87.761pt,-72.001pt) -- (-92.38pt,-80.0011pt) -- cycle;\draw[rhrhombidraw] (-73.904pt,-64.0009pt) -- (-69.285pt,-56.0008pt) -- (-50.809pt,-56.0008pt) -- (-55.428pt,-64.0009pt) -- cycle;\draw[rhrhombidraw] (-73.904pt,-80.0011pt) -- (-69.285pt,-72.001pt) -- (-50.809pt,-72.001pt) -- (-55.428pt,-80.0011pt) -- cycle;\draw[rhrhombidraw] (-96.999pt,-56.0008pt) -- (-101.618pt,-64.0009pt) ;\draw[rhrhombidraw] (-96.999pt,-72.001pt) -- (-101.618pt,-80.0011pt) ;\draw[decorate,decoration={snake,amplitude=.6mm,segment length=1.7mm, post length=.4mm},thin,-my] (-85.4515pt,-60.0008pt) --  (-76.2135pt,-60.0008pt);;\draw[decorate,decoration={snake,amplitude=.6mm,segment length=1.7mm, post length=.4mm},thin,-my] (-85.4515pt,-76.001pt) --  (-76.2135pt,-76.001pt);;\draw[thin,-my] (-108.5465pt,4.0001pt) arc (240:300:4.619pt) arc (120:60:4.619pt);\draw[thin,-my] (-99.3085pt,4.0001pt) arc (60:0:4.619pt);\draw[thin,-my] (-71.5945pt,4.0001pt) arc (60:0:4.619pt);\draw[thin,-my] (-108.5465pt,-12.0002pt) arc (240:300:4.619pt) arc (120:60:4.619pt);\draw[thin,-my] (-99.3085pt,-12.0002pt) arc (240:300:4.619pt) arc (120:60:4.619pt);\draw[thin,-my] (-71.5945pt,-12.0002pt) arc (240:300:4.619pt) arc (120:60:4.619pt) arc (240:300:4.619pt) arc (120:60:4.619pt);\draw[thin,-my] (-108.5465pt,-28.0004pt) arc (240:300:4.619pt) arc (120:60:4.619pt);\draw[thin,-my] (-99.3085pt,-28.0004pt) arc (240:300:4.619pt) arc (300:360:4.619pt);\draw[thin,-my] (-71.5945pt,-28.0004pt) arc (240:300:4.619pt) arc (120:60:4.619pt) arc (240:300:4.619pt) arc (300:360:4.619pt);\draw[thin,-my] (-90.0705pt,-44.0006pt) arc (60:120:4.619pt) arc (300:240:4.619pt);\draw[thin,-my] (-99.3085pt,-44.0006pt) arc (240:180:4.619pt);\draw[thin,-my] (-53.1185pt,-44.0006pt) arc (240:180:4.619pt);\draw[thin,-my] (-90.0705pt,-60.0008pt) arc (60:120:4.619pt) arc (300:240:4.619pt);\draw[thin,-my] (-99.3085pt,-60.0008pt) arc (60:120:4.619pt) arc (300:240:4.619pt);\draw[thin,-my] (-53.1185pt,-60.0008pt) arc (60:120:4.619pt) arc (300:240:4.619pt) arc (60:120:4.619pt) arc (300:240:4.619pt);\draw[thin,-my] (-90.0705pt,-76.001pt) arc (60:120:4.619pt) arc (300:240:4.619pt);\draw[thin,-my] (-99.3085pt,-76.001pt) arc (60:120:4.619pt) arc (120:180:4.619pt);\draw[thin,-my] (-53.1185pt,-76.001pt) arc (60:120:4.619pt) arc (300:240:4.619pt) arc (60:120:4.619pt) arc (120:180:4.619pt);\end{tikzpicture}%
      \hspace{2cm}
      \nopar\label{fig:B}}
  \caption{Illustration of the proof of Proposition~\ref{pro:straight-parallelogram}.}
\end{center}
\end{figure}
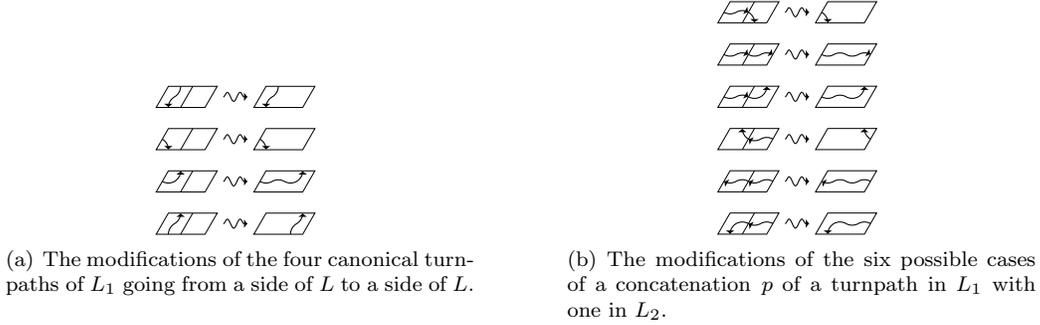


\begin{proposition}\label{pro:CTT-parallelogram}
The assertion of the Canonical Turnpath Theorem~\ref{thm:CTT} is true 
if $L$ is a parallelogram.
\end{proposition}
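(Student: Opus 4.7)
I will argue by induction on the size of $L$ (the number of hive triangles it contains). The base case is when $L$ is a rhombus, where the statement is exactly the content of Lemma~\ref{le:CTT-rhombus}.

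For the induction step, assume $L$ has more than two hive triangles. The plan is to cut $L$ into two smaller parallelograms by a straight line parallel to one of its sides: pick any interior line parallel to the shorter pair of sides, producing two sub-parallelograms $L_1$ and $L_2$ of strictly smaller size, sharing a common side $a$. Let $d_1$ and $d_2$ denote the restrictions of the hive flow $d$ to $L_1$ and $L_2$; these are integral hive flows on $L_1$ and $L_2$, respectively, so by the induction hypothesis there exist multisets $\varphi_1$ and $\varphi_2$ of canonical turnpaths on $L_1$ and $L_2$ compatible with $d_1$ and $d_2$.

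The next step is to glue $\varphi_1$ and $\varphi_2$ along $a$. The key observation, already used in the passage from Theorem~\ref{thm:CTT} to Theorem~\ref{thm:rerouting}, is the matching identity $\outom{L_1}{a}{\varphi_1} = \outom{L_1}{a}{d_1} = \inom{L_2}{a}{d_2} = \inom{L_2}{a}{\varphi_2}$, where the middle equality holds since $L_1$ and $L_2$ share $a$ and $d$ is a flow on $L$. I therefore fix an arbitrary bijection between the turnpaths of $\varphi_1$ ending at $a_{L_1\to}$ and the turnpaths of $\varphi_2$ starting at $a_{\to L_2} = a_{L_1\to}$, and concatenate each matched pair to obtain a turnpath in $L$ that goes from a side of $L$ to a side of $L$ and crosses $a$ once. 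The turnpaths of $\varphi_1$ and $\varphi_2$ that do not touch $a$ already go from a side of $L$ to a side of $L$ (because the non-shared sides of $L_1$ and $L_2$ are contained in sides of $L$), and we keep them as they are. To each of these turnpaths, whether concatenated or not, I now apply the straightening result Proposition~\ref{pro:straight-parallelogram}, which replaces it by a genuine canonical turnpath of $L$ with the same starting entrance edge and the same ending exit edge. Collecting the results yields a multiset $\varphi$ of canonical turnpaths on $L$.

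It remains to verify that $\varphi$ is compatible with $d$, i.e., that $\inom{L}{b}{\varphi} = \inom{L}{b}{d}$ and $\outom{L}{b}{\varphi} = \outom{L}{b}{d}$ for every side $b$ of $L$. Because straightening preserves the start and end edges of each turnpath and the gluing along $a$ only affects counts along $a$ (not along the outer sides of $L$), the entrance and exit counts of $\varphi$ on any side $b$ of $L$ equal the corresponding counts of $\varphi_1$ or $\varphi_2$ on the part of $b$ lying in $L_1$ or $L_2$, which by compatibility of $\varphi_1$ and $\varphi_2$ equal the corresponding $d$-counts, and these sum to $\inom{L}{b}{d}$ (respectively $\outom{L}{b}{d}$). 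The main obstacle in this proof is really a bookkeeping one, namely to check that all the cases that may arise after cutting $L$ are genuinely covered by Proposition~\ref{pro:straight-parallelogram}: the concatenations produced by the gluing step are precisely of the form considered there, and the uncut turnpaths of $\varphi_1$ (respectively $\varphi_2$) that run from a side of $L$ to a side of $L$ are the single-parallelogram case of Proposition~\ref{pro:straight-parallelogram}, so straightening applies throughout.
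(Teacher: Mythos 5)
Your proposal follows essentially the same route as the paper — induction on size with Lemma~\ref{le:CTT-rhombus} as base case, cut along a line parallel to one pair of sides, apply the induction hypothesis, glue along the shared side $a$, straighten via Proposition~\ref{pro:straight-parallelogram}, and verify compatibility by per-side counting — but your gluing step is incomplete. You only match turnpaths of $\varphi_1$ \emph{ending} at $a_{L_1\to}$ with turnpaths of $\varphi_2$ \emph{starting} at $a_{\to L_2}$. A canonical turnpath of $L_1$ can also \emph{start} at $a_{\to L_1}$, and one of $L_2$ can also \emph{end} at $a_{L_2\to}$; these do touch $a$, so they are excluded from your ``keep as is'' clause, yet your bijection does not cover them, and they would be left unprocessed. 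You need a second, symmetric matching based on $\outom{L_2}{a}{\varphi_2}=\inom{L_1}{a}{\varphi_1}$: concatenate each turnpath of $\varphi_2$ ending at $a_{L_2\to}=a_{\to L_1}$ with a turnpath of $\varphi_1$ starting there, and straighten these as well (the paper does exactly this, appealing to Proposition~\ref{pro:straight-parallelogram} a second time). Only then is every turnpath of $\varphi_1\cup\varphi_2$ accounted for, either by being glued into a pair crossing $a$ or by being kept and straightened.

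There is also a minor imprecision you should correct in the verification. Straightening does \emph{not} preserve the exact start and end edges: Proposition~\ref{pro:straight-parallelogram} replaces a turnpath going from side $b$ to side $c$ by a canonical turnpath of $L$ going from $b_{\to L}$ to $c_{L\to}$, and these entrance/exit edges of $L$ will generally differ from the original entrance/exit edges of $L_1$ or $L_2$ (e.g., $(b_2)_{\to L_2}\neq b_{\to L}$ when $a$ cuts $b$). What is preserved is the starting \emph{side} and the ending \emph{side}, and since canonical turnpaths of $L$ on a given side always use that side's $L$-entrance and $L$-exit edges, the per-side counting in your last paragraph is still valid once phrased this way.
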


\begin{proof}
We proceed by induction on the size of~$L$. 
The induction start is provided by Lemma~\ref{le:CTT-rhombus}.
We suppose now that $L$ has size greater than two and we cut~$L$ 
into two parallelograms $L_1$ and~$L_2$ 
by a straight line parallel to one the sides of~$L$. 
The induction hypothesis yields the existence of 
multisets $\varphi_i$ of canonical turnpaths of~$L_i$ 
compatible with~$d$ restricted to~$L_i$, for~$i=1,2$. 

Let $a$ denote the side shared by $L_1$ and $L_2$
and note that $a_{L_1\to} = a_{\to L_2}$. 
The reader should note that it is not possible for a canonical 
turnpath in $L_i$ to start and end at~$a$, 
see Figure~\ref{fig:tp-parallelogram}.
Using Definition~\ref{def:enter-exit-L}, 
the induction hypothesis, 
and \eqref{eq:in=out}, we get 
$$
    \outom {L_1} a {\varphi_1} 
 = \outom {L_1} a d
 = \inom {L_2} a d 
 = \inom {L_2} a {\varphi_2}.
$$
This means that the number of turnpaths~$p_1$ in $\varphi_1$ 
ending at $a_{L_1\to}$ equals the number of turnpaths~$p_2$ in $\varphi_2$
starting at $a_{\to L_2}$. 
It is therefore possible to set up a bijection between the set of turnpaths~$p_1$ in $\varphi_1$ 
ending at $a_{L_1\to}$ 
with the set of turnpaths~$p_2$ in $\varphi_2$ starting at $a_{\to L_2}$, 
and to concatenate each~$p_1$ with its partner~$p_2$ to obtain a turnpath~$q$ in $L$ 
starting from a side of $L$ and ending at a side of $L$. 
However, the turnpath~$q$ may not be canonical for $L$. 
But now we use Proposition~\ref{pro:straight-parallelogram} 
to replace $q$ by a canonical turnpath of $L$ starting and ending 
at the same sides of $L$ as $q$ does. 

Similarly,  $a_{L_2\to} = a_{\to L_1}$ and we get 
$\outom {L_2} a {\varphi_2} = \inom {L_1} a {\varphi_1}$. 
As before, we can match and concatenate
the turnpaths~$p_2$ in $\varphi_2$ ending at $a_{L_2\to}$ 
with the turnpaths~$p_1$ in $\varphi_1$ starting at $a_{\to L_1}$. 
Again, we use Proposition~\ref{pro:straight-parallelogram} 
to replace the resulting turnpaths by canonical turnpaths of $L$ 
without changing the starting and ending side.

We also apply Proposition~\ref{pro:straight-parallelogram} 
to the turnpaths in $\varphi_1 $ and $\varphi_2$ going from 
a side of $L$ to a side of $L$. 

After performing these procedures, we obtain a multiset~$\varphi$ 
of canonical turnpaths of $L$.
Let~$b$ be the side of $L_1$ parallel to~$a$.
Then we have by construction
$$
\inom L {b} \varphi = \inom {L_1} {b} {\varphi_1} = 
\inom {L_1} {b} d = \inom L {b} d .
$$
Similarly for $b$ being the side of $L_2$ parallel to~$a$.
Now let~$b$ be a side of $L$ cut by $a$ into line segments $b_1$ and $b_2$.
Then we have
$$
\inom L {b} \varphi = \inom {L_1} {b_1} \varphi + \inom {L_2} {b_2} \varphi =
\inom {L_1} {b_1} d + \inom {L_2} {b_2} d = \inom L {b} d.
$$
It follows that $\varphi$ is compatible with~$d$.
\qquad\end{proof}

\subsection{Trapezoids, pentagons and hexagons}

We first treat the case of trapezoids. 
Again, the strategy is to proceed by induction, 
cutting the trapezoid into smaller trapezoids or parallelograms. 
But now, unlike the case of parallelograms before, 
the cutting has to be done in a certain way in order to ensure the
straightening of canonical turnpaths. 
The following result identifies the critical cases to be avoided. 
The straightforward proof is similar to the one of 
Proposition~\ref{pro:straight-parallelogram}
and left to the reader, who should consult 
Figures~\ref{fig:tp-parallelogram}--\ref{fig:tp-trapezoid}
for the possible canonical turnpaths in a parallelogram 
or a trapezoid, respectively. 

The {\em height} of a convex set~$L$ is defined as the number of its edges on its shortest side.

\begin{proposition}\label{pro:straight-trapezoid}
Let $L$ be a trapezoid cut into convex sets $L_1$ and $L_2$ by a straight line~$a$.
Further let $p$ be a turnpath going from the side~$b$ of~$L$ to 
the side~$c$ of~$L$ such that $p$~is either a canonical turnpath of $L_1$, 
a canonical turnpath of $L_2$, 
or $p$~is obtained by concatenating canonical turnpaths of $L_1$ 
with canonical turnpaths $L_2$ (in any order).  

1. If $a$ is parallel to the longest side of $L$ so that 
$L_1$ and $L_2$ are trapezoids, then $p$ can be straightened, i.e., 
there exists a canonical turnpath of $L$ going from $b_{\to L}$ to $c_{L \to}$.

2. Suppose that $L$ has the height~$1$ and that $L$ is cut by $a$ 
into a trapezoid (or a triangle) and a rhombus (there are two possibilities to do so).
Then $p$ can be straightened unless in the four critical cases 
depicted in Figure~\ref{fig:critical-trapezoid}(a)-(b).
\endproof
\end{proposition}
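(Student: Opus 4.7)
The plan is to carry out a case-by-case verification analogous to the proof of Proposition~\ref{pro:straight-parallelogram} for parallelograms, exploiting the fact that the canonical turnpaths in a trapezoid, parallelogram, rhombus, and triangle are all explicitly enumerated in Figures~\ref{fig:tp-triangle}--\ref{fig:tp-trapezoid}. Because a canonical turnpath in either piece $L_i$ traverses the border in counterclockwise direction and uses only border triangles, the turnpath $p$ can cross the cut line $a$ in only a few possible local patterns. These local patterns are what determine whether $p$ can be straightened.

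For part~(1), where $a$ is parallel to the longest side of $L$ so that both $L_1, L_2$ are trapezoids, I would first note that canonical turnpaths of $L_i$ entering or leaving through $a$ must do so at $a_{\to L_i}$ or $a_{L_i\to}$. Hence a concatenation at $a$ joins an exit edge of one trapezoid to an entrance edge of the other. I would enumerate the three possible local approaches to $a$: (i) a single counterclockwise turn at an acute corner, (ii) a clockwise turn followed by a counterclockwise turn at an obtuse corner, and (iii) a ``wrap-around'' with two clockwise turns along the short side parallel to $a$. In each case the straightening move is either to stretch the turnpath so that its straight portion lies on the short side of $L$ itself, or to shift it parallel through the intermediate strip, exactly as in Figures~\ref{fig:A}--\ref{fig:B}. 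The same applies when $p$ lies entirely in one $L_i$ and goes from a side of $L$ to a side of $L$.

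For part~(2), the cut $a$ separates a rhombus $R$ from a smaller trapezoid or triangle $T$. Since $L$ has height~$1$, the available canonical turnpaths in $R$ (Figure~\ref{fig:tp-parallelogram}, bottom row) and in $T$ (Figures~\ref{fig:tp-triangle},~\ref{fig:tp-trapezoid}) are quite constrained. The procedure is the same: enumerate the ways in which a canonical turnpath of one piece, or a concatenation across $a$, can arrive at or leave $a$, and attempt to straighten locally. Most configurations straighten by the same stretch/shift operations as in part~(1). The exceptional configurations are those in which the turnpath performs a counterclockwise turn at the acute corner of $R$ adjacent to $a$ and simultaneously requires the straightened path in $L$ to turn clockwise at that corner, producing an incompatibility between the prescribed start/end edges of $p$ and the unique canonical turnpath of $L$ with those endpoints. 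These obstructions occur in precisely the four patterns shown in Figure~\ref{fig:critical-trapezoid}(a)--(b).

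The main obstacle is not conceptual but combinatorial bookkeeping: one must check every compatible pairing of a canonical turnpath of $L_1$ and a canonical turnpath of $L_2$ meeting at $a$, together with every canonical turnpath of $L_i$ going from a side of $L$ to a side of $L$, and confirm either that the explicit local straightening move produces a canonical turnpath of $L$ with the same endpoints, or that the pattern is one of the four critical ones. Since the total number of canonical turnpaths in each piece is bounded (at most $9$ in a trapezoid, $8$ in a rhombus, $6$ in a triangle), the entire verification is finite. The delicate points are (a) ensuring that the endpoints $b_{\to L}$ and $c_{L\to}$ of $p$ really coincide with start/end edges of some canonical turnpath of $L$ after the local move, and (b) handling the wrap-around turns at the obtuse vertices of $L$, where two successive counterclockwise turns must be preserved under straightening.
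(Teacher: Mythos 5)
The paper itself declines to spell out this proof — it states that the argument is ``similar to the one of Proposition~\ref{pro:straight-parallelogram} and left to the reader,'' pointing to the explicit enumerations of canonical turnpaths in Figures~\ref{fig:tp-parallelogram}--\ref{fig:tp-trapezoid}. Your proposal takes exactly this approach: a finite case-by-case check of concatenations across the cut $a$, straightening by the stretch/shift/shrink moves illustrated for parallelograms, and isolating the four obstructions in Figure~\ref{fig:critical-trapezoid}. That matches what the authors intend. One small caution if you carry it out in full: a canonical turnpath never contains two consecutive clockwise turns (those only appear in the \emph{concatenated} path across~$a$, not inside either piece), so your item~(iii) in part~(1) should be phrased as a feature of the junction between $p_1$ and $p_2$, exactly as in the parallelogram case (Figure~\ref{fig:B}), rather than as a ``wrap-around along the short side.'' Also, for part~(2) the clean way to pin down the critical cases is to note that the rhombus' two ``long'' canonical turnpaths (Figure~\ref{fig:tp-parallelogram}, bottom row, last two) are precisely the ones that cannot be joined to a canonical turnpath of the adjacent trapezoid and then straightened; your description in terms of incompatible corner turns captures the same phenomenon but is a little loose.
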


\begin{figure}[h]
\begin{center}
\subfigure[]{\nopar\label{fig:tra-uno}
\begin{tikzpicture}\draw[rhrhombidraw] (23.095pt,8.0001pt) -- (41.571pt,8.0001pt) -- (46.19pt,0.0pt) -- (27.714pt,0.0pt) ;\draw[rhrhombidraw] (32.333pt,8.0001pt) -- (36.952pt,0.0pt) ;\fill (24.9426pt,0.0pt) circle (0.4pt);\fill (23.095pt,0.0pt) circle (0.4pt);\fill (21.2474pt,0.0pt) circle (0.4pt);;\fill (20.3236pt,8.0001pt) circle (0.4pt);\fill (18.476pt,8.0001pt) circle (0.4pt);\fill (16.6284pt,8.0001pt) circle (0.4pt);;\draw[rhrhombidraw] (13.857pt,8.0001pt) -- (-4.619pt,8.0001pt) -- (-9.238pt,0.0pt) -- (18.476pt,0.0pt) ;\draw[rhrhombidraw] (23.095pt,-8.0001pt) -- (41.571pt,-8.0001pt) -- (46.19pt,-16.0002pt) -- (27.714pt,-16.0002pt) ;\draw[rhrhombidraw] (13.857pt,-8.0001pt) -- (-4.619pt,-8.0001pt) -- (-9.238pt,-16.0002pt) -- (18.476pt,-16.0002pt) ;\fill (20.3236pt,-8.0001pt) circle (0.4pt);\fill (18.476pt,-8.0001pt) circle (0.4pt);\fill (16.6284pt,-8.0001pt) circle (0.4pt);;\fill (24.9426pt,-16.0002pt) circle (0.4pt);\fill (23.095pt,-16.0002pt) circle (0.4pt);\fill (21.2474pt,-16.0002pt) circle (0.4pt);;\draw[rhrhombidraw] (0.0pt,-16.0002pt) -- (4.619pt,-8.0001pt) ;\draw[thin,-my] (36.952pt,8.0001pt) arc (180:240:4.619pt) arc (240:300:4.619pt);\draw[thin,-my] (-6.9285pt,-12.0002pt) arc (240:300:4.619pt) arc (300:360:4.619pt);\end{tikzpicture}
}
\hspace{0.5cm}
\subfigure[]{\nopar\label{fig:tra-due}
\begin{tikzpicture}\draw[rhrhombidraw] (23.095pt,8.0001pt) -- (41.571pt,8.0001pt) -- (46.19pt,0.0pt) -- (27.714pt,0.0pt) ;\draw[rhrhombidraw] (32.333pt,8.0001pt) -- (36.952pt,0.0pt) ;\fill (24.9426pt,0.0pt) circle (0.4pt);\fill (23.095pt,0.0pt) circle (0.4pt);\fill (21.2474pt,0.0pt) circle (0.4pt);;\fill (20.3236pt,8.0001pt) circle (0.4pt);\fill (18.476pt,8.0001pt) circle (0.4pt);\fill (16.6284pt,8.0001pt) circle (0.4pt);;\draw[rhrhombidraw] (13.857pt,8.0001pt) -- (-4.619pt,8.0001pt) -- (-9.238pt,0.0pt) -- (18.476pt,0.0pt) ;\draw[rhrhombidraw] (23.095pt,-8.0001pt) -- (41.571pt,-8.0001pt) -- (46.19pt,-16.0002pt) -- (27.714pt,-16.0002pt) ;\draw[rhrhombidraw] (13.857pt,-8.0001pt) -- (-4.619pt,-8.0001pt) -- (-9.238pt,-16.0002pt) -- (18.476pt,-16.0002pt) ;\fill (20.3236pt,-8.0001pt) circle (0.4pt);\fill (18.476pt,-8.0001pt) circle (0.4pt);\fill (16.6284pt,-8.0001pt) circle (0.4pt);;\fill (24.9426pt,-16.0002pt) circle (0.4pt);\fill (23.095pt,-16.0002pt) circle (0.4pt);\fill (21.2474pt,-16.0002pt) circle (0.4pt);;\draw[rhrhombidraw] (0.0pt,-16.0002pt) -- (4.619pt,-8.0001pt) ;\draw[thin,-my] (36.952pt,8.0001pt) arc (0:-60:4.619pt);\draw[thin,-my] (34.6425pt,4.0001pt) --  (-6.9285pt,4.0001pt);;\draw[thin,-my] (43.8805pt,-12.0002pt) --  (2.3095pt,-12.0002pt);;\draw[thin,-my] (2.3095pt,-12.0002pt) arc (240:180:4.619pt);\end{tikzpicture}
}
\hspace{0.5cm}
\subfigure[]{\nopar\label{fig:tra-tre}
\begin{tikzpicture}\draw[rhrhombidraw] (23.095pt,8.0001pt) -- (41.571pt,8.0001pt) -- (46.19pt,0.0pt) -- (27.714pt,0.0pt) ;\draw[rhrhombidraw] (32.333pt,8.0001pt) -- (36.952pt,0.0pt) ;\fill (24.9426pt,0.0pt) circle (0.4pt);\fill (23.095pt,0.0pt) circle (0.4pt);\fill (21.2474pt,0.0pt) circle (0.4pt);;\fill (20.3236pt,8.0001pt) circle (0.4pt);\fill (18.476pt,8.0001pt) circle (0.4pt);\fill (16.6284pt,8.0001pt) circle (0.4pt);;\draw[rhrhombidraw] (13.857pt,8.0001pt) -- (-4.619pt,8.0001pt) -- (-9.238pt,0.0pt) -- (18.476pt,0.0pt) ;\draw[rhrhombidraw] (23.095pt,-8.0001pt) -- (41.571pt,-8.0001pt) -- (46.19pt,-16.0002pt) -- (27.714pt,-16.0002pt) ;\draw[rhrhombidraw] (13.857pt,-8.0001pt) -- (-4.619pt,-8.0001pt) -- (-9.238pt,-16.0002pt) -- (18.476pt,-16.0002pt) ;\fill (20.3236pt,-8.0001pt) circle (0.4pt);\fill (18.476pt,-8.0001pt) circle (0.4pt);\fill (16.6284pt,-8.0001pt) circle (0.4pt);;\fill (24.9426pt,-16.0002pt) circle (0.4pt);\fill (23.095pt,-16.0002pt) circle (0.4pt);\fill (21.2474pt,-16.0002pt) circle (0.4pt);;\draw[rhrhombidraw] (0.0pt,-16.0002pt) -- (4.619pt,-8.0001pt) ;\draw[rhrhombithickside] (-4.619pt,8.0001pt) -- (4.619pt,8.0001pt);\draw[->,rhrhombiarrow] (0.0pt,12.6191pt) -- (0.0pt,3.3811pt);\draw[rhrhombithickside] (4.619pt,8.0001pt) -- (13.857pt,8.0001pt);\draw[->,rhrhombiarrow] (9.238pt,12.6191pt) -- (9.238pt,3.3811pt);\draw[rhrhombithickside] (23.095pt,8.0001pt) -- (32.333pt,8.0001pt);\draw[->,rhrhombiarrow] (27.714pt,12.6191pt) -- (27.714pt,3.3811pt);\draw[rhrhombithickside] (32.333pt,8.0001pt) -- (41.571pt,8.0001pt);\draw[->,rhrhombiarrow] (36.952pt,12.6191pt) -- (36.952pt,3.3811pt);\draw[rhrhombithickside] (41.571pt,-8.0001pt) -- (32.333pt,-8.0001pt);\draw[->,rhrhombiarrow] (36.952pt,-12.6191pt) -- (36.952pt,-3.3811pt);\draw[rhrhombithickside] (32.333pt,-8.0001pt) -- (23.095pt,-8.0001pt);\draw[->,rhrhombiarrow] (27.714pt,-12.6191pt) -- (27.714pt,-3.3811pt);\draw[rhrhombithickside] (13.857pt,-8.0001pt) -- (4.619pt,-8.0001pt);\draw[->,rhrhombiarrow] (9.238pt,-12.6191pt) -- (9.238pt,-3.3811pt);\draw[rhrhombithickside] (4.619pt,-8.0001pt) -- (-4.619pt,-8.0001pt);\draw[->,rhrhombiarrow] (0.0pt,-12.6191pt) -- (0.0pt,-3.3811pt);\end{tikzpicture}
}\caption{A trapezoid of height 1 cut into a trapezoid and a parallelogram 
               with the four critical cases of a  turnpath $p$ that cannot be straightened.}
    \nopar\label{fig:critical-trapezoid}
\end{center}
\end{figure}
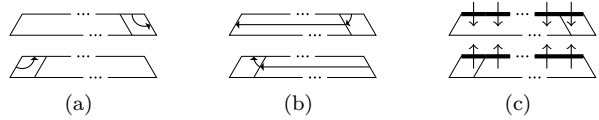

\begin{proposition}\label{pro:CTT-trapezoid}
The assertion of the Canonical Turnpath Theorem~\ref{thm:CTT} is true 
if $L$ is a trapezoid. 
\end{proposition}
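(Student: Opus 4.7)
The plan is to proceed by induction on the size of the trapezoid $L$, following the same template as the parallelogram case in Proposition~\ref{pro:CTT-parallelogram}, but with the choice of cut and the straightening step adapted to the trapezoidal shape. The base case is the smallest trapezoid (of size~$3$), where the assertion can be verified by a direct case analysis of the nine canonical turnpaths listed in Example~\ref{ex:can-tp}, together with a decomposition argument analogous to the one used in Lemma~\ref{le:CTT-rhombus}: one peels off pairs of slack contributions using Lemma~\ref{cla:negimpliespos} until the remaining hive flow is a nonnegative combination of canonical turnpaths.

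For the induction step, I distinguish the two cases according to the height of $L$. When the height of $L$ is at least~$2$, I cut $L$ into two trapezoids $L_1$ and $L_2$ by a straight line~$a$ parallel to the longest side of $L$. Applying the induction hypothesis to each of $L_1,L_2$ yields compatible multisets $\varphi_1,\varphi_2$; the identity
\[
\outom{L_1}{a}{\varphi_1} \,=\, \outom{L_1}{a}{d} \,=\, \inom{L_2}{a}{d} \,=\, \inom{L_2}{a}{\varphi_2}
\]
and the symmetric identity for traffic in the opposite direction permit matching and concatenating canonical turnpaths across~$a$. Proposition~\ref{pro:straight-trapezoid}(1) guarantees that all resulting concatenations can be straightened into canonical turnpaths of~$L$, and the verification that the resulting multiset $\varphi$ is compatible with $d$ proceeds exactly as in the last paragraph of the proof of Proposition~\ref{pro:CTT-parallelogram}.

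The main obstacle is the case of height~$1$, where no cut parallel to the longest side exists. Here I cut $L$ into a rhombus~$L_2$ and a smaller trapezoid (or triangle) $L_1$ by a line perpendicular to the short parallel side, apply induction (respectively Lemma~\ref{le:CTT-rhombus}, or the to-be-proved triangle case) to each piece, and match as before. The new difficulty is that Proposition~\ref{pro:straight-trapezoid}(2) allows straightening \emph{except} in the four critical configurations of Figure~\ref{fig:critical-trapezoid}. The plan for handling these is twofold. First, I observe that by choosing the direction of the cut (from the left versus from the right end of $L$) one can avoid the configurations of subfigures~(a) and~(b), since these are asymmetric with respect to the two possible cuts and cannot both occur simultaneously for the same flow~$d$. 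Second, for the residual critical configurations, I plan to modify the multisets $\varphi_1,\varphi_2$ without changing the boundary data by using the hexagon equality~\eqref{cla:BZ} applied to the rhombi adjacent to the cutting line: a critical pair of turnpath fragments (one on each side of~$a$) can be exchanged for a compatible pair avoiding the critical configuration, analogous to the substitution $\rhpourMr+\rhpolrWl \mapsto \rhpourMlr+\rhpolrWrl$ used in the proof of Lemma~\ref{le:CTT-rhombus}. The hardest part will be verifying that every critical pair admits such an exchange while preserving the invariants $\inom{L_i}{\cdot}{\varphi_i}$ and $\outom{L_i}{\cdot}{\varphi_i}$ for every side of every piece, which amounts to a finite but careful case check based on the structure of canonical turnpaths enumerated in Figures~\ref{fig:tp-parallelogram} and~\ref{fig:tp-trapezoid}.
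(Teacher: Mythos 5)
Your outline for height $\geq 2$ matches the paper exactly, and your overall inductive structure is correct. The divergence is in the height-$1$ case, and there is a genuine gap there.

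Your first idea---that one of the two cut directions must work because the critical configurations ``cannot both occur simultaneously''---is the right intuition, but as stated it carries no proof obligation. The paper's actual argument is specific: if a critical turnpath arises in $\varphi_2$ under the $\mathbf{t}$ cut, then by the definition of compatibility one has $\inom{L_2}{k_r}{d} \ge 1$ for the relevant entrance edge $k_r$, hence $\inflow{L_2}{k_r}{d}>0$; then Lemma~\ref{obs:borderentranceexit} (monotonicity of throughputs of $d$ along a side of a flatspace, which is what makes this argument depend on $d$ being a hive flow) upgrades this to $\inflow{L}{k}{d}>0$ for \emph{every} edge $k$ of the side $a$. Symmetrically, a critical case in the $\mathbf{b}$ cut forces $\outflow{L}{k}{d}>0$ on the same side. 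Since $\inflow$ and $\outflow$ are negatives of each other, both cannot be positive, so at least one cut direction is free of critical cases. Without invoking Lemma~\ref{obs:borderentranceexit} you have no mechanism to propagate the positivity from a single entrance edge to the whole side, and the claim that the two obstructions ``cannot both occur'' is unsubstantiated.

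Your second step---an exchange argument for ``residual critical configurations'' modelled on Lemma~\ref{le:CTT-rhombus}---is superfluous once the first step is done correctly: the paper shows there are \emph{no} residual cases after choosing the right cut. Worse, it is not clear such an exchange exists: the substitutions in the proof of Lemma~\ref{le:CTT-rhombus} trade turnpaths \emph{within a single rhombus}, whereas a critical configuration here straddles the cut line $a$ and involves whole canonical turnpaths of the two pieces $L_1,L_2$; replacing such a turnpath generally changes $\inom{L_i}{\cdot}{\varphi_i}$ or $\outom{L_i}{\cdot}{\varphi_i}$ on sides other than $a$ and hence destroys compatibility. Finally, a minor point: the paper takes a hive triangle as a degenerate trapezoid and uses it as the induction base (which is trivial), while your base case of size~$3$ would require a nontrivial verification analogous to Lemma~\ref{le:CTT-rhombus} that you have not carried out.
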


\begin{proof}
We make induction on the size of~$L$.
The case where $L$ is a hive triangle, 
which we consider a degenerate trapezoid, is trivial. 
So suppose that $L$ has size at least three and let $d$ be an integral hive flow on $L$.

(A) If $L$ has height greater than 1, then we cut $L$ into two 
trapezoids $L_1$ and $L_2$ by a straight line 
parallel to the longest side of $L$ 
and apply the induction hypothesis to $L_1$ and $L_2$ to obtain multisets $\varphi_i$ 
of canonical turnpaths of~$L_i$ compatible with $d$ restricted to~$L_i$, 
for $i=1,2$.
Proceeding as in the proof of Proposition~\ref{pro:CTT-parallelogram} 
and using Proposition~\ref{pro:straight-trapezoid}(1),   
we construct from $\varphi_1,\varphi_2$ 
a multiset $\varphi$ of canonical turnpaths of $L$ 
satisfying 
$\inom L a \varphi=\inom L a d$ and $\outom L a \varphi=\outom L a d$ for all sides~$a$ of $L$.

(B) Now suppose that $L$ has height~$1$. There are two possibilities 
{\bf t} and {\bf b} of cutting~$L$ 
by a straight line into a trapezoid and a parallelogram, as depicted 
in the top and bottom row of Figure~\ref{fig:critical-trapezoid}. 

Choose the {\bf t} version of cutting and apply the induction hypothesis 
and Proposition~\ref{pro:CTT-parallelogram} to $L_1$ and $L_2$, 
respectively, 
to obtain multisets $\varphi_i$ 
of canonical turnpaths of~$L_i$. 
Then we apply the straightening of 
Proposition~\ref{pro:straight-trapezoid}(2) as before, 
which succeeds unless we are 
in one of the critical cases as depicted in the top row of 
Figure~\ref{fig:critical-trapezoid}.
For instance, assume that 
$L_2$ is a rhombus and the turnpath $p=$ 
\begin{tikzpicture}\draw[rhrhombidraw] (23.095pt,8.0001pt) -- (41.571pt,8.0001pt) -- (46.19pt,0.0pt) -- (27.714pt,0.0pt) ;\draw[rhrhombidraw] (32.333pt,8.0001pt) -- (36.952pt,0.0pt) ;\fill (24.9426pt,0.0pt) circle (0.4pt);\fill (23.095pt,0.0pt) circle (0.4pt);\fill (21.2474pt,0.0pt) circle (0.4pt);;\fill (20.3236pt,8.0001pt) circle (0.4pt);\fill (18.476pt,8.0001pt) circle (0.4pt);\fill (16.6284pt,8.0001pt) circle (0.4pt);;\draw[rhrhombidraw] (13.857pt,8.0001pt) -- (-4.619pt,8.0001pt) -- (-9.238pt,0.0pt) -- (18.476pt,0.0pt) ;\draw[thin,-my] (36.952pt,8.0001pt) arc (180:240:4.619pt) arc (240:300:4.619pt);\end{tikzpicture}
occurs in $\varphi_2$ as in Figure~\ref{fig:tra-uno}.
Let $k_r$ denote the edge of $L_2$ where $p$ starts and $a$ be the corresponding side of $L$. 
Then, using Definition~\ref{def:def-inflow}, we have 
$$
 \inom {L_2} {k_r} d = \inom {L_2} {k_r} {\varphi_2} \ge 1,
$$
hence $\inflow {L_2} {k_r} d >0$. 
Lemma~\ref{obs:borderentranceexit} implies that 
$\inflow L k d >0$ for all edges~$k$ of $a$, 
see Figure~\ref{fig:tra-tre}. 
The same conclusion can be drawn when 
$p=$
\begin{tikzpicture}\draw[rhrhombidraw] (23.095pt,8.0001pt) -- (41.571pt,8.0001pt) -- (46.19pt,0.0pt) -- (27.714pt,0.0pt) ;\draw[rhrhombidraw] (32.333pt,8.0001pt) -- (36.952pt,0.0pt) ;\fill (24.9426pt,0.0pt) circle (0.4pt);\fill (23.095pt,0.0pt) circle (0.4pt);\fill (21.2474pt,0.0pt) circle (0.4pt);;\fill (20.3236pt,8.0001pt) circle (0.4pt);\fill (18.476pt,8.0001pt) circle (0.4pt);\fill (16.6284pt,8.0001pt) circle (0.4pt);;\draw[rhrhombidraw] (13.857pt,8.0001pt) -- (-4.619pt,8.0001pt) -- (-9.238pt,0.0pt) -- (18.476pt,0.0pt) ;\draw[thin,-my] (36.952pt,8.0001pt) arc (0:-60:4.619pt);\draw[thin,-my] (34.6425pt,4.0001pt) --  (-6.9285pt,4.0001pt);;\end{tikzpicture} 
occurs in $\varphi_2$. 

The clue is now that if we cut $L$ in the other possible way 
({\bf b} version, cf.\ bottom row of Figure~\ref{fig:critical-trapezoid}), 
then no critical case can occur. Indeed, otherwise, by an analogous 
reasoning as before, we had 
$\outflow L k d  >0$ for all edges~$k$ of $a$,
which contradicts $\inflow L k d >0$. 

Similarly one shows that if we start with the {\bf b} version 
of cutting~$L$ and a critical case occurs, then cutting 
with the  {\bf t} version succeeds.
\qquad\end{proof}

For later use, we note the following observation 
resulting from the above proof.

\begin{observation}\label{obs:trap-rhomb}
Let $L$ be a trapezoid of height~$1$ and $d$ be an integral hive flow on~$L$. 
If the multiset $\varphi$ of canonical turnpaths compatible with $d$ 
resulting from the proof of Proposition~\ref{pro:CTT-trapezoid} contains
the turnpath $q=$ 
\begin{tikzpicture}\draw[rhrhombidraw] (-4.619pt,-8.0001pt) -- (-9.238pt,-16.0002pt) -- (46.19pt,-16.0002pt) -- (41.571pt,-8.0001pt) -- cycle;\draw[thin,-my] (41.571pt,-16.0002pt) arc (0:60:4.619pt) arc (60:120:4.619pt) arc (300:240:4.619pt) arc (60:120:4.619pt) arc (300:240:4.619pt) arc (60:120:4.619pt) arc (300:240:4.619pt) arc (60:120:4.619pt) arc (300:240:4.619pt) arc (60:120:4.619pt) arc (120:180:4.619pt);\end{tikzpicture},
then there is a rhombus $\varrho$ and 
a turnedge~$p\in\Psi_+(\varrho)$ as in Figure~\ref{fig:trap-neu}
such that $p\subseteq\SUPP(d)$. 
\end{observation}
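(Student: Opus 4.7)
The plan is to unwind the iterative construction of $\varphi$ in the proof of Proposition~\ref{pro:CTT-trapezoid}(B) and to pinpoint a rhombus $\varrho$ in that construction whose restriction of $q$ is a positive slack contribution; the refined statement of Lemma~\ref{le:CTT-rhombus} will then close the argument.

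First I would recall how $\varphi$ is assembled. The height-$1$ trapezoid $L$ is split recursively into a smaller trapezoid $L_1$ and a rhombus $L_2$; on $L_2$ we invoke Lemma~\ref{le:CTT-rhombus} to produce a multiset $\varphi_2$, on $L_1$ the inductive hypothesis furnishes $\varphi_1$, and halves of canonical turnpaths meeting at the shared side are concatenated and, where necessary, straightened via Proposition~\ref{pro:straight-trapezoid}(2). In particular, every canonical turnpath of $L_2$ belongs to $\Psi_+(L_2) \cup \Psi_0(L_2)$, and by the refined assertion of Lemma~\ref{le:CTT-rhombus} any turnpath lying in $\Psi_+(L_2)$ that is chosen by the construction is automatically contained in $\SUPP(d)$.

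The key combinatorial observation is that the zigzag turnpath $q$ of Figure~\ref{fig:trap-neu} executes, at some interior obtuse corner of $L$, a pair of consecutive counterclockwise turns that lies entirely inside a single rhombus $\varrho \subseteq L$ and forms a positive slack contribution $p \in \Psi_+(\varrho)$ (namely one of $\rhpourMll$ or $\rhpollWll$). Inspection of the straightening rules in Figure~\ref{fig:B} shows that straightening alters turnedges only immediately adjacent to the cut line and never rearranges an interior pair of counterclockwise obtuse-angle turns. Consequently, at the step of the recursion at which $\varrho$ is removed as the cutoff rhombus $L_2$, the restriction $q \cap L_2$ must already appear as $p$ in $\varphi_2$, and Lemma~\ref{le:CTT-rhombus} then delivers $p \subseteq \SUPP(d)$, as claimed.

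The main obstacle is formalizing the stability of this positive-contribution piece under the entire chain of straightenings and concatenations: one has to verify, case by case with respect to the order in which rhombi are cut off and the choice between the two possible cut directions in Proposition~\ref{pro:CTT-trapezoid}(B), that the two counterclockwise turns at the interior obtuse vertex of $\varrho$ are never rerouted by any straightening step. Once this combinatorial bookkeeping is carried out, the conclusion of the Observation follows immediately.
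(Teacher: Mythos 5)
Your overall strategy is the right one and matches the paper's: unwind the recursive construction of $\varphi$ from the proof of Proposition~\ref{pro:CTT-trapezoid}(B), and, at the appropriate step, invoke the refined second assertion of Lemma~\ref{le:CTT-rhombus} (that the positive contributions chosen by that lemma are automatically in $\SUPP(d)$). However, the mechanism you propose for pinning down which step that is has a gap that the paper's argument deliberately avoids.

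The paper's proof is a plain induction on the size of the trapezoid: tracing one level of the recursion, the zigzag $q$ either is obtained from a shorter zigzag $\tilde q$ by a stretching operation, in which case one applies the induction hypothesis to $\tilde q$ and is done (the rhombus $\varrho'$ found there is some rhombus of $L_1\subset L$), or it is obtained by appending a turnedge $p\in\Psi_+(\varrho)$ in the cut-off rhombus $\varrho=L_2$, in which case the refined Lemma~\ref{le:CTT-rhombus} yields $p\subseteq\SUPP(d)$. Crucially, this argument never fixes \emph{which} rhombus $\varrho$ will eventually witness the conclusion; it is simply the rhombus cut off at the first recursive level at which an appending (rather than a stretching) step produced the current zigzag, or, if stretching occurs all the way down, the rhombus that is the base of the induction.

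You instead try to single out a rhombus $\varrho$ in advance (the one containing the pair of consecutive counterclockwise turns in $q$) and argue that this piece of $q$ survives all straightening and concatenation steps intact until the recursion cuts off $\varrho$ as $L_2$, at which point $q\cap L_2=p$. This requires the ``stability'' claim you yourself flag as the main obstacle, and that claim is not established: stretching (and the parallel-move case of Proposition~\ref{pro:straight-parallelogram}) shifts the whole zigzag, so the rhombus in which the two consecutive counterclockwise turns sit actually migrates from one level of the recursion to the next, and is not the same rhombus throughout. Moreover, the figure you cite, Figure~\ref{fig:B}, covers straightening inside parallelograms; the relevant straightening for Proposition~\ref{pro:CTT-trapezoid}(B) is Proposition~\ref{pro:straight-trapezoid}(2), with its own set of critical cases depending on the \textbf{t}/\textbf{b} choice of cut, and those cases are precisely what make the ``only local changes near the cut'' picture unreliable. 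In short, the induction in the paper makes the stability question moot; your proposal reintroduces it and then leaves it unresolved, which is where the gap lies.
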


\begin{figure}[h]
\begin{center}
\begin{tikzpicture}\fill[rhrhombifill] (18.476pt,0.0pt) -- (13.857pt,8.0001pt) -- (23.095pt,8.0001pt) -- (27.714pt,0.0pt) -- cycle;\draw[rhrhombidraw] (-4.619pt,8.0001pt) -- (-9.238pt,0.0pt) -- (46.19pt,0.0pt) -- (41.571pt,8.0001pt) -- cycle;\draw[thin,-my] (23.095pt,0.0pt) arc (0:60:4.619pt) arc (60:120:4.619pt);\end{tikzpicture}
\hspace{1cm}
\begin{tikzpicture}\fill[rhrhombifill] (23.095pt,8.0001pt) -- (18.476pt,0.0pt) -- (9.238pt,0.0pt) -- (13.857pt,8.0001pt) -- cycle;\draw[rhrhombidraw] (-4.619pt,8.0001pt) -- (-9.238pt,0.0pt) -- (46.19pt,0.0pt) -- (41.571pt,8.0001pt) -- cycle;\draw[thin,-my] (20.7855pt,4.0001pt) arc (60:120:4.619pt) arc (120:180:4.619pt);\end{tikzpicture}
    \caption{On Observation~\ref{obs:trap-rhomb}.} 
    \nopar\label{fig:trap-neu}
\end{center}
\end{figure}

\begin{proof}
Tracing part~(B) of the inductive proof of Proposition~\ref{pro:CTT-trapezoid} shows that~$q$
results from a smaller turnpath~$\tilde{q}$ either by stretching to the right or left, 
or by appending to $\tilde{q}$ a turnedge~$p$ in the right or left rhombus~$\varrho$, 
see Figure~\ref{fig:trap-rhomb}.
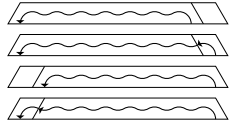
\begin{figure}[h]
\begin{center}
\begin{tikzpicture}\draw[rhrhombidraw] (-13.857pt,24.0003pt) -- (-9.238pt,32.0004pt) -- (64.666pt,32.0004pt) -- (69.285pt,24.0003pt) -- cycle;\draw[rhrhombidraw] (60.047pt,24.0003pt) -- (55.428pt,32.0004pt) ;\draw[thin,-my] (55.428pt,24.0003pt) arc (0:60:4.619pt) arc (60:120:4.619pt) arc (300:240:4.619pt) arc (60:120:4.619pt) arc (300:240:4.619pt) arc (60:120:4.619pt) arc (300:240:4.619pt) arc (60:120:4.619pt) arc (300:240:4.619pt) arc (60:120:4.619pt) arc (300:240:4.619pt) arc (60:120:4.619pt) arc (300:240:4.619pt) arc (60:120:4.619pt) arc (120:180:4.619pt);\end{tikzpicture}

\begin{tikzpicture}\draw[rhrhombidraw] (-13.857pt,8.0001pt) -- (-9.238pt,16.0002pt) -- (64.666pt,16.0002pt) -- (69.285pt,8.0001pt) -- cycle;\draw[rhrhombidraw] (60.047pt,8.0001pt) -- (55.428pt,16.0002pt) ;\draw[thin,-my] (64.666pt,8.0001pt) arc (0:60:4.619pt) arc (60:120:4.619pt);\draw[thin,-my] (57.7375pt,12.0002pt) arc (300:240:4.619pt) arc (60:120:4.619pt) arc (300:240:4.619pt) arc (60:120:4.619pt) arc (300:240:4.619pt) arc (60:120:4.619pt) arc (300:240:4.619pt) arc (60:120:4.619pt) arc (300:240:4.619pt) arc (60:120:4.619pt) arc (300:240:4.619pt) arc (60:120:4.619pt) arc (300:240:4.619pt) arc (60:120:4.619pt) arc (120:180:4.619pt);\end{tikzpicture}

\begin{tikzpicture}\draw[rhrhombidraw] (-13.857pt,-8.0001pt) -- (-9.238pt,0.0pt) -- (64.666pt,0.0pt) -- (69.285pt,-8.0001pt) -- cycle;\draw[rhrhombidraw] (0.0pt,0.0pt) -- (-4.619pt,-8.0001pt) ;\draw[thin,-my] (64.666pt,-8.0001pt) arc (0:60:4.619pt) arc (60:120:4.619pt) arc (300:240:4.619pt) arc (60:120:4.619pt) arc (300:240:4.619pt) arc (60:120:4.619pt) arc (300:240:4.619pt) arc (60:120:4.619pt) arc (300:240:4.619pt) arc (60:120:4.619pt) arc (300:240:4.619pt) arc (60:120:4.619pt) arc (300:240:4.619pt) arc (60:120:4.619pt) arc (120:180:4.619pt);\end{tikzpicture}

\begin{tikzpicture}\draw[rhrhombidraw] (-13.857pt,-24.0003pt) -- (-9.238pt,-16.0002pt) -- (64.666pt,-16.0002pt) -- (69.285pt,-24.0003pt) -- cycle;\draw[rhrhombidraw] (0.0pt,-16.0002pt) -- (-4.619pt,-24.0003pt) ;\draw[thin,-my] (64.666pt,-24.0003pt) arc (0:60:4.619pt) arc (60:120:4.619pt) arc (300:240:4.619pt) arc (60:120:4.619pt) arc (300:240:4.619pt) arc (60:120:4.619pt) arc (300:240:4.619pt) arc (60:120:4.619pt) arc (300:240:4.619pt) arc (60:120:4.619pt) arc (300:240:4.619pt) arc (60:120:4.619pt) arc (300:240:4.619pt) arc (60:120:4.619pt) arc (300:240:4.619pt);\draw[thin,-my] (-2.3095pt,-20.0003pt) arc (60:120:4.619pt) arc (120:180:4.619pt);\end{tikzpicture}

    \caption{Inductive construction of the turnpath~$q$ along the proof of Proposition~\ref{pro:CTT-trapezoid}.
                   In the first and third case stretching is necessary.}
    \nopar\label{fig:trap-rhomb}
\end{center}
\end{figure}
In the case~$p$ is appended, we know that 
$p\subseteq\SUPP(d)$ by Lemma~\ref{le:CTT-rhombus}.
Otherwise,  we conclude by the induction hypothesis.
\qquad\end{proof}

We settle now the case of pentagons. 

\begin{proposition}\label{pro:CTT-pentagon}
The assertion of the Canonical Turnpath Theorem~\ref{thm:CTT} is true 
if $L$ is a pentagon. 
\end{proposition}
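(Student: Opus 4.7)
The plan is to proceed by induction on the size of $L$, closely paralleling the arguments for Propositions~\ref{pro:CTT-parallelogram} and~\ref{pro:CTT-trapezoid}. The base case is a minimal pentagon, which I would handle by direct enumeration using Figure~\ref{fig:tp-pentagon}.

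For the inductive step, let $v$ denote the unique $60^\circ$ corner of $L$ and let $a_1,\ldots,a_5$ be its sides in counterclockwise order, with $v$ between $a_5$ and~$a_1$. First I would cut $L$ by a straight line $\ell$ parallel to one of the three sides not incident to~$v$; three inequivalent such parallel directions are available. Any such cut divides $L$ into two strictly smaller convex pieces $L_1,L_2$, each being either a smaller pentagon, a trapezoid, a parallelogram, or a (degenerate) triangle. Applying the induction hypothesis together with Propositions~\ref{pro:CTT-parallelogram} and~\ref{pro:CTT-trapezoid} then produces multisets $\varphi_1,\varphi_2$ of canonical turnpaths on $L_1,L_2$ compatible with the restrictions of $d$. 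On the shared side $a$, the compatibility condition combined with \eqref{eq:in=out} yields the balance identities
\[
 \outom{L_1}{a}{\varphi_1} \ =\  \inom{L_2}{a}{\varphi_2}, \qquad \inom{L_1}{a}{\varphi_1} \ =\  \outom{L_2}{a}{\varphi_2},
\]
allowing turnpaths crossing $a$ to be matched and concatenated via bijections, exactly as in the proof of Proposition~\ref{pro:CTT-parallelogram}.

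What remains is a pentagon-specific straightening step, in analogy with Propositions~\ref{pro:straight-parallelogram} and~\ref{pro:straight-trapezoid}: each turnpath going from a side of $L$ to another side of $L$---whether it is a single canonical turnpath of one of the $L_i$ or a concatenation across $a$---needs to be replaced by a canonical turnpath of $L$ with the same starting and ending sides. I would establish this by a routine case analysis against the $16$ canonical turnpaths of a pentagon listed in Figure~\ref{fig:tp-pentagon}.

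The hard part will be the occurrence of critical configurations in which straightening cannot be carried out directly, analogous to the four critical cases of Figure~\ref{fig:critical-trapezoid}. My plan for handling them mirrors the device used in Proposition~\ref{pro:CTT-trapezoid}(B): with three inequivalent cut directions to choose from, one can retry the construction with a different cut whenever the first one fails. Specifically, I would argue that a critical configuration arising under one choice of cut forces, via Lemma~\ref{obs:borderentranceexit}, a strict inequality $\inflow{L}{k}{d} > 0$ on every edge $k$ contained in some side $b$ of $L$, while the corresponding critical configuration under a second cut direction would similarly force $\outflow{L}{k}{d} > 0$ (equivalently $\inflow{L}{k}{d} < 0$) on the same edges---a contradiction. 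Consequently, for every integral hive flow $d$ on $L$ at least one of the available cuts admits successful straightening, completing the induction.
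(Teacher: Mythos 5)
Your overall strategy --- induction on size, cut by a straight line, apply the smaller-shape cases, concatenate and straighten canonical turnpaths across the cut, and resolve failed straightenings by switching to an alternative cut direction that forces contradictory throughputs via Lemma~\ref{obs:borderentranceexit} --- is exactly what the paper does. However, your specific geometric claim about the cuts is wrong, and this is a genuine gap. Among the three sides of a pentagon not incident to the acute vertex $v$, exactly one lies in the ``singleton'' direction (the unique edge direction that occurs only once around the pentagon); the other two lie in the paired directions. Cutting parallel to that singleton side can split $L$ into a \emph{hexagon} and a (non-degenerate) \emph{triangle} --- for instance, a cut three rows above the singleton side of a sufficiently large pentagon yields a hexagon below and a triangle above. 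This contradicts your claim that ``any such cut'' yields only smaller pentagons, trapezoids, parallelograms, or degenerate triangles. It also forces you to invoke the Canonical Turnpath Theorem for hexagons (Proposition~\ref{pro:CTT-hexagon}) and for large triangles (Proposition~\ref{pro:CTT-triangle}), both of which the paper proves only \emph{after} Proposition~\ref{pro:CTT-pentagon}; and it would require a separate straightening analysis for concatenations across a hexagon$\,+\,$triangle cut, which you have not set up.

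The paper sidesteps all of this by cutting only parallel to the two paired directions (the two ``corner-cut'' sides), which provably always splits $L$ into pentagon$\,+\,$trapezoid or parallelogram$\,+\,$trapezoid --- shapes whose CTT is already established. Two cut directions also suffice for the criticality argument you sketch: the two critical cases force, respectively, strict outflow and strict inflow along every edge of the singleton side of $L$, and hence cannot both occur (cf.\ Figure~\ref{fig:penta}). If you drop the third cut direction and restrict to cuts parallel to the two paired directions, your plan coincides with the paper's proof; as written, though, the claim about the resulting shapes is false and the induction does not close.
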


\begin{proof}
We proceed by induction on the size of~$L$, cutting the pentagon by a straight line into a 
pentagon and a trapezoid, or a parallelogram and a trapezoid. 
The two critical cases, where a straightening fails, are depicted in Figure~\ref{fig:penta}.
\begin{figure}[h]
\begin{center}
\subfigure[]{\nopar\label{fig:nomergeresultpent:nomergresults}
\begin{tikzpicture}\draw[rhrhombidraw] (0.0pt,0.0pt) -- (27.714pt,0.0pt) -- (32.333pt,8.0001pt) -- (9.238pt,48.0006pt) -- (-9.238pt,16.0002pt) -- cycle;\fill (9.238pt,32.0004pt) circle (0.4pt);\fill (4.619pt,24.0003pt) circle (0.4pt);\fill (0.0pt,16.0002pt) circle (0.4pt);\fill (4.619pt,8.0001pt) circle (0.4pt);\fill (9.238pt,16.0002pt) circle (0.4pt);\fill (13.857pt,24.0003pt) circle (0.4pt);\fill (18.476pt,16.0002pt) circle (0.4pt);\fill (13.857pt,8.0001pt) circle (0.4pt);\fill (23.095pt,8.0001pt) circle (0.4pt);\draw[rhrhombidraw] (18.476pt,0.0pt) -- (27.714pt,16.0002pt) ;\draw[thin,-my] (-2.3095pt,4.0001pt) --  (20.7855pt,4.0001pt);;\draw[thin,-my] (20.7855pt,4.0001pt) arc (60:0:4.619pt);\end{tikzpicture}
\hspace{0.1cm}
\begin{tikzpicture}\draw[rhrhombidraw] (0.0pt,0.0pt) -- (27.714pt,0.0pt) -- (32.333pt,8.0001pt) -- (9.238pt,48.0006pt) -- (-9.238pt,16.0002pt) -- cycle;\fill (9.238pt,32.0004pt) circle (0.4pt);\fill (4.619pt,24.0003pt) circle (0.4pt);\fill (0.0pt,16.0002pt) circle (0.4pt);\fill (4.619pt,8.0001pt) circle (0.4pt);\fill (9.238pt,16.0002pt) circle (0.4pt);\fill (13.857pt,24.0003pt) circle (0.4pt);\fill (18.476pt,16.0002pt) circle (0.4pt);\fill (13.857pt,8.0001pt) circle (0.4pt);\fill (23.095pt,8.0001pt) circle (0.4pt);\draw[rhrhombidraw] (9.238pt,0.0pt) -- (-4.619pt,24.0003pt) ;\draw[thin,-my] (4.619pt,0.0pt) arc (180:120:4.619pt);\draw[thin,-my] (6.9285pt,4.0001pt) --  (30.0235pt,4.0001pt);;\end{tikzpicture}
}
\hspace{0.5cm}
\subfigure[]{\nopar\label{fig:nomergeresultpent:groundside}
\begin{tikzpicture}\draw[rhrhombidraw] (0.0pt,0.0pt) -- (27.714pt,0.0pt) -- (32.333pt,8.0001pt) -- (9.238pt,48.0006pt) -- (-9.238pt,16.0002pt) -- cycle;\fill (9.238pt,32.0004pt) circle (0.4pt);\fill (4.619pt,24.0003pt) circle (0.4pt);\fill (0.0pt,16.0002pt) circle (0.4pt);\fill (4.619pt,8.0001pt) circle (0.4pt);\fill (9.238pt,16.0002pt) circle (0.4pt);\fill (13.857pt,24.0003pt) circle (0.4pt);\fill (18.476pt,16.0002pt) circle (0.4pt);\fill (13.857pt,8.0001pt) circle (0.4pt);\fill (23.095pt,8.0001pt) circle (0.4pt);\draw[rhrhombidraw] (18.476pt,0.0pt) -- (27.714pt,16.0002pt) ;\draw[rhrhombithickside] (18.476pt,0.0pt) -- (27.714pt,0.0pt);\draw[->,rhrhombiarrow] (23.095pt,4.619pt) -- (23.095pt,-4.619pt);\draw[rhrhombithickside] (9.238pt,0.0pt) -- (18.476pt,0.0pt);\draw[->,rhrhombiarrow] (13.857pt,4.619pt) -- (13.857pt,-4.619pt);\draw[rhrhombithickside] (0.0pt,0.0pt) -- (9.238pt,0.0pt);\draw[->,rhrhombiarrow] (4.619pt,4.619pt) -- (4.619pt,-4.619pt);\end{tikzpicture}
\hspace{0.1cm}
\begin{tikzpicture}\draw[rhrhombidraw] (0.0pt,0.0pt) -- (27.714pt,0.0pt) -- (32.333pt,8.0001pt) -- (9.238pt,48.0006pt) -- (-9.238pt,16.0002pt) -- cycle;\fill (9.238pt,32.0004pt) circle (0.4pt);\fill (4.619pt,24.0003pt) circle (0.4pt);\fill (0.0pt,16.0002pt) circle (0.4pt);\fill (4.619pt,8.0001pt) circle (0.4pt);\fill (9.238pt,16.0002pt) circle (0.4pt);\fill (13.857pt,24.0003pt) circle (0.4pt);\fill (18.476pt,16.0002pt) circle (0.4pt);\fill (13.857pt,8.0001pt) circle (0.4pt);\fill (23.095pt,8.0001pt) circle (0.4pt);\draw[rhrhombithickside] (27.714pt,0.0pt) -- (18.476pt,0.0pt);\draw[->,rhrhombiarrow] (23.095pt,-4.619pt) -- (23.095pt,4.619pt);\draw[rhrhombithickside] (9.238pt,0.0pt) -- (0.0pt,0.0pt);\draw[->,rhrhombiarrow] (4.619pt,-4.619pt) -- (4.619pt,4.619pt);\draw[rhrhombithickside] (18.476pt,0.0pt) -- (9.238pt,0.0pt);\draw[->,rhrhombiarrow] (13.857pt,-4.619pt) -- (13.857pt,4.619pt);\draw[rhrhombidraw] (-4.619pt,24.0003pt) -- (9.238pt,0.0pt) ;\end{tikzpicture}
}
    \caption{Two ways of cutting a pentagon with the two critical cases of turnpaths 
                   that cannot be straightened and the resulting throughputs.}
    \nopar\label{fig:penta}
\end{center}
\end{figure}
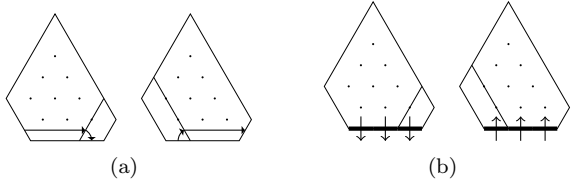
As in the proof of Proposition~\ref{pro:CTT-trapezoid}, one can show 
that in one of the two possible ways of cutting~$L$, no critical 
case can occur. The details are left to the reader.
\qquad\end{proof}

The case where $L$ is a hexagon is the simplest one of all 
and does not require an inductive argument as before.

\begin{proposition}\label{pro:CTT-hexagon}
The assertion of the Canonical Turnpath Theorem~\ref{thm:CTT} is true 
if $L$ is a hexagon.
\end{proposition}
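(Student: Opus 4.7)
The plan is to reduce the statement to a balanced bipartite transportation problem. Since every one of the six interior angles of a hexagon $L$ is obtuse, Definition~\ref{def:PL} produces, for every ordered pair $(i,j) \in \{1,\ldots,6\}^2$ of sides $a_i,a_j$ of~$L$ (including the case $i=j$, where the sequence of successive sides wraps once around the boundary with $r=7$), exactly one canonical turnpath $p_{ij}$ starting at $(a_i)_{\to L}$ and ending at $(a_j)_{L\to}$. In particular, no cutting of $L$ and no inductive straightening are required: the $36$ canonical turnpaths of $L$ noted in Example~\ref{ex:can-tp} are parameterized by pairs of sides.

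Given an integral hive flow $d$ on $L$, the key observation is the marginal balance
\begin{equation*}
\sum_{i=1}^{6} \inom{L}{a_i}{d} \;=\; \sum_{j=1}^{6} \outom{L}{a_j}{d}.
\end{equation*}
To see this, I would sum the closedness condition~\eqref{eq:closedness} over all upright hive triangles in $L$ and subtract the analogous sum over all downright hive triangles. Every interior edge of $L$ is shared by one upright and one downright triangle, so its contribution cancels, leaving $\sum_{k \subseteq \partial L} \inflow{L}{k}{d} = 0$. Decomposing $\inflow{L}{k}{d} = \inom{L}{k}{d} - \outom{L}{k}{d}$ and summing over the edges $k$ contained in each side yields the balance.

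Set $I_i := \inom{L}{a_i}{d}$ and $O_j := \outom{L}{a_j}{d}$. The classical transportation problem with balanced nonnegative integer marginals $(I_i)$ and $(O_j)$ admits a nonnegative integer solution $(x_{ij}) \in \N^{6\times 6}$ satisfying $\sum_j x_{ij} = I_i$ and $\sum_i x_{ij} = O_j$; the north-west corner rule gives an explicit such~$x$. Defining the multiset $\varphi\colon \cP_L \to \N$ by $\varphi(p_{ij}) := x_{ij}$, one computes $\inom{L}{a_i}{\varphi} = \sum_j x_{ij} = I_i = \inom{L}{a_i}{d}$ and, symmetrically, $\outom{L}{a_j}{\varphi} = O_j = \outom{L}{a_j}{d}$, so $\varphi$ is compatible with $d$.

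There is no real obstacle here. Unlike the trapezoid and pentagon proofs, no critical configurations can block straightening, because the complete freedom to route each unit of inflow from any side to any exit side reduces the argument to matching marginal totals. This is the structural reason why the hexagon case is genuinely simpler than the preceding ones.
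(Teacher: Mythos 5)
Your proof is correct and follows essentially the same route as the paper: both observe that in a hexagon every ordered pair of sides $(i,j)$ is realized by a canonical turnpath, establish the balance $\sum_i \inom{L}{a_i}{d} = \sum_j \outom{L}{a_j}{d}$ from flow conservation, and then match inflows to outflows arbitrarily. The paper simply pairs off two equal-length lists positionally, while you package the same arbitrary matching as a balanced integer transportation problem solved by the north-west corner rule; these are interchangeable bookkeeping for the identical idea.
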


\begin{proof}
Let $a_1,\ldots,a_6$ denote the six sides of a hexagon~$L$.
We write $a^-_i := (a_i)_{\to L}$ and $a^+_i := (a_i)_{L\to}$ 
for the entrance and exit edges of $L$, respectively. 

The essential observation is that for any pair $(i,j)$
there exists a canonical turnpath of $L$ going from $a^-_i$ to $a^+_j$. 
This is easily verified by looking at Figure~\ref{fig:tp-hexagon}.

Let $d$ be an integral flow on $L$ and put 
$\iflow(i) := \inom L {a_i} d$ and 
$\oflow(i) := \outom L {a_i} d$ 
for $1\le i\le 6$. 
The flow conservation laws imply that 
$\sum_i \iflow(i) = \sum_i \oflow(i)$. 

We form a list ${\mathcal L}^{-}$ of entrance edges 
in which $a^-_i$ occurs $\iflow(i)$ many times 
and we form a list ${\mathcal L}^{+}$of exit edges 
in which $a^+_i$ occurs $\oflow(i)$ many times.
Both lists have the same length.
We now connect, for all~$j$, the $j$th element of ${\mathcal L}^{-}$ with 
the $j$th element of ${\mathcal L}^{+}$ by a canonical turnpath~$p_j$ of $L$. 
This is possible by the observation made at the beginning of the proof.
The resulting multiset~$\varphi$ of canonical turnpaths of $L$ satisfies 
$\inom L {a_i} \varphi = \inom L {a_i} d$ and $\outom L {a_i} \varphi = \outom L {a_i} d$
by construction.
\qquad\end{proof}

\subsection{Triangles}

We need the following flow propagation lemma. 

\begin{lemma}\label{le:flow-ext}
Let $L$ be a trapezoid and $d$ be a hive flow on~$L$.

1. Let $p$ be the path in Figure~\ref{fig:flow-ext}(a)
and suppose that $p\subseteq \SUPP(d)$.
Then all the edges  of $G$ belonging to the 
turns in Figure~\ref{fig:flow-ext}(b) belong to $\SUPP(d)$ as well.
Moreover, $\SUPP(d)$ cannot contain the paths~$q_1,q_2\in\Psi_+(\varrho)$ 
in the shaded rhombi $\varrho$  
depicted in Figure~\ref{fig:flow-ext}(c). 

2. If the path~$\tilde{p}$ in Figure~\ref{fig:flow-ext}(a') 
satisfies $\tilde{p}\subseteq\SUPP(d)$, then 
a similar conclusion can be drawn, see 
Figures~\ref{fig:flow-ext}(b')-(c'). 
\end{lemma}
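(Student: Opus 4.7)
The plan is to iteratively apply the flow propagation Lemma~\ref{lem:flowpropagation} to extend the support of~$d$ across the trapezoid~$L$. Since $d$ is a hive flow on~$L$, every rhombus $\varrho$ contained in~$L$ satisfies $\s{\varrho}{d}\ge 0$, so both the antipodal contribution Lemma~\ref{cla:negimpliespos} and Lemma~\ref{lem:flowpropagation} are applicable to every rhombus in~$L$. The starting path~$p$ in Figure~(a) will be a negative slack contribution in some rhombus~$\varrho_0$ of~$L$, namely a counterclockwise turn at an acute angle of the form $\rhpoulMl$. Lemma~\ref{cla:negimpliespos} therefore immediately forces the antipodal positive contribution into $\SUPP(d)$, which supplies the starting point for a chain of forced turns along the row of rhombi of~$L$ adjacent to the side where~$p$ begins.

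First I would formalize the propagation step: given that a counterclockwise turn $\rhpoulMl$ of a rhombus~$\varrho$ in~$L$ is in $\SUPP(d)$, Lemma~\ref{lem:flowpropagation} forces $\rhpollWr\subseteq\SUPP(d)$, and then, provided the rhombus immediately below-left (resp.\ below-right) of~$\varrho$ lies in~$L$, it forces $\rhpllMl\subseteq\SUPP(d)$ (resp.\ $\rhpolrMl\subseteq\SUPP(d)$). This step can be chained: each newly-forced turn at a white vertex is itself a (half of a) counterclockwise turn at an acute angle of the next rhombus, and the hypothesis that $L$ is a trapezoid ensures that the chain of adjacent rhombi needed for the next application of the lemma remains inside~$L$ until the propagation has reached the opposite parallel side. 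Applying this chain of implications one rhombus at a time produces exactly the collection of turns depicted in Figure~(b).

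For the claim that $q_1,q_2\in\Psi_+(\varrho)$ in the shaded rhombi of Figure~(c) cannot be contained in $\SUPP(d)$, I would argue by contradiction: assuming $q_i\subseteq\SUPP(d)$, one of the edges of~$G$ used by~$q_i$ is the reverse of an edge already forced into $\SUPP(d)$ by the propagation of part~(1). Since the reduced representative of a flow class is nonnegative, the support of~$d$ cannot simultaneously contain an edge and its reverse, yielding the contradiction. (Alternatively, flow conservation at the relevant white vertex, together with the forced turns and the hive inequality on an intermediate rhombus, gives the same obstruction.) Part~2 is obtained by an entirely symmetric argument, exchanging the roles of clockwise/counterclockwise turns and of acute/obtuse angles; the same propagation mechanism applies after reflecting the picture.

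The main obstacle is purely bookkeeping: one must check that the inductive propagation does not leave~$L$ prematurely and that at each step the appropriate neighbouring rhombus is $f$-admissible (i.e.\ has nonnegative slack, which is automatic here) and is of the shape required by Lemma~\ref{lem:flowpropagation}. The geometry of a trapezoid makes this routine, since the row of rhombi parallel to the longer side can be traversed without obstruction; the subtlety lies only in identifying, at the far end of the trapezoid, which positive contributions in Figure~(c) have their constituent edges already occupied by reverses of the forced turns, and this is a direct inspection once Figure~(b) has been established.
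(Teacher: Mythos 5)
Your proposal matches the paper's proof: iterated application of Lemma~\ref{lem:flowpropagation} (flow propagation) to force the turns of Figure~(b) into $\SUPP(d)$, followed by the observation that $q_1,q_2$ would each require an edge of $G$ whose reverse is already in $\SUPP(d)$, which is impossible, with the second part by symmetry. The approach and both key steps are the same; you have merely spelled out the induction that the paper leaves terse.
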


\begin{proof}
1. The first assertion on $\SUPP(d)$ 
follows by successively applying Lemma~\ref{lem:flowpropagation} 
on flow propagation.

The assertion $q_i\not\subseteq\SUPP(d)$ 
follows by inspecting the edges of $G$ involved in the paths appearing 
in the bottom row of the trapezoid in Figure~\ref{fig:flow-ext}(c) 
and noting that $\SUPP(d)$ cannot contain an edge $k\in E(G)$ and 
its reverse. 

2. The second case is treated similarly.
\qquad\end{proof}

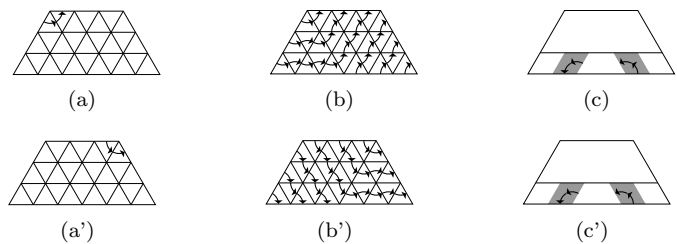
\begin{figure}[h]
\begin{center}
\subfigure[]
{
\begin{tikzpicture}\draw[rhrhombidraw] (0.0pt,0.0pt) -- (13.857pt,24.0003pt) -- (41.571pt,24.0003pt) -- (55.428pt,0.0pt) -- cycle;\draw[rhrhombidraw] (9.238pt,16.0002pt) -- (46.19pt,16.0002pt) ;\draw[rhrhombidraw] (4.619pt,8.0001pt) -- (50.809pt,8.0001pt) ;\draw[rhrhombidraw] (23.095pt,24.0003pt) -- (9.238pt,0.0pt) ;\draw[rhrhombidraw] (32.333pt,24.0003pt) -- (18.476pt,0.0pt) ;\draw[rhrhombidraw] (41.571pt,24.0003pt) -- (27.714pt,0.0pt) ;\draw[rhrhombidraw] (46.19pt,16.0002pt) -- (36.952pt,0.0pt) ;\draw[rhrhombidraw] (50.809pt,8.0001pt) -- (46.19pt,0.0pt) ;\draw[rhrhombidraw] (46.19pt,0.0pt) -- (32.333pt,24.0003pt) ;\draw[rhrhombidraw] (36.952pt,0.0pt) -- (23.095pt,24.0003pt) ;\draw[rhrhombidraw] (27.714pt,0.0pt) -- (13.857pt,24.0003pt) ;\draw[rhrhombidraw] (18.476pt,0.0pt) -- (9.238pt,16.0002pt) ;\draw[rhrhombidraw] (9.238pt,0.0pt) -- (4.619pt,8.0001pt) ;\draw[thin,-my] (11.5475pt,20.0003pt) arc (240:300:4.619pt);\draw[thin,-my] (16.1665pt,20.0003pt) arc (300:360:4.619pt);\end{tikzpicture}
}
\hspace{1cm}
\subfigure[]
{
\begin{tikzpicture}\draw[rhrhombidraw] (0.0pt,0.0pt) -- (13.857pt,24.0003pt) -- (41.571pt,24.0003pt) -- (55.428pt,0.0pt) -- cycle;\draw[rhrhombidraw] (9.238pt,16.0002pt) -- (46.19pt,16.0002pt) ;\draw[rhrhombidraw] (4.619pt,8.0001pt) -- (50.809pt,8.0001pt) ;\draw[rhrhombidraw] (23.095pt,24.0003pt) -- (9.238pt,0.0pt) ;\draw[rhrhombidraw] (32.333pt,24.0003pt) -- (18.476pt,0.0pt) ;\draw[rhrhombidraw] (41.571pt,24.0003pt) -- (27.714pt,0.0pt) ;\draw[rhrhombidraw] (46.19pt,16.0002pt) -- (36.952pt,0.0pt) ;\draw[rhrhombidraw] (50.809pt,8.0001pt) -- (46.19pt,0.0pt) ;\draw[rhrhombidraw] (46.19pt,0.0pt) -- (32.333pt,24.0003pt) ;\draw[rhrhombidraw] (36.952pt,0.0pt) -- (23.095pt,24.0003pt) ;\draw[rhrhombidraw] (27.714pt,0.0pt) -- (13.857pt,24.0003pt) ;\draw[rhrhombidraw] (18.476pt,0.0pt) -- (9.238pt,16.0002pt) ;\draw[rhrhombidraw] (9.238pt,0.0pt) -- (4.619pt,8.0001pt) ;\draw[thin,-my] (11.5475pt,20.0003pt) arc (240:300:4.619pt);\draw[thin,-my] (16.1665pt,20.0003pt) arc (300:360:4.619pt);\draw[thin,-my] (6.9285pt,12.0002pt) arc (240:300:4.619pt);\draw[thin,-my] (11.5475pt,12.0002pt) arc (120:60:4.619pt);\draw[thin,-my] (16.1665pt,12.0002pt) arc (240:300:4.619pt);\draw[thin,-my] (20.7855pt,12.0002pt) arc (300:360:4.619pt);\draw[thin,-my] (23.095pt,16.0002pt) arc (180:120:4.619pt);\draw[thin,-my] (25.4045pt,20.0003pt) arc (300:360:4.619pt);\draw[thin,-my] (2.3095pt,4.0001pt) arc (240:300:4.619pt);\draw[thin,-my] (6.9285pt,4.0001pt) arc (120:60:4.619pt);\draw[thin,-my] (11.5475pt,4.0001pt) arc (240:300:4.619pt);\draw[thin,-my] (16.1665pt,4.0001pt) arc (120:60:4.619pt);\draw[thin,-my] (20.7855pt,4.0001pt) arc (240:300:4.619pt);\draw[thin,-my] (25.4045pt,4.0001pt) arc (300:360:4.619pt);\draw[thin,-my] (27.714pt,8.0001pt) arc (180:120:4.619pt);\draw[thin,-my] (30.0235pt,12.0002pt) arc (300:360:4.619pt);\draw[thin,-my] (32.333pt,16.0002pt) arc (180:120:4.619pt);\draw[thin,-my] (34.6425pt,20.0003pt) arc (300:360:4.619pt);\draw[thin,-my] (32.333pt,0.0pt) arc (180:120:4.619pt);\draw[thin,-my] (34.6425pt,4.0001pt) arc (300:360:4.619pt);\draw[thin,-my] (36.952pt,8.0001pt) arc (180:120:4.619pt);\draw[thin,-my] (39.2615pt,12.0002pt) arc (300:360:4.619pt);\draw[thin,-my] (41.571pt,16.0002pt) arc (180:120:4.619pt);\draw[thin,-my] (41.571pt,0.0pt) arc (180:120:4.619pt);\draw[thin,-my] (43.8805pt,4.0001pt) arc (300:360:4.619pt);\draw[thin,-my] (46.19pt,8.0001pt) arc (180:120:4.619pt);\draw[thin,-my] (50.809pt,0.0pt) arc (180:120:4.619pt);\end{tikzpicture}
}
\hspace{1cm}
\subfigure[]
{
\begin{tikzpicture}\fill[rhrhombifill] (23.095pt,8.0001pt) -- (13.857pt,8.0001pt) -- (9.238pt,0.0pt) -- (18.476pt,0.0pt) -- cycle;\fill[rhrhombifill] (32.333pt,8.0001pt) -- (41.571pt,8.0001pt) -- (46.19pt,0.0pt) -- (36.952pt,0.0pt) -- cycle;\draw[rhrhombidraw] (0.0pt,0.0pt) -- (13.857pt,24.0003pt) -- (41.571pt,24.0003pt) -- (55.428pt,0.0pt) -- cycle;\draw[rhrhombidraw] (4.619pt,8.0001pt) -- (50.809pt,8.0001pt) ;\draw[thin,-my] (41.571pt,0.0pt) arc (0:60:4.619pt);\draw[thin,-my] (20.7855pt,4.0001pt) arc (60:120:4.619pt);\draw[thin,-my] (39.2615pt,4.0001pt) arc (60:120:4.619pt);\draw[thin,-my] (16.1665pt,4.0001pt) arc (120:180:4.619pt);\end{tikzpicture}
}

\setcounter{subfigure}{0}
\renewcommand*{\thesubfigure}{(\alph{subfigure}')}
\subfigure[]
{
\begin{tikzpicture}\draw[rhrhombidraw] (0.0pt,0.0pt) -- (13.857pt,24.0003pt) -- (41.571pt,24.0003pt) -- (55.428pt,0.0pt) -- cycle;\draw[rhrhombidraw] (9.238pt,16.0002pt) -- (46.19pt,16.0002pt) ;\draw[rhrhombidraw] (4.619pt,8.0001pt) -- (50.809pt,8.0001pt) ;\draw[rhrhombidraw] (23.095pt,24.0003pt) -- (9.238pt,0.0pt) ;\draw[rhrhombidraw] (32.333pt,24.0003pt) -- (18.476pt,0.0pt) ;\draw[rhrhombidraw] (41.571pt,24.0003pt) -- (27.714pt,0.0pt) ;\draw[rhrhombidraw] (46.19pt,16.0002pt) -- (36.952pt,0.0pt) ;\draw[rhrhombidraw] (50.809pt,8.0001pt) -- (46.19pt,0.0pt) ;\draw[rhrhombidraw] (46.19pt,0.0pt) -- (32.333pt,24.0003pt) ;\draw[rhrhombidraw] (36.952pt,0.0pt) -- (23.095pt,24.0003pt) ;\draw[rhrhombidraw] (27.714pt,0.0pt) -- (13.857pt,24.0003pt) ;\draw[rhrhombidraw] (18.476pt,0.0pt) -- (9.238pt,16.0002pt) ;\draw[rhrhombidraw] (9.238pt,0.0pt) -- (4.619pt,8.0001pt) ;\draw[thin,-my] (36.952pt,24.0003pt) arc (180:240:4.619pt);\draw[thin,-my] (39.2615pt,20.0003pt) arc (240:300:4.619pt);\end{tikzpicture}
}
\hspace{1cm}
\subfigure[]
{
\begin{tikzpicture}\draw[rhrhombidraw] (0.0pt,0.0pt) -- (13.857pt,24.0003pt) -- (41.571pt,24.0003pt) -- (55.428pt,0.0pt) -- cycle;\draw[rhrhombidraw] (9.238pt,16.0002pt) -- (46.19pt,16.0002pt) ;\draw[rhrhombidraw] (4.619pt,8.0001pt) -- (50.809pt,8.0001pt) ;\draw[rhrhombidraw] (23.095pt,24.0003pt) -- (9.238pt,0.0pt) ;\draw[rhrhombidraw] (32.333pt,24.0003pt) -- (18.476pt,0.0pt) ;\draw[rhrhombidraw] (41.571pt,24.0003pt) -- (27.714pt,0.0pt) ;\draw[rhrhombidraw] (46.19pt,16.0002pt) -- (36.952pt,0.0pt) ;\draw[rhrhombidraw] (50.809pt,8.0001pt) -- (46.19pt,0.0pt) ;\draw[rhrhombidraw] (46.19pt,0.0pt) -- (32.333pt,24.0003pt) ;\draw[rhrhombidraw] (36.952pt,0.0pt) -- (23.095pt,24.0003pt) ;\draw[rhrhombidraw] (27.714pt,0.0pt) -- (13.857pt,24.0003pt) ;\draw[rhrhombidraw] (18.476pt,0.0pt) -- (9.238pt,16.0002pt) ;\draw[rhrhombidraw] (9.238pt,0.0pt) -- (4.619pt,8.0001pt) ;\draw[thin,-my] (36.952pt,24.0003pt) arc (180:240:4.619pt);\draw[thin,-my] (39.2615pt,20.0003pt) arc (240:300:4.619pt);\draw[thin,-my] (27.714pt,24.0003pt) arc (180:240:4.619pt);\draw[thin,-my] (30.0235pt,20.0003pt) arc (60:0:4.619pt);\draw[thin,-my] (32.333pt,16.0002pt) arc (180:240:4.619pt);\draw[thin,-my] (34.6425pt,12.0002pt) arc (240:300:4.619pt);\draw[thin,-my] (39.2615pt,12.0002pt) arc (120:60:4.619pt);\draw[thin,-my] (43.8805pt,12.0002pt) arc (240:300:4.619pt);\draw[thin,-my] (18.476pt,24.0003pt) arc (180:240:4.619pt);\draw[thin,-my] (20.7855pt,20.0003pt) arc (60:0:4.619pt);\draw[thin,-my] (23.095pt,16.0002pt) arc (180:240:4.619pt);\draw[thin,-my] (25.4045pt,12.0002pt) arc (60:0:4.619pt);\draw[thin,-my] (27.714pt,8.0001pt) arc (180:240:4.619pt);\draw[thin,-my] (30.0235pt,4.0001pt) arc (240:300:4.619pt);\draw[thin,-my] (34.6425pt,4.0001pt) arc (120:60:4.619pt);\draw[thin,-my] (39.2615pt,4.0001pt) arc (240:300:4.619pt);\draw[thin,-my] (43.8805pt,4.0001pt) arc (120:60:4.619pt);\draw[thin,-my] (48.4995pt,4.0001pt) arc (240:300:4.619pt);\draw[thin,-my] (11.5475pt,20.0003pt) arc (60:0:4.619pt);\draw[thin,-my] (13.857pt,16.0002pt) arc (180:240:4.619pt);\draw[thin,-my] (16.1665pt,12.0002pt) arc (60:0:4.619pt);\draw[thin,-my] (18.476pt,8.0001pt) arc (180:240:4.619pt);\draw[thin,-my] (20.7855pt,4.0001pt) arc (60:0:4.619pt);\draw[thin,-my] (6.9285pt,12.0002pt) arc (60:0:4.619pt);\draw[thin,-my] (9.238pt,8.0001pt) arc (180:240:4.619pt);\draw[thin,-my] (11.5475pt,4.0001pt) arc (60:0:4.619pt);\draw[thin,-my] (2.3095pt,4.0001pt) arc (60:0:4.619pt);\end{tikzpicture}
}
\hspace{1cm}
\subfigure[]
{
\begin{tikzpicture}\fill[rhrhombifill] (23.095pt,8.0001pt) -- (13.857pt,8.0001pt) -- (9.238pt,0.0pt) -- (18.476pt,0.0pt) -- cycle;\fill[rhrhombifill] (32.333pt,8.0001pt) -- (41.571pt,8.0001pt) -- (46.19pt,0.0pt) -- (36.952pt,0.0pt) -- cycle;\draw[rhrhombidraw] (0.0pt,0.0pt) -- (13.857pt,24.0003pt) -- (41.571pt,24.0003pt) -- (55.428pt,0.0pt) -- cycle;\draw[rhrhombidraw] (4.619pt,8.0001pt) -- (50.809pt,8.0001pt) ;\draw[thin,-my] (41.571pt,0.0pt) arc (0:60:4.619pt);\draw[thin,-my] (20.7855pt,4.0001pt) arc (60:120:4.619pt);\draw[thin,-my] (39.2615pt,4.0001pt) arc (60:120:4.619pt);\draw[thin,-my] (16.1665pt,4.0001pt) arc (120:180:4.619pt);\end{tikzpicture}
}
\renewcommand*{\thesubfigure}{(\alph{subfigure})}
    \caption{If $d$ is a hive flow and the path on the left figure is contained in $\SUPP(d)$, 
                   then the turns in the middle figure are contained in $\SUPP(d)$. 
                   The two paths on the right figure cannot be contained in $\SUPP(d)$.}
    \nopar\label{fig:flow-ext}
\end{center}
\end{figure}

The following proposition completes the proof of the 
Canonical Turnpath Theorem~\ref{thm:CTT} 
for any shapes of $L$. 

\begin{proposition}\label{pro:CTT-triangle}
The assertion of the Canonical Turnpath Theorem~\ref{thm:CTT} is true 
if $L$ is a triangle.
\end{proposition}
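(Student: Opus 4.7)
The plan is to proceed by induction on the size of~$L$, following the same pattern used for the other shapes. The base case of a single hive triangle is essentially trivial: the integral hive flow~$d$ has at most three nonzero border throughputs, which by flow conservation at the unique interior black vertex satisfy one linear relation, and a small case distinction yields a compatible multiset drawn from the six canonical turnpaths of Figure~\ref{fig:tp-triangle}. (This case is in fact already covered by Proposition~\ref{pro:CTT-trapezoid}, which treats the hive triangle as a degenerate trapezoid.)

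For the induction step, I cut $L$ along a straight line parallel to one of its three sides into a trapezoid~$L_1$ of height~$1$ and a smaller triangle~$L_2$. Proposition~\ref{pro:CTT-trapezoid} applied to~$L_1$ and the induction hypothesis applied to~$L_2$ produce multisets $\varphi_1$ and $\varphi_2$ of canonical turnpaths compatible with the corresponding restrictions of~$d$. Along the shared side~$a$ the identities $\outom{L_1}{a}{\varphi_1}=\inom{L_2}{a}{\varphi_2}$ and $\outom{L_2}{a}{\varphi_2}=\inom{L_1}{a}{\varphi_1}$ allow me to pair matching turnpaths bijectively across~$a$ and concatenate. Each resulting concatenation, along with every turnpath of $\varphi_1$ or $\varphi_2$ already running between two sides of~$L$, has to be straightened into a canonical turnpath of~$L$ by local moves in the same spirit as those in Propositions~\ref{pro:straight-parallelogram} and~\ref{pro:straight-trapezoid}.

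The main obstacle will be the \emph{critical concatenations} where direct straightening fails. These arise when the canonical turnpath of $\varphi_1$ sweeping along the entire bottom side of~$L_1$ is joined at a corner of~$a$ with a particular turnpath of $\varphi_2$ in a way that has no canonical counterpart in~$L$. The key observation is that there are three possible cutting directions, one for each side of~$L$, and a critical case cannot arise for all three simultaneously: by Observation~\ref{obs:trap-rhomb}, the presence of a sweeping turnpath in~$\varphi_1$ forces $\SUPP(d)$ to contain a $\Psi_+$-contribution in one of the two extreme rhombi of~$L_1$, whereas the configuration in~$\SUPP(d)$ induced by a critical concatenation in~$L_1$ matches the hypothesis of Lemma~\ref{le:flow-ext} (cases~(a) or~(a')) and therefore \emph{forbids} a $\Psi_+$-contribution in precisely those rhombi. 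This contradiction rules out the critical case for at least one of the three cuts, after which straightening proceeds as in the earlier propositions and delivers a multiset~$\varphi$ of canonical turnpaths of~$L$ with $\inom{L}{b}{\varphi}=\inom{L}{b}{d}$ and $\outom{L}{b}{\varphi}=\outom{L}{b}{d}$ for every side~$b$ of~$L$, completing the induction.
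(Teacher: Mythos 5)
Your overall strategy matches the paper's: cut $L$ into a height-$1$ trapezoid $L_1$ and a smaller triangle $L_2$, invoke Proposition~\ref{pro:CTT-trapezoid} and the induction hypothesis, concatenate and straighten, and handle the one critical configuration (where $\varphi_1$ contains the full-sweep canonical turnpath of $L_1$) by switching to a different cutting direction, using Observation~\ref{obs:trap-rhomb} and Lemma~\ref{le:flow-ext}. However, the mechanism you give for why the critical case can be avoided is not correct as stated, and this is the crux of the proof.

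You write that the Observation forces a $\Psi_+$-contribution in one of the two extreme rhombi of $L_1$, and that Lemma~\ref{le:flow-ext} then ``forbids a $\Psi_+$-contribution in precisely those rhombi,'' yielding a direct contradiction. That is not what the lemma says: the hypothesis of Lemma~\ref{le:flow-ext} is the $\Psi_+$-contribution $p\subseteq\SUPP(d)$ supplied by the Observation (the shaded rhombus of Figure~\ref{fig:tria-flow}), and its conclusion forbids $\Psi_+$-contributions in \emph{different} rhombi (the shaded rhombi of Figure~\ref{fig:flow-ext}(c) or (c')), not the same one. There is no contradiction \emph{within} the first cut; if there were, the critical case could never arise at all, which is false. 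The correct argument---and the one the paper uses---is that the rhombi where Lemma~\ref{le:flow-ext} forbids $\Psi_+$-contributions are exactly the extreme rhombi of the trapezoid obtained by cutting $L$ along a \emph{different} side (left or right, depending on which of the two cases of Figure~\ref{fig:tria-flow} you are in). If that second cut were critical, Observation~\ref{obs:trap-rhomb} applied to \emph{its} trapezoid would require a $\Psi_+$-contribution in one of those forbidden rhombi, contradicting Lemma~\ref{le:flow-ext}. Hence the second cut avoids the critical case and the straightening procedure goes through. You need to make this cross-cut interaction explicit; as written, the proposal asserts a contradiction that does not exist.
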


\begin{proof}
Again we proceed by induction on the size of $L$, the start of a hive triangle being trivial. 
For the induction step, suppose that $d$ is an integral hive flow on $L$, 
and note that there are three ways of cutting~$L$ into a trapezoid~$L_1$ of height~$1$ and a  triangle~$L_2$.
We choose one as in Figure~\ref{fig:triangle}.
The induction hypothesis and Proposition~\ref{pro:CTT-trapezoid} 
yield multisets~$\varphi_i$ compatible with $d$ restricted to $L_1$ and~$L_2$, 
respectively. 
Using Figure~\ref{fig:tp-triangle} and Figure~\ref{fig:tp-trapezoid} 
showing the possible canonical turnpaths in triangles and trapezoids, 
the reader should verify that the 
procedure of concatenation and straightening, 
as explained in the proof of Proposition~\ref{pro:CTT-parallelogram},  
can only fail in the critical case where 
$\varphi_1$ contains a turnpath~$q$ as depicted in Figure~\ref{fig:triangle}. 

By Observation~\ref{obs:trap-rhomb} applied to the trapezoid $L_1$ 
we may assume that there is rhombus~$\varrho$ 
(shaded in Figure~\ref{fig:tria-flow})
and a path~$p\in\Psi_+(\varrho)$ 
such that $p\subseteq\SUPP(d)$. 
Now we can apply Lemma~\ref{le:flow-ext} as depicted in 
Figure~\ref{fig:tria-flow} and conclude that  
all the turns depicted in this figure are contained in $\SUPP(d)$. 
\begin{figure}[h]
\begin{center}
     \subfigure[Cutting a triangle into a trapezoid and a triangle with the only critical case of a turnpath~$q$ that cannot be straightened.]
       {\qquad\qquad
        \begin{tikzpicture}\draw[rhrhombidraw] (64.666pt,-16.0002pt) -- (-9.238pt,-16.0002pt) ;\draw[rhrhombidraw] (27.714pt,48.0006pt) -- (-13.857pt,-24.0003pt) -- (69.285pt,-24.0003pt) -- cycle;\draw[thin,-my] (64.666pt,-24.0003pt) arc (0:60:4.619pt) arc (60:120:4.619pt) arc (300:240:4.619pt) arc (60:120:4.619pt) arc (300:240:4.619pt) arc (60:120:4.619pt) arc (300:240:4.619pt) arc (60:120:4.619pt) arc (300:240:4.619pt) arc (60:120:4.619pt) arc (300:240:4.619pt) arc (60:120:4.619pt) arc (300:240:4.619pt) arc (60:120:4.619pt) arc (300:240:4.619pt) arc (60:120:4.619pt) arc (120:180:4.619pt);\end{tikzpicture}
        \nopar\label{fig:triangle}\qquad\qquad} \hspace{0.7cm}
       \subfigure[The turnpath in the shaded rhombus is contained in $\SUPP(d)$, 
                  implying flow propagation according to Lemma~\ref{le:flow-ext}.]
       {
\begin{tikzpicture}\fill[rhrhombifill] (32.333pt,-24.0003pt) -- (27.714pt,-16.0002pt) -- (18.476pt,-16.0002pt) -- (23.095pt,-24.0003pt) -- cycle;\draw[rhrhombidraw] (64.666pt,-16.0002pt) -- (-9.238pt,-16.0002pt) ;\draw[rhrhombidraw] (27.714pt,48.0006pt) -- (-13.857pt,-24.0003pt) -- (69.285pt,-24.0003pt) -- cycle;\draw[thin,-my] (27.714pt,-24.0003pt) arc (0:60:4.619pt) arc (60:120:4.619pt);\draw[thin,-my] (36.952pt,-24.0003pt) arc (0:60:4.619pt) arc (240:180:4.619pt) arc (0:60:4.619pt) arc (60:120:4.619pt) arc (300:240:4.619pt) arc (60:120:4.619pt);\draw[thin,-my] (46.19pt,-24.0003pt) arc (0:60:4.619pt) arc (240:180:4.619pt) arc (0:60:4.619pt) arc (240:180:4.619pt) arc (0:60:4.619pt) arc (60:120:4.619pt) arc (300:240:4.619pt) arc (60:120:4.619pt) arc (300:240:4.619pt) arc (60:120:4.619pt);\draw[thin,-my] (55.428pt,-24.0003pt) arc (0:60:4.619pt) arc (240:180:4.619pt) arc (0:60:4.619pt) arc (240:180:4.619pt) arc (0:60:4.619pt) arc (240:180:4.619pt) arc (0:60:4.619pt) arc (60:120:4.619pt) arc (300:240:4.619pt) arc (60:120:4.619pt) arc (300:240:4.619pt) arc (60:120:4.619pt) arc (300:240:4.619pt) arc (60:120:4.619pt);\draw[thin,-my] (64.666pt,-24.0003pt) arc (0:60:4.619pt) arc (240:180:4.619pt) arc (0:60:4.619pt) arc (240:180:4.619pt) arc (0:60:4.619pt) arc (240:180:4.619pt) arc (0:60:4.619pt) arc (240:180:4.619pt) arc (0:60:4.619pt) arc (60:120:4.619pt) arc (300:240:4.619pt) arc (60:120:4.619pt) arc (300:240:4.619pt) arc (60:120:4.619pt) arc (300:240:4.619pt) arc (60:120:4.619pt) arc (300:240:4.619pt);\draw[thin,-my] (43.8805pt,20.0003pt) arc (300:240:4.619pt) arc (60:120:4.619pt) arc (300:240:4.619pt) arc (60:120:4.619pt) arc (300:240:4.619pt) arc (60:120:4.619pt) arc (300:240:4.619pt);\draw[thin,-my] (39.2615pt,28.0004pt) arc (300:240:4.619pt) arc (60:120:4.619pt) arc (300:240:4.619pt) arc (60:120:4.619pt) arc (300:240:4.619pt);\draw[thin,-my] (34.6425pt,36.0005pt) arc (300:240:4.619pt) arc (60:120:4.619pt) arc (300:240:4.619pt);\draw[thin,-my] (30.0235pt,44.0006pt) arc (300:240:4.619pt);\end{tikzpicture}
\hspace{0.5cm}
\begin{tikzpicture}\fill[rhrhombifill] (23.095pt,-24.0003pt) -- (32.333pt,-24.0003pt) -- (36.952pt,-16.0002pt) -- (27.714pt,-16.0002pt) -- cycle;\draw[rhrhombidraw] (64.666pt,-16.0002pt) -- (-9.238pt,-16.0002pt) ;\draw[rhrhombidraw] (27.714pt,48.0006pt) -- (-13.857pt,-24.0003pt) -- (69.285pt,-24.0003pt) -- cycle;\draw[thin,-my] (34.6425pt,-20.0003pt) arc (60:120:4.619pt) arc (120:180:4.619pt);\draw[thin,-my] (39.2615pt,-12.0002pt) arc (60:120:4.619pt) arc (300:240:4.619pt) arc (60:120:4.619pt) arc (120:180:4.619pt) arc (0:-60:4.619pt) arc (120:180:4.619pt);\draw[thin,-my] (43.8805pt,-4.0001pt) arc (60:120:4.619pt) arc (300:240:4.619pt) arc (60:120:4.619pt) arc (300:240:4.619pt) arc (60:120:4.619pt) arc (120:180:4.619pt) arc (0:-60:4.619pt) arc (120:180:4.619pt) arc (0:-60:4.619pt) arc (120:180:4.619pt);\draw[thin,-my] (48.4995pt,4.0001pt) arc (60:120:4.619pt) arc (300:240:4.619pt) arc (60:120:4.619pt) arc (300:240:4.619pt) arc (60:120:4.619pt) arc (300:240:4.619pt) arc (60:120:4.619pt) arc (120:180:4.619pt) arc (0:-60:4.619pt) arc (120:180:4.619pt) arc (0:-60:4.619pt) arc (120:180:4.619pt) arc (0:-60:4.619pt) arc (120:180:4.619pt);\draw[thin,-my] (48.4995pt,12.0002pt) arc (300:240:4.619pt) arc (60:120:4.619pt) arc (300:240:4.619pt) arc (60:120:4.619pt) arc (300:240:4.619pt) arc (60:120:4.619pt) arc (300:240:4.619pt) arc (60:120:4.619pt) arc (120:180:4.619pt) arc (0:-60:4.619pt) arc (120:180:4.619pt) arc (0:-60:4.619pt) arc (120:180:4.619pt) arc (0:-60:4.619pt) arc (120:180:4.619pt) arc (0:-60:4.619pt) arc (120:180:4.619pt);\draw[thin,-my] (43.8805pt,20.0003pt) arc (300:240:4.619pt) arc (60:120:4.619pt) arc (300:240:4.619pt) arc (60:120:4.619pt) arc (300:240:4.619pt) arc (60:120:4.619pt) arc (300:240:4.619pt);\draw[thin,-my] (39.2615pt,28.0004pt) arc (300:240:4.619pt) arc (60:120:4.619pt) arc (300:240:4.619pt) arc (60:120:4.619pt) arc (300:240:4.619pt);\draw[thin,-my] (34.6425pt,36.0005pt) arc (300:240:4.619pt) arc (60:120:4.619pt) arc (300:240:4.619pt);\draw[thin,-my] (30.0235pt,44.0006pt) arc (300:240:4.619pt);\end{tikzpicture}
       \nopar\label{fig:tria-flow}}
   \caption{On the proof of Proposition~\ref{pro:CTT-triangle}.}
 \end{center}
\end{figure}
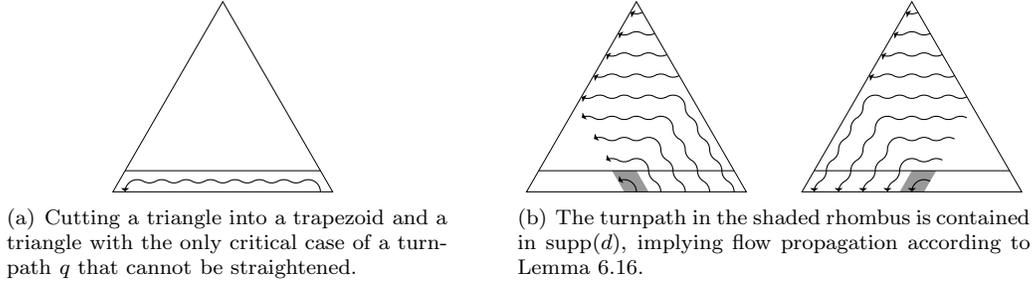
Suppose we are in the left-hand situation of Figure~\ref{fig:tria-flow}. 
Then we can decompose~$L$ into a trapezoid of height~$1$ and a triangle
by cutting along the right-hand side of~$L$. 
Lemma~\ref{le:flow-ext} implies that no critical case can arise,
so that in this situation, 
the procedure of concatenation and straightening works.
If we are in the right-hand situation of Figure~\ref{fig:tria-flow}, 
we can decompose~$L$ into a trapezoid of height~$1$ and a triangle
by cutting along the left-hand side of~$L$ and 
argue analogously.
\qquad\end{proof}


\section{Proof of the Shortest Path Theorem}
\label{sec:shortestpaththeorem}
In this section we prove Theorem~\ref{thm:shopath}.

\subsection{Special rhombi}
\label{subsec:specialrhombi}

For the whole subsection we fix a {\em shortest} \stturnpath $p$ in $\resf f$ for $f\in B_\Z$. 
Flatness shall always refer to $f$.  

We will show in several steps that the minimal length of $p$ poses severe restrictions on the way $p$ may pass a rhombus. 
Before doing so, let us verify an even simpler property of $p$ resulting from the minimality.  
Each turnvertex in $\res$ has a \emph{reverse turnvertex} in $R$ 
that points in the other direction, e.g., the reverse turnvertex of $\rhpcMl$ is $\rhpoulMr$.
We note that if $p$ contains a turnvertex~$v$ touching the boundary of $\Delta$, 
than $p$ cannot contain the reverse of~$v$
(otherwise, $p$ would use $s$ or $t$ more than once). 



Here and in the following, statements involving the pictorial
description~$\rhc$ include the possibility of a rotation by $180^\circ$. 

\begin{proposition}
\label{cla:noreverse}
The turnpath~$p$ cannot use a turnvertex and its reverse. 
\end{proposition}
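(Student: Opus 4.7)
The plan is to argue by contradiction: if the shortest $s$-$t$-turnpath $p$ used both a turnvertex $v = v_i$ and its reverse $v^{-1} = v_j$ (with $i < j$), I would construct a strictly shorter $s$-$t$-turnpath in $R_f$, contradicting the minimality of $p$. Writing $v = (w_1, b, w_2)$, a first easy observation is that $v$ and $v^{-1}$ cannot be consecutive in $p$, because $v \cdot v^{-1} = w_1 \to b \to w_2 \to b \to w_1$ revisits the black vertex $b$ and so is not a length-$4$ path in $G$; hence $v \to v^{-1}$ is not a turnedge, and $j \ge i+2$.

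The next step is a structural analysis of the surrounding turnvertices. The requirement that $v_{i-1} \cdot v_i$ and $v_j \cdot v_{j+1}$ be paths of length $4$ forces the middle black vertices of $v_{i-1}$ and $v_{j+1}$ (when these are turnvertices) to differ from $b$, so both must pass through the other black neighbor $\beta_1$ of $w_1$; in particular $w_1$ is internal. The boundary alternatives—where $v_{i-1}=s$ or $v_{j+1}=t$—would place $w_1$ on the right/bottom border and on the left border simultaneously, which is impossible. Hence $v_{i-1} = (x, \beta_1, w_1)$ and $v_{j+1} = (w_1, \beta_1, y)$ for white vertices $x, y$ of the triangle of $\beta_1$, and the walk of $p$ in $G$ traces
\[
\cdots \to x \to \beta_1 \to w_1 \to b \to w_2 \to \cdots \to w_2 \to b \to w_1 \to \beta_1 \to y \to \cdots.
\]
If $x \neq y$, cancelling this back-and-forth at $w_1$ produces the single turn $\tilde u = (x, \beta_1, y)$ in the triangle of $\beta_1$; replacing the subsequence $v_{i-1}, v_i, \ldots, v_j, v_{j+1}$ of $p$ by the single turnvertex $\tilde u$ yields a strictly shorter candidate turnpath, and the outer turnedges $v_{i-2} \to \tilde u$ and $\tilde u \to v_{j+2}$ (or the corresponding $s \to \tilde u$, $\tilde u \to t$ edges at the boundary) are automatic because the middle black vertices of $v_{i-2}$ and $v_{j+2}$ differ from $\beta_1$ by the original turnedge conditions at $v_{i-1}$ and $v_{j+1}$. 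If instead $x = y$, then $(v_{i-1}, v_{j+1})$ is itself a reverse pair with strictly greater distance $j-i+2$, and the argument iterates outward; this iteration must terminate in the case $x \neq y$ before reaching $s$ or $t$, again by the boundary argument.

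The main obstacle will be verifying that the replacement turn $\tilde u$ actually belongs to $R_f$, i.e., that it has not been deleted as a negative slack contribution of some $f$-flat rhombus. I expect to establish this by exploiting the fact that $v_{i-1}$ is another turn in the triangle of $\beta_1$ that survives in $R_f$ and uses $\beta_1$ as its middle vertex, combined with the slack relations of Definition~\ref{def:slack} controlling which turns of a triangle can lie in $\Psi_-$ of an $f$-flat neighbouring rhombus; should the outer shortening fail for this reason, a symmetric shortcut through the inner pair $(v_{i+1}, v_{j-1})$ in the triangle of the other black neighbour $\beta_2$ of $w_2$ provides a fallback, since the same structural analysis applies inward.
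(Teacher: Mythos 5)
Your structural setup is sound: the middle black vertices of $v_{i-1}$ and $v_{j+1}$ must indeed both be the other black neighbour $\beta_1$ of $w_1$, and the boundary degeneracies are excluded because $w_1$ on the border of $\Delta$ would force $p$ to visit $s$ (or $t$) twice. The cancellation/surgery idea is also a genuinely different strategy from the paper's, which instead fixes the \emph{first} turnvertex $w$ whose reverse lies in $p$, considers only the single rhombus spanned by $w$ and its predecessor $v$, and produces short local reroutings by a case split on whether $v$ is clockwise or counterclockwise and on which nearby rhombi are $f$\dash flat.

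However, your proof has a real gap at exactly the point you flag as ``the main obstacle,'' and it is not a formality. You never verify that the replacement turn $\tilde u = (x,\beta_1,y)$, nor the two new turnedges $v_{i-2}\to\tilde u$ and $\tilde u\to v_{j+2}$, survive in $\resf f$. That $v_{i-1}$ and $v_{j+1}$ are vertices of $\resf f$ gives you almost nothing about $\tilde u$: these are three distinct turnvertices of the same hive triangle, and each can independently lie in $\p_-(\varrho)$ for some $f$\dash flat rhombus $\varrho$. Likewise, observing that the new turnedges exist in $\res$ is not enough: turnedges are also deleted from $\resf f$ when they are two-clockwise-turn negative contributions, and $(v_{i-2},v_{i-1})$ and $(v_{i-2},\tilde u)$ are different contributions that can have different fates. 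The ``symmetric fallback through $\beta_2$'' is only asserted, and there is no argument showing that at least one of the two shortcuts must go through; conceivably both fail simultaneously. This is precisely where the paper's proof does its work: in the counterclockwise case it extracts the information that the existence of a counterclockwise turn at an acute angle \emph{forces} certain neighbouring rhombi to be non-$f$\dash flat, which is what licenses the rerouting, and in the flat case it shows that $w$ was in fact not the first reverse pair, contradicting its choice. Your sketch has no analogue of this slack/flatness bookkeeping. As a secondary point, one must also rule out that $\tilde u$ already occurs elsewhere in $p$, since otherwise the spliced sequence is a walk but not a path.
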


\begin{proof} 
By way of contradiction, let $w$ be the {\em first}  turnvertex in $p$ whose reverse turnvertex is also used by $p$.
We already noted that $w$ cannot touch the border of $\Delta$.
Let $v$ denote the predecessor of~$w$ and let $\rhc$ stand for the rhombus that contains both $v$ and $w$.

Suppose first that $v$ is a clockwise turn: $v = \rhpoulMr$. 
Since the reverse of $w$ is in $p$, the turnpath~$p$ must use $\rhpcMl$ or $\rhpcMr$. 
But $\rhpcMl$ is excluded because of the minimal choice of $w$.
If $\rhc$ were not flat, then it is easy to check that $p$~could be rerouted via $\rhpoulMl$.
This contradicts the minimal length of~$p$.
So let us assume that $\rhc$ is flat.
Then $\rhpoulMrr \notin E(\resf f)$ by construction of $\resf f$ and thus $\rhpoulMrl \in p$.
Hence $w = \rhpcWl$, which implies $\rhpolrWr \in p$.
But since $\rhpolrWrr \notin E(\resf f)$, it follows that $\rhpolrWrl$ is used by $p$, 
in contradiction with the minimal choice of~$w$.

It remains to analyze the case where $v$ is a counterclockwise turn: $v=\rhpourMl$. 
As before we must have $\rhpcMl \in p$. 
The existence of counterclockwise turns at acute angles implies that 
$\rhrhoul$ and $\rhrhour$  are not flat. 
Hence $p$ can be rerouted via $\rhpourMr$, 
in contradiction with the minimal length of~$p$.
\qquad\end{proof}

We continue now by analyzing the possible ways $p$ may pass through a rhombus. 
Note that, due to Proposition~\ref{cla:noreverse}, the turnpath~$p$ can cross the diagonal $\rhsc$ 
of a rhombus at most twice.

\begin{lemma}
\label{cla:nonflatflow}
 If $\rhc$ is not flat, then its diagonal $\rhsc$ is crossed by $p$ at most once.
\end{lemma}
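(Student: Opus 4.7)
The plan is to argue by contradiction: assuming that $p$ crosses $\rhsc$ at least twice, I will produce a strictly shorter $s$-$t$-turnpath in $\resf f$, contradicting the minimality of $p$. I would begin by noting that every crossing of $\rhsc$ is realized by a consecutive pair of turns $(\vartheta,\vartheta')$ in $p$ that meet at the white vertex $w$ lying on $\rhsc$ and sit in opposite hive triangles of $\rhc$. Two crossings therefore contribute four pairwise distinct turns at $w$ (since $p$ is a path in $R$), and Proposition~\ref{cla:noreverse} forces these four turns to be drawn one from each of the four mutually-reverse pairs of turns at $w$. The four turns then split naturally according to (i) whether they start or end at $w$ and (ii) whether they lie in the upper or lower hive triangle of $\rhc$.

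Next I would exploit the hypothesis that $\rhc$ is not flat: since $\s\rhc f \ge 1$, no turnvertex or turnedge in $\Psi_-(\rhc)$ has been removed in passing from $R$ to $\resf f$. In particular all four internal turns $\rhpcMl, \rhpcMr, \rhpcWl, \rhpcWr$ and all in-rhombus turnedges joining them to the acute-angle turns $\rhpoulMl$, $\rhpolrWl$, $\rhpourMr$, $\rhpollWr$ remain available in $\resf f$. Up to the $180^\circ$ rotational symmetry of $\rhc$ the configuration of the two crossings then falls into a small number of cases, organized by (a) whether they cross $\rhsc$ in the same or in opposite directions and (b) which of the midpoints on $\rhsoul,\rhsour,\rhsoll,\rhslr$ each crossing uses as its outer endpoint.

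In every configuration the same strategy applies: replace the segment of $p$ lying strictly between the two crossings by a short reroute inside $\rhc$. In the opposite-direction case one fuses the two crossings into a single traversal through an appropriate internal turn at $w$, and deletes the intervening portion of $p$, which necessarily exits $\rhc$ and so contains turns one can excise. In the same-direction case one swaps the outer endpoints of the two crossings using the in-rhombus turnedges now at one's disposal; the intermediate detour of $p$ outside of $\rhc$ is again excised. The remaining subcases either lead to a repeated turnvertex in the purported reroute (impossible because $p$ is a path), or force $p$ to use a turnvertex together with its reverse (contradicting Proposition~\ref{cla:noreverse}). Each reroute is in $\resf f$ because all in-rhombus turnedges of the non-flat $\rhc$ survive in $\resf f$, and the rest of $p$ is untouched.

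The hard part will be organizing the case analysis cleanly and checking, especially when the two crossings lie close together in $p$ or share an outer side of $\rhc$, that the proposed reroute is genuinely a turnpath (no repeated turn), is genuinely shorter than $p$, and does not accidentally rely on turnedges that were removed from $\resf f$ because of flatness constraints in neighboring rhombi or capacity constraints at the border. Once each of the finitely many configurations has been handled by producing such a reroute, the contradiction with minimality of $p$ is complete.
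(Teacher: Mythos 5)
Your high-level plan mirrors the paper's: derive a contradiction with minimality by exhibiting a shorter \stturnpath, after using Proposition~\ref{cla:noreverse} to pin down which of the eight turnvertices at $w$ can occur. But three of your concrete steps are wrong.

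First, the ``in particular'' is a non sequitur. The turns $\rhpcMl,\rhpcMr,\rhpcWl,\rhpcWr$ do not lie in $\p_-(\rhc)$, so the non-flatness of $\rhc$ tells you nothing about them. Each of them is a turn at an acute angle of one of the four \emph{overlapping} rhombi: for instance $\rhpcMl\in\p_-(\rhrhour)$ and $\rhpcWl\in\p_-(\rhrholl)$, so these two are deleted from $\resf f$ whenever the corresponding neighbour is $f$-flat, regardless of $\s\rhc f$. Second, the ``in-rhombus turnedges joining them to the acute-angle turns $\rhpoulMl,\rhpolrWl,\rhpourMr,\rhpollWr$'' do not exist: $\rhpcMl$ and $\rhpoulMl$, for example, lie in the \emph{same} hive triangle of $\rhc$, so their concatenation repeats the black vertex and is not a path in $G$, hence not a turnedge at all. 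Third, in the opposite-direction case the two lower turns you want to reconnect (one ending and one starting at $w$) lie in the \emph{same} hive triangle, so there is no legal turnedge between them, and splicing in one internal turn at $w$ cannot help since such a turn leaves $w$ and never comes back. Any genuine reroute there must go around an acute corner of $\rhc$, and the turnedges entering and leaving that corner cross $\rhsoul,\rhsour$ or $\rhsoll,\rhsolr$ and therefore depend on the flatness of the overlapping rhombi --- exactly the worry you raise in your last paragraph but never resolve.

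What actually makes the argument short is the observation your sketch misses: when both $\rhpcMl$ and $\rhpcMr$ occur in $p$ (the same-direction case the paper treats), each has as predecessor one of $\rhpollWl,\rhpolrWr$, and the reroute replaces the whole middle of $p$ by a \emph{single} new turnedge crossing $\rhsc$ inside $\rhc$, one of $\rhpollWll,\rhpollWlr,\rhpolrWrl,\rhpolrWrr$. A turnedge through $w$ is a slack contribution for $\rhc$ and for no other rhombus, so its availability in $\resf f$ is governed by $\rhc$ alone: the $\p_0$ and $\p_+$ ones are never deleted, and the $\p_-(\rhc)$ ones survive precisely because $\rhc$ is not flat. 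Neighbouring rhombi never enter the argument. Reorganizing your case analysis around that fact is what you need to make the proof go through cleanly.
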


\begin{proof}
Assume by way of contradiction that both $\rhpcMl$ and $\rhpcMr$ occur in~$p$. 
Since $p$ cannot use a turnvertex twice, there are only two possibilites: 
\begin{equation}\label{eq:two-poss}
\mbox{either\quad $\rhpollWll$ and $\rhpolrWrr$ are edges of $p$\quad  
or \quad  
$\rhpollWlr$ and $\rhpolrWrl$  are edges of $p$.}
\end{equation}
In both cases, $p$~can be rerouted resulting in a shorter \stturnpath, contradicting the 
minimal length of~$p$. 
Note that the rerouting in the second case is possible since $\rhc$ is assumed to be not flat.
\qquad\end{proof}


We now focus on the rhombi in which $p$ crosses the diagonal twice. 

\begin{definition}\label{def:special}
A rhombus~$\varrho$ is called {\em special} if the turnpath~$p$ crosses its diagonal twice. 
If the crossing is in the same direction, then 
$\varrho$ is called  \emph{confluent},  
otherwise, if the crossing is in opposite directions, 
$\varrho$ is called \emph{contrafluent}. 
\end{definition}

By Lemma~\ref{cla:nonflatflow}, special rhombi are necessarily flat. 
Recall the slack contributions 
of a rhombus $\varrho$ introduced in Definition~\ref{def:slack-contr}. 

\begin{proposition}\label{pro:special}
In a special rhombus $\rhc$, the turnpath~$p$ 
uses exactly two neutral slack contributions.  
\end{proposition}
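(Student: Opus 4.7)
The plan is to combine the flatness of $\rhc$ (forced by Lemma~\ref{cla:nonflatflow}) with a rerouting argument exploiting the minimality of $p$. Since $\rhc$ is $f$-flat, Definition~\ref{def:resf} removes all negative slack contributions of $\rhc$ from $\resf f$, so $p$ uses none of $\p_-(\rhc)$; Lemma~\ref{le:spos} then yields $\s \rhc p \ge 0$, and by Lemma~\ref{lem:slackcalculation} this quantity equals the number of positive slack contributions of $\rhc$ occurring in $p$. The first step of the plan is to interpret the two diagonal crossings: each crossing is a cross-triangle visit of $p$ to the white vertex $w_d$ on $\rhsc$, consisting of one incoming and one outgoing turn lying in different hive triangles of $\rhc$. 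A direct enumeration shows that the resulting two-turn path through $w_d$ must be one of the around-obtuse contributions $\rhpollWll, \rhpourMll \in \p_+(\rhc)$ or one of the four crosswise contributions in $\p_0(\rhc)$ (the around-obtuse negatives $\rhpoulMrr, \rhpolrWrr$ being already excluded), and Proposition~\ref{cla:noreverse} guarantees that the two crossings use distinct contributions.

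To conclude that both crossings use neutral contributions, I will rule out any use of a positive slack contribution of $\rhc$ by $p$. The two kinds of positive contributions are handled separately. For the around-obtuse ones, if $p$ uses, say, $\rhpollWll$ going $wll \to w_d \to wul$ around $W$, I will derive a contradiction by swapping the pairing of the incoming and outgoing turns at $w_d$ between the two visits, so that the around-obtuse positive contribution is replaced by the corresponding crosswise neutral contribution in $\p_0(\rhc)$ and the external portion of $p$ between the two visits is simultaneously shortened because the rerouted visits now connect to different boundary vertices of $\rhc$. For the single-turn acute positive contributions $\rhpourMr$ and $\rhpollWr$ (which by themselves do not cross the diagonal but would still contribute to $\s \rhc p > 0$), I will apply a local rerouting near the acute vertex combined with the two neutral crossings at $w_d$ to extract a strictly shorter \stturnpath, mimicking the replacement strategy used in the proof of Lemma~\ref{cla:nonflatflow}.

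The main obstacle will be verifying that the rerouting yields a valid and strictly shorter \stturnpath in $\resf f$. Since $\rhc$ is flat the crosswise neutral contributions remain available in $\resf f$, but the swap may invoke turns in neighboring rhombi, which must themselves lie in $\resf f$; here the hexagon equality~\eqref{cla:BZ} and the propagation of flatness via Corollary~\ref{cor:BZ2} will be used to control how flatness spreads around $\rhc$. A case analysis distinguishing the confluent and contrafluent subcases, and within each subcase the different choices of which crossing is positive, will complete the contradiction with minimality and force $\s \rhc p = 0$, leaving exactly two neutral slack contributions as required.
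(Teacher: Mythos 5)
Your opening reduction is sound: since a special rhombus $\rhc$ is $f$-flat (by Lemma~\ref{cla:nonflatflow}), the negative contributions of $\rhc$ are deleted from $\resf f$, so $\s \rhc p \ge 0$ by Lemma~\ref{le:spos}, and by Lemma~\ref{lem:slackcalculation} it suffices to rule out every positive contribution. Your classification of each visit to $w_d$ as one of the two around-obtuse positives $\rhpollWll, \rhpourMll$ or one of the four neutrals is also correct.

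The central rerouting step, however, has a genuine gap. Combining Proposition~\ref{cla:noreverse} with the requirement that $p$ not repeat a turnvertex, one finds that if one visit is $\rhpollWll$ (using $\rhpollWl$ and $\rhpcMl$), the second visit is \emph{forced} to be $\rhpourMll$ (using $\rhpourMl$ and $\rhpcWl$): the reverses $\rhpcWr$ and $\rhpoulMr$ are excluded, leaving only the incoming turns $\rhpourMl, \rhpolrWr$ and outgoing turns $\rhpcMr, \rhpcWl$, and the only crossing pairing other than $\rhpourMll$ is $\rhpolrWrr$, a deleted negative. In precisely this case your ``swap at $w_d$'' collapses: both re-pairings $(\rhpollWl,\rhpcWl)$ and $(\rhpourMl,\rhpcMl)$ put the incoming and outgoing turn in the same hive triangle of $\rhc$, so the concatenation revisits a black vertex of $G$ and is not a turnedge of $\res$ at all. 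This is exactly the contrafluent situation, which the paper handles in Lemma~\ref{cla:contrafluentfixed} by a fundamentally different rerouting, around the acute north vertex via $\rhpourMr$, after first establishing that $\rhrhoul$ and $\rhrhour$ are not $f$-flat; your sketch never considers a reroute that leaves $w_d$ aside. I also note that the claimed shortening ``because the rerouted visits now connect to different boundary vertices of $\rhc$'' misidentifies the source of the shortening (which, when applicable, comes from omitting the segment of $p$ between the two visits), and the acute single-turn positives $\rhpourMr,\rhpollWr$ are only gestured at. The paper's route through the confluent/contrafluent dichotomy and the non-overlap lemma (Lemmas~\ref{cla:confluentatleast}--\ref{cla:contraoverlother}) supplies exactly the facts your plan is missing, and without them the argument does not close.
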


The proof proceeds by several steps. 

\begin{lemma}
\label{cla:confluentatleast}
In a confluent rhombus $\rhc$ the turnpath $p$ uses at least 
the two contributions $\rhpollWlr$ and $\rhpolrWrl$. 
\end{lemma}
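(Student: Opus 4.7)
The plan is to extract the required structural information from the case analysis that was already carried out in the proof of Lemma~\ref{cla:nonflatflow}. Since $\rhc$ is confluent, the turnpath $p$ crosses the diagonal $\rhsc$ twice in the same direction, and by Proposition~\ref{cla:noreverse} those two crossings must occur at distinct turnvertices. Up to the $180^\circ$ symmetry used for the pictogram $\rhc$, this means $p$ uses both $\rhpcMl$ and $\rhpcMr$. Moreover, by Lemma~\ref{cla:nonflatflow}, the confluent rhombus $\rhc$ is $f$-flat.

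Next, I would track the turnedges of $\resf f$ that are incident to $\rhpcMl$ and $\rhpcMr$ inside $\rhc$. Since $p$ is a turnpath and cannot revisit any turnvertex, the ways in which $p$ can enter and leave $\rhpcMl$ and $\rhpcMr$ without leaving $\rhc$ are combinatorially constrained, and this is exactly the dichotomy already used in the proof of Lemma~\ref{cla:nonflatflow}: either
\begin{enumerate}
\item[(a)] the turnedges $\rhpollWll$ and $\rhpolrWrr$ (positive slack contributions) lie in $p$, or
\item[(b)] the turnedges $\rhpollWlr$ and $\rhpolrWrl$ (neutral slack contributions) lie in $p$.
\end{enumerate}

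The central step is to rule out~(a). In that configuration, the two diagonal crossings $\rhpcMl$, $\rhpcMr$ together with the positive contributions $\rhpollWll$, $\rhpolrWrr$ force $p$ to trace a detour around the south corner of $\rhc$; this detour can be replaced by a single short-cut turnedge at that corner. Crucially, the short-cut uses only turnedges that are \emph{not} negative slack contributions in $\rhc$, so it lies in $\resf f$ regardless of the flatness of $\rhc$. This would produce a strictly shorter \stturnpath in $\resf f$, contradicting the minimality of $p$. Hence only~(b) is possible, giving $\rhpollWlr, \rhpolrWrl \in p$.

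The main obstacle will be a rigorous justification of the ``exactly two possibilities'' dichotomy, which requires a careful enumeration of which turnedges of $\resf f$ are incident to $\rhpcMl$ and $\rhpcMr$ within $\rhc$ (taking into account that some neighbouring contributions may have been deleted due to flatness of $\rhc$ or other flat rhombi), and verifying that the short-cut in case~(a) does produce a bona fide turnpath (in particular no repeated turnvertices and no turnedges outside $\resf f$).
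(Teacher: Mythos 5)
Your overall structure is sound: you correctly observe that confluence forces both $\rhpcMl$ and $\rhpcMr$ to occur in $p$, that $\rhc$ is $f$\dash flat by Lemma~\ref{cla:nonflatflow}, and that the dichotomy \eqref{eq:two-poss} applies. However, there is a factual error at the critical point: you label both $\rhpollWll$ and $\rhpolrWrr$ as positive slack contributions, but in fact $\rhpolrWrr \in \p_-(\rhc)$ is a \emph{negative} contribution (Definition~\ref{def:slack-contr}). This mislabeling causes you to miss the intended, much simpler argument. Since $\rhc$ is $f$\dash flat, Definition~\ref{def:resf} deletes from $\resf f$ all turnvertices and turnedges in $\p_-(\rhc)$, and in particular $\rhpolrWrr\notin E(\resf f)$. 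Hence $p$, being a turnpath in $\resf f$, cannot possibly use $\rhpolrWrr$, and case~(a) is ruled out immediately with no rerouting at all. Your alternative rerouting route is not wrong in principle (the proof of Lemma~\ref{cla:nonflatflow} does reroute case~(a) independently of the flatness of $\rhc$), but it is strictly heavier machinery, and you yourself acknowledge deferring the verification that the short-cut avoids deleted turnedges in other $f$\dash flat rhombi and avoids repeated turnvertices. The misidentification of $\rhpolrWrr$'s sign is the real gap: it is precisely the negativity of that turnedge combined with flatness of $\rhc$ that makes the paper's one-line conclusion available.
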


\begin{proof}
Suppose that both $\rhpcMl$ and $\rhpcMr$ occur in~$p$. 
Then, as before, there are only the two possibilites of~\eqref{eq:two-poss}. 
Since $\rhc$ is flat, the first case is impossible. 
\qquad\end{proof}

We can now completely determine how $p$ passes through contrafluent rhombi. 

\begin{lemma}
\label{cla:contrafluentfixed}
In a contrafluent rhombus $\rhc$, the turnpath~$p$ 
uses the contributions $\rhpoulMrl$ and $\rhpollWlr$ 
and no other contributions in this rhombus. 
\end{lemma}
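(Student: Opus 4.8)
The plan is to analyse, case by case, the possible ways in which the shortest turnpath $p$ can realise its two crossings of the diagonal $\rhsc$ of a contrafluent rhombus $\rhc$, discarding every configuration except the asserted one.

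First I would record the local possibilities. By Lemma~\ref{cla:nonflatflow} a contrafluent rhombus $\rhc$ is $f$-flat, so by the construction of $\resf f$ the turnpath $p$ uses no turnedge of $\p_-(\rhc)$. Since $\rhc$ is contrafluent, $p$ crosses $\rhsc$ exactly once in each direction, and each such crossing is a turnedge formed by a turn of one hive triangle of $\rhc$ ending at the white vertex on $\rhsc$ followed by a turn of the other hive triangle starting there. Enumerating these turnedges and deleting the members of $\p_-(\rhc)$, the ``downward'' crossing must lie in $\{\rhpoulMrl,\rhpourMll,\rhpourMlr\}$ and the ``upward'' one in $\{\rhpollWlr,\rhpollWll,\rhpolrWrl\}$, giving nine combinations.

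Next I would eliminate six of the nine combinations by Proposition~\ref{cla:noreverse}: in each of them the two crossing turnedges together use some turnvertex at an obtuse corner of $\rhc$ together with its reverse turnvertex. The three survivors are the asserted pair $(\rhpoulMrl,\rhpollWlr)$ and the two degenerate pairs $(\rhpourMll,\rhpollWll)$ and $(\rhpourMlr,\rhpolrWrl)$, for which I would invoke the minimality of $p$. Writing $p = \alpha\cdot(\text{crossing }1)\cdot\beta\cdot(\text{crossing }2)\cdot\gamma$, in all but one sub-case the two crossings enter and leave $\rhc$ through the two edges meeting at a suitable acute corner of one of the two hive triangles of $\rhc$, so that $\alpha$ and $\gamma$ can be joined directly by the single turn around that acute corner; this turn is a positive slack contribution, hence still present in $\resf f$, and the resulting turnpath is strictly shorter, contradicting minimality. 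The remaining sub-case, $(\rhpourMlr,\rhpolrWrl)$ traversed so that $p$ enters $\rhc$ at one acute corner and leaves at the opposite one, is the crux: there every local rerouting across $\rhc$ passes through a negative contribution of the $f$-flat rhombus, which is absent from $\resf f$, so this case must be excluded globally --- I expect by a planarity argument inside $\Delta$ showing that the two crossings would cross each other transversally at the white vertex on $\rhsc$ while $p$ winds around $\rhc$ on the outside, after which one uncrossing of the excursions of $p$ still yields a strictly shorter turnpath. This leaves only $(\rhpoulMrl,\rhpollWlr)$.

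Finally I would show that $p$ uses no further contribution inside $\rhc$: any additional turnvertex or turnedge of $p$ in $\rhc$ is either a negative contribution, already excluded, or it forces a third passage of $p$ along an edge of $\rhc$ incident to $\rhsc$, which again allows a length-reducing reroute inside the $f$-flat rhombus. The main obstacle is the degenerate sub-case above, where the two crossings of $\rhsc$ are separated along $p$ by an excursion out of $\rhc$, so naive deletion of a subpath fails and the argument must be global rather than a local surgery.
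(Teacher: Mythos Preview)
Your enumeration of the nine crossing pairs and the reduction to three survivors via Proposition~\ref{cla:noreverse} is correct, but the plan breaks down at the very point you flag as ``the crux''. The pair $(\rhpourMlr,\rhpolrWrl)$ is the $180^\circ$ rotation of the asserted pair $(\rhpoulMrl,\rhpollWlr)$: rotating the rhombus sends NW$\leftrightarrow$SE and NE$\leftrightarrow$SW, so the crossing NW$\to$SE becomes SE$\to$NW and SW$\to$NE becomes NE$\to$SW. Under the paper's standing convention (stated just before \S\ref{subsec:specialrhombi}) that pictorial statements about $\rhc$ are taken up to $180^\circ$ rotation, this pair \emph{is} the conclusion of the lemma, not a case to be eliminated. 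Consequently your ``crux sub-case'' --- $(\rhpourMlr,\rhpolrWrl)$ traversed so that the only local shortcut would be the forbidden turn $\rhpolrWl$ --- cannot be excluded by any planarity or uncrossing argument: it is a genuine configuration. Your expected global argument does not exist, and attempting it would amount to proving the lemma vacuous.

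The paper avoids this trap by parametrising only by the two upper-triangle turnvertices $\{\rhpoulMr,\rhpourMl\}$ and $\{\rhpcMl,\rhpcMr\}$, and using the rotation freedom to guarantee that the rerouting via $\rhpourMr$ applies. What actually has to be eliminated is only your pair $(\rhpourMll,\rhpollWll)$, and there your rerouting works for both orderings (via $\rhpourMr$ at the north corner or $\rhpollWr$ at the south corner). Once you accept both $(\rhpoulMrl,\rhpollWlr)$ and its rotation as the answer, you are done with the crossing analysis. Your final paragraph on excluding further contributions is also too sketchy: the paper needs a genuinely non-local step there (the configuration of Figure~\ref{fig:contrafluent}, where three overlapping contrafluent rhombi force a small triangular flatspace and a reroute), and your ``third passage forces a local reroute'' does not cover the case $\rhpourMr\in p$.
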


\begin{proof}
Assume first that $\rhpourMl$ and $\rhpcMl$ are in~$p$. 
Then $\rhrhoul$ and $\rhrhour$ are both not flat. 
Hence $p$ can be rerouted via $\rhpourMr$, 
contradicting the minimal length of $p$.

We are therefore left with the case where 
$\rhpoulMr$ and $\rhpcMr$ are in $p$.
By construction, $\rhpoulMrr$ and $\rhpolrWrr$ are not edges of  $\resf f$ 
and thus $\rhpoulMrl$ and $\rhpollWlr$ are both turnedges of~$p$. 
It remains to show that $p$~uses no other contribution in~$\rhc$.

Proposition~\ref{cla:noreverse} combined with the fact that $\rhc$ is flat 
easily implies that 
$\rhpourMr$ and $\rhpollWr$
are the only contributions that $p$ may possibly use.
We exclude now these two cases.

Suppose that $\rhpollWr$ occurs in~$p$. Then, as  $\rhpollWlr \in p$, 
Lemma~\ref{cla:nonflatflow} implies that $\rhrholl$ is flat. 
However, this contradicts $\rhpcWl \in p$.

We are left with the case that $\rhpourMr$ occurs in $p$.
Then $\rhrhoul$ and $\rhrhour$ 
are both contrafluent.
Applying  what we have learned so far about contrafluent rhombi,
we get the situation depicted in Figure~\ref{fig:contrafluent}.
\begin{figure}[h] 
  \begin{center}
\scalebox{2}{
\begin{tikzpicture}\draw[thin,-my] (6.9285pt,12.0002pt) --  (-6.9285pt,12.0002pt);;\draw[thin,-my] (-2.3095pt,4.0001pt) --  (4.619pt,16.0002pt);;\draw[thin,-my] (-4.619pt,16.0002pt) --  (2.3095pt,4.0001pt);;\draw[rhrhombidraw] (0.0pt,0.0pt) -- (9.238pt,16.0002pt) -- (-9.238pt,16.0002pt) -- cycle;\fill (4.619pt,8.0001pt) circle (0.4pt);\fill (0.0pt,16.0002pt) circle (0.4pt);\fill (-4.619pt,8.0001pt) circle (0.4pt);\end{tikzpicture}
\hspace{2cm}
\begin{tikzpicture}\draw[rhrhombidraw] (0.0pt,0.0pt) -- (9.238pt,16.0002pt) -- (-9.238pt,16.0002pt) -- cycle;\fill (4.619pt,8.0001pt) circle (0.4pt);\fill (0.0pt,16.0002pt) circle (0.4pt);\fill (-4.619pt,8.0001pt) circle (0.4pt);\draw[thin,-my] (-4.619pt,16.0002pt) arc (0:-60:4.619pt);\draw[thin,-my] (-2.3095pt,4.0001pt) arc (120:60:4.619pt);\draw[thin,-my] (6.9285pt,12.0002pt) arc (240:180:4.619pt);\end{tikzpicture}
}
    \caption{The situation in Lemma~\protect\ref{cla:contrafluentfixed}.
Left: The arrows represent $p$.
All three overlapping rhombi are contrafluent and thus flat. On the right: The turnvertices that can be used for rerouting.}
    \nopar\label{fig:contrafluent}
  \end{center}
\end{figure}
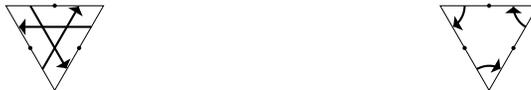
Note that the depicted triangle of side length 2 is not only contained in a flatspace,
but it is a flatspace itself: The reason is
that $p$ traverses in counterclockwise direction at the border of flatspaces, 
see Proposition~\ref{cla:outerccdir}.
This implies that $p$ can be rerouted as seen in Figure~\ref{fig:contrafluent}, 
which is a contradiction to the minimal length of~$p$.
\qquad\end{proof}

\begin{lemma}
\label{cla:contraoverlother}
1. Contrafluent rhombi cannot overlap with contrafluent or confluent rhombi. 

2. Confluent rhombi cannot overlap.
\end{lemma}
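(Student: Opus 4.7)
My plan is to argue by contradiction: suppose two special rhombi $\varrho_1,\varrho_2$ share a common hive triangle~$T$. Their union is then a length-$2$, height-$1$ trapezoid consisting of $T$ and two triangles of opposite orientation glued to the two non-shared sides of~$T$. Since both $\varrho_1$ and $\varrho_2$ are flat (being special), the entire trapezoid sits inside a single $f$-flatspace, and by iterated use of the hexagon equality~\eqref{cla:BZ} every rhombus contained in the trapezoid is $f$-flat. The strong characterizations already available give me, for each of $\varrho_1,\varrho_2$, an explicit set of turnvertices that $p$ must use inside it: Lemma~\ref{cla:contrafluentfixed} pins down the four turnvertices of $p$ in a contrafluent rhombus exactly (the turnvertices of $\rhpoulMrl$ and $\rhpollWlr$), while Lemma~\ref{cla:confluentatleast} guarantees at least the two neutral contributions $\rhpollWlr$ and $\rhpolrWrl$ in a confluent rhombus. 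Restricting these prescriptions to $T$ yields, for each of $\varrho_1$ and $\varrho_2$, a forced pair of turnvertices in $T$ through which $p$ must pass.

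I would then enumerate the ways $\varrho_1$ and $\varrho_2$ can share $T$: the shared triangle may be either of the two triangles of $\varrho_1$, and the diagonal of $\varrho_2$ can make only a small number of angles with that of $\varrho_1$ at their common vertex on the boundary of $T$. In each configuration one of two things must occur. Either the two forced pairs of turnvertices in $T$ are incompatible with $p$ being a single turnpath---typically by requiring $p$ to contain some turnvertex together with its reverse, which is forbidden by Proposition~\ref{cla:noreverse}. Or the two pairs are consistent, but together they force $p$ to double-cross the diagonal of some flat rhombus inside the trapezoid; such a sub-path can be replaced by a counterclockwise detour around that rhombus along boundary turns at acute angles, which belong to $R_f$ precisely because every rhombus inside the trapezoid is $f$-flat and hence no negative contribution is used. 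The replaced $s$-$t$-turnpath is strictly shorter than $p$, contradicting minimality. Part~(1), where at least one rhombus is contrafluent, is handled first and is the easier case thanks to the complete description in Lemma~\ref{cla:contrafluentfixed}; part~(2), with two confluent rhombi, follows the same template but needs a short additional argument: Lemma~\ref{cla:confluentatleast} only gives a lower bound on $p$ inside a confluent rhombus, so one must rule out extra contributions, which I would do by observing that any such contribution, combined with the forced behavior in the neighbor, would again produce a turnvertex and its reverse in $p$.

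The main obstacle is bookkeeping rather than conceptual novelty: the rerouting is always the same local move---swap a double crossing of a flat diagonal for a boundary detour---but the rotationally distinct local pictures have to be enumerated and inspected one by one, and one has to verify in each case that the replacement turnpath remains in~$R_f$ (in particular avoids negative contributions of flat rhombi) and remains an $s$-$t$-turnpath (no revisit of the source or target). The most delicate configurations are those in part~(2), because of the aforementioned ambiguity in the confluent description, which has to be resolved before the reroute can be written down.
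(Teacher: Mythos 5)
Your overall strategy---local case analysis using Lemmas~\ref{cla:contrafluentfixed} and \ref{cla:confluentatleast}, with contradictions from the minimality of~$p$---is in the right spirit, but the paper's proof is much shorter because it exploits the full strength of Lemma~\ref{cla:contrafluentfixed}, which you do not. For part~1, the paper needs neither Proposition~\ref{cla:noreverse} nor a rerouting: Lemma~\ref{cla:contrafluentfixed} says $p$ uses in a contrafluent rhombus the two prescribed neutral contributions \emph{and no others}, and the prescription from the overlapping special rhombus (whichever of the two lemmas applies to it) already forces an extra turnvertex in the shared triangle, which is an additional slack contribution of the contrafluent rhombus---an immediate contradiction, case closed. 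For part~2, the paper reduces to part~1: the two overlapping confluent prescriptions force a third rhombus to be contrafluent and overlapping with one of them. Your plan for part~2 instead wants to upgrade Lemma~\ref{cla:confluentatleast} to an exact description on the fly; be aware that the exact confluent description is Proposition~\ref{pro:special}, whose proof \emph{uses} the present lemma, so your ad hoc argument must stay strictly local to avoid circularity.

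There is also a genuine flaw in your rerouting step. You justify the validity of the detour by asserting that the boundary turns at acute angles belong to $\resf f$ ``because every rhombus inside the trapezoid is $f$-flat and hence no negative contribution is used.'' This is backwards: by Definition~\ref{def:resf}, the deletions producing $\resf f$ are exactly the negative contributions $\p_-(\varrho)$ of $f$-flat rhombi~$\varrho$, and $\p_-(\varrho)$ consists of the \emph{counterclockwise turns at the acute angles} of~$\varrho$ together with the clockwise two-turn paths at its obtuse angles. So in a region all of whose rhombi are $f$-flat, counterclockwise turns at acute corners are excluded from $\resf f$, not included. Indeed, Proposition~\ref{cla:outerccdir} shows a turnpath in $\resf f$ can use a counterclockwise turn only where the diagonal it touches is a divider, i.e.\ only along the border of a flatspace; a local counterclockwise detour around a rhombus interior to a flatspace is simply not available in $\resf f$. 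This is precisely the obstruction the paper sidesteps by invoking the exhaustiveness clause of Lemma~\ref{cla:contrafluentfixed} instead of rerouting.
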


\begin{proof}
1. Assume that a contrafluent rhombus $\rhc$ overlaps with a shaded confluent or contrafluent rhombus
as in Figure~\ref{fig:overlapping} (the other cases are similar).
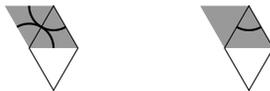
\begin{figure}[h]
  \begin{center}
\scalebox{2}{
\begin{tikzpicture}\fill[rhrhombifill] (41.571pt,8.0001pt) -- (32.333pt,8.0001pt) -- (27.714pt,16.0002pt) -- (36.952pt,16.0002pt) -- cycle;\fill[rhrhombifill] (-9.238pt,16.0002pt) -- (-4.619pt,8.0001pt) -- (4.619pt,8.0001pt) -- (0.0pt,16.0002pt) -- cycle;\draw[rhrhombidraw] (0.0pt,16.0002pt) -- (-4.619pt,8.0001pt) -- (0.0pt,0.0pt) -- (4.619pt,8.0001pt) -- cycle;\draw[thin] (2.3095pt,12.0002pt) arc (300:240:4.619pt) arc (60:120:4.619pt);\draw[thin] (-4.619pt,16.0002pt) arc (180:240:4.619pt) arc (60:0:4.619pt);\draw[rhrhombidraw] (36.952pt,0.0pt) -- (32.333pt,8.0001pt) -- (36.952pt,16.0002pt) -- (41.571pt,8.0001pt) -- cycle;\draw[thin] (39.2615pt,12.0002pt) arc (300:240:4.619pt);\end{tikzpicture}
}
\caption{The situation in the proof of Lemma~\ref{cla:contraoverlother}.}
    \nopar\label{fig:overlapping}
  \end{center}
\end{figure}
The turnpath $p$ uses at least the turnedges drawn in the left figure, where the directions are irrelevant and hence omitted. 
Hence the turnvertex in the right figure is used by $p$. 
But then, Lemma~\protect\ref{cla:contrafluentfixed} implies that 
$\rhc$~cannot be contrafluent, contradiction! 

2. Let $\rhc$ be confluent and assume that $\rhpoulMrl$ and $\rhpourMlr$ occur in~$p$. 
Assume that $\rhrhoul$ is confluent and hence $p$ contains $\rhppulWlr$ and $\rhpulWrl$. 
Then $\rhrhour$ is contrafluent and overlapping with $\rhc$, which is contradicting part one of this lemma.
The same argument works for the other three overlapping cases.  
\qquad\end{proof}



\begin{prooff}[Proof of Proposition~\textup{\ref{pro:special}}] 
By Lemma~\ref{cla:contrafluentfixed} it remains to consider the 
case of a confluent rhombus. 
We improve on Lemma~\ref{cla:confluentatleast}.
If $p$ would use any additional contribution, then
$\rhc$ would overlap with a confluent or contrafluent rhombus,
which is impossible due to Lemma~\ref{cla:contraoverlother}. 
\qquad\end{prooff}

\subsection{Rigid and critical rhombi}
\label{subsec:shopath}

Recall the polyhedron $B$ of bounded hive flows associated with chosen partitions $\la,\mu,\nu$. 
Again we fix $f\in B_\Z$ and we fix a {\em shortest} \stturnpath $p$ in $\resf f$.
We set 
\begin{equation}\label{eq:defeps}
 \varepsilon := \max \{ t \in \R \mid f + t \pi(p) \in B\}, \ \ g:=f+\varepsilon \pi(p).
\end{equation}
Then we have $\varepsilon > 0$ by Lemma~\ref{lem:direction} and $g \in B$.
For the proof of the Shortest Path Theorem~\ref{thm:shopath} 
it suffices to show that  $\varepsilon \geq 1$, since then $f+\pi(p) \in B_\Z$.

If all rhombi are $f$\dash flat, then there are only two possibilities for $p$,
going directly from the right or bottom entrance edge to the left exit edge.
In these two cases we clearly have $\varepsilon \geq 1$.

In the following we suppose that 
not all rhombi are $f$\dash flat.
We shall argue indirectly and assume that $\varepsilon<1$ 
for the rest of this subsection. 
After going through numerous detailed case distinctions, 
describing the possible local situations, 
we will finally end up with a contradiction, which then finishes the proof of 
the Shortest Path Theorem~\ref{thm:shopath}. 
Our main tools will be Proposition~\ref{pro:special} on special rhombi
and the hexagon equality~\eqref{cla:BZ}.
Unfortunately, we see no way of considerably simplifying 
the tedious arguments.

\begin{definition}\label{def:critical}
A rhombus is called \emph{critical} if it is not $f$-flat, but $g$-flat.
Moreover, we call a rhombus \emph{rigid} if it is both $f$-flat and $g$-flat.
\end{definition}


\begin{claim}\label{cla:critexists}
There exists a critical rhombus.
\end{claim}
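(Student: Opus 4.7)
The plan is to exploit the maximality of $\varepsilon$ in~\eqref{eq:defeps}: some defining inequality of the polytope $B$ must become tight at $t=\varepsilon$ and be strictly violated for $t>\varepsilon$. I would classify which type of inequality can be responsible, and then show that under the standing assumption $\varepsilon<1$ the only possibility is a slack inequality for a rhombus which is not $f$-flat, hence is critical.

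First I would note that $\varepsilon$ is strictly positive by Lemma~\ref{lem:direction} applied to the flow $p\in\Ke(\resf f)$, and finite since $B$ is bounded (Lemma~\ref{le:bbound}). Hence there exists a constraint of $B$ whose value at $t=\varepsilon$ is zero and whose (linear) derivative in $t$ is strictly negative.

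Next I would rule out the border inequalities. Since $p$ is an \stturnpath, it crosses the right or bottom border of $\Delta$ in exactly one edge $k$ and the left border in exactly one edge $k'$, with $\delta(k,\pi(p))=1$ and $-\delta(k',\pi(p))=1$; on all other border edges of $\Delta$ the throughput of $\pi(p)$ vanishes. Consequently, the only border constraints whose value strictly decreases in $t$ are the two upper bounds $\delta(k,\cdot)\le b(k)$ and $-\delta(k',\cdot)\le b(k')$. Definition~\ref{def:resf} of $\resf f$ ensures that $\delta(k,f)<b(k)$ and $-\delta(k',f)<b(k')$ (otherwise the turnedges of $p$ crossing $k$ or $k'$ would have been deleted), and the integrality of $f$, $\la$, $\mu$, $\nu$ upgrades each gap to at least~$1$. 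So if either of these border inequalities were the one binding at $\varepsilon$, one would conclude $\varepsilon\ge 1$, contradicting the hypothesis $\varepsilon<1$.

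The remaining possibility is that the binding constraint is a slack inequality: there exists a rhombus $\varrho$ with $\sigma(\varrho,g)=0$ and $\sigma(\varrho,\pi(p))<0$. The first equality means $\varrho$ is $g$-flat, so to finish I only need to exclude $f$-flatness of $\varrho$. This is where Lemma~\ref{le:spos} intervenes: if $\varrho$ were $f$-flat, it would give $\sigma(\varrho,\pi(p))=\sigma(\varrho,p)\ge 0$ (using the definition $\sigma(\varrho,\cdot):=\sigma(\varrho,\pi(\cdot))$ for flows on $\res$), contradicting strict negativity. Hence $\varrho$ is critical in the sense of Definition~\ref{def:critical}. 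The only point requiring a little care is the border bookkeeping---matching the direction of the single entrance and exit crossings with the correct upper-bound inequality---but this is immediate from Definition~\ref{def:res} and the simplicity of $p$, so no real obstacle arises.
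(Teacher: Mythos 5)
Your argument is correct and uses the same key ingredients as the paper's proof: the integrality of $f$ and the construction of $\resf f$ to show border constraints cannot bind before $t=1$, and Lemma~\ref{le:spos} to show that slack inequalities of $f$-flat rhombi cannot bind. The only presentational difference is that the paper reaches the same conclusion via a continuity argument applied to the function $F(t):=\min_{\varrho\notin\text{flat}}\sigma(\varrho,f+t\pi(p))$, whereas you phrase it as identifying the binding linear constraint at $t=\varepsilon$; both routes are equivalent once the two exclusions are in place.
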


\begin{proof}
Let $S \neq \emptyset$ denote the set of rhombi which are not $f$-flat and consider the continuous function
of $t\in\R$
$$
 F(t) := \min_{\varrho \in S} \s \varrho {f+t \pi(p)} .
$$
It is sufficient to show that $F(\varepsilon) = 0$. 

By the definition~\eqref{eq:defeps} of~$\varepsilon$ we have $F(\varepsilon) \ge 0$. 
Further, for $\varepsilon < t <1$ we have $f+t\pi(p)\not\in B$.
Since the flow $f+t\pi(p)$ satisfies the border capacity constraints by construction of~$\resf f$, 
there is a rhombus $\varrho$ with $\s \varrho {f+ t \pi(p)} < 0$. 
We must have $\varrho \in S$, since otherwise $\s \varrho p \ge 0$ (cf.\ Lemma~\ref{le:spos}),  
which would lead to the contradiction $\s \varrho {f+ t \pi(p)} \ge 0$.  
We have thus shown that $F(t) < 0$. 
Since $t$ can be arbitrarily close to $\varepsilon$, we get $F(\varepsilon) \le 0$.
Altogether, we conclude $F(\varepsilon) = 0$.
\qquad\end{proof}
 
\begin{claim}
\label{cla:verycritical}
Each critical rhombus $\varrho$ satisfies $\s \varrho p \leq -2$.
\end{claim}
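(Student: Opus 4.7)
The plan is to unravel the definitions and reduce the claim to an integrality argument, where the contradiction hypothesis $\varepsilon < 1$ forces the slack contribution from $p$ to be strictly less than $-1$, which then rounds down to $-2$.

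First I would note that since $f \in B_\Z$ is integral and the slack is a linear combination of the throughputs of $f$ through edges of $\Delta$ with integer coefficients, $\s\varrho f$ is a nonnegative integer. The rhombus $\varrho$ being critical means, by Definition~\ref{def:critical}, that $\varrho$ is not $f$-flat, so $\s\varrho f \geq 1$, and that $\varrho$ is $g$-flat, so $\s\varrho g = 0$.

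Next, using the linearity of the slack functional together with $g = f + \varepsilon\, \pi(p)$, I would write
\[
 0 \ =\ \s\varrho g \ =\ \s\varrho f + \varepsilon\, \s\varrho {\pi(p)} \ =\ \s\varrho f + \varepsilon\, \s\varrho p,
\]
so that $\s\varrho p = -\s\varrho f /\varepsilon$. Since $\s\varrho f \geq 1$ and, by the standing assumption of this subsection, $0 < \varepsilon < 1$, this gives
\[
 \s\varrho p \ =\ -\frac{\s\varrho f}{\varepsilon} \ <\ -\s\varrho f \ \leq\ -1.
\]

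Finally I would invoke the integrality of $\s\varrho p$: by Lemma~\ref{lem:slackcalculation}, the slack of any rhombus with respect to a complete turnpath lies in $\{-4,-3,\ldots,3,4\}$, so in particular $\s\varrho p \in \Z$. A strict inequality $\s\varrho p < -1$ among integers yields $\s\varrho p \leq -2$, as claimed. There is no real obstacle here — the entire argument is three lines of algebra once the definitions are chased; the only subtlety is remembering that integrality of $\s\varrho f$ comes from $f \in B_\Z$ and that integrality of $\s\varrho p$ comes from Lemma~\ref{lem:slackcalculation}, which together with $\varepsilon < 1$ supply the sharpening from $\s\varrho p < 0$ to $\s\varrho p \leq -2$.
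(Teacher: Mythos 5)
Your proof is correct and follows the same route as the paper: use criticality and linearity of the slack to get $\s\varrho p = -\s\varrho f/\varepsilon < -1$, then apply integrality of $\s\varrho p$ (Lemma~\ref{lem:slackcalculation}) to sharpen this to $\le -2$. The paper's own proof actually stops at $\s\varrho p < -1$ and leaves the final integrality step implicit, so your more explicit version is a small improvement in exposition but not a different argument.
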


\begin{proof}
We have $\s \varrho f \geq 1$ and $\s \varrho {f+\varepsilon \pi(p)} = 0$, 
hence $\s \varrho p = -\tfrac 1 \varepsilon \ \s \varrho f$.
Using $0 <\varepsilon<1$ we conclude $\s \varrho p < -1$. 
\qquad\end{proof}


\begin{lemma}\label{cor:rigidonlyneutral} 
A rhombus $\varrho$ is rigid  iff it is $f$\dash flat and $p$~uses in it
only neutral contributions. 
All special rhombi are rigid. 
\end{lemma}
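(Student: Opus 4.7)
The plan is to prove both directions of the equivalence directly from the slack formula of Lemma~\ref{lem:slackcalculation}, and then derive the statement about special rhombi as an immediate corollary using Proposition~\ref{pro:special}.

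First I would prove the forward direction. Assume $\varrho$ is rigid, so $\s \varrho f = 0$ and $\s \varrho g = 0$. Since $g = f + \varepsilon \pi(p)$ and the slack is linear in its flow argument, we get
\[
 0 \;=\; \s \varrho g \;=\; \s \varrho f + \varepsilon \,\s \varrho {\pi(p)} \;=\; \varepsilon\, \s \varrho p,
\]
and since $\varepsilon > 0$, this forces $\s \varrho p = 0$. By Lemma~\ref{lem:slackcalculation},
\[
 \s \varrho p \;=\; \sum_{x\in\p_+(\varrho)}\eins_p(x) \;-\; \sum_{x\in\p_-(\varrho)}\eins_p(x).
\]
Because $\varrho$ is $f$-flat, the construction of $\resf f$ (Definition~\ref{def:resf}) removes all elements of $\p_-(\varrho)$, so the negative sum vanishes and therefore the positive sum must vanish as well. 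Hence $p$ uses in $\varrho$ neither positive nor negative slack contributions; only neutral ones remain.

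For the backward direction, assume $\varrho$ is $f$-flat and $p$ uses in it only neutral contributions. Then by Lemma~\ref{lem:slackcalculation} we have $\s \varrho p = 0$, hence by linearity
\[
 \s \varrho g \;=\; \s \varrho f + \varepsilon\, \s \varrho p \;=\; 0,
\]
which together with $f$-flatness says that $\varrho$ is rigid.

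For the final clause, let $\varrho$ be a special rhombus. Lemma~\ref{cla:nonflatflow} (special rhombi have their diagonals crossed twice by~$p$) gives $f$-flatness of~$\varrho$, and Proposition~\ref{pro:special} asserts that $p$ uses inside~$\varrho$ exactly two contributions, both neutral. Applying the backward direction just proved, $\varrho$ is rigid. I expect no real obstacle here: the entire argument is an unwinding of the linearity of slack together with the already-established structural lemmas, and the most delicate preliminary work (Proposition~\ref{pro:special} and Lemma~\ref{cla:nonflatflow}) has already been done.
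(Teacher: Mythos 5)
Your proof is correct and takes the same route as the paper's (very terse) one, which simply cites Lemma~\ref{lem:slackcalculation} for the first assertion and Proposition~\ref{pro:special} for the second; you have just made the unwinding explicit via linearity of slack, $\varepsilon>0$, and the deletion of $\p_-(\varrho)$ from $\resf f$ for $f$\dash flat $\varrho$. One small slip: the parenthetical attributes to Lemma~\ref{cla:nonflatflow} the fact that special rhombi are crossed twice, but that is just Definition~\ref{def:special}; Lemma~\ref{cla:nonflatflow} supplies the contrapositive ingredient (non\dash flat $\Rightarrow$ crossed at most once), and combining the two gives the $f$\dash flatness you use.
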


\begin{proof}
The first assertion follows immediately from Lemma~\ref{lem:slackcalculation}.
The second assertion is a consequence of the first and Proposition~\ref{pro:special}.
\qquad\end{proof}




For the rest of this subsection, we denote by $\rhc$ a {\em first critical rhombus} visited by~$p$
(a~priori it might not be unique, because critical rhombi could overlap). 
By Claim~\ref{cla:verycritical},
$p$~uses at least two negative slack contributions in $\rhc$, cf.\ Lemma~\ref{lem:slackcalculation}.  
In particular, 
$p$~uses at least one turnvertex among $\rhpoulMl, \rhpoulMr, \rhpolrWl, \rhpolrWr$:
let $\rhpoulMlXoulMr \in \{\rhpoulMl, \rhpoulMr\}$ denote the {\em first one} used by $p$ in this set.
(Rotating with $180^\circ$ we may assume so without loss of generality.)
Further, let $\rhpulWrXpulWl$ denote the predecessor of $\rhpoulMlXoulMr$ in~$p$.

Our goal is to analyze the route of~$p$ through $\rhc$ and nearby rhombi.
Narrowing down the possibilities will finally lead to a contradiction.


The situation at the boundary of $\Delta$ deserves special treatment and is handled in the following claim.
\begin{claim}
\label{cla:border}$
\begin{tikzpictured}\draw[rhrhombidraw] (0.0pt,16.0002pt) -- (-4.619pt,8.0001pt) -- (0.0pt,0.0pt) -- (4.619pt,8.0001pt) -- cycle;\draw[rhrhombithickside] (-9.238pt,0.0pt) -- (0.0pt,16.0002pt);\end{tikzpictured}
$
 is not at the border of $\Delta$.
\end{claim}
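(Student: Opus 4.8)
The claim asserts that the first critical rhombus $\rhc$ cannot have its obtuse diagonal $\rhsc$ lying along the left border of $\Delta$ (in the indicated orientation). The strategy is to assume it does lie on the boundary and derive a contradiction with the minimality of the shortest \stturnpath $p$, using the machinery already developed: Proposition~\ref{cla:noreverse} (no turnvertex and its reverse), Proposition~\ref{cla:outerccdir} (turnpaths traverse flatspace borders counterclockwise), the structural analysis of special rhombi in Proposition~\ref{pro:special}, and Claim~\ref{cla:verycritical} ($\s \rhc p \le -2$). First I would observe that if $\rhsc$ were on the left border of $\Delta$, then the turnvertices in $\rhc$ pointing towards that border vertex, and the turnvertices starting there, would be exactly the ones incident to $t$; by construction of $\resf f$ (Definition~\ref{def:resf}) those edges get deleted if the corresponding border edge is capacity-achieving, and in any case $p$ being an \stturnpath can use $t$ at most once.

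The key step is the case analysis on how $p$ actually runs through $\rhc$. Since $\rhc$ is critical, it is not $f$-flat (Definition~\ref{def:critical}), so by Lemma~\ref{cla:nonflatflow} the diagonal $\rhsc$ is crossed by $p$ at most once; combined with Claim~\ref{cla:verycritical} this forces $p$ to use at least two negative slack contributions, hence $p$ must enter the acute-angle region and use at least one of $\rhpoulMl, \rhpoulMr, \rhpolrWl, \rhpolrWr$ — this is precisely the turnvertex $\rhpoulMlXoulMr$ singled out in the text. Now the point is that the diagonal $\rhsc$ lying on the left border severely constrains the predecessor $\rhpulWrXpulWl$ of $\rhpoulMlXoulMr$ and the route before that: the turn $\rhpoulMlXoulMr$ heads toward a vertex on $\partial\Delta$, and so either $p$ reaches $t$ right there (contradicting that $\rhc$ is the \emph{first} critical rhombus while $p$ still has to leave $\Delta$ through the left side exactly once, or more simply forcing $p$ to have already used $t$), or $p$ must have arrived at $\rhc$ from a turnvertex whose reverse is then forced to appear — violating Proposition~\ref{cla:noreverse}. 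I would enumerate the two sub-cases $\rhpoulMlXoulMr = \rhpoulMl$ and $\rhpoulMlXoulMr = \rhpoulMr$ and, in each, use the fact that the neighbouring rhombi on the far side of $\rhsc$ simply do not exist (we are at the boundary), so the rerouting moves that were available in the interior proofs (e.g.\ those in Lemma~\ref{cla:contrafluentfixed} and Proposition~\ref{cla:noreverse}) collapse to a situation where $p$ is forced into a shorter \stturnpath or into using $t$ twice.

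The main obstacle I anticipate is bookkeeping the interaction between the "first critical rhombus" hypothesis and the counterclockwise-traversal property: one has to be careful that the rhombi adjacent to $\rhc$ that $p$ visits \emph{before} reaching $\rhpoulMlXoulMr$ are either $f$-flat (so the flatspace-border argument of Proposition~\ref{cla:outerccdir} applies) or non-flat but already visited, and in the latter case one must rule out that $p$ crossed them in a way incompatible with the boundary position of $\rhsc$. Concretely, the hard part will be showing that when $\rhpoulMlXoulMr = \rhpoulMr$, the predecessor $\rhpulWrXpulWl$ together with the boundary constraint pins $p$ down to a configuration where $p$ must repeat a turnvertex or reach $t$ before completing its passage of $\rhc$; this requires combining the local picture around $\rhc$ with the global fact that an \stturnpath enters $\Delta$ through the right/bottom border and leaves through the left border exactly once (stated in Section~\ref{se:RRT}). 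Once that configuration is excluded, the symmetric $180^\circ$-rotated case is automatic, and the claim follows, completing the last missing piece before the final proof of the Shortest Path Theorem.
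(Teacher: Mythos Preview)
Your plan rests on a misreading of the pictogram. The thick segment in the statement is not the diagonal $\rhsc$ but the side $\rhsoul$ of $\rhc$ (together with its continuation one edge further along the same line): this is precisely the side at which the distinguished turn $\rhpoulMlXoulMr$ \emph{starts}. If $\rhsoul$ lay on $\partial\Delta$, the predecessor $\rhpulWrXpulWl$ would not exist as a turnvertex at all; the white vertex on $\rhsoul$ is adjacent only to the black vertex inside the upper triangle of $\rhc$ and to $s$ or $t$. Since $\rhpoulMlXoulMr$ points into $\Delta$, the turnpath $p$ is \emph{entering} $\Delta$ at this point, and for an \stturnpath this forces the border in question to be the right or bottom one, with $p$ arriving from $s$. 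Your proposed contradiction via ``$p$ forced to use $t$ twice'' is thus aimed at the wrong vertex, and your remark about ``neighbouring rhombi on the far side of $\rhsc$ not existing'' concerns the wrong side of $\rhc$.

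Once the side is identified correctly, the argument needed is a finer enumeration than the dichotomy $\rhpoulMl$ versus $\rhpoulMr$. There are three ways $p$ can enter through $\rhsoul$ (a single counterclockwise turn, or a clockwise turn followed by either of two continuations across the diagonal); one of these three is immediately excluded by Proposition~\ref{cla:noreverse}. Combined with $\s\rhc p\le -2$ from Claim~\ref{cla:verycritical}, a second negative contribution in $\rhc$ is forced, and only two candidates for it survive. Each of the four remaining pairs is eliminated by its own specific mechanism: in two cases an overlapping rhombus ($\rhrholl$ or $\rhrhour$) becomes special, and Proposition~\ref{pro:special} pins down the continuation of $p$ so tightly that either an explicit shorter rerouting appears or $p$ must exit and re-enter the same side of $\Delta$ (hence visit $s$ twice); in one case $\rhc$ itself would have its diagonal crossed twice without two neutral contributions, contradicting Proposition~\ref{pro:special}; and in the last case a direct rerouting through $\rhc$ is available because $\rhc$ is not $f$-flat. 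Proposition~\ref{pro:special} is the real workhorse here; Proposition~\ref{cla:outerccdir} is not used.
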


{\em Proof}.
Assume the contrary.
Then $p$ enters $\Delta$ once via
\newcommand{\cI}{
\begin{tikzpictured}\draw[rhrhombidraw] (0.0pt,16.0002pt) -- (-4.619pt,8.0001pt) -- (0.0pt,0.0pt) -- (4.619pt,8.0001pt) -- cycle;\draw[rhrhombithickside] (-9.238pt,0.0pt) -- (0.0pt,16.0002pt);\draw[thin,-my] (-2.3095pt,12.0002pt) arc (240:300:4.619pt);\end{tikzpictured}
}\cI
,
\newcommand{\cIV}{
\begin{tikzpictured}\draw[rhrhombidraw] (0.0pt,16.0002pt) -- (-4.619pt,8.0001pt) -- (0.0pt,0.0pt) -- (4.619pt,8.0001pt) -- cycle;\draw[rhrhombithickside] (-9.238pt,0.0pt) -- (0.0pt,16.0002pt);\draw[thin,-my] (-2.3095pt,12.0002pt) arc (60:0:4.619pt) arc (0:-60:4.619pt);\end{tikzpictured}
}\cIV
or
\newcommand{\cV}{
\begin{tikzpictured}\draw[rhrhombidraw] (0.0pt,16.0002pt) -- (-4.619pt,8.0001pt) -- (0.0pt,0.0pt) -- (4.619pt,8.0001pt) -- cycle;\draw[rhrhombithickside] (-9.238pt,0.0pt) -- (0.0pt,16.0002pt);\draw[thin,-my] (-2.3095pt,12.0002pt) arc (60:0:4.619pt) arc (180:240:4.619pt);\end{tikzpictured}
}\cV
.
If $p$ enters over \cV, then $p$ must use both
\newcommand{\cII}{
\begin{tikzpictured}\draw[rhrhombidraw] (0.0pt,16.0002pt) -- (-4.619pt,8.0001pt) -- (0.0pt,0.0pt) -- (4.619pt,8.0001pt) -- cycle;\draw[rhrhombithickside] (-9.238pt,0.0pt) -- (0.0pt,16.0002pt);\draw[thin,-my] (2.3095pt,4.0001pt) arc (60:120:4.619pt);\end{tikzpictured}
}\cII
 and 
\newcommand{\cIII}{
\begin{tikzpictured}\draw[rhrhombidraw] (0.0pt,16.0002pt) -- (-4.619pt,8.0001pt) -- (0.0pt,0.0pt) -- (4.619pt,8.0001pt) -- cycle;\draw[rhrhombithickside] (-9.238pt,0.0pt) -- (0.0pt,16.0002pt);\draw[thin,-my] (2.3095pt,4.0001pt) arc (240:180:4.619pt) arc (180:120:4.619pt);\end{tikzpictured}
}\cIII
to satisfy $\s \rhc p \leq -2$. 
Using both
\cIII
and
\cV
is prohibited by Claim~\ref{cla:noreverse}.
Hence $p$ uses either \cI or \cIV and $p$ also uses \cII or \mbox{\cIII.}

We now make a distinction of cases, each of which leads to a contradiction.
Hereby, we heavily rely on Proposition~\ref{pro:special}. 
\begin{remunerate}

 \item[Case $\cII \cup \cIV \subseteq p$:] Here $\rhrholl$ is special and thus \cII must be continued as
\begin{tikzpictured}\draw[rhrhombidraw] (0.0pt,16.0002pt) -- (-4.619pt,8.0001pt) -- (0.0pt,0.0pt) -- (4.619pt,8.0001pt) -- cycle;\draw[rhrhombithickside] (-9.238pt,0.0pt) -- (0.0pt,16.0002pt);\draw[thin,-my] (2.3095pt,4.0001pt) arc (60:120:4.619pt) arc (300:240:4.619pt);\end{tikzpictured}
and \cIV must be continued as
\begin{tikzpictured}\draw[rhrhombidraw] (0.0pt,16.0002pt) -- (-4.619pt,8.0001pt) -- (0.0pt,0.0pt) -- (4.619pt,8.0001pt) -- cycle;\draw[rhrhombithickside] (-9.238pt,0.0pt) -- (0.0pt,16.0002pt);\draw[thin,-my] (-2.3095pt,12.0002pt) arc (60:0:4.619pt) arc (0:-60:4.619pt) arc (120:180:4.619pt);\end{tikzpictured}%
. Thus $p$ leaves and enters the same side of $\Delta$.

 \item[Case $\cIII \cup \cIV \subseteq p$:] This is impossible, because $p$ passes $\rhsc$ twice, but not with two neutral contributions 
as a special rhombus must do.

 \item[Case $\cI \cup \cIII \subseteq p$:] Here $\rhrhour$ is special and \cI must be continued as
\begin{tikzpictured}\draw[rhrhombidraw] (0.0pt,16.0002pt) -- (-4.619pt,8.0001pt) -- (0.0pt,0.0pt) -- (4.619pt,8.0001pt) -- cycle;\draw[rhrhombithickside] (-9.238pt,0.0pt) -- (0.0pt,16.0002pt);\draw[thin,-my] (-2.3095pt,12.0002pt) arc (240:300:4.619pt) arc (120:60:4.619pt);\end{tikzpictured}
and \cIII must be continued as
\begin{tikzpictured}\draw[rhrhombidraw] (0.0pt,16.0002pt) -- (-4.619pt,8.0001pt) -- (0.0pt,0.0pt) -- (4.619pt,8.0001pt) -- cycle;\draw[rhrhombithickside] (-9.238pt,0.0pt) -- (0.0pt,16.0002pt);\draw[thin,-my] (2.3095pt,4.0001pt) arc (240:180:4.619pt) arc (180:120:4.619pt) arc (300:360:4.619pt);\end{tikzpictured}%
. This enables a rerouting via
\begin{tikzpictured}\draw[rhrhombidraw] (0.0pt,16.0002pt) -- (-4.619pt,8.0001pt) -- (0.0pt,0.0pt) -- (4.619pt,8.0001pt) -- cycle;\draw[rhrhombithickside] (-9.238pt,0.0pt) -- (0.0pt,16.0002pt);\draw[thin,-my] (-2.3095pt,12.0002pt) arc (240:300:4.619pt) arc (300:360:4.619pt);\end{tikzpictured}%
, which is a contradiction to the minimal length of $p$.

 \item[Case $\cI \cup \cII \subseteq p$:]
Note that 
\begin{tikzpictured}\draw[rhrhombidraw] (0.0pt,16.0002pt) -- (-4.619pt,8.0001pt) -- (0.0pt,0.0pt) -- (4.619pt,8.0001pt) -- cycle;\draw[rhrhombithickside] (-9.238pt,0.0pt) -- (0.0pt,16.0002pt);\draw[thin,-my] (2.3095pt,4.0001pt) arc (60:120:4.619pt) arc (120:180:4.619pt);\end{tikzpictured}
$\subseteq p$, because
\begin{tikzpictured}\draw[rhrhombidraw] (0.0pt,16.0002pt) -- (-4.619pt,8.0001pt) -- (0.0pt,0.0pt) -- (4.619pt,8.0001pt) -- cycle;\draw[rhrhombithickside] (-9.238pt,0.0pt) -- (0.0pt,16.0002pt);\draw[thin,-my] (2.3095pt,4.0001pt) arc (60:120:4.619pt) arc (300:240:4.619pt);\end{tikzpictured}
$\subseteq p$ would be a contradiction, because $p$ cannot leave $\Delta$ at
\begin{tikzpictured}\draw[rhrhombidraw] (0.0pt,16.0002pt) -- (-4.619pt,8.0001pt) -- (0.0pt,0.0pt) -- (4.619pt,8.0001pt) -- cycle;\draw[rhrhombithickside] (-9.238pt,0.0pt) -- (0.0pt,16.0002pt);\end{tikzpictured}%
. The fact
\begin{tikzpictured}\draw[rhrhombidraw] (0.0pt,16.0002pt) -- (-4.619pt,8.0001pt) -- (0.0pt,0.0pt) -- (4.619pt,8.0001pt) -- cycle;\draw[rhrhombithickside] (-9.238pt,0.0pt) -- (0.0pt,16.0002pt);\draw[thin,-my] (2.3095pt,4.0001pt) arc (60:120:4.619pt) arc (120:180:4.619pt);\end{tikzpictured}
$\subseteq p$ implies that
$p$ can be rerouted via
\begin{tikzpictured}\draw[rhrhombidraw] (0.0pt,16.0002pt) -- (-4.619pt,8.0001pt) -- (0.0pt,0.0pt) -- (4.619pt,8.0001pt) -- cycle;\draw[rhrhombithickside] (-9.238pt,0.0pt) -- (0.0pt,16.0002pt);\draw[thin,-my] (-2.3095pt,12.0002pt) arc (60:0:4.619pt) arc (0:-60:4.619pt) arc (120:180:4.619pt);\end{tikzpictured}%
, which is a contradiction to the minimal length of $p$.\qquad\endproof
\end{remunerate}

We continue with the analysis of the general situation. 

\begin{claim}\label{cla:uppernotgflat}
 $\rhrholl$ is not $g$-flat.
\end{claim}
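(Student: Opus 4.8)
The plan is to argue by contradiction, so suppose that $\rhrholl$ is $g$-flat. Since $\rhc$ is critical we have $\s\rhc f\ge 1$, $\s\rhc g=0$, and $\s\rhc p\le -2$ by Claim~\ref{cla:verycritical}; by Lemma~\ref{lem:slackcalculation} this means $p$ uses at least two negative slack contributions in $\rhc$, one of which starts at $\rhpoulMlXoulMr$ with predecessor $\rhpulWrXpulWl$ in $\rhrhul$. First I would exploit the identity $\s\rhrholl g=\s\rhrholl f+\varepsilon\,\s\rhrholl p=0$ together with $\s\rhrholl f\ge 0$ and $0<\varepsilon<1$ to split into two cases: either $\rhrholl$ is $f$-flat and $\s\rhrholl p=0$ (so $\rhrholl$ is rigid and, by Lemma~\ref{cor:rigidonlyneutral}, $p$ uses only neutral contributions in it), or $\rhrholl$ is not $f$-flat, whence $\s\rhrholl f\ge 1$, $\s\rhrholl p\le -2$, and $\rhrholl$ is itself a critical rhombus in which $p$ uses at least two negative contributions. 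In both cases $\rhc$ and $\rhrholl$ share their common downright hive triangle, and $p$ wraps around the obtuse vertex $\rhdwest$, entering the neighbourhood of $\rhc$ through the edge $\rhdwest$–$\rhdnorth$.

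Next I would dispatch the two cases. In the rigid case, the second negative contribution of $\rhc$ used by $p$ is forced to be a neutral contribution of $\rhrholl$; combining this with the deletions that define $\resf f$ around the $f$-flat rhombus $\rhrholl$, with Proposition~\ref{cla:outerccdir} on counterclockwise traversal at the border of the $f$-flatspace of $\rhrholl$, and—where neighbouring rhombi such as $\rhrhll$ or $\rhrhpl$ are also $f$-flat—with the hexagon equality~\eqref{cla:BZ} to force further flat rhombi, one finds that $p$ can be rerouted around $\rhdwest$ into a strictly shorter $s$–$t$-turnpath, exactly in the spirit of Lemma~\ref{cla:contrafluentfixed}; this contradicts the minimality of $p$ (the alternative, that $p$ would have to revisit $s$ or $t$, is excluded since $\rhc$ is not at the border of $\Delta$). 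In the critical case, $p$ uses at least two negative contributions in $\rhrholl$ as well; since $\rhrholl$ overlaps $\rhc$ in their common downright triangle and $\rhc$ is a \emph{first} critical rhombus visited by $p$, the turns $p$ uses there are simultaneously negative contributions of both rhombi, and Proposition~\ref{cla:noreverse} together with the planarity of $p$ shows that $\s\rhc p\le -2$ and $\s\rhrholl p\le -2$ cannot hold at once, again a contradiction.

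The main obstacle is the bookkeeping in the second step. The configuration branches according to whether $\rhpoulMlXoulMr$ is $\rhpoulMl$ or $\rhpoulMr$, according to which of the negative contributions (e.g.\ $\rhpolrWl$ or $\rhpolrWrr$) of $\rhc$ the second one used by $p$ is, and according to whether each of the surrounding rhombi $\rhrholl$, $\rhrhll$, $\rhrhpl$, $\rhrhul$ is rigid or merely $g$-flat; in every branch one must exhibit the concrete shorter rerouting or the concrete double use of $s$ or $t$. Proposition~\ref{pro:special} (which rigidly controls how $p$ crosses any rhombus it meets twice) and the hexagon equality substantially reduce the number of genuinely distinct branches, but I do not expect a single uniform argument to cover them all, so the proof will proceed by a careful, if somewhat tedious, enumeration of the local pictures.
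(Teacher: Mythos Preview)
Your plan has a genuine gap: you never hit on the key structural move, and both of your case analyses break down.

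In the paper the proof is short and does \emph{not} proceed by splitting on whether $\rhrholl$ is rigid or critical. Instead, since $\rhc$ is critical (hence $g$-flat) and $\rhrholl$ is assumed $g$-flat, Corollary~\ref{cor:BZ2} applied to $g$ forces the two remaining rhombi of the hexagon, $\rhrhul$ and $\rhrhpl$, to be $g$-flat as well. Then Corollary~\ref{cor:BZ2} applied to $f$ shows they cannot both be $f$-flat (else $\rhc$ would be $f$-flat), so at least one of $\rhrhul$, $\rhrhpl$ is \emph{critical}. Now the predecessor turn $\rhpulWrXpulWl$ of $\rhpoulMlXoulMr$ lies in $\rhrhul$; if $\rhrhul$ is critical this already contradicts the choice of $\rhc$ as a \emph{first} critical rhombus visited by $p$. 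If $\rhrhul$ is rigid (hence $f$-flat), then the turnvertex $\rhppulWl$ is deleted from $\resf f$, forcing $\rhpulWrXpulWl=\rhpulWr$, which lies in $\rhrhpl$; since $\rhrhpl$ is then critical, again $p$ visits a critical rhombus before $\rhc$. No rerouting or slack-counting in $\rhrholl$ is needed.

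Against this, your ``critical case'' argument is incorrect as stated. The assertion that ``the turns $p$ uses there are simultaneously negative contributions of both rhombi'' is false: $\rhc$ and $\rhrholl$ share the lower hive triangle, but their acute and obtuse corners sit at different vertices, so the sets $\Psi_-(\rhc)$ and $\Psi_-(\rhrholl)$ are genuinely different; a negative contribution in one is typically positive or neutral in the other. Consequently there is no contradiction between $\s\rhc p\le -2$ and $\s{\rhrholl}{p}\le -2$ obtainable from Proposition~\ref{cla:noreverse} and planarity alone. Moreover, even if $\rhrholl$ is critical, nothing in your argument shows that $p$ visits it \emph{before} $\rhc$, which is what you would need to contradict the minimal choice of $\rhc$. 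Your ``rigid case'' is essentially a promissory note: you assert a shorter rerouting exists but do not exhibit it, and the tools you invoke (Proposition~\ref{cla:outerccdir}, Lemma~\ref{cla:contrafluentfixed}) do not by themselves produce one. The clean fix is to abandon the case split on $\rhrholl$ and instead look at the other pair $\rhrhul,\rhrhpl$ in the hexagon, exploiting directly that the predecessor of $\rhpoulMlXoulMr$ lives there.
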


\begin{proof}
By way of contradiction, assume that $\rhrholl$ is $g$-flat. 
By Claim~\ref{cla:border} we know that
\begin{tikzpictured}\draw[rhrhombidraw] (0.0pt,16.0002pt) -- (-4.619pt,8.0001pt) -- (0.0pt,0.0pt) -- (4.619pt,8.0001pt) -- cycle;\fill[rhrhombifill] (-4.619pt,8.0001pt) -- (-9.238pt,0.0pt) -- (0.0pt,0.0pt) -- (4.619pt,8.0001pt) -- cycle;\draw[rhrhombithickside] (-9.238pt,0.0pt) -- (0.0pt,16.0002pt);\end{tikzpictured}
is not at the border of $\Delta$. 
Then $\rhrhul$ and $\rhrhpl$ exist and are both $g$-flat by Corollary~\ref{cor:BZ2} applied to~$g$.
So they are either rigid or critical.
Since $\rhc$ is not $f$-flat, it follows from Corollary~\ref{cor:BZ2} applied to~$f$ 
that not both $\rhrhul$ and $\rhrhpl$ are rigid, so at least one of them is critical. 
It remains to exclude the following two cases:

If $\rhrhul$ is critical, then the critical rhombus $\rhrhul$ is passed by $p$ before $\rhc$, 
contradicting the minimal choice of $\rhc$. 

If $\rhrhul$ is rigid, then $\rhppulWl \notin V(\resf f)$ by the definition of $\resf f$
and hence $\rhpulWrXpulWl = \rhpulWr$ and $p$ passes the critical rhombus $\rhrhpl$ before $\rhc$.
This again contradicts the minimal choice of $\rhc$. 
\qquad\end{proof}

\begin{claim}\label{cla:direct}
The turnpath $p$ goes directly from $\rhpulWrXpulWl$ to $\rhpoulMl$, 
which is the only time that $p$ leaves $\rhc$ over $\rhsour$.
Additionally, $p$ leaves $\rhc$ exactly once over $\rhsoll$.
In particular, $\s \rhc p = -2$, $\s \rhc f = 1$ and $\varepsilon = \tfrac 1 2$.
\end{claim}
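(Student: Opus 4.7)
My strategy is to progressively narrow the route $p$ takes through $\rhc$ and its neighbours, ultimately pinning down that $p$ uses exactly two slack contributions of $\rhc$, both negative, and that $\s\rhc f=1$.

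First, I would establish that the first turn $\rhpoulMlXoulMr$ used by $p$ from $\{\rhpoulMl,\rhpoulMr\}$ must in fact be $\rhpoulMl$. Suppose for contradiction it is $\rhpoulMr$. Then the joining turnedge from $\rhpulWrXpulWl$ to $\rhpoulMr$ lives in one of the rhombi $\rhrhoul$, $\rhrhul$, or $\rhrhpl$, and in order to lie in $\resf f$ the relevant one of these cannot be $f$-flat. Using the hexagon equality \eqref{cla:BZ} for $f$ and for $g$ across the hexagon formed by $\rhrhul,\rhrhoul,\rhc,\rhrholl$ (and its partners to the right), together with the fact that $\rhc$ is critical (so it is $g$-flat but not $f$-flat), I would show that the specific non-$f$-flatness forced by the turnedge ultimately entails $\s{\rhrholl}g=0$, contradicting Claim~\ref{cla:uppernotgflat}. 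Any branch of the case analysis not closed by this hexagon argument would be closed by a rerouting argument in the spirit of Lemmas~\ref{cla:contrafluentfixed} and~\ref{cla:nonflatflow}: a continuation of $p$ after $\rhpoulMr$ that needed to produce a second negative contribution in $\rhc$ would always admit a strictly shorter replacement via $\rhpoulMl$ or via a detour through $\rhrhour$, contradicting the minimality of $p$.

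Once $\rhpoulMlXoulMr=\rhpoulMl$ is known, the directness of the transition $\rhpulWrXpulWl\to\rhpoulMl$ is immediate from the definition of predecessor in $p$, giving the first sentence. The uniqueness of the crossing over $\rhsour$ is then obtained as follows: the side $\rhsour$ is the diagonal of $\rhrhour$, and since $\rhc$ is not $f$-flat, $\rhrhour$ cannot be $f$-flat without forcing (via the hexagon equality) $\rhc$ to be $f$-flat as well. Thus Lemma~\ref{cla:nonflatflow} applied to $\rhrhour$ bounds the number of crossings of $\rhsour$ by $1$, and the one crossing we have identified (entering $\rhc$ through $\rhpoulMl$) saturates this bound. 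For the exit over $\rhsoll$: having entered the upper triangle of $\rhc$ and descended via $\rhpoulMl$ to its lower triangle, $p$ must eventually leave $\rhc$; the only side left open is $\rhsoll$, and the very same argument (applied now to $\rhrholl$, which is not $g$-flat and a fortiori not $f$-flat) bounds the number of such crossings by $1$, while flow conservation guarantees at least one.

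Finally, the numerical part is a short count. The analysis above shows that, in $\rhc$, the turnpath $p$ uses exactly the two negative contributions $\rhpoulMl$ and $\rhpolrWl$ and no positive contribution, so Lemma~\ref{lem:slackcalculation} gives $\s\rhc p=-2$. Combining with the critical equation $0=\s\rhc g=\s\rhc f+\varepsilon\s\rhc p$, the integrality $\s\rhc f\in\Z_{\ge 1}$ (since $\rhc$ is not $f$-flat and $f\in B_\Z$), and the standing assumption $0<\varepsilon<1$, we conclude $\s\rhc f=1$ and $\varepsilon=\tfrac12$. The main obstacle is the first step: cleanly excluding $\rhpoulMlXoulMr=\rhpoulMr$. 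Keeping the sub-cases (determined by whether $\rhpulWrXpulWl=\rhpulWr$ or $\rhpulWl$ and by the $f$-flatness status of $\rhrhoul$, $\rhrhul$, $\rhrhpl$) organized, and systematically exploiting the minimality of $\rhc$ among critical rhombi visited by $p$ to propagate each local shortcut back to a global one, will be essential to avoid a proliferation of cases.
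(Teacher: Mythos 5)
Your numerical conclusion and overall strategy (bound the crossings of $\rhsour$ and $\rhsoll$ to force $\s \rhc p=-2$, then solve $0=\s \rhc f+\varepsilon\,\s \rhc p$ using $\s \rhc f\in\Z_{\ge1}$ and $0<\varepsilon<1$) agree with the paper, but the way you bound the crossings rests on two false inferences.

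For $\rhsour$ you assert that $\rhrhour$ cannot be $f$-flat because, via the hexagon equality, its flatness would force $\rhc$ to be $f$-flat; you then invoke Lemma~\ref{cla:nonflatflow}. But the hexagon equality~\eqref{cla:BZ} relates \emph{four} slacks, and flatness of a single rhombus tells you nothing about a single neighbour. The paper explicitly allows $\rhrhour$ to be $f$-flat; it observes instead that a double crossing of $\rhsour$ would make $\rhrhour$ \emph{special} (Definition~\ref{def:special}), uses Proposition~\ref{pro:special} to pin down exactly which contributions $p$ then uses there, and reroutes $p$ to a strictly shorter \stturnpath. For $\rhsoll$ you write that $\rhrholl$ is ``not $g$-flat and a fortiori not $f$-flat.'' That implication also fails: for an $f$-flat rhombus $\varrho$ one has $\s \varrho g=\varepsilon\,\s \varrho p$, which can be strictly positive (Lemma~\ref{le:spos} only gives $\s \varrho p\ge 0$), so $f$-flatness does not pass to $g$-flatness and ``not $g$-flat'' does not yield ``not $f$-flat.'' The paper invokes Claim~\ref{cla:uppernotgflat} in the opposite direction: a double crossing of $\rhsoll$ would make $\rhrholl$ special, hence \emph{rigid} by Lemma~\ref{cor:rigidonlyneutral}, hence $g$-flat --- and \emph{that} contradicts Claim~\ref{cla:uppernotgflat}. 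Both bounds therefore come from the special/rigid machinery together with the minimality of $p$, not from any non-flatness of the neighbouring rhombi. Your Step~1 (ruling out $\rhpoulMr$ as the first turn) is also left largely unexecuted; the paper disposes of it with a one-line rerouting, which is a much cleaner route than the multi-case hexagon analysis you sketch.
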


\begin{proof} 
We prove the following claims, tacitly using Proposition~\ref{pro:special} on special rhombi: 
\begin{romannum}
 \item If $p$ leaves $\rhc$ at $\rhsour$, then $p$ goes directly from $\rhpulWrXpulWl$ to $\rhpoulMl$.
Proof: Otherwise $p$ could be rerouted via $\rhpoulMl$, a contradiction.
 \item $p$ does not leave $\rhc$ twice at $\rhsour$.
Proof: Otherwise $\rhrhour$ would be special and hence $p$ would use both $\rhpcMrl$ and $\rhpoulMlr$.
Then $p$ could be rerouted via $\rhpoulMll$, a contradiction.
 \item $p$ does not leave $\rhc$ twice at $\rhsoll$.
Proof: Otherwise $\rhrholl$ would be special and 
hence rigid by Lemma~\ref{cor:rigidonlyneutral}.  
However, this is prohibited by Claim~\ref{cla:uppernotgflat}.
\end{romannum}
The fact that $p$ leaves $\rhc$ over $\rhsour$ at most once and 
over $\rhsoll$ at most once 
implies $\s \rhc p = \rhaollW(p) + \rhaourM(p) \geq -2$.
On the other hand, since $\rhc$ is critical, 
we have $\s \rhc p \leq -2$ by Claim~\ref{cla:verycritical}.
Therefore, $\s \rhc p = -2$.
Hence $\s \rhc f = -\varepsilon \s \rhc p = 2 \varepsilon$,
so $\varepsilon = \frac 1 2 \s \rhc f$
and since $0<\varepsilon<1$ we obtain $\s \rhc f = 1$ and $\varepsilon = \frac 1 2$.
\qquad\end{proof}


Despite the fact that, due to Claim~\ref{cla:direct}, $p$ enters $\rhc$ at $\rhsoul$ and leaves $\rhc$ at $\rhsoll$,
$p$ cannot be rerouted via $\rhpoulMrr$ to a shorter \stturnpath,
because $p$ has minimal length. This can have two reasons, which 
leads to the following namings (compare~\eqref{eq:A-B}):
\begin{quote}
If $\rhrhoul$ is $f$-flat and $p$ uses $\rhpulWrl$, then we say that $p$ \emph{enters nonreroutably}, otherwise $p$ \emph{enters reroutably}.
If $\rhrholl$ is $f$-flat and $p$ uses $\rhpolrWlr$, then we say that $p$ \emph{leaves nonreroutably}, otherwise $p$ \emph{leaves reroutably}.
\end{quote}
An explanation of these namings can be found in the proof of the following claim.

\begin{claim}\label{cla:basicreroute}
The turnpath $p$ enters nonreroutably or leaves nonreroutably.
\end{claim}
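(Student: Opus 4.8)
The statement to be shown is that the shortest path $p$, which by Claim~\ref{cla:direct} enters the first critical rhombus $\rhc$ via $\rhsoul$ and leaves via $\rhsoll$, must enter \emph{nonreroutably} or leave \emph{nonreroutably}; equivalently, it is impossible that $p$ both enters reroutably and leaves reroutably. The natural approach is to argue by contradiction: assume $p$ enters reroutably and leaves reroutably, and show that then $p$ can be rerouted via $\rhpoulMrr$ to a strictly shorter \stturnpath, contradicting minimality of $p$. This is precisely the situation flagged in~\eqref{eq:A-B}: rerouting via $\rhpoulMrr$ fails only in the two cases (A) and (B), i.e. only if $p$ enters or leaves nonreroutably.

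First I would unwind the two ``reroutable'' hypotheses according to Definition~\ref{def:f-secure}(2) and the construction of $\resf f$. ``$p$ enters reroutably'' means: either $\rhrhoul$ is not $f$\dash flat, or $p$ does not use $\rhpulWrl$ (hence $\rhpulWrXpulWl = \rhpulWr$ points the relevant turnvertex the other way). Similarly ``$p$ leaves reroutably'' means: either $\rhrholl$ is not $f$\dash flat, or $p$ does not use $\rhpolrWlr$. By Claim~\ref{cla:direct}, $p$ runs $\rhpulWrXpulWl \to \rhpoulMl \to$ (exit at $\rhsoll$), so the portion of $p$ near $\rhc$ that has to be rerouted is short and completely determined: it goes in at $\rhsoul$ and out at $\rhsoll$ using the negative contribution $\rhpoulMrr$ (up to the $180^\circ$ symmetry already fixed). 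The candidate detour replaces this by sending flow out at $\rhsour$ using $\rhpoulMrr$, i.e.\ the turnedge $\rhpoulMrr$ in the antipodal/positive direction, thereby shortening the path by bypassing one of the obstructions. I would verify carefully that in each of the $2\times 2$ combinations of the reroutable-cases the new turnedges used by the detour actually lie in $\resf f$: when $\rhrhoul$ (resp.\ $\rhrholl$) is not $f$\dash flat, the relevant turnvertex at its acute angle is present in $\resf f$ by Definition~\ref{def:resf}; when instead $p$ does not use $\rhpulWrl$ (resp.\ $\rhpolrWlr$), the detour does not need that deleted turnvertex at all. In either branch the detour consists of turnedges of $\resf f$, and it strictly shortens $p$ because it removes the two-contribution passage through $\rhc$ and replaces it by a shorter one, while not revisiting any turnvertex — here one invokes Proposition~\ref{cla:noreverse} and Proposition~\ref{cla:outerccdir} to rule out that the detour forces $p$ to use a turnvertex twice or to traverse a flatspace border the wrong way.

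The main obstacle I expect is the bookkeeping needed to check that the rerouted turnpath is still a \emph{simple} turnpath (no repeated turnvertex) and still an \stturnpath (it does not accidentally create a cycle or reuse $s$ or $t$). This is exactly the kind of planarity/self-intersection argument that has recurred throughout the section: since $\Delta$ is planar and $p$ is embedded, a shortcut across $\rhc$ cannot cross the rest of $p$, so the only danger is local reuse of a turnvertex in $\rhrhoul$, $\rhrholl$, $\rhrhour$, or $\rhc$ itself. I would handle this by noting that the turnvertices the detour introduces are exactly the ones that the reroutable hypotheses guarantee are unused by $p$ (or are legitimate vertices of $\resf f$ not yet visited), using Claim~\ref{cla:direct} to pin down precisely which turnvertices of $\rhc$ are already occupied. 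Once this is in place, the contradiction with the minimal length of $p$ is immediate, and the claim follows.
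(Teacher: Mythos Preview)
Your approach --- assume both reroutable conditions hold and derive a contradiction by exhibiting a shorter \stturnpath, splitting into the $2\times 2$ cases on the $f$-flatness of $\rhrhoul$ and $\rhrholl$ --- is exactly what the paper does.

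But your description of the local geometry is inverted. By Claim~\ref{cla:direct}, the portion of $p$ to be rerouted is \emph{not} short and does \emph{not} use $\rhpoulMrr$: $p$ enters at $\rhsoul$, leaves at $\rhsour$ via the counterclockwise turn $\rhpoulMl$, wanders off, and only later returns to exit once at $\rhsoll$. The shortcut $\rhpoulMrr$ goes directly from $\rhsoul$ to $\rhsoll$ (not to $\rhsour$) and \emph{replaces} this long excursion, which is why it shortens $p$. Moreover, the shortcut is not literally $\rhpoulMrr$ in all four cases. When $\rhrhoul$ is $f$-flat, the entering-reroutably hypothesis forces $p$ to arrive via $\rhppulWll$ rather than $\rhpulWrl$, and the correct rerouting is $\rhppulWlrr$, beginning one turn earlier; symmetrically on the exit side when $\rhrholl$ is $f$-flat one must first pin down that $p$ continues via $\rhpollMl$ and then reroute with $\rhpoulMrrl$. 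The paper writes out all four reroutings explicitly. Your line ``the detour does not need that deleted turnvertex at all'' is precisely where this adjustment is hiding: the detour does change from case to case, and checking that each variant lies in $\resf f$ and connects correctly to the rest of $p$ is the actual content of the argument.
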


{\em Proof}.
Assume the contrary, i.e., $p$~enters reroutably and leaves reroutably.
Recall that $\rhc$ is critical and hence not $f$\dash flat.
We make a distinction of four cases.
\begin{remunerate}
 \item Let $\rhrhoul$ be not $f$\dash flat and $\rhrholl$ not be $f$\dash flat. Then $p$ can be rerouted using $\rhpoulMrr$.
 \item Let $\rhrhoul$ be $f$\dash flat and $\rhrholl$ not be $f$\dash flat. Then $p$ uses $\rhppulWll$ by our assumption on $p$ 
at the beginning of the proof. Hence $p$ can be rerouted with~$\rhppulWlrr$.

 \item Let $\rhrhoul$ not be $f$\dash flat and $\rhrholl$ be $f$\dash flat.
Assume that $p$ uses $\rhpollMr$. Then $p$ uses $\rhpolrWl$, which is a contradiction to the fact that $p$ leaves reroutably.
Hence $p$ uses $\rhpollMl$ and $p$ can be rerouted using~$\rhpoulMrrl$.

 \item Let $\rhrhoul$ and $\rhrholl$ be both $f$\dash flat.
Then $p$ uses $\rhppulWl$ and $\rhpollMl$
by our assumption on $p$ at the beginning of the proof.
Hence $p$ can be rerouted with~\begin{tikzpictured}\draw[rhrhombidraw] (0.0pt,16.0002pt) --
  (-4.619pt,8.0001pt) -- (0.0pt,0.0pt) -- (4.619pt,8.0001pt) --
  cycle;\draw[thin,-my] (-4.619pt,16.0002pt) arc (180:240:4.619pt) arc
  (60:0:4.619pt) arc (0:-60:4.619pt) arc
  (120:180:4.619pt);\end{tikzpictured}
.\qquad\endproof 
\end{remunerate}

The possible reroutability of $p$ upon entering or leaving the critical rhombus $\rhc$ 
gives rise to a distinction of four cases, 
one of which is dealt with in Claim~\ref{cla:basicreroute}.
In the rest of this subsection we deal with the other three cases.
But first we prove the following auxiliary Claim~\ref{cla:notrigid}.

\begin{claim}
\label{cla:notrigid}
$\rhrhoul$ is not rigid.
\end{claim}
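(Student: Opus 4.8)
The plan is to argue by contradiction: assume $\rhrhoul$ is rigid and derive a contradiction with the minimal length of $p$, by producing a strictly shorter $s$-$t$-turnpath. Recall the situation established in the preceding claims. By Claim~\ref{cla:direct}, $p$ enters the first critical rhombus $\rhc$ through $\rhsoul$ (coming directly from $\rhpulWrXpulWl$ to $\rhpoulMl$) and leaves $\rhc$ through $\rhsoll$, with $\s\rhc p=-2$, $\s\rhc f=1$, $\varepsilon=\tfrac12$. First I would record the consequences of assuming $\rhrhoul$ rigid: being rigid it is in particular $f$-flat, so by the definition of $\resf f$ the turnvertex $\rhppulWl$ is not in $V(\resf f)$; since $\rhpulWrXpulWl$ is the predecessor of $\rhpoulMlXoulMr$, this forces $\rhpulWrXpulWl=\rhpulWr$, i.e.\ $p$ enters $\rhc$ with the turnvertex $\rhpulWr$ coming out of $\rhrhul$ (or $\rhrhoul$). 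I would then invoke Claim~\ref{cla:uppernotgflat}: $\rhrholl$ is not $g$-flat, hence $\rhrholl$ is not rigid (a rigid rhombus is both $f$- and $g$-flat); since by hypothesis we are analysing the case where $p$ enters reroutably or nonreroutably, I would combine this with Claim~\ref{cla:uppernotgflat} applied together with the hexagon equality~\eqref{cla:BZ} to the hexagon formed by $\rhc$, $\rhrholl$, $\rhrhoul$, $\rhrhpl$ and their neighbours.

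The key step is a local rerouting. Because $\rhrhoul$ is assumed rigid (so $f$-flat and $g$-flat) while $\rhc$ is critical, Corollary~\ref{cor:BZ2} applied to $f$ on the trapezoid made of $\rhc$ and $\rhrhoul$, together with the hexagon equality, pins down the flatness pattern of the two further rhombi $\rhrhul$ and $\rhrhpl$ adjacent along the long side: exactly as in the proof of Theorem~\ref{pro:shortestplanarcycle}, one of them must be non-$f$-flat, and in fact—using that $\rhrhoul$ is rigid rather than merely $f$-flat—I expect to pin down that $\rhrhpl$ is critical (it is $g$-flat by Corollary~\ref{cor:BZ2} applied to $g$, and not rigid since then $\rhc$ would be $f$-flat by Corollary~\ref{cor:BZ2} on $f$). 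But $p$ uses $\rhpulWr$, which lies in $\rhrhpl$ (or adjacent to it), so $p$ visits the critical rhombus $\rhrhpl$ strictly before it reaches $\rhc$; this contradicts the choice of $\rhc$ as a \emph{first} critical rhombus visited by $p$. Alternatively, if the pattern forces $\rhrhul$ to be critical, the same contradiction arises because $p$ passing through $\rhpulWr$ means $p$ passes $\rhrhul$ before $\rhc$. Either way the minimality of $\rhc$ is violated.

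The main obstacle I anticipate is the careful bookkeeping of which turnvertices of $p$ lie in which of the overlapping rhombi $\rhrhul,\rhrhpl,\rhrholl$, and making the hexagon-equality argument airtight in the two sub-cases (whether $p$'s predecessor segment $\rhpulWr$ comes out of $\rhrhul$ or out of $\rhrhpl$); the pictorial reasoning is exactly of the flavour already used in Claim~\ref{cla:uppernotgflat} and in the proof of Theorem~\ref{pro:shortestplanarcycle}, so I would model the write-up on those, reusing Corollary~\ref{cor:BZ2}, the hexagon equality~\eqref{cla:BZ}, Proposition~\ref{pro:special}, and the definition of $\resf f$ (deletion of negative contributions in $f$-flat rhombi). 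A secondary subtlety is ruling out the degenerate possibility that $\rhrhul$ or $\rhrhpl$ fails to exist because $\rhc$ sits at the boundary of $\Delta$; but this is handled by Claim~\ref{cla:border}, which already tells us the diagonal of $\rhc$ is not on the border of $\Delta$, so both $\rhrhul$ and $\rhrhpl$ exist. Once the flatness pattern is fixed and the ``$p$ meets a critical rhombus before $\rhc$'' conclusion is reached, the proof closes immediately.
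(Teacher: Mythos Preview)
Your overall strategy matches the paper's: assume $\rhrhoul$ rigid, pin down the predecessor as $\rhpulWr$, use the hexagon equality to locate a nearby critical rhombus, and contradict the minimality of $\rhc$. However, three concrete errors would prevent the argument from going through as written.

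First, your justification that $\rhppulWl\notin V(\resf f)$ because $\rhrhoul$ is $f$\dash flat is wrong: $\rhppulWl$ is a counterclockwise turn at an acute vertex of $\rhrhul$, not of $\rhrhoul$, so $f$\dash flatness of $\rhrhoul$ does not delete it. The paper instead uses rigidity directly via Lemma~\ref{cor:rigidonlyneutral}: since $\rhrhoul$ is rigid, $p$ uses only \emph{neutral} contributions there, but $\rhppulWll$ is a positive contribution, so the predecessor must give the neutral $\rhpulWrl$.

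Second, the two rhombi complementary to $(\rhc,\rhrhoul)$ in the hexagon equality are $\rhrhll$ and $\rhrhpl$, not $\rhrhul$ and $\rhrhpl$. Applying Corollary~\ref{cor:BZ2} to $g$ gives that $\rhrhll$ and $\rhrhpl$ are $g$\dash flat; then $\s{\rhrhll}{f}+\s{\rhrhpl}{f}=\s{\rhc}{f}+\s{\rhrhoul}{f}=1$, so exactly one of $\rhrhll,\rhrhpl$ is critical and the other rigid.

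Third, and most importantly, with the correct pair the second case is not as automatic as you suggest. If $\rhrhpl$ is critical, then indeed the predecessor of $\rhpulWr$ lies in $\rhrhpl$ and the contradiction is immediate. But if $\rhrhll$ is critical, the turn $\rhpulWr$ does \emph{not} lie in $\rhrhll$; you must use that $\rhrhpl$ is then rigid (hence $f$\dash flat), which deletes the turnvertex $\rhpxulMl$ from $\resf f$, forcing the predecessor of $\rhpulWr$ to be $\rhpplMr$ --- and \emph{that} turn sits in $\rhrhll$, giving the contradiction. This extra tracing-back step is the part your sketch misses.
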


\begin{proof}
Assume the contrary. Then, according to Lemma~\ref{cor:rigidonlyneutral} and Claim~\ref{cla:direct}, $p$ uses $\rhpulWrl$. 
Hence $p$ enters nonreroutably. 
Corollary~\ref{cor:BZ2} implies that $\rhrhll$ and $\rhrhpl$ are both $g$-flat. 
The hexagon equality~\eqref{cla:BZ} and $\s \rhc f = 1$ imply that 
$\s \rhrhll f + \s \rhrhpl f = 1$.
Integrality of~$f$ implies that one of the two shaded rhombi of $\rhrhll$ and $\rhrhpl$ is critical and the other one is rigid.

The rhombus $\rhrhpl$ cannot be critical since otherwise $p$ would pass the critical $\rhrhpl$ before~$\rhc$,
which contradicts the choice of $\rhc$ as the first critical rhombus in which $p$ uses turnvertices.
Hence $\rhrhpl$ is rigid, which implies that $\rhpxulMl$ is not a turnvertex of $\resf f$.
Thus $p$ uses $\rhpplMrrl$ and hence $p$ passes the critical rhombus
$\rhrhll$ before $\rhc$. Again, this is a contradiction. 

Finally, if 
\begin{tikzpictured}\draw[rhrhombidraw] (0.0pt,0.0pt) -- (4.619pt,8.0001pt) -- (0.0pt,16.0002pt) -- (-4.619pt,8.0001pt) -- cycle;\fill[rhrhombifill] (4.619pt,8.0001pt) -- (-4.619pt,8.0001pt) -- (-9.238pt,16.0002pt) -- (0.0pt,16.0002pt) -- cycle;\draw[rhrhombithickside] (0.0pt,0.0pt) -- (-9.238pt,16.0002pt);\end{tikzpictured}
lies at the border of $\Delta$, then,
according to Claim~\ref{cla:direct},
$p$ enters and leaves~$\Delta$ over the same side, which is a contradiction to the minimal length of $p$.
\qquad\end{proof}

\begin{claim}
\label{cla:enterrerou}
$p$ enters reroutably.
\end{claim}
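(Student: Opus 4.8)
The plan is to argue by contradiction: assume $p$ enters nonreroutably, that is, $\rhrhoul$ is $f$-flat and $p$ uses the turnedge $\rhpulWrl$. I then want to follow the route of $p$ through $\rhrhoul$ and the rhombi to the left of $\rhc$ and contradict either the choice of $\rhc$ as the \emph{first} critical rhombus visited by $p$, or the minimality of the length of $p$.

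First I would record the elementary consequences of the hypothesis. Since $\rhrhoul$ is $f$-flat, Claim~\ref{cla:notrigid} shows it is not rigid, hence not $g$-flat; together with $g=f+\tfrac12\pi(p)$ (Claim~\ref{cla:direct}) and $\s \rhrhoul f=0$ this forces $\s \rhrhoul p\ge 1$, i.e.\ $p$ uses a positive slack contribution of $\rhrhoul$ (it cannot use a negative one in an $f$-flat rhombus, by Lemma~\ref{le:spos}). Because $\rhrhoul$ is not at the border of $\Delta$ (Claim~\ref{cla:border}), the rhombi $\rhrhpl,\rhrhll,\rhrhul$ exist, and, exactly as in the proof of Claim~\ref{cla:notrigid}, the hexagon equality~\eqref{cla:BZ} gives $\s \rhrhpl f+\s \rhrhll f=\s \rhc f+\s \rhrhoul f=1$. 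By integrality of $f$, precisely one of $\rhrhpl,\rhrhll$ is $f$-flat and the other is nearly $f$-flat.

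The main work is the ensuing case analysis, organized as in the proof of Claim~\ref{cla:notrigid} but now relying on Proposition~\ref{pro:special} on special rhombi together with Lemma~\ref{cor:rigidonlyneutral}, in place of Corollary~\ref{cor:BZ2} applied to~$g$ (which is unavailable because $\rhrhoul$ is not $g$-flat). Since $p$ uses $\rhpulWrl$, the predecessor of $\rhpulWr$ in $p$ lies in $\rhrhul$, so $p$ crosses the diagonal of $\rhrhul$ before reaching $\rhc$. If $\rhrhpl$ (resp.\ $\rhrhll$) is nearly $f$-flat, one argues it is $g$-flat, hence \emph{critical}, and that $p$ is forced to traverse it before $\rhc$: when the rhombus separating it from the route of $p$ is rigid, the negative contributions of that rhombus have been deleted from $\resf f$, so $p$ is pushed along $\rhpplMrrl$ or through $\rhpxulMl$ and must enter the critical rhombus early -- contradicting the minimality of $\rhc$; whenever $p$ is \emph{not} so forced, an explicit rerouting via $\rhpoulMrr$ or one of its extensions (cf.\ the proof of Claim~\ref{cla:basicreroute}) yields a shorter \stturnpath, contradicting the minimality of $p$. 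The degenerate possibility that the relevant rhombus sits at the border of $\Delta$ is excluded by Claim~\ref{cla:direct}, which would then force $p$ to enter and leave $\Delta$ on the same side. The hard part is exactly this book-keeping: enumerating, subject to Proposition~\ref{cla:noreverse}, Proposition~\ref{cla:outerccdir}, and Proposition~\ref{pro:special}, all the ways $p$ can continue from $\rhpulWr$ past $\rhrhul,\rhrhpl,\rhrhll$, and verifying that each surviving configuration either places a critical rhombus before $\rhc$ or admits a shortcut. Once all subcases collapse to a contradiction, $p$ must enter reroutably.
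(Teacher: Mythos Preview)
Your overall architecture (contradiction; $\rhrhoul$ $f$-flat but not rigid forces a positive contribution of $\rhrhoul$ to occur in $p$; then a hexagon computation and a criticality/rerouting dichotomy) matches the paper, but the crucial middle step is missing, and the plan to ``organize the case analysis as in Claim~\ref{cla:notrigid}'' does not go through without it.

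The missing step is the identification of \emph{which} positive contribution of $\rhrhoul$ the path $p$ uses. The paper rules out $\rhppulWll$, $\rhpcMr$ and $\rhpcMll$ one by one (reused vertex; a second exit of $\rhc$ over $\rhsour$, contradicting Claim~\ref{cla:direct}; Proposition~\ref{cla:noreverse}), leaving only $\rhppulWr$. This single fact makes $\rhrhul$ \emph{special}, hence rigid, and now Corollary~\ref{cor:BZ2} applied to~$f$ (not to~$g$) in the hexagon $\rhrhoul\cup\rhrhul\cup\rhrholl\cup\rhrhll$ forces $\rhrholl$ and $\rhrhll$ to be $f$-flat. Only then does the hexagon equality pin down $\s{\rhrhpl}f=1$, and only then can one compute $\s{\rhrhpl}p=-2$ in the subcase $\rhppulWrlr\subseteq p$ (Figure~\ref{fig:situation:B}) and conclude that $\rhrhpl$ is critical. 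In the complementary subcase $\rhppulWrll\subseteq p$ (Figure~\ref{fig:situation:C}) the contradiction is not a ``first critical rhombus'' argument at all but a concrete rerouting via $\rhpxulMrr$, two rhombi to the left --- this is not an extension of $\rhpoulMrr$ in the sense of Claim~\ref{cla:basicreroute}.

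Your plan tries to bypass this by using the hexagon at the obtuse vertex of $\rhc$ to get $\s{\rhrhpl}f+\s{\rhrhll}f=1$ directly, and then to argue that the nearly-$f$-flat one is $g$-flat. But you cannot conclude $g$-flatness without knowing $\s{\varrho}p=-2$, and you cannot know $\s{\varrho}p$ without first pinning down the route of $p$ through $\rhrhoul$ and $\rhrhul$. The analogy with Claim~\ref{cla:notrigid} breaks precisely here: there, $\rhrhoul$ was assumed rigid, so Corollary~\ref{cor:BZ2} for $g$ gave both $\rhrhpl$ and $\rhrhll$ $g$-flat for free; here it does not, and Proposition~\ref{pro:special} becomes usable only \emph{after} you have exhibited a second crossing of the diagonal of $\rhrhul$, which is exactly the content of the step you omitted.
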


{\em Proof}.
We suppose the contrary, so assume that $p$ uses $\rhpulWrl$ and $\rhrhoul$ is $f$\dash flat.
Since $\rhrhoul$ is not rigid by Claim~\ref{cla:notrigid}, 
$p$ must use a positive slack contribution in $\rhrhoul$.
We claim that only $\rhppulWr$ is possible.
This is shown by the following case distinction, leading to contradictions in all three cases.
\begin{remunerate}
\item $\rhppulWll$ uses a turnvertex already used by~$p$.
\item $\rhpcMr \in p$ implies that $p$ leaves $\rhc$ over $\rhsour$ more than once, which is impossible due to Claim~\ref{cla:direct}.
\item $\rhpcMll \in p$ contradicts Proposition~\ref{cla:noreverse}. 
\end{remunerate}

The fact $\rhppulWr \in p$ implies that $\rhrhul$ is special and hence we have $\rhppulWrl \in p$.
Corollary~\ref{cor:BZ2} implies that both $\rhrholl$ and $\rhrhll$ are $f$-flat.
Moreover, $\s \rhc f = 1$ (see Claim~\ref{cla:direct}) and 
the hexagon equality~\eqref{cla:BZ} 
implies $\s \rhrhpl f = 1$.
Since $\rhpollMl \notin V(\resf f)$ and $\rhpollMrr \notin E(\resf f)$, 
$p$~must leave $\rhc$ at $\rhsoll$ via $\rhpollMrl$, 
see Figure~\ref{fig:situation:A}. 

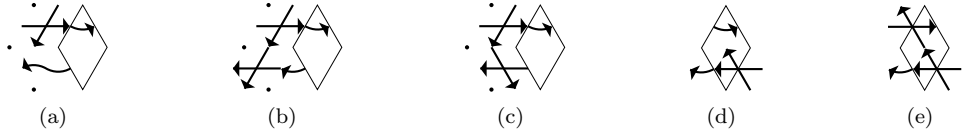
\begin{figure}[h]
  \begin{center}
      \subfigure[]
        {
\scalebox{2}{
\begin{tikzpicture}\draw[rhrhombidraw] (0.0pt,0.0pt) -- (4.619pt,8.0001pt) -- (0.0pt,16.0002pt) -- (-4.619pt,8.0001pt) -- cycle;\draw[thin,-my] (-2.3095pt,4.0001pt) arc (300:240:4.619pt) arc (60:120:4.619pt);\fill (-13.857pt,8.0001pt) circle (0.4pt);\fill (-9.238pt,16.0002pt) circle (0.4pt);\fill (-9.238pt,0.0pt) circle (0.4pt);\draw[thin,-my] (-4.619pt,16.0002pt) --  (-9.238pt,8.0001pt);;\draw[thin,-my] (-2.3095pt,12.0002pt) arc (240:300:4.619pt);\draw[thin,-my] (-11.5475pt,12.0002pt) --  (-2.3095pt,12.0002pt);;\end{tikzpicture}
}
        \nopar\label{fig:situation:A}} \hspace{0.7cm}
      \subfigure[]
        {
\scalebox{2}{
\begin{tikzpicture}\draw[rhrhombidraw] (0.0pt,0.0pt) -- (4.619pt,8.0001pt) -- (0.0pt,16.0002pt) -- (-4.619pt,8.0001pt) -- cycle;\fill (-13.857pt,8.0001pt) circle (0.4pt);\fill (-9.238pt,16.0002pt) circle (0.4pt);\fill (-9.238pt,0.0pt) circle (0.4pt);\draw[thin,-my] (-4.619pt,16.0002pt) --  (-9.238pt,8.0001pt);;\draw[thin,-my] (-2.3095pt,12.0002pt) arc (240:300:4.619pt);\draw[thin,-my] (-11.5475pt,12.0002pt) --  (-2.3095pt,12.0002pt);;\draw[thin,-my] (-2.3095pt,4.0001pt) arc (300:240:4.619pt);\draw[thin,-my] (-6.9285pt,4.0001pt) --  (-16.1665pt,4.0001pt);;\draw[thin,-my] (-9.238pt,8.0001pt) --  (-13.857pt,0.0pt);;\end{tikzpicture}
}
        \nopar\label{fig:situation:B}} \hspace{0.7cm}
      \subfigure[]
        {
\scalebox{2}{
\begin{tikzpicture}\draw[rhrhombidraw] (0.0pt,0.0pt) -- (4.619pt,8.0001pt) -- (0.0pt,16.0002pt) -- (-4.619pt,8.0001pt) -- cycle;\fill (-13.857pt,8.0001pt) circle (0.4pt);\fill (-9.238pt,16.0002pt) circle (0.4pt);\fill (-9.238pt,0.0pt) circle (0.4pt);\draw[thin,-my] (-4.619pt,16.0002pt) --  (-9.238pt,8.0001pt);;\draw[thin,-my] (-2.3095pt,12.0002pt) arc (240:300:4.619pt);\draw[thin,-my] (-11.5475pt,12.0002pt) --  (-2.3095pt,12.0002pt);;\draw[thin,-my] (-9.238pt,8.0001pt) --  (-4.619pt,0.0pt);;\draw[thin,-my] (-2.3095pt,4.0001pt) --  (-11.5475pt,4.0001pt);;\end{tikzpicture}
}
        \nopar\label{fig:situation:C}} \hspace{0.7cm}
      \subfigure[]
        {
\scalebox{2}{
\begin{tikzpicture}\draw[rhrhombidraw] (0.0pt,0.0pt) -- (4.619pt,8.0001pt) -- (0.0pt,16.0002pt) -- (-4.619pt,8.0001pt) -- cycle;\draw[thin,-my] (4.619pt,0.0pt) --  (0.0pt,8.0001pt);;\draw[thin,-my] (6.9285pt,4.0001pt) --  (-2.3095pt,4.0001pt);;\draw[thin,-my] (-2.3095pt,4.0001pt) arc (300:240:4.619pt);\draw[thin,-my] (-2.3095pt,12.0002pt) arc (240:300:4.619pt);\end{tikzpicture}
}
        \nopar\label{fig:situation:D}} \hspace{0.7cm}
      \subfigure[]
        {
\scalebox{2}{
\begin{tikzpicture}\draw[rhrhombidraw] (0.0pt,0.0pt) -- (4.619pt,8.0001pt) -- (0.0pt,16.0002pt) -- (-4.619pt,8.0001pt) -- cycle;\draw[thin,-my] (4.619pt,0.0pt) --  (0.0pt,8.0001pt);;\draw[thin,-my] (6.9285pt,4.0001pt) --  (-2.3095pt,4.0001pt);;\draw[thin,-my] (-2.3095pt,4.0001pt) arc (300:240:4.619pt);\draw[thin,-my] (0.0pt,8.0001pt) --  (-4.619pt,16.0002pt);;\draw[thin,-my] (-6.9285pt,12.0002pt) --  (2.3095pt,12.0002pt);;\end{tikzpicture}
}
        \nopar\label{fig:situation:E}}

    \caption{The situations in the proof of
      Claims~\protect\ref{cla:enterrerou}--\ref{claim:fino}. 
     We use straight arrows to depict parts of $p$ which are contained in special rhombi.}
    \nopar\label{fig:situation}
  \end{center}
\end{figure}

\begin{romannum}
\item If $p$ continues from $\rhppulWrl$ to $\rhppulWrlr$, then $\rhrhxll$ is special, see Figure~\ref{fig:situation:B}.
Slack computation shows that $\s \rhrhpl p = -2$, which implies with
$\varepsilon = \frac 1 2$ (Claim~\ref{cla:direct}) 
that $\s \rhrhpl g = 0$ and hence $\rhrhpl$ is critical.
But $p$ passes $\rhrhpl$ before $\rhc$, in contradiction with the choice of~$\rhc$.

\item If $p$ continues from $\rhppulWrl$ to $\rhppulWrll$, then $\rhrhll$ is special (see Figure~\ref{fig:situation:C}).
Note that $\rhpulMll \in p$ implies that the rhombi $\rhrhxul$ and $\rhrhxll$ are not $f$-flat.
Therefore we can reroute $p$ via $\rhpxulMrr$, which is in contradiction to the minimal length of $p$.\qquad\endproof 
\end{romannum}

Claim~\ref{cla:basicreroute} and Claim~\ref{cla:enterrerou} imply that
$p$ enters reroutably and leaves nonreroutably.
It remains to show that this leads to a contradiction.
Since $\rhrholl$ is $f$-flat and Claim~\ref{cla:uppernotgflat} ensures that $\rhrholl$ is not $g$-flat, 
$p$ must use a positive slack contribution in $\rhrholl$.

\begin{claim}\label{claim:fino}
From the positive slack contributions of $p$ in $\rhrholl$, only $\rhpolrWr$ is possible.
\end{claim}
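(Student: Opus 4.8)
The plan is to argue by elimination: the turnpath $p$ uses a positive slack contribution in $\rhrholl$ (since $\rhrholl$ is $f$-flat but not $g$-flat), so $p$ uses one of the turnvertices $\rhpollWr$, $\rhpollWl$, or one of the compound contributions in $\Psi_+(\rhrholl)$. I would go through each candidate and derive a contradiction, exactly in the style of the preceding claims, using Proposition~\ref{pro:special} on special rhombi, Proposition~\ref{cla:noreverse}, Claim~\ref{cla:direct} (so that $p$ leaves $\rhc$ exactly once over $\rhsoll$ and $\varepsilon = \tfrac12$), and Claim~\ref{cla:uppernotgflat} (so $\rhrholl$ is not $g$-flat, hence not special).

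First I would record the setup: by Claim~\ref{cla:direct}, $p$ uses $\rhpoulMl$ and leaves $\rhc$ over $\rhsoll$ exactly once, so $p$ uses exactly one of $\rhpollMl$, $\rhpollMr$ inside $\rhc$; and since $p$ leaves nonreroutably (from Claims~\ref{cla:basicreroute} and~\ref{cla:enterrerou}), $\rhrholl$ is $f$-flat and $p$ uses $\rhpolrWlr$. The key structural fact is that the positive contribution $p$ uses in $\rhrholl$ cannot reuse a turnvertex already forced to be in $p$, and cannot create a special (hence rigid, hence by Claim~\ref{cla:uppernotgflat} forbidden) configuration on $\rhrholl$. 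Concretely: a contribution starting with $\rhpollWr$ (this is the surviving case) must be examined on its own; a contribution whose first turnvertex coincides with one already used, or one that would force $\rhrholl$ to be special, or one that would enable a rerouting of $p$ contradicting minimality, is excluded. The compound positive contributions $\rhpollWll$, $\rhpollWlr$ reuse the turnvertex $\rhpollWr$'s neighbourhood or force $\rhrholl$ to carry two crossings of its diagonal, making it special; and the contribution built from $\rhpollWl$ would combine with $p$'s known route through $\rhc$ and the flatness of $\rhrholl$ to produce either a reversed turnvertex (violating Proposition~\ref{cla:noreverse}) or a shorter rerouting (violating minimality of $p$). I expect each of these to be dispatched by a short picture-and-rerouting argument, exactly parallel to the case analysis in Claim~\ref{cla:enterrerou}.

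The main obstacle, as in the surrounding claims, is not any single step but the bookkeeping: one has to keep straight which turnvertices of $\rhrholl$ and its overlapping neighbours are already forced into $p$, which rhombi are already known to be $f$-flat, $g$-flat, rigid, or critical, and which reroutings are available without passing through the first critical rhombus $\rhc$ earlier than $p$ actually does. I would organise the proof as an enumeration of the at most four positive slack contributions of $\rhrholl$ (cf.\ Definition~\ref{def:slack-contr} and Figure~\ref{fig:tp-parallelogram}), treating $\rhpollWr$ as the case that survives and showing for each of the others that it forces $\rhrholl$ to be special (hence rigid, contradicting Claim~\ref{cla:uppernotgflat}), or forces $p$ to reuse a turnvertex, or permits a length-reducing reroute of $p$. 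Once all other contributions are excluded, the claim follows.

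\begin{proof}
Since $\rhrholl$ is $f$-flat (as $p$ leaves nonreroutably) but not $g$-flat by Claim~\ref{cla:uppernotgflat}, the turnpath $p$ uses a positive slack contribution in $\rhrholl$. By Definition~\ref{def:slack-contr}, this contribution is one of $\rhpollWr$, $\rhpollWll$, $\rhpollWlr$, or the contribution starting at the acute angle opposite to $\rhpollWr$ (a rotation of $\rhpoulMl$-type inside $\rhrholl$). We rule out all but $\rhpollWr$.

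Recall from Claim~\ref{cla:direct} that $p$ uses $\rhpoulMl$ and leaves $\rhc$ exactly once over $\rhsoll$, so precisely one of $\rhpollMl$, $\rhpollMr$ occurs in $p$ inside $\rhc$.

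If $p$ uses the positive contribution in $\rhrholl$ starting at the acute angle opposite to $\rhpollWr$, then $p$ would combine this with $\rhpolrWlr$ (which $p$ uses since it leaves nonreroutably) to use both counterclockwise turns at acute angles of $\rhrholl$, forcing $\s \rhrholl p = -2$; but $\rhrholl$ is $f$-flat, so $\s \rhrholl g = \s \rhrholl f + \varepsilon\,\s \rhrholl p = -1 < 0$, contradicting $g\in B$. (Alternatively, this pair of turns reverses a turnvertex already forced into $p$ via $\rhpolrWlr$, contradicting Proposition~\ref{cla:noreverse}.)

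If $p$ uses $\rhpollWll$ or $\rhpollWlr$, then $p$ crosses the diagonal $\rhsoll[\text{in }\rhrholl]$ twice, so $\rhrholl$ is special, hence rigid by Lemma~\ref{cor:rigidonlyneutral}. But a rigid rhombus is $g$-flat, contradicting Claim~\ref{cla:uppernotgflat}.

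The only remaining positive slack contribution of $p$ in $\rhrholl$ is $\rhpollWr$, which proves the claim.
\qquad\end{proof}
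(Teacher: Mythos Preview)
Your enumeration of the positive slack contributions of $\rhrholl$ is wrong, and this derails the whole argument. The four elements of $\Psi_+(\rhrholl)$, written in the paper's pictogram convention relative to $\rhc$, are $\rhpolrWr$, $\rhpolrWll$, $\rhpllMr$, $\rhpllMll$ --- not $\rhpollWr$, $\rhpollWll$, $\rhpollWlr$. The objects you list are (respectively) a positive contribution of $\rhc$, a positive contribution of $\rhc$, and a \emph{neutral} contribution of $\rhc$; none of them is a positive contribution of $\rhrholl$. Correspondingly, your final conclusion that $\rhpollWr$ survives does not even match the claim, which asserts that $\rhpolrWr$ is the survivor. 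Your elimination arguments then target the wrong objects: for instance, saying ``$\rhpollWll$ or $\rhpollWlr$ forces $p$ to cross the diagonal of $\rhrholl$ twice'' is not correct for those pictograms, and the ``acute angle opposite to $\rhpollWr$'' case you describe would be a \emph{negative} contribution, so it is not one of the candidates to begin with.

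The paper's actual proof eliminates $\rhpolrWll$ (it reuses a turnvertex already in $p$), $\rhpllMll$ (contradicts Proposition~\ref{cla:noreverse}), and $\rhpllMr$. The last case is the only one with real content: if $\rhpllMr\subseteq p$ then $\rhrhll$ is special, hence $f$-flat; Corollary~\ref{cor:BZ2} then forces $\rhrhoul$ and $\rhrhul$ to be $f$-flat; but $p$ enters reroutably, so $p$ uses $\rhppulWl$, contradicting $\rhppulWl\notin V(\resf f)$. This chain --- special $\Rightarrow$ $f$-flat $\Rightarrow$ hexagon equality $\Rightarrow$ contradiction with reroutability --- is the genuine idea you are missing, and none of your three bullet-point eliminations captures it.
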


\begin{proof}
The turnedge $\rhpolrWll$ uses a turnvertex already used by $p$.
The turnedge $\rhpllMll \subseteq p$ contradicts Proposition~\ref{cla:noreverse}. 
Now assume that $\rhpllMr \subseteq p$.
Then $\rhrhll$ is special and in particular $f$-flat.
Corollary~\ref{cor:BZ2} implies that $\rhrhoul$ and $\rhrhul$ are $f$-flat.
Since $p$ enters reroutably, $p$ uses $\rhppulWl$, which is a contradiction to $\rhppulWl \notin V(\resf f)$.
\end{proof}

It follows that $\rhrholr$ is special. The situation is depicted in Figure~\ref{fig:situation:D}.
Now $\rhpolrWr$ cannot continue to $\rhpolrWrr$, because, according to Claim~\ref{cla:direct}, $p$ leaves $\rhc$ over $\rhsour$ only once.
Hence $p$ continues to $\rhpolrWrl$ and $\rhrhoul$ is special, see Figure~\ref{fig:situation:E}.
But then, $p$ enters nonreroutably, which is a contradiction to Claim~\ref{cla:enterrerou}.

This shows that the assumption $\varepsilon<1$ is absurd.
Hence the Shortest Path Theorem~\ref{thm:shopath} is finally proved. 


\section{Proof of the Connectedness Theorem}
\label{sec:connectedness}


Recall from Section~\ref{se:honey}
the linear isomorphism $\oF(G)\simeq Z$ mapping flow classes $f$ 
to their throughput function $E(\Delta)\to\R, k\mapsto \delta(k,f)$. 
The $L_1$-norm on $Z$ induces the following {\em norm} on $\oF(G)$: 
for $f\in\oF(G)$, we set
$$
\mbox{$\|f\| := \sum_{k \in E(\Delta)} |\delta(k,f)|$.} 
$$
 Correspondingly, we define the \emph{distance} 
between $f,g \in \oF(G)$ as
$\dist(f,g) = \|g-f\|$,

In order to prove Theorem~\ref{thm:connectedness} 
we have to show that for all $f, g \in P(\la,\mu,\nu)_\Z$ there exists
a finite sequence $f=f_0, f_1, f_2, \ldots, f_\ell=g$ such that each $f_{i+1}$ equals $f_i+c_i$ 
for some $f_i$\dash secure cycle $c_i$ (cf.\ Proposition~\ref{pro:char-f-secure}).
We will construct this sequence with the additional property that $\dist(f_{i+1},g)<\dist(f_i,g)$ for all $i$.
To achieve this, it suffices to show the following Proposition~\ref{pro:connectedness:existscycle}.

\begin{proposition}
\label{pro:connectedness:existscycle}
For all distinct $f, g \in P(\la,\mu,\nu)_\Z$ there exists an $f$\dash secure cycle $c$ such that
$\dist(f+c,g) < \dist(f,g)$.
\end{proposition}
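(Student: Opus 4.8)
The plan is to construct, for distinct $f,g\in P(\la,\mu,\nu)_\Z$, an $f$-secure cycle $c$ with $\dist(f+c,g)<\dist(f,g)$. Since $f$ and $g$ are both capacity achieving, the flow $d:=g-f$ vanishes on all border edges of $\Delta$, so $\delta(d)=0$ and $d$ is a ``proper'' flow class in $\oF(G)$ which is $f$-hive preserving in the relaxed sense that $\s\varrho d\ge 0$ for every $f$-flat rhombus (because $\s\varrho g\ge 0$) and moreover $\s\varrho d\ge -1$ for every nearly $f$-flat rhombus (because $\s\varrho g\ge 0$ forces $\s\varrho f + \s\varrho d\ge 0$, and $\s\varrho f=1$ there). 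The key point is that $d\ne 0$, so by Lemma~\ref{le:flow-decomp} applied to the reduced representative of $d$ we may decompose $d=\sum_i\alpha_i p_i$ into complete paths in $G$ with positive coefficients; since $\delta(d)=0$ and $d$ vanishes at the border, at least one of the $p_i$ must be a proper cycle $c_0$ with $c_0\subseteq\SUPP(d)$.

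\textbf{Key steps.} First I would extract such a proper cycle $c_0\subseteq\SUPP(d)$ from the path/cycle decomposition of $d$. Second, I would argue that $c_0$ is ``almost'' $f$-secure: since $\SUPP(c_0)\subseteq\SUPP(d)$, any negative slack contribution of $c_0$ in an $f$-flat rhombus $\varrho$ would lie in $\SUPP(d)$, and then Lemma~\ref{cla:negimpliespos} (antipodal contributions) applied to $d$ and $\varrho$ would force the antipodal positive contribution to also lie in $\SUPP(d)$, contradicting $\s\varrho d\ge 0$ unless... — here one has to be careful: $c_0$ using a negative contribution does not immediately contradict anything about $d$. The cleaner route is to invoke Theorem~\ref{pro:shortestplanarcycle}: instead of $c_0$ directly, I would first verify that \emph{some} $f$-hive preserving proper cycle exists (namely, a minimal-length proper cycle contained in $\SUPP(d)$ after possibly rerouting; the existence of $c_0$ guarantees the set of $f$-hive preserving proper cycles is nonempty, since $c_0$ itself does not use negative contributions in $f$-flat rhombi — this needs checking via Lemma~\ref{cla:negimpliespos} as above, using $\s\varrho c_0\le\s\varrho d$ componentwise is false, so one uses that $\SUPP(c_0)\subseteq\SUPP(d)$ and the structural Lemma~\ref{cla:negimpliespos}). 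Then Theorem~\ref{pro:shortestplanarcycle} upgrades a minimal-length $f$-hive preserving cycle to an $f$-secure one, call it $c$, and Proposition~\ref{pro:char-f-secure} gives $f+c\in P_\Z$. Third, and crucially, I need the \emph{distance decrease}: I would choose the orientation of $c$ (i.e. $c$ versus $-c$, but note $-c$ may not be in $\SUPP(d)$) so that on every edge $k$ where $c$ carries flow, the direction agrees with the sign of $\delta(k,d)$; this is exactly what $\SUPP(c)\subseteq\SUPP(d)$ buys us, giving $|\delta(k,f+c) - \delta(k,g)| = |\delta(k,d) - \delta(k,c)| \le |\delta(k,d)|$ with strict inequality on at least one edge, whence $\dist(f+c,g)<\dist(f,g)$.

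\textbf{Main obstacle.} The hard part is reconciling two requirements that pull in opposite directions: to get the distance to decrease I want a cycle sitting inside $\SUPP(d)$ (so subtracting it moves $f$ toward $g$), but to apply Theorem~\ref{pro:shortestplanarcycle} I pass to a \emph{shortest} $f$-hive preserving cycle, which a priori need not lie in $\SUPP(d)$ anymore. I expect the resolution to require either (a) showing that the rerouting operations used in the proof of Theorem~\ref{pro:shortestplanarcycle} (rerouting along borders of flatspaces) preserve the property of lying in $\SUPP(d)$ — plausible because rerouting replaces a turn through an $f$-flat rhombus by the antipodal-type turn, and Lemma~\ref{cla:negimpliespos}/Lemma~\ref{lem:flowpropagation} show the rerouted turns are also forced into $\SUPP(d)$ — or (b) a direct argument that among all $f$-secure cycles inside $\SUPP(d)$ one of minimal length is already $f$-secure, sidestepping the general shortest-cycle theorem and instead running the argument of Theorem~\ref{pro:shortestplanarcycle}'s proof \emph{within} $\SUPP(d)$ (the bad-rhombus region argument there only uses planarity of $\Delta$ and that $c$ cannot cross itself, both of which survive restriction to $\SUPP(d)$, and the reroutings stay in $\SUPP(d)$ by flow propagation). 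I would pursue route (b): take $c$ to be a shortest proper cycle with $\SUPP(c)\subseteq\SUPP(d)$ that is $f$-hive preserving (nonempty by the existence of $c_0$, after checking $c_0$ is $f$-hive preserving via Lemma~\ref{cla:negimpliespos}), show $f$-security by repeating the contradiction argument of Theorem~\ref{pro:shortestplanarcycle} with all reroutings confined to $\SUPP(d)$, and conclude the distance drop as above.
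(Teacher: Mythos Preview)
There is a genuine gap at the foundation of your approach. You need an $f$-hive preserving proper cycle contained in $\SUPP(d)$, and you propose to get it from a decomposition cycle $c_0\subseteq\SUPP(d)$. But such a $c_0$ need not be $f$-hive preserving: if $\varrho$ is $f$-flat and the counterclockwise acute-angle turn $\rhpoulMl$ lies in $\SUPP(d)$, Lemma~\ref{cla:negimpliespos} only tells you the antipodal positive contribution $\rhpollWr$ is \emph{also} in $\SUPP(d)$; it does not prevent $c_0$ from using $\rhpoulMl$ alone and thereby picking up a negative contribution in $\varrho$. Your route~(b) then presupposes that the set of $f$-hive preserving cycles inside $\SUPP(d)$ is nonempty, which you have not shown, and even granting this, the shortening reroutings from the proof of Theorem~\ref{pro:shortestplanarcycle} (e.g.\ replacing $\rhpoulMl\cup\rhpolrWl$ by $\rhpoulMrr$) introduce edges across the diagonal of $\rhc$ that are not forced into $\SUPP(d)$ by flow propagation from the data you have; so minimality \emph{within} $\SUPP(d)$ gives no contradiction.

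The paper's proof confronts exactly this obstruction and resolves it differently. It splits into two situations. When $d$ does not cross any $f$-flatspace border, a flow-propagation argument shows the decomposition cycles are counterclockwise hexagon-boundary cycles, which are $f$-secure and lie in $\SUPP(d)$. In the remaining case the paper \emph{abandons} the requirement $c\subseteq\SUPP(d)$: an explicit greedy construction (Algorithm~\ref{alg:connectedness}) grows a walk using turns of $\SUPP(d)$ whenever possible, but inserts a two-turn ``swerve'' (clockwise then counterclockwise) through an $f$-flat rhombus when the $\SUPP(d)$-continuation would create a negative contribution. The resulting cycle $c$ is shown to be $f$-secure, and although swerve turns may leave $\SUPP(d)$, strictly more than half of the edges of $c$ lie in $\SUPP(d)$; the distance drop then comes from the weaker criterion of Lemma~\ref{cla:connectedness:morethanhalf}, not from $c\subseteq\SUPP(d)$. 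Your proposal misses both the need for swerves and this ``more than half'' relaxation.
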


In the rest of this section we prove Proposition~\ref{pro:connectedness:existscycle}
by explicitly constructing~$c$.
We fix $f, g \in P(\la,\mu,\nu)_\Z$ and set $d := g - f$.
We can ensure the distance property $\dist(f+c,g) < \dist(f,g)$ 
for a proper cycle~$c$ by using the following result. 

\begin{lemma}
\label{cla:connectedness:morethanhalf}
If more than half of the edges of a proper cycle~$c$ on $G$ are contained in $\SUPP(d)$, 
then we have $\dist(f+c,g) < \dist(f,g)$.
\end{lemma}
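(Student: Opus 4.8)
The plan is to compute the quantity $\dist(f+c,g)-\dist(f,g)=\|d-c\|-\|d\|$ directly from the defining formula $\|h\|=\sum_{k\in E(\Delta)}|\delta(k,h)|$, and to show it equals $m-2\phi$, where $2m$ is the length of $c$ and $\phi$ is half the number of edges of $c$ that lie in $\SUPP(d)$; the hypothesis of the lemma says precisely that $2\phi>m$, so this will finish the proof.

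First I would fix notation for the cycle. Since $G$ is bipartite with white and black vertices, a proper cycle $c$ alternates colours and hence has even length $2m$, visiting $m$ pairwise distinct white vertices $w_1,\dots,w_m$; I would list its directed edges as $f_1,g_1,f_2,g_2,\dots,f_m,g_m$, where $f_i$ runs from $w_i$ into a black vertex and $g_i$ runs from that black vertex into $w_{i+1}$ (indices modulo $m$). If $2m=2$, the two edges of $c$ form an edge together with its reverse, so at most one of them lies in $\SUPP(d)$; then the hypothesis cannot hold and there is nothing to prove, so I may assume $m\ge2$. Because $c$ is proper it never uses $s$ or $t$, hence never passes through a border white vertex (in $G$ such a vertex has degree $2$, one of its two edges leading to $s$ or $t$); therefore every $w_i$ is an interior white vertex, of degree $2$, and at $w_i$ the cycle uses exactly one edge to each of its two black neighbours.

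The heart of the argument is a propagation statement: \emph{for every interior white vertex $w$ on $c$, the edge of $c$ entering $w$ lies in $\SUPP(d)$ if and only if the edge of $c$ leaving $w$ lies in $\SUPP(d)$.} I would deduce this from Kirchhoff's conservation law for the reduced representative $\tilde d$ at $w$, together with the fact that $\tilde d$ is never positive on both an edge and its reverse: if the entering edge $(a,w)$ carries positive $\tilde d$-flow then $(w,a)$ carries none, so conservation at $w$ forces the leaving edge $(w,b)$, where $b$ is the second black neighbour of $w$, to carry at least as much flow, hence positive flow; the converse is symmetric. Applying this at $w_{i+1}$ gives $g_i\in\SUPP(d)\iff f_{i+1}\in\SUPP(d)$, whence $\#\{i:g_i\in\SUPP(d)\}=\#\{i:f_i\in\SUPP(d)\}$; call this common value $\phi$. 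Then the number of edges of $c$ in $\SUPP(d)$ equals $2\phi$, and the hypothesis reads $2\phi>m$.

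Finally I would evaluate $\|d-c\|-\|d\|=\sum_{k\in E(\Delta)}\bigl(|\delta(k,d)-\delta(k,c)|-|\delta(k,d)|\bigr)$ term by term, using linearity of $k\mapsto\delta(k,\cdot)$, integrality of $d$, and $\delta(k,c)\in\{-1,0,1\}$: an edge $k$ with $\delta(k,c)=0$ contributes $0$, while an edge $k$ with $\delta(k,c)\neq0$ contributes $-1$ if $\delta(k,c)$ and $\delta(k,d)$ have the same sign and $+1$ otherwise. The edges $k$ with $\delta(k,c)\neq0$ are exactly those whose white vertex $w_k$ lies on $c$ (passing through $w_k$, the cycle must use the edge of $G$ joining $w_k$ to the black vertex of the upright triangle of $k$, namely $e_k$ or $-e_k$), so there are exactly $m$ of them. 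For such a $k$, unwinding the sign conventions of $\delta(k,c)$ and $\delta(k,d)$ shows that the ``$-1$'' case occurs precisely when the edge of $c$ at $w_k$ touching that black vertex belongs to $\SUPP(d)$; this is one of the two edges of $c$ at $w_k$, and by the propagation statement the two are simultaneously in or out of $\SUPP(d)$, so this happens for exactly $\phi$ of the $m$ edges. Hence $\|d-c\|-\|d\|=(m-\phi)-\phi=m-2\phi<0$, which is the claim. The one genuine difficulty is the bookkeeping in this last step — keeping the orientation of $\delta(k,c)$, the orientation of $\delta(k,d)$, and the direction in which $c$ traverses $w_k$ all consistent — whereas the conceptual content of the proof lies entirely in the propagation statement at white vertices.
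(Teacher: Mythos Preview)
Your argument is correct and follows the same route as the paper: both compute $\dist(f,g)-\dist(f+c,g)$ as a sum over the edges $k\in E(\Delta)$ crossed by $c$, observe that each such summand equals $\pm1$ according to whether $\sgn\delta(k,c)=\sgn\delta(k,d)$, and conclude from the hypothesis. The only difference is that the paper's last sentence (``If more than half the edges of $c$ are contained in $\SUPP(d)$, then also more than half of the summands are~$1$'') tacitly uses exactly your propagation statement at interior white vertices --- that the two edges of $c$ meeting at $w$ lie in $\SUPP(d)$ simultaneously --- which you make explicit via flow conservation for $\tilde d$ at the degree-$2$ vertex~$w$; this is what matches the $2m$ cycle-edges two-to-one with the $m$ summands.
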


\begin{proof}
Let $K \subseteq E(\Delta)$ be the set of edges of $\Delta$ crossed by $c$.
Then
\[
\dist(f,g)-\dist(f+c,g) = \sum_{k\in E(\Delta)}\big(|\delta(k,d)|-|\delta(k,d-c)|\big)
= \sum_{k\in K}\big(|\delta(k,d)|-|\delta(k,d-c)|\big).
\]
But for edges $\rhsc \in K$ we easily calculate
\[|\delta(\rhsc,d)|-|\delta(\rhsc,d-c)| = \begin{cases}
                                              1 & \text{if } \sgn\big(\rhacW(c)\big)=\sgn\big(\rhacW(d)\big) \\
                                              -1 & \text{if } \sgn\big(\rhacW(c)\big)\neq\sgn\big(\rhacW(d)\big)
                                             \end{cases}.\]
If more than half the edges of $c$ are contained in $\SUPP(d)$,
then also more than half of the summands are~1. The claim follows.
\end{proof}
In the light of Lemma~\ref{cla:connectedness:morethanhalf}, we will try to make $c$ use as many edges contained in $\SUPP(d)$ as possible.
Note that since $f$ and $g$ are both capacity achieving, we have that
\begin{equation}
\delta(k,d)=0 \text{ for all edges $k$ at the border of~$\Delta$.}
\tag{$\dag$}
\end{equation}
For the construction of~$c$ we distinguish two situations. 
The first one turns out to be considerably easier.
\smallskip

\noindent{\bf Situation 1.}
We assume that $d$ does not cross the sides of $f$\dash flatspaces, that is, 
\begin{equation}\tag{$\ast$}\label{eq:ikenstar}
\delta(k,d) = 0 \text{ for all diagonals $k$ of non\dash $f$\dash flat rhombi} 
\end{equation}

\begin{claim}
\label{cla:connectedness:noconclockcycles}
In the situation $(\ast)$, $\SUPP(d)$ does not contain a path in~$G$ consisting of two consecutive clockwise turns.
\end{claim}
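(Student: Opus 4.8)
The plan is to argue by contradiction. Suppose $\SUPP(d)$ contains a path of length~$4$ formed by two consecutive clockwise turns. The two turns share a white vertex, which lies on the common side of the two hive triangles carrying them, so the path lies inside a single rhombus~$\varrho$, with $\rhsc$ equal to that common side, the diagonal of~$\varrho$. Among the eight turnedges contained in a rhombus, the only ones built from two clockwise turns are the negative slack contributions $\rhpoulMrr$ and $\rhpolrWrr$ (cf.\ the description following Definition~\ref{def:slack-contr}); rotating by~$180^\circ$ if necessary, we may assume that $\rhpoulMrr\subseteq\SUPP(d)$.

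The first step is to locate $\varrho$ and its neighbours. By definition of the support, $\rhpoulMrr\subseteq\SUPP(d)$ forces the throughputs $\rhaoulM(d)$, $\rhaollM(d)$ and $\rhacW(d)$ to be strictly positive (this is the content of the opening lines of the proof of Lemma~\ref{cla:negimpliespos}). In particular the throughput of~$d$ across the diagonal~$\rhsc$ is nonzero, so $\varrho$ cannot be non-$f$-flat by~$(\ast)$; hence $\varrho$ is $f$-flat, and therefore $\s\rhc d=\s\rhc g\geq 0$ since $\s\rhc f=0$ and $g=f+d\in P(\la,\mu,\nu)_\Z$. Now Lemma~\ref{cla:negimpliespos} applies at~$\varrho$: the antipodal contribution $\rhpourMll$ lies in~$\SUPP(d)$, and the computation inside that proof also yields $\rhaourM(d)>0$ and $\rhaolrM(d)>0$. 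Thus $d$ has nonzero throughput across all four sides of~$\varrho$; each of these sides is an interior edge of~$\Delta$ (it cannot lie on $\partial\Delta$, by $(\dag)$) and is the diagonal of exactly one of the rhombi $\rhrhoul$, $\rhrhour$, $\rhrholl$, $\rhrholr$ meeting~$\varrho$ there, so a further application of~$(\ast)$ shows that all four of these rhombi are $f$-flat as well.

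The core of the argument is then a flow-propagation step inside the $f$-flatspace~$L$ containing~$\varrho$. Since $f$ and $g$ are both capacity achieving we have $(\dag)$, i.e.\ $\delta(k,d)=0$ on every border edge~$k$ of~$\Delta$; moreover every edge separating two $f$-flatspaces is the diagonal of a non-$f$-flat rhombus (otherwise its two adjacent hive triangles would lie in the same flatspace), so by~$(\ast)$ the throughput of~$d$ vanishes across those edges too. Consequently, following $\SUPP(d)$ by flow conservation starting from~$\rhpoulMrr$ and invoking~$(\ast)$ whenever an edge of~$\Delta$ is crossed, the flow lines of~$d$ never cross a divider and never reach~$\partial\Delta$: they stay inside~$L$, across whose entire boundary~$d$ has zero throughput, so that the part of $\SUPP(d)$ reached this way closes up into proper cycles contained in~$L$. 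Every rhombus of the convex set~$L$ is $f$-flat, so Lemma~\ref{lem:flowpropagation} may be applied repeatedly along these cycles; this propagation forces the portion of~$L$ they traverse to be run through counterclockwise along flat rhombi, which is incompatible with the clockwise double turn~$\rhpoulMrr$ produced at the outset. Equivalently, pushing outward from~$\rhpoulMrr$ one must eventually reach an edge on~$\partial L$ carrying nonzero $d$-throughput, contradicting the vanishing just established. This contradiction proves the claim.

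The step I expect to be the main obstacle is precisely this last one: turning ``the propagation must escape~$L$ but cannot'' into a rigorous argument. The subtlety is that a hive flow on a convex set with vanishing boundary throughput need not be zero (a counterclockwise circulation around an interior vertex is such a flow), so one genuinely has to exploit that a clockwise double turn forces the propagation to run \emph{against} the counterclockwise traversal that Lemma~\ref{lem:flowpropagation} imposes on the flat rhombi of~$L$, and that the convexity of~$L$ leaves no room for this conflict. Everything else reduces to routine applications of Lemma~\ref{cla:negimpliespos}, Lemma~\ref{lem:flowpropagation}, $(\ast)$ and~$(\dag)$.
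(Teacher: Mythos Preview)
Your opening two paragraphs are fine and match the paper's setup: the clockwise double turn is a negative contribution $\rhpoulMrr$ in some rhombus~$\rhc$, and $(\ast)$ together with Lemma~\ref{cla:negimpliespos} forces $\rhc$ and its neighbours to be $f$-flat while producing further turns in $\SUPP(d)$.

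The gap is exactly where you flagged it, and it is a real one. Your third paragraph tries to argue inside a \emph{fixed} $f$-flatspace~$L$: $d$ has zero throughput across $\partial L$, so $\SUPP(d)$ closes into cycles in~$L$, and these cycles ``must be counterclockwise'' by Lemma~\ref{lem:flowpropagation}, contradicting the clockwise double turn. But Lemma~\ref{lem:flowpropagation} does not say that cycles in $\SUPP(d)$ are counterclockwise; it only propagates a single counterclockwise acute-angle turn to its antipodal partner and one step further. There is no statement in the paper that converts this into a global orientation constraint on cycles of~$d$, and your alternative formulation (``pushing outward must hit $\partial L$ with nonzero throughput'') is equally unsupported: nothing you have established forces the propagation to move monotonically toward the boundary of a fixed convex region.

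The paper's proof runs the propagation in the opposite conceptual direction. Rather than confining~$d$ to a fixed~$L$ and seeking a contradiction there, it uses $(\ast)$ \emph{actively}: each time the rules (A)~antipodal lemma and (B)~flow conservation produce a new turn in $\SUPP(d)$ that crosses some edge~$k$, condition~$(\ast)$ forces the rhombus with diagonal~$k$ to be $f$-flat, which in turn licenses another application of~(A). Iterating the cycle $[(A),(\ast),(A),(\ast),(B),(A)]$ from the initial configuration extends the $f$-flat region by one unit in a fixed direction at every step, so the region grows without bound---contradicting the finiteness of~$\Delta$. The contradiction is thus ``$L$ would have to be infinite'', not ``$d$ escapes a finite~$L$''. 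To complete your argument you need this explicit self-similar propagation pattern (or an equivalent monotone-growth mechanism); the tools you listed are the right ones, but the missing ingredient is the observation that the pattern reproduces itself one step over.
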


\begin{proof}
Assume the contrary.
We create a contradiction by using three type of arguments:
(A) Lemma~\ref{cla:negimpliespos} on antipodal pairs, 
(B) the flow conservation laws, 
and the fact~$(\ast)$.

The following sequence of pictures shows paths contained in $\SUPP(d)$ and how rules (A), (B) and $(\ast)$ 
imply that additional paths are contained in $\SUPP(d)$. 
All rhombi that are known to be $f$\dash flat are drawn shaded:

\hspace{-1.5cm}\begin{tikzpicture}
\node[] (TOP) at (1,1.5){
\begin{tikzpicture}\fill[thick,fill=black!20] (-87.761pt,8.0001pt) -- (-83.142pt,16.0002pt) -- (-73.904pt,16.0002pt) -- (-78.523pt,8.0001pt) -- cycle;\fill[thick,fill=black!20] (138.57pt,16.0002pt) -- (147.808pt,16.0002pt) -- (152.427pt,24.0003pt) -- (157.046pt,16.0002pt) -- (166.284pt,16.0002pt) -- (157.046pt,0.0pt) -- (138.57pt,0.0pt) -- (143.189pt,8.0001pt) -- cycle;\fill[thick,fill=black!20] (106.237pt,24.0003pt) -- (110.856pt,16.0002pt) -- (120.094pt,16.0002pt) -- (110.856pt,0.0pt) -- (92.38pt,0.0pt) -- (96.999pt,8.0001pt) -- (92.38pt,16.0002pt) -- (101.618pt,16.0002pt) -- cycle;\fill[thick,fill=black!20] (-32.333pt,24.0003pt) -- (-36.952pt,16.0002pt) -- (-46.19pt,16.0002pt) -- (-36.952pt,0.0pt) -- (-32.333pt,8.0001pt) -- (-23.095pt,8.0001pt) -- cycle;\fill[thick,fill=black!20] (13.857pt,24.0003pt) -- (23.095pt,8.0001pt) -- (13.857pt,8.0001pt) -- (9.238pt,0.0pt) -- (0.0pt,16.0002pt) -- (9.238pt,16.0002pt) -- cycle;\fill[thick,fill=black!20] (46.19pt,16.0002pt) -- (50.809pt,8.0001pt) -- (46.19pt,0.0pt) -- (64.666pt,0.0pt) -- (73.904pt,16.0002pt) -- (64.666pt,16.0002pt) -- (60.047pt,24.0003pt) -- (55.428pt,16.0002pt) -- cycle;\fill[thick,fill=black!20] (-106.237pt,8.0001pt) -- (-115.475pt,8.0001pt) -- (-110.856pt,16.0002pt) -- (-101.618pt,16.0002pt) -- cycle;\draw[thin,-my] (-110.856pt,8.0001pt) arc (180:120:4.619pt);\draw[thin,-my] (-108.5465pt,12.0002pt) arc (120:60:4.619pt);\draw[thin,-my] (-36.952pt,8.0001pt) arc (180:120:4.619pt);\draw[thin,-my] (-34.6425pt,12.0002pt) arc (120:60:4.619pt);\draw[thin,-my] (-39.2615pt,12.0002pt) arc (240:300:4.619pt);\draw[thin,-my] (-34.6425pt,12.0002pt) arc (300:360:4.619pt);\draw[thin,-my] (6.9285pt,12.0002pt) arc (240:300:4.619pt);\draw[thin,-my] (11.5475pt,12.0002pt) arc (300:360:4.619pt);\draw[thin,-my] (9.238pt,8.0001pt) arc (180:120:4.619pt);\draw[thin,-my] (11.5475pt,12.0002pt) arc (120:60:4.619pt);\draw[thin,-my] (6.9285pt,4.0001pt) arc (120:60:4.619pt);\draw[thin,-my] (18.476pt,8.0001pt) arc (180:120:4.619pt);\draw[thin,-my] (53.1185pt,12.0002pt) arc (240:300:4.619pt);\draw[thin,-my] (57.7375pt,12.0002pt) arc (300:360:4.619pt);\draw[thin,-my] (55.428pt,8.0001pt) arc (180:120:4.619pt);\draw[thin,-my] (57.7375pt,12.0002pt) arc (120:60:4.619pt);\draw[thin,-my] (64.666pt,8.0001pt) arc (180:120:4.619pt);\draw[thin,-my] (53.1185pt,4.0001pt) arc (120:60:4.619pt);\draw[thin,-my] (99.3085pt,12.0002pt) arc (240:300:4.619pt);\draw[thin,-my] (103.9275pt,12.0002pt) arc (300:360:4.619pt);\draw[thin,-my] (101.618pt,8.0001pt) arc (180:120:4.619pt);\draw[thin,-my] (103.9275pt,12.0002pt) arc (120:60:4.619pt);\draw[thin,-my] (110.856pt,8.0001pt) arc (180:120:4.619pt);\draw[thin,-my] (99.3085pt,4.0001pt) arc (120:60:4.619pt);\draw[thin,-my] (99.3085pt,4.0001pt) arc (300:360:4.619pt);\draw[thin,-my] (108.5465pt,12.0002pt) arc (240:300:4.619pt);\draw[thin,-my] (145.4985pt,12.0002pt) arc (240:300:4.619pt);\draw[thin,-my] (150.1175pt,12.0002pt) arc (300:360:4.619pt);\draw[thin,-my] (147.808pt,8.0001pt) arc (180:120:4.619pt);\draw[thin,-my] (150.1175pt,12.0002pt) arc (120:60:4.619pt);\draw[thin,-my] (154.7365pt,12.0002pt) arc (240:300:4.619pt);\draw[thin,-my] (157.046pt,8.0001pt) arc (180:120:4.619pt);\draw[thin,-my] (145.4985pt,4.0001pt) arc (300:360:4.619pt);\draw[thin,-my] (145.4985pt,4.0001pt) arc (120:60:4.619pt);\draw[thin,-my] (152.427pt,0.0pt) arc (180:120:4.619pt);\draw[thin,-my] (154.7365pt,4.0001pt) arc (120:60:4.619pt);\node at (-50.809pt,8.0001pt) {{$\stackrel{\text{($\ast$)}}{\to}$}};\node at (-4.619pt,8.0001pt) {{$\stackrel{\text{\tiny(A)}}{\to}$}};\node at (41.571pt,8.0001pt) {{$\stackrel{\text{($\ast$)}}{\to}$}};\node at (87.761pt,8.0001pt) {{$\stackrel{\text{\tiny(B)}}{\to}$}};\node at (133.951pt,8.0001pt) {{$\stackrel{\text{\tiny(A)}}{\to}$}};\node at (-96.999pt,8.0001pt) {{$\stackrel{\text{\tiny(A)}}{\to}$}};\draw[thin,-my] (-83.142pt,8.0001pt) arc (180:120:4.619pt);\draw[thin,-my] (-85.4515pt,12.0002pt) arc (240:300:4.619pt);\draw[thin,-my] (-80.8325pt,12.0002pt) arc (300:360:4.619pt);\draw[thin,-my] (-80.8325pt,12.0002pt) arc (120:60:4.619pt);\end{tikzpicture}
};
\node[] (BOTTOM) at (1.57,0) {
\begin{tikzpicture}\fill[thick,fill=black!20] (415.71pt,16.0002pt) -- (424.948pt,16.0002pt) -- (429.567pt,24.0003pt) -- (434.186pt,16.0002pt) -- (443.424pt,16.0002pt) -- (438.805pt,8.0001pt) -- (448.043pt,8.0001pt) -- (438.805pt,-8.0001pt) -- (420.329pt,-8.0001pt) -- (424.948pt,0.0pt) -- (415.71pt,0.0pt) -- (420.329pt,8.0001pt) -- cycle;\fill[thick,fill=black!20] (369.52pt,16.0002pt) -- (378.758pt,16.0002pt) -- (383.377pt,24.0003pt) -- (387.996pt,16.0002pt) -- (397.234pt,16.0002pt) -- (392.615pt,8.0001pt) -- (401.853pt,8.0001pt) -- (392.615pt,-8.0001pt) -- (374.139pt,-8.0001pt) -- (378.758pt,0.0pt) -- (369.52pt,0.0pt) -- (374.139pt,8.0001pt) -- cycle;\fill[thick,fill=black!20] (277.14pt,16.0002pt) -- (286.378pt,16.0002pt) -- (290.997pt,24.0003pt) -- (295.616pt,16.0002pt) -- (304.854pt,16.0002pt) -- (300.235pt,8.0001pt) -- (304.854pt,0.0pt) -- (295.616pt,0.0pt) -- (290.997pt,-8.0001pt) -- (286.378pt,0.0pt) -- (277.14pt,0.0pt) -- (281.759pt,8.0001pt) -- cycle;\fill[thick,fill=black!20] (230.95pt,16.0002pt) -- (240.188pt,16.0002pt) -- (244.807pt,24.0003pt) -- (249.426pt,16.0002pt) -- (258.664pt,16.0002pt) -- (254.045pt,8.0001pt) -- (258.664pt,0.0pt) -- (249.426pt,0.0pt) -- (244.807pt,-8.0001pt) -- (240.188pt,0.0pt) -- (230.95pt,0.0pt) -- (235.569pt,8.0001pt) -- cycle;\fill[thick,fill=black!20] (184.76pt,16.0002pt) -- (193.998pt,16.0002pt) -- (198.617pt,24.0003pt) -- (203.236pt,16.0002pt) -- (212.474pt,16.0002pt) -- (203.236pt,0.0pt) -- (184.76pt,0.0pt) -- (189.379pt,8.0001pt) -- cycle;\fill[thick,fill=black!20] (323.33pt,16.0002pt) -- (332.568pt,16.0002pt) -- (337.187pt,24.0003pt) -- (341.806pt,16.0002pt) -- (351.044pt,16.0002pt) -- (346.425pt,8.0001pt) -- (355.663pt,8.0001pt) -- (346.425pt,-8.0001pt) -- (327.949pt,-8.0001pt) -- (332.568pt,0.0pt) -- (323.33pt,0.0pt) -- (327.949pt,8.0001pt) -- cycle;\draw[thin,-my] (191.6885pt,12.0002pt) arc (240:300:4.619pt);\draw[thin,-my] (196.3075pt,12.0002pt) arc (300:360:4.619pt);\draw[thin,-my] (193.998pt,8.0001pt) arc (180:120:4.619pt);\draw[thin,-my] (196.3075pt,12.0002pt) arc (120:60:4.619pt);\draw[thin,-my] (200.9265pt,12.0002pt) arc (240:300:4.619pt);\draw[thin,-my] (203.236pt,8.0001pt) arc (180:120:4.619pt);\draw[thin,-my] (191.6885pt,4.0001pt) arc (300:360:4.619pt);\draw[thin,-my] (191.6885pt,4.0001pt) arc (120:60:4.619pt);\draw[thin,-my] (198.617pt,0.0pt) arc (180:120:4.619pt);\draw[thin,-my] (200.9265pt,4.0001pt) arc (120:60:4.619pt);\draw[thin,-my] (196.3075pt,4.0001pt) arc (240:300:4.619pt);\draw[thin,-my] (200.9265pt,4.0001pt) arc (300:360:4.619pt);\draw[thin,-my] (237.8785pt,12.0002pt) arc (240:300:4.619pt);\draw[thin,-my] (242.4975pt,12.0002pt) arc (300:360:4.619pt);\draw[thin,-my] (240.188pt,8.0001pt) arc (180:120:4.619pt);\draw[thin,-my] (242.4975pt,12.0002pt) arc (120:60:4.619pt);\draw[thin,-my] (247.1165pt,12.0002pt) arc (240:300:4.619pt);\draw[thin,-my] (249.426pt,8.0001pt) arc (180:120:4.619pt);\draw[thin,-my] (237.8785pt,4.0001pt) arc (300:360:4.619pt);\draw[thin,-my] (237.8785pt,4.0001pt) arc (120:60:4.619pt);\draw[thin,-my] (242.4975pt,4.0001pt) arc (240:300:4.619pt);\draw[thin,-my] (244.807pt,0.0pt) arc (180:120:4.619pt);\draw[thin,-my] (247.1165pt,4.0001pt) arc (300:360:4.619pt);\draw[thin,-my] (247.1165pt,4.0001pt) arc (120:60:4.619pt);\draw[thin,-my] (284.0685pt,12.0002pt) arc (240:300:4.619pt);\draw[thin,-my] (288.6875pt,12.0002pt) arc (300:360:4.619pt);\draw[thin,-my] (286.378pt,8.0001pt) arc (180:120:4.619pt);\draw[thin,-my] (288.6875pt,12.0002pt) arc (120:60:4.619pt);\draw[thin,-my] (284.0685pt,4.0001pt) arc (300:360:4.619pt);\draw[thin,-my] (284.0685pt,4.0001pt) arc (120:60:4.619pt);\draw[thin,-my] (290.997pt,0.0pt) arc (180:120:4.619pt);\draw[thin,-my] (288.6875pt,4.0001pt) arc (240:300:4.619pt);\draw[thin,-my] (293.3065pt,4.0001pt) arc (300:360:4.619pt);\draw[thin,-my] (293.3065pt,4.0001pt) arc (120:60:4.619pt);\draw[thin,-my] (295.616pt,8.0001pt) arc (180:120:4.619pt);\draw[thin,-my] (293.3065pt,12.0002pt) arc (240:300:4.619pt);\draw[thin,-my] (288.6875pt,-4.0001pt) arc (120:60:4.619pt);\draw[thin,-my] (300.235pt,0.0pt) arc (180:120:4.619pt);\node at (364.901pt,8.0001pt) {{$\stackrel{\text{\tiny(B)}}{\to}$}};\node at (411.091pt,8.0001pt) {{$\stackrel{\text{\tiny(A)}}{\to}$}};\node at (466.519pt,8.0001pt) {{$\stackrel{\cdots}{\to}$}};\draw[thin,-my] (330.2585pt,12.0002pt) arc (240:300:4.619pt);\draw[thin,-my] (334.8775pt,12.0002pt) arc (300:360:4.619pt);\draw[thin,-my] (334.8775pt,12.0002pt) arc (120:60:4.619pt);\draw[thin,-my] (332.568pt,8.0001pt) arc (180:120:4.619pt);\draw[thin,-my] (330.2585pt,4.0001pt) arc (120:60:4.619pt);\draw[thin,-my] (330.2585pt,4.0001pt) arc (300:360:4.619pt);\draw[thin,-my] (334.8775pt,4.0001pt) arc (240:300:4.619pt);\draw[thin,-my] (337.187pt,0.0pt) arc (180:120:4.619pt);\draw[thin,-my] (339.4965pt,4.0001pt) arc (300:360:4.619pt);\draw[thin,-my] (339.4965pt,4.0001pt) arc (120:60:4.619pt);\draw[thin,-my] (346.425pt,0.0pt) arc (180:120:4.619pt);\draw[thin,-my] (334.8775pt,-4.0001pt) arc (120:60:4.619pt);\draw[thin,-my] (376.4485pt,12.0002pt) arc (240:300:4.619pt);\draw[thin,-my] (381.0675pt,12.0002pt) arc (300:360:4.619pt);\draw[thin,-my] (378.758pt,8.0001pt) arc (180:120:4.619pt);\draw[thin,-my] (381.0675pt,12.0002pt) arc (120:60:4.619pt);\draw[thin,-my] (376.4485pt,4.0001pt) arc (300:360:4.619pt);\draw[thin,-my] (376.4485pt,4.0001pt) arc (120:60:4.619pt);\draw[thin,-my] (383.377pt,0.0pt) arc (180:120:4.619pt);\draw[thin,-my] (381.0675pt,4.0001pt) arc (240:300:4.619pt);\draw[thin,-my] (385.6865pt,4.0001pt) arc (300:360:4.619pt);\draw[thin,-my] (385.6865pt,4.0001pt) arc (120:60:4.619pt);\draw[thin,-my] (392.615pt,0.0pt) arc (180:120:4.619pt);\draw[thin,-my] (381.0675pt,-4.0001pt) arc (120:60:4.619pt);\draw[thin,-my] (341.806pt,8.0001pt) arc (180:120:4.619pt);\draw[thin,-my] (339.4965pt,12.0002pt) arc (240:300:4.619pt);\draw[thin,-my] (387.996pt,8.0001pt) arc (180:120:4.619pt);\draw[thin,-my] (385.6865pt,12.0002pt) arc (240:300:4.619pt);\draw[thin,-my] (381.0675pt,-4.0001pt) arc (300:360:4.619pt);\draw[thin,-my] (390.3055pt,4.0001pt) arc (240:300:4.619pt);\draw[thin,-my] (434.186pt,-8.0001pt) arc (180:120:4.619pt);\draw[thin,-my] (436.4955pt,-4.0001pt) arc (120:60:4.619pt);\draw[thin,-my] (422.6385pt,12.0002pt) arc (240:300:4.619pt);\draw[thin,-my] (427.2575pt,12.0002pt) arc (300:360:4.619pt);\draw[thin,-my] (424.948pt,8.0001pt) arc (180:120:4.619pt);\draw[thin,-my] (427.2575pt,12.0002pt) arc (120:60:4.619pt);\draw[thin,-my] (431.8765pt,12.0002pt) arc (240:300:4.619pt);\draw[thin,-my] (434.186pt,8.0001pt) arc (180:120:4.619pt);\draw[thin,-my] (422.6385pt,4.0001pt) arc (300:360:4.619pt);\draw[thin,-my] (422.6385pt,4.0001pt) arc (120:60:4.619pt);\draw[thin,-my] (427.2575pt,4.0001pt) arc (240:300:4.619pt);\draw[thin,-my] (431.8765pt,4.0001pt) arc (300:360:4.619pt);\draw[thin,-my] (429.567pt,0.0pt) arc (180:120:4.619pt);\draw[thin,-my] (431.8765pt,4.0001pt) arc (120:60:4.619pt);\draw[thin,-my] (436.4955pt,4.0001pt) arc (240:300:4.619pt);\draw[thin,-my] (427.2575pt,-4.0001pt) arc (300:360:4.619pt);\draw[thin,-my] (427.2575pt,-4.0001pt) arc (120:60:4.619pt);\draw[thin,-my] (438.805pt,0.0pt) arc (180:120:4.619pt);\node at (318.711pt,8.0001pt) {{$\stackrel{\text{($\ast$)}}{\to}$}};\node at (272.521pt,8.0001pt) {{$\stackrel{\text{\tiny(A)}}{\to}$}};\node at (226.331pt,8.0001pt) {{$\stackrel{\text{($\ast$)}}{\to}$}};\node at (180.141pt,8.0001pt) {{$\stackrel{\text{\tiny(A)}}{\to}$}};\fill[white] (161.665pt,8.0001pt) circle (0.1pt);\end{tikzpicture}
};
\draw[-] (6,1.3) .. controls (9,1.3) and (7.25,0.8) .. (1.25,0.8);
\draw[-] (1.25,0.8) .. controls (-4.75,0.8) and (-6.5,-0.1) .. (-3.3,-0.1);
\end{tikzpicture}

The process of repeatedly applying \big[(A), $(\ast)$, (A), $(\ast)$, (B), (A)\big]
can be continued infinitely while extending the $f$\dash flat region to the lower right.
This is a contradiction to the finite size of~$\Delta$.
\end{proof}

By Lemma~\ref{le:flow-decomp} we have a decomposition 
$d=\sum_j \alpha_j c_j$ with $\alpha_j >0$ and cycles~$c_j$ in $G$ that are 
contained in $\SUPP(d)$. 
According to $(\ast)$, each $c_j$ runs in a single $f$\dash flatspace
and does not cross any $f$\dash flatspace border. 
Let $c$ be any of the cycles $c_j$ and suppose that $c$ runs inside the $f$\dash flatspace~$L$.
Claim~\ref{cla:connectedness:noconclockcycles} implies that $c$ runs in counterclockwise direction.
We will next show that $L$ is a hexagon. 

Let $\gamma$ denote the polygon (without self-intersections) 
obtained from~$c$ by linearly interpolating between the successive white vertices of~$c$.
Following the white vertices of~$c$ (in counterclockwise order) reveals that 
two consecutive counterclockwise turns lead to an angle of 120° in~$\gamma$.
Further, an alternating sequence of clockwise and counterclockwise turns in~$c$ is represented by a line segment in~$\gamma$.
By an elementary geometric argument we see that $\gamma$ must be a hexagon.


Let $\tilde c$ be the counterclockwise cycle \emph{surrounding~$c$}: more specifically, 
$\tilde c$~consists of the clockwise antipodal contributions of all counterclockwise turns in $c$
and, additionally, of the necessary counterclockwise turns in between, as illustrated below:
\begin{center}
\begin{tikzpicture}\draw[thin,-my] (2.3095pt,-4.0001pt) arc (300:360:4.619pt);\draw[thin,-my] (4.619pt,0.0pt) arc (0:60:4.619pt);\draw[thin,-my] (2.3095pt,4.0001pt) arc (240:180:4.619pt);\draw[thin,-my] (0.0pt,8.0001pt) arc (0:60:4.619pt);\draw[thin,-my] (-2.3095pt,12.0002pt) arc (60:120:4.619pt);\draw[thin,-my] (-6.9285pt,12.0002pt) arc (300:240:4.619pt);\draw[thin,-my] (-11.5475pt,12.0002pt) arc (60:120:4.619pt);\draw[thin,-my] (-16.1665pt,12.0002pt) arc (120:180:4.619pt);\draw[thin,-my] (-18.476pt,8.0001pt) arc (0:-60:4.619pt);\draw[thin,-my] (-20.7855pt,4.0001pt) arc (120:180:4.619pt);\draw[thin,-my] (-23.095pt,0.0pt) arc (180:240:4.619pt);\draw[thin,-my] (-20.7855pt,-4.0001pt) arc (240:300:4.619pt);\draw[thin,-my] (-16.1665pt,-4.0001pt) arc (120:60:4.619pt);\draw[thin,-my] (-11.5475pt,-4.0001pt) arc (240:300:4.619pt);\draw[thin,-my] (-6.9285pt,-4.0001pt) arc (120:60:4.619pt);\draw[thin,-my] (-2.3095pt,-4.0001pt) arc (240:300:4.619pt);\draw[thin,-my] (9.238pt,-8.0001pt) arc (180:120:4.619pt);\draw[thin,-my] (11.5475pt,-4.0001pt) arc (300:360:4.619pt);\draw[thin,-my] (13.857pt,0.0pt) arc (0:60:4.619pt);\draw[thin,-my] (11.5475pt,4.0001pt) arc (240:180:4.619pt);\draw[thin,-my] (9.238pt,8.0001pt) arc (0:60:4.619pt);\draw[thin,-my] (6.9285pt,12.0002pt) arc (240:180:4.619pt);\draw[thin,-my] (4.619pt,16.0002pt) arc (0:60:4.619pt);\draw[thin,-my] (2.3095pt,20.0003pt) arc (60:120:4.619pt);\draw[thin,-my] (-2.3095pt,20.0003pt) arc (300:240:4.619pt);\draw[thin,-my] (-6.9285pt,20.0003pt) arc (60:120:4.619pt);\draw[thin,-my] (-11.5475pt,20.0003pt) arc (300:240:4.619pt);\draw[thin,-my] (-16.1665pt,20.0003pt) arc (60:120:4.619pt);\draw[thin,-my] (-20.7855pt,20.0003pt) arc (120:180:4.619pt);\draw[thin,-my] (-23.095pt,16.0002pt) arc (0:-60:4.619pt);\draw[thin,-my] (-25.4045pt,12.0002pt) arc (120:180:4.619pt);\draw[thin,-my] (-27.714pt,8.0001pt) arc (0:-60:4.619pt);\draw[thin,-my] (-30.0235pt,4.0001pt) arc (120:180:4.619pt);\draw[thin,-my] (-32.333pt,0.0pt) arc (180:240:4.619pt);\draw[thin,-my] (-30.0235pt,-4.0001pt) arc (60:0:4.619pt);\draw[thin,-my] (-27.714pt,-8.0001pt) arc (180:240:4.619pt);\draw[thin,-my] (-25.4045pt,-12.0002pt) arc (240:300:4.619pt);\draw[thin,-my] (-20.7855pt,-12.0002pt) arc (120:60:4.619pt);\draw[thin,-my] (-16.1665pt,-12.0002pt) arc (240:300:4.619pt);\draw[thin,-my] (-11.5475pt,-12.0002pt) arc (120:60:4.619pt);\draw[thin,-my] (-6.9285pt,-12.0002pt) arc (240:300:4.619pt);\draw[thin,-my] (-2.3095pt,-12.0002pt) arc (120:60:4.619pt);\draw[thin,-my] (2.3095pt,-12.0002pt) arc (240:300:4.619pt);\draw[thin,-my] (6.9285pt,-12.0002pt) arc (300:360:4.619pt);\draw[rhrhombidraw] (-32.333pt,24.0003pt) -- (13.857pt,24.0003pt) ;\draw[rhrhombidraw] (-36.952pt,-16.0002pt) -- (18.476pt,-16.0002pt) ;\draw[rhrhombidraw] (-23.095pt,-24.0003pt) -- (-41.571pt,8.0001pt) ;\draw[rhrhombidraw] (-13.857pt,-24.0003pt) -- (-36.952pt,16.0002pt) ;\draw[rhrhombidraw] (-4.619pt,-24.0003pt) -- (-32.333pt,24.0003pt) ;\draw[rhrhombidraw] (4.619pt,-24.0003pt) -- (-27.714pt,32.0004pt) ;\draw[rhrhombidraw] (13.857pt,-24.0003pt) -- (-18.476pt,32.0004pt) ;\draw[rhrhombidraw] (-18.476pt,32.0004pt) -- (-41.571pt,-8.0001pt) ;\draw[rhrhombidraw] (4.619pt,-24.0003pt) -- (23.095pt,8.0001pt) ;\draw[rhrhombidraw] (23.095pt,-8.0001pt) -- (0.0pt,32.0004pt) ;\draw[rhrhombidraw] (-36.952pt,16.0002pt) -- (18.476pt,16.0002pt) ;\draw[rhrhombidraw] (-41.571pt,8.0001pt) -- (23.095pt,8.0001pt) ;\draw[rhrhombidraw] (-46.19pt,0.0pt) -- (27.714pt,0.0pt) ;\draw[rhrhombidraw] (-41.571pt,-8.0001pt) -- (23.095pt,-8.0001pt) ;\draw[rhrhombidraw] (-9.238pt,32.0004pt) -- (-36.952pt,-16.0002pt) ;\draw[rhrhombidraw] (0.0pt,32.0004pt) -- (-32.333pt,-24.0003pt) ;\draw[rhrhombidraw] (9.238pt,32.0004pt) -- (-23.095pt,-24.0003pt) ;\draw[rhrhombidraw] (13.857pt,24.0003pt) -- (-13.857pt,-24.0003pt) ;\draw[rhrhombidraw] (18.476pt,16.0002pt) -- (-4.619pt,-24.0003pt) ;\draw[rhrhombidraw] (18.476pt,-16.0002pt) -- (-9.238pt,32.0004pt) ;\end{tikzpicture}
\end{center}
The Flow Propagation Lemma~\ref{lem:flowpropagation} implies that all turns of $\tilde c$
lying inside~$L$ are contained in~$\SUPP(d)$.
Hence, by \eqref{eq:ikenstar}, $\tilde c$ cannot pass the border of~$L$.
Therefore, $\tilde c$ either lies completely inside~$L$ or completely outside~$L$.
If $\tilde c$ lies completely inside~$L$, we can form  the cycle surrounding $\tilde{c}$ 
and continue inductively, until 
we find a cycle $c' \subseteq \SUPP(d)$ which lies inside~$L$ and 
such that $\tilde{c}'$ lies outside~$L$. 
Since the polygon $\gamma'$ corresponding to $c'$ is a hexagon,
it follows that $L$ must be a hexagon. 
Summarizing, we see that the cycle $c'$ runs counterclockwise 
through the border triangles of a hexagon~$L$.
Such $c'$ is clearly $f$\dash secure.
Moreover, since $c' \subseteq \SUPP(d)$, we have $\dist(f+c',g) < \dist(f,g)$ 
by  to Lemma~\ref{cla:connectedness:morethanhalf}.
This proves Proposition~\ref{pro:connectedness:existscycle} in Situation~1.
\smallskip

\noindent{\bf Situation 2.}
We now treat the case where $d$ has nonzero throughput through some edge~$k$ 
of an $f$\dash flatspace~$L$. 
By ($\dag$), $k$ is not at the border of $\Delta$.
By Lemma~\ref{obs:borderentranceexit}, 
we can assume w.l.o.g.\ that $k$ is an $L$\dash entrance edge and $\inflow L k d >0$.
Let $p\subseteq\SUPP(d)$ be a turn in~$L$ starting at~$k$.

We will show that the following Algorithm~\ref{alg:connectedness} constructs a desired~$c$.

\begin{algorithm}[H]
\caption{}\nopar \label{alg:connectedness}
\begin{algorithmic}[1]
\REQUIRE $f,g \in P(\la,\mu,\nu)_\Z$, an edge~$k$ such that $\inflow L k d >0$, where $d:=g - f$,
                 and a turn~$p$ in $\SUPP(d)$ starting at~$k$.
\ENSURE An $f$\dash secure cycle $c$ such that $\dist(f+c,g) < \dist(f,g)$.
    \WHILE{$p$ does not contain a vertex more than once}
        \IF{one can append to $p$ a turn $\vartheta$ contained in $\SUPP(d)$ such that 
             $p$ after appending does not use a negative contribution in any $f$\dash flat rhombus\nopar\label{alg:connectedness:if}}
              \STATE Append $\vartheta$ to $p$.\nopar\label{alg:connectedness:canonical}
        \ELSE
              \STATE Append a clockwise turn followed by a counterclockwise turn to $p$.\nopar\label{alg:connectedness:swerve}
        \ENDIF
    \ENDWHILE\nopar\label{alg:connectedness:endwhile}
    \STATE Generate a cycle $c$ from the edges of~$p$ by ``truncation and concatenation''.\nopar\label{alg:connectedness:concat}
    \RETURN $c$.
\end{algorithmic}
\end{algorithm}

We postpone the definition of the procedure used in line~\ref{alg:connectedness:concat} for the construction of $c$ from~$p$.
Later on, we will give a precedence rule to determine 
what should happen in line~\ref{alg:connectedness:if} 
when both turns, clockwise and counterclockwise, are possible to append.

What is striking about Algorithm~\ref{alg:connectedness}
is that it is a priori unclear that line~\ref{alg:connectedness:swerve} can be executed (without $p$ leaving $\Delta$).
We next explain why this is the case.

To ease notation we index the intermediate results that occur during the construction of~$p$
by $p_0, p_1, \ldots$, where $p_i$ either has one or two more turns than~$p_{i-1}$,
depending on whether in the while loop there has been appended only one turn or (in case of line~\ref{alg:connectedness:swerve}) two turns
to~$p_{i-1}$.
The paths $q_i$ are defined such that each $p_{i+1}$ is the result of the concatenation of $p_i$ and $q_i$.
We denote by the term \emph{swerve} each $q_i$ that is not a single turn,
i.e.\ those $q_i$ that consist of a clockwise turn followed by a counterclockwise turn.
For a swerve $q_i$ we denote by $\varrho(q_i)$ the rhombus in which both turns of $q_i$ lie.

\begin{claim}
\label{cla:connectedness:welldefined}
For all $i$ we have the following properties:
\begin{remunerate}
 \item[(1)] Let $\rhpoulMrXourMl \in \{\rhpoulMr,\rhpourMl\}$ denote the last turn of~$p_{i}$ and
suppose that line~\ref{alg:connectedness:swerve} is about to be executed.
Then $\rhsoll \in E(\Delta)$ is not at the border of $\Delta$, which means that $q_i=\rhpcWrl$ can be appended to~$p_{i}$ in line~\ref{alg:connectedness:swerve} without leaving~$\Delta$.
 \item[(2)] The first and last edge of each $q_i$ are contained in $\SUPP(d)$.
 \item[(3)] Each $p_i$ does not use negative contributions in $f$\dash flat rhombi.
 \item[(4)] The rhombus $\varrho(q_i)$ is $f$\dash flat for each swerve~$q_i$.
\end{remunerate}
\end{claim}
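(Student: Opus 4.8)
The plan is to prove the four properties (1)--(4) by a single simultaneous induction on~$i$, since they are tightly coupled: (2) at step~$i$ feeds the flow--conservation argument giving (2) at step~$i+1$; (4) is precisely what makes the antipodal--contribution argument for the second half of (2) go through; (3) is the running invariant that later guarantees the generated cycle is $f$-secure; and (1) is exactly what makes line~\ref{alg:connectedness:swerve} executable. The base case is $p_0=p$, a single turn in $\SUPP(d)$ entering the $f$-flatspace~$L$ through its entrance edge~$k$: here (1), (2), (4) are vacuous, and (3) holds because a single turn is a negative slack contribution only if it is a counterclockwise turn at an acute angle, and at an $L$-entrance edge the rhombus carrying such a turn has its diagonal on the border of~$L$ (by the mechanism behind Proposition~\ref{cla:outerccdir}), hence is not $f$-flat.

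If line~\ref{alg:connectedness:canonical} is executed, then $q_i=\vartheta$ is a single turn in $\SUPP(d)$, so (2) is immediate, (3) for $p_{i+1}$ is literally the condition tested in line~\ref{alg:connectedness:if}, and (1), (4) are vacuous. The whole weight therefore sits in the swerve case. Write~$w$ for the white vertex at which $p_i$ ends. Up to the rotational symmetry built into the pictorial notation I may take the last turn of $p_i$ to be $\rhpoulMr$ (that the last turn always has shape $\rhpoulMrXourMl$ when a swerve is forced is itself part of the induction, preserved because both kinds of step append turns of a controlled type), so $q_i$ begins with the clockwise turn $\rhpcWr$ and $\varrho(q_i)$ is the rhombus spanned by $\rhpcWr$ and the counterclockwise turn completing $\rhpcWrl$. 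For the first half of (2): by (2) (or the canonical condition) at the previous step the last edge of $p_i$ lands in $\SUPP(d)$, and $w$ is interior at this stage (else the swerve could not be formed at all), so it has exactly two neighbours in~$G$ and flow conservation at~$w$ forces the unique forward edge out of~$w$, namely the first edge of~$q_i$, into $\SUPP(d)$.

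For (4) I argue by contradiction: if $\varrho(q_i)$ were not $f$-flat, then combining the failure of line~\ref{alg:connectedness:if} with the hexagon equality~\eqref{cla:BZ} and Corollary~\ref{cor:BZ2} (applied to the hexagons assembled from the rhombi incident to~$w$) pins down the $f$-flatness pattern around~$w$, and in that pattern one of the two forward turns at~$w$ can be appended in line~\ref{alg:connectedness:canonical} (possibly after invoking the precedence rule), contradiction. Given (4), $d=g-f$ is $f$-hive preserving (as $B$ is convex and $f,g\in B$), so $\s{\varrho(q_i)}d\ge 0$; using this slack inequality together with flow conservation inside the $f$-flat rhombus $\varrho(q_i)$, starting from the flow already forced into it through the first edge of~$q_i$, pushes flow out through the exit edge of~$q_i$, which is the second half of (2) (Lemma~\ref{cla:negimpliespos} on antipodal contributions and Lemma~\ref{lem:flowpropagation} handle the bookkeeping). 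For (3): $q_i$ ends with a counterclockwise turn and contains no two consecutive clockwise turns, so the only new negative contribution $p_{i+1}$ could use is $\rhpoulMrr$ in the rhombus~$\rhc$ spanned by the last turn of~$p_i$ and $\rhpcWr$, and $\rhc$ is not $f$-flat, since if it were then (again via~\eqref{cla:BZ}) the alternative counterclockwise forward turn at~$w$ would have been appendable in line~\ref{alg:connectedness:canonical}, contradicting the failure of line~\ref{alg:connectedness:if}. Finally for (1): were $\rhsoll$ on the border of~$\Delta$, then $\varrho(q_i)$, being $f$-flat by (4), would sit in a flatspace reaching the border, and propagating flow from the negative contribution now known to be in $\SUPP(d)$ via Lemma~\ref{lem:flowpropagation} would produce positive $d$-throughput through a border edge, contradicting~(\dag); this is the exact analogue of the boundary argument of Claim~\ref{cla:border}.

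The main obstacle is the step inside the swerve case that reads the $f$-flatness pattern around~$w$ off the failure of line~\ref{alg:connectedness:if}. This forces an exhaustive case distinction over which forward turns at~$w$ lie in $\SUPP(d)$ and which rhombi incident to~$w$ and its neighbours are $f$-flat, run in complete parallel with the case analysis of Section~\ref{subsec:shopath} (Claims~\ref{cla:basicreroute}--\ref{claim:fino}): the precedence rule for line~\ref{alg:connectedness:if} must be set up so that in each branch exactly one of the two continuations is viable, and, as in the Shortest Path Theorem, there appears to be no way to compress this bookkeeping.
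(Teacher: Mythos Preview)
Your inductive scaffolding is right, but the swerve case has several concrete gaps that the paper's proof handles differently and more simply.

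First, the order is off. Property (1) must be established \emph{before} (4): the rhombus $\varrho(q_i)$ only exists once you know the swerve does not leave~$\Delta$, so deducing (1) from (4) is circular. The paper's argument for (1) is elementary and uses only~$(\dag)$ and the induction hypothesis on (2): if $\rhsoll$ lay on the border, then $\delta(\rhsoll,d)=0$ together with $\rhacW(d)>0$ forces $\rhpcWl\subseteq\SUPP(d)$, and since the rhombus $\rhrholl$ does not exist, appending $\rhpcWl$ creates no negative contribution, so line~\ref{alg:connectedness:canonical} would have fired. No flow propagation, no analogy with Claim~\ref{cla:border}.

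Second, the heart of the swerve analysis is a two-case split, not the exhaustive bookkeeping you anticipate, and the key tool is Lemma~\ref{lem:connectedness:hivepreservingcyclesonG}, not the hexagon equality. Since $\rhacW(d)>0$, at least one of $\rhpcWl$, $\rhpcWr$ lies in $\SUPP(d)$. Case~(a): $\rhpcWr\subseteq\SUPP(d)$ but is blocked; then $\rhc$ is $f$-flat, Lemma~\ref{lem:connectedness:hivepreservingcyclesonG} forces $\rhrholl$ non-$f$-flat, and Lemma~\ref{cla:negimpliespos} on $\rhpoulMrr$ gives $\rhpcWl\subseteq\SUPP(d)$, which can now be appended --- contradiction, so case~(a) is impossible. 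Case~(b): $\rhpcWl\subseteq\SUPP(d)$ but is blocked; then $\rhrholl$ is $f$-flat directly, which is (4), and Lemma~\ref{cla:negimpliespos} gives the last edge of the swerve in $\SUPP(d)$, which is (2). No hexagon equality, no precedence rule --- the latter is specified only \emph{after} this claim and is used solely in Claim~\ref{cla:LAST}.

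Third, your check of (3) is incomplete. You verify only the potential negative contribution $\rhpoulMrr$ in $\rhc$, but the counterclockwise turn $\rhpollMl$ at the end of the swerve is itself a negative contribution at an acute angle of the rhombus $\rhrhll$. Both possibilities are excluded in the paper by two separate applications of Lemma~\ref{lem:connectedness:hivepreservingcyclesonG} (to the trapezoids built from $\rhrholl$ together with $\rhc$, respectively $\rhrhll$).
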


Before proving Claim~\ref{cla:connectedness:welldefined} we start out with a fairly easy lemma that will prove useful.

\begin{lemma}
\label{lem:connectedness:hivepreservingcyclesonG}
Given a walk $p$ in $G$ starting with a turn at a side of an $f$\dash flatspace, 
for some fixed $f\in B$.
Further assume that $p$ does not use negative contributions in $f$\dash flat rhombi.
If the trapezoid
\begin{tikzpictured}\draw[rhrhombidraw] (0.0pt,0.0pt) -- (9.238pt,0.0pt) -- (13.857pt,8.0001pt) -- (9.238pt,16.0002pt) -- cycle;\fill (0.0pt,0.0pt) circle (1pt);\fill (9.238pt,0.0pt) circle (1pt);\fill (4.619pt,8.0001pt) circle (1pt);\fill (13.857pt,8.0001pt) circle (1pt);\fill (9.238pt,16.0002pt) circle (1pt);\end{tikzpictured}
consists of two overlapping $f$\dash flat rhombi, then $p$ does not end with one of the two turns
\begin{tikzpictured}\draw[rhrhombidraw] (0.0pt,0.0pt) -- (9.238pt,0.0pt) -- (13.857pt,8.0001pt) -- (9.238pt,16.0002pt) -- cycle;\fill (0.0pt,0.0pt) circle (1pt);\fill (9.238pt,0.0pt) circle (1pt);\fill (4.619pt,8.0001pt) circle (1pt);\fill (13.857pt,8.0001pt) circle (1pt);\fill (9.238pt,16.0002pt) circle (1pt);\draw[thin,-my] (4.619pt,16.0002pt) arc (180:240:4.619pt);\draw[thin,-my] (2.3095pt,12.0002pt) arc (120:60:4.619pt);\end{tikzpictured}%
.
\end{lemma}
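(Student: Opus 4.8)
The plan is to argue by contradiction. Suppose that the walk $p$ ends with one of the two displayed turns. Both of these turns live in the single downright hive triangle $X$ that is glued to the trapezoid along the left side $a$ of its topmost (upright) hive triangle $T$, and both terminate at the white vertex $w$ of $a$; so the assumption says precisely that $p$ finishes by running through $X$ into $w$. Write $\varrho$ for the lower and $\varrho'$ for the upper of the two overlapping $f$\dash flat rhombi making up the trapezoid (so $T\subseteq\varrho'$), and put $\tau:=X\cup T$, which is the rhombus whose diagonal is exactly $a$.

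First I would pin down the local slack situation around $T$. Applying Corollary~\ref{cor:BZ2} and the hexagon equality~\eqref{cla:BZ} to $\varrho$, $\varrho'$ and the hive rhombi overlapping them across $a$ forces the slacks of the rhombi adjacent to $T$, and in particular decides whether $\tau$ is $f$\dash flat. This is exactly the input that controls which turnvertices are legal continuations near $w$ for a walk obeying the no\dash negative\dash contribution hypothesis.

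Next I would trace $p$ backwards from its last turn in $X$. Because $p$ uses no negative contribution in an $f$\dash flat rhombus, the slack bookkeeping behind the antipodal\dash contribution Lemma~\ref{cla:negimpliespos} and the flow\dash propagation Lemma~\ref{lem:flowpropagation} forces the turns immediately preceding the last one; propagating this step by step along the flat rhombi $\varrho$ and $\varrho'$, and using convexity of $f$\dash flatspaces (Lemma~\ref{lem:flatspaceprop}) together with the counterclockwise\dash traversal phenomenon underlying Proposition~\ref{cla:outerccdir}, shows that $p$ would ultimately be forced either to cross $\varrho$ or $\varrho'$ in the forbidden (negative) direction, or to have entered the flatspace containing the trapezoid in a way forbidden for such walks (cf.\ Proposition~\ref{cla:outerccdir}). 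Both alternatives contradict the standing hypotheses, the latter because $p$ is assumed to begin with a turn at a side of an $f$\dash flatspace; hence no such $p$ exists.

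I expect the only real work to be the case bookkeeping: there are only two or three admissible ways $p$ could have approached $w$ through $X$, and each of them must be ruled out in turn. Each case is short and uses nothing beyond the local lemmas already available, so the main obstacle is purely a matter of keeping the pictorial turn notation straight, consistent with this being a small preparatory lemma rather than a substantial result.
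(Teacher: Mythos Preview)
Your overall shape is right: argue by contradiction, use the hexagon equality to enlarge the $f$\dash flat region around the trapezoid, and then look one step backwards from the last turn. But two of the tools you propose to invoke do not apply here, and once you remove them the argument collapses to something much shorter than you suggest.

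Lemma~\ref{cla:negimpliespos} and Lemma~\ref{lem:flowpropagation} are statements about the support of a \emph{flow} $d\in\oF(G)$ satisfying slack inequalities; they say nothing about a \emph{walk}~$p$. Likewise Proposition~\ref{cla:outerccdir} concerns turnpaths in the residual digraph $\resf f$, not an arbitrary walk in $G$ that merely avoids negative contributions. There is no entrance/exit rule for such a walk to violate, so your second alternative in the dichotomy is vacuous.

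What actually finishes the proof after the first step is a direct enumeration. Corollary~\ref{cor:BZ2} applied to the two given $f$\dash flat rhombi forces the two rhombi completing the hexagon on the other side of the shared diagonal to be $f$\dash flat as well; in particular the rhombus $\tau$ with diagonal $a$ is $f$\dash flat, and so is its neighbour. Now there are exactly three turns that can precede either of the two forbidden ending turns (one counterclockwise and two clockwise predecessors into the black vertex of $X$), and for each of them the resulting last two turns constitute a negative slack contribution in one of the rhombi just shown to be $f$\dash flat. That is the whole argument: no propagation, no tracing further back, no appeal to flatspace traversal. Your sketch overshoots the required work and leans on lemmas that are not applicable to walks.
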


{\em Proof}.
According to the hexagon inequality~(\ref{cla:BZ}), both trapezoids
\begin{tikzpictured}\draw[rhrhombidraw] (0.0pt,0.0pt) -- (9.238pt,0.0pt) -- (13.857pt,8.0001pt) -- (9.238pt,16.0002pt) -- cycle;\fill (0.0pt,0.0pt) circle (1pt);\fill (9.238pt,0.0pt) circle (1pt);\fill (4.619pt,8.0001pt) circle (1pt);\fill (13.857pt,8.0001pt) circle (1pt);\fill (9.238pt,16.0002pt) circle (1pt);\draw[rhrhombidraw] (0.0pt,0.0pt) -- (-4.619pt,8.0001pt) -- (0.0pt,16.0002pt) -- (9.238pt,16.0002pt) ;\fill (-4.619pt,8.0001pt) circle (1pt);\fill (0.0pt,16.0002pt) circle (1pt);\end{tikzpictured}
are $f$\dash flat.
Note that the following three possible cases, which could precede
\begin{tikzpictured}\draw[rhrhombidraw] (0.0pt,0.0pt) -- (9.238pt,0.0pt) -- (13.857pt,8.0001pt) -- (9.238pt,16.0002pt) -- cycle;\fill (0.0pt,0.0pt) circle (1pt);\fill (9.238pt,0.0pt) circle (1pt);\fill (4.619pt,8.0001pt) circle (1pt);\fill (13.857pt,8.0001pt) circle (1pt);\fill (9.238pt,16.0002pt) circle (1pt);\draw[thin,-my] (4.619pt,16.0002pt) arc (180:240:4.619pt);\draw[thin,-my] (2.3095pt,12.0002pt) arc (120:60:4.619pt);\end{tikzpictured}%
, all use negative contributions in $f$\dash flat rhombi, which contradicts our assumption:
\[
\begin{tikzpicture}\draw[rhrhombidraw] (0.0pt,0.0pt) -- (9.238pt,0.0pt) -- (13.857pt,8.0001pt) -- (9.238pt,16.0002pt) -- cycle;\fill (0.0pt,0.0pt) circle (1pt);\fill (9.238pt,0.0pt) circle (1pt);\fill (4.619pt,8.0001pt) circle (1pt);\fill (13.857pt,8.0001pt) circle (1pt);\fill (9.238pt,16.0002pt) circle (1pt);\draw[rhrhombidraw] (0.0pt,0.0pt) -- (-4.619pt,8.0001pt) -- (0.0pt,16.0002pt) -- (9.238pt,16.0002pt) ;\fill (-4.619pt,8.0001pt) circle (1pt);\fill (0.0pt,16.0002pt) circle (1pt);\draw[thin,-my] (4.619pt,16.0002pt) arc (180:240:4.619pt);\end{tikzpicture}
\hspace{1cm}
\begin{tikzpicture}\draw[rhrhombidraw] (0.0pt,0.0pt) -- (9.238pt,0.0pt) -- (13.857pt,8.0001pt) -- (9.238pt,16.0002pt) -- cycle;\fill (0.0pt,0.0pt) circle (1pt);\fill (9.238pt,0.0pt) circle (1pt);\fill (4.619pt,8.0001pt) circle (1pt);\fill (13.857pt,8.0001pt) circle (1pt);\fill (9.238pt,16.0002pt) circle (1pt);\draw[rhrhombidraw] (0.0pt,0.0pt) -- (-4.619pt,8.0001pt) -- (0.0pt,16.0002pt) -- (9.238pt,16.0002pt) ;\fill (-4.619pt,8.0001pt) circle (1pt);\fill (0.0pt,16.0002pt) circle (1pt);\draw[thin,-my] (-2.3095pt,12.0002pt) arc (240:300:4.619pt) arc (120:60:4.619pt);\end{tikzpicture}
\hspace{1cm}
\begin{tikzpicture}\draw[rhrhombidraw] (0.0pt,0.0pt) -- (9.238pt,0.0pt) -- (13.857pt,8.0001pt) -- (9.238pt,16.0002pt) -- cycle;\fill (0.0pt,0.0pt) circle (1pt);\fill (9.238pt,0.0pt) circle (1pt);\fill (4.619pt,8.0001pt) circle (1pt);\fill (13.857pt,8.0001pt) circle (1pt);\fill (9.238pt,16.0002pt) circle (1pt);\draw[rhrhombidraw] (0.0pt,0.0pt) -- (-4.619pt,8.0001pt) -- (0.0pt,16.0002pt) -- (9.238pt,16.0002pt) ;\fill (-4.619pt,8.0001pt) circle (1pt);\fill (0.0pt,16.0002pt) circle (1pt);\draw[thin,-my] (0.0pt,8.0001pt) arc (180:120:4.619pt) arc (120:60:4.619pt);\end{tikzpicture}
\qquad\endproof
\]

\begin{prooff}[Proof of Claim~\textup{\ref{cla:connectedness:welldefined}}]
We prove all claims simultaneously by induction on~$i$.
If $q_{i-1}$ is a single turn, then $q_{i-1}$ is supported by~$d$
and (by definition of the if-clause in Algorithm~\ref{alg:connectedness}) $p_i$
does not use any negative contributions in $f$\dash flat rhombi, which proves (2) and (3) in this case.

It remains to consider the case where $q_{i-1}$ is a swerve, that is, line~\ref{alg:connectedness:swerve} is about to execute.
Let $\rhpoulMrXourMl \in \{\rhpoulMr,\rhpourMl\}$ be the last turn of~$p_{i-1}$.
The induction hypothesis (2) ensures $\rhacW(d)>0$.

We first show (1).
For the sake of contradiction, we assume that the edge $\rhsoll \in E(\Delta)$ is at the border of~$\Delta$.
Then, considering $(\dag)$, it follows that $\rhpcWr\not\subseteq\SUPP(d)$, 
but $\rhpcWl\subseteq\SUPP(d)$.
Thus Algorithm~\ref{alg:connectedness} uses line~\ref{alg:connectedness:canonical} and $q_{i-1} = \rhpcWl$.
This is a contradiction to the assumption that line~\ref{alg:connectedness:swerve} is about to be executed.
Hence $\rhsoll \in E(\Delta)$ is not at the border of~$\Delta$. This proves~(1).

It remains to show (2), (3) and (4).
The fact that line~\ref{alg:connectedness:swerve} is about to execute can have the following two reasons (a) and (b):
\smallskip

\noindent{\bf (a)} $\rhpcWr\subseteq\SUPP(d)$, but $\rhpcWr$ cannot be appended to $p_{i-1}$ in line~\ref{alg:connectedness:canonical}.\\
Then $\rhc$ is $f$\dash flat and $\rhpoulMrXourMl = \rhpoulMr\subseteq\SUPP(d)$
as this turn was appended in line~\ref{alg:connectedness:canonical}. 
Lemma~\ref{lem:connectedness:hivepreservingcyclesonG} applied to~%
\begin{tikzpictured}\draw[rhrhombidraw] (0.0pt,0.0pt) -- (9.238pt,0.0pt) -- (13.857pt,8.0001pt) -- (9.238pt,16.0002pt) -- cycle;\fill (0.0pt,0.0pt) circle (1pt);\fill (9.238pt,0.0pt) circle (1pt);\fill (4.619pt,8.0001pt) circle (1pt);\fill (13.857pt,8.0001pt) circle (1pt);\fill (9.238pt,16.0002pt) circle (1pt);\draw[thin,-my] (4.619pt,16.0002pt) arc (180:240:4.619pt);\draw[thin,-my] (2.3095pt,12.0002pt) arc (120:60:4.619pt);\end{tikzpictured}
yields that $\rhrholl$ is not $f$\dash flat.
Since 
$\rhpoulMrr\subseteq\SUPP(d)$ 
we have $\rhpourMll\subseteq\SUPP(d)$
 by Lemma~\ref{cla:negimpliespos}.
Therefore, 
$p_{i-1}$ can be continued via $q_{i-1} = \rhpcWl$ in line~\ref{alg:connectedness:canonical},
in contradiction to the fact that line~\ref{alg:connectedness:swerve} is about to execute.
\smallskip

\noindent{\bf (b)} $\rhpcWl\subseteq\SUPP(d)$, but $\rhpcWl$ cannot be appended to~$p_{i-1}$ in line~\ref{alg:connectedness:canonical}.\\
Then $\rhrholl$ is $f$\dash flat, which shows (4).
Lemma~\ref{cla:negimpliespos} implies that $\rhpllMr\subseteq\SUPP(d)$.
In line~\ref{alg:connectedness:swerve}, the turns $q_{i-1} = \rhpcWrl$ are appended to~$p_{i-1}$,
which shows (2).
It remains to show that appending $q_{i-1}$ does not result in negative contributions in $f$\dash flat rhombi.
But if $\rhpcWr$ leads to a negative contribution in an $f$\dash flat rhombus, then $\rhc$ is $f$\dash flat
and if $\rhpollMl$ leads to a negative contribution in an $f$\dash flat rhombus, then $\rhrhll$ is $f$\dash flat.
In both cases, this contradicts Lemma~\ref{lem:connectedness:hivepreservingcyclesonG},
for the $f$\dash flat trapezoid 
\begin{tikzpictured}\draw[rhrhombidraw] (0.0pt,0.0pt) -- (9.238pt,0.0pt) -- (13.857pt,8.0001pt) -- (9.238pt,16.0002pt) -- cycle;\fill (0.0pt,0.0pt) circle (1pt);\fill (9.238pt,0.0pt) circle (1pt);\fill (13.857pt,8.0001pt) circle (1pt);\fill (9.238pt,16.0002pt) circle (1pt);\fill (4.619pt,8.0001pt) circle (1pt);\draw[thin,-my] (2.3095pt,12.0002pt) arc (120:60:4.619pt);\draw[thin,-my] (4.619pt,16.0002pt) arc (180:240:4.619pt);\end{tikzpictured}
and 
\begin{tikzpictured}\fill (0.0pt,0.0pt) circle (1pt);\fill (9.238pt,0.0pt) circle (1pt);\fill (13.857pt,8.0001pt) circle (1pt);\fill (4.619pt,8.0001pt) circle (1pt);\draw[rhrhombidraw] (-4.619pt,8.0001pt) -- (0.0pt,0.0pt) -- (9.238pt,0.0pt) -- (13.857pt,8.0001pt) -- cycle;\fill (-4.619pt,8.0001pt) circle (1pt);\fill[white] (9.238pt,16.0002pt) circle (0.1pt);\draw[thin,-my] (6.9285pt,12.0002pt) arc (60:0:4.619pt);\draw[thin,-my] (11.5475pt,12.0002pt) arc (120:180:4.619pt);\end{tikzpictured}
, respectively. This shows (3).
\qquad\end{prooff}

We specify now the precedence rule~(\ddag) for breaking ties in line~\ref{alg:connectedness:if} 
of Algorithm~\ref{alg:connectedness}. 
\begin{equation*}\tag{\ddag} 
\begin{minipage}{12cm}
If $p_{i-1}$ ends at the diagonal of an $f$\dash flat rhombus, then Algorithm~\ref{alg:connectedness}
appends \emph{counterclockwise} turns;
if $p_{i-1}$ ends at the diagonal of a non\dash $f$\dash flat rhombus, then Algorithm~\ref{alg:connectedness}
appends \emph{clockwise} turns.
\end{minipage}
\end{equation*}

Finally, to fully specify Algorithm~\ref{alg:connectedness}, 
we now define how the cycle~$c$ is generated from~$p$ 
in line~\ref{alg:connectedness:concat}:
When line~\ref{alg:connectedness:concat} is about to execute, then $p$ has used a vertex more than once.
Let $v$ denote the first vertex of $p$ that is used more than once.
Note that $v$ is a black vertex.
Let $q$~denote the $q_i$ that was appended last.
We note that $q$ consists of either two edges or four edges.
Now we truncate everything of~$p$ previous to the first occurence of~$v$ and everything after the last occurence of~$v$, 
thus generating the cycle~$c$.
We denote by $\vartheta$ the first turn of $p$ that uses $v$ and by $\vartheta'$ the turn of $c$ that uses $v$.

For example, suppose $p$ uses the swerve $\rhpoulMrl$ and $q = \rhpollWl$.
Then $\vartheta=\rhpcWl$ and $\vartheta'=\rhpollWr$.
Note that the turn $\vartheta'$ is contained in $c$ but not contained in $p$.

Since $p$ uses no negative contributions in $f$\dash flat rhombi, 
the first assertion of Claim~\ref{cla:LAST} below is plausible, but needs proof
as $c$ may contain turns that are not contained in $p$.
In fact, we must ensure that no negative contributions exist in $c$ ``near~$v$''.

\begin{claim}\label{cla:LAST}
(\textup{1}) The cycle $c$ is $f$\dash hive preserving.

(\textup{2}) If $\vartheta'$ is not used by $p$, then $\vartheta' \subseteq \SUPP(d)$.
\end{claim}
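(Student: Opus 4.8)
The plan is to exploit that $c$ is obtained from $p$ purely by the truncation of line~\ref{alg:connectedness:concat}, so that $c$ and $p$ agree ``everywhere except near~$v$''. First I would fix notation: write $p=t_1t_2\cdots t_N$ as its sequence of turns (each $t_j$ running white\dash black\dash white), let $t_a$ be the first turn whose middle vertex is~$v$ (so $\vartheta=t_a$) and $t_b$ the last such turn; by construction $t_b$ lies in the piece $q=q_{m-1}$ appended last, which is either a single turn or a swerve. After truncation $c$ has the cyclic turn sequence $\vartheta',t_{a+1},t_{a+2},\ldots,t_{b-1}$, where $\vartheta'$ is formed from the first half\dash edge of~$t_b$ followed by the second half\dash edge of~$t_a=\vartheta$ (note $b\ge a+2$, since a turn must end at a different white vertex). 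Hence the only turns and turnedges of~$c$ that are not already turns and turnedges of~$p$ are $\vartheta'$ itself and the two turnedges $(t_{b-1},\vartheta')$ and $(\vartheta',t_{a+1})$ incident to it.

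For part~(1), recall from Definition~\ref{def:f-secure}(2) and Definition~\ref{def:slack-contr} that $c$ is $f$\dash hive preserving iff it uses none of the negative contributions $\rhpoulMl,\rhpolrWl$ (counterclockwise turns at acute angles) or $\rhpoulMrr,\rhpolrWrr$ (two consecutive clockwise turns around an obtuse angle) in an $f$\dash flat rhombus. Claim~\ref{cla:connectedness:welldefined}(3) already gives this for everything inherited from~$p$, so only $\vartheta'$ and the two incident turnedges remain. I would then run a finite case analysis driven by (i) how $\vartheta=t_a$ was produced (the initial turn, a turn appended via line~\ref{alg:connectedness:canonical}, or a turn of an earlier swerve); (ii) whether $v$ is the middle vertex of a single\dash turn $q_{m-1}$ or the first or second black vertex of a swerve $q_{m-1}$; and (iii) the precedence rule~(\ddag) together with Claim~\ref{cla:connectedness:welldefined}(4), which forces every swerve rhombus to be $f$\dash flat and pins down the turn direction of each step from the flatness of the rhombus on whose diagonal~$p$ lands. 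In each configuration one checks, exactly as in the proof of Lemma~\ref{le:canpa} and possibly invoking Lemma~\ref{lem:connectedness:hivepreservingcyclesonG} for a flat trapezoid, that $\vartheta'$ is not a counterclockwise turn at an acute angle of an $f$\dash flat rhombus and that $(t_{b-1},\vartheta')$ and $(\vartheta',t_{a+1})$ are not $\rhpoulMrr$\dash type contributions in $f$\dash flat rhombi; the key leverage is that $\vartheta'$ shares its outgoing half\dash edge with $t_a$, whose continuation $t_{a+1}$ in~$p$ is already admissible, and its incoming half\dash edge with $t_b\subseteq q_{m-1}$, whose predecessor $t_{b-1}$ in~$p$ is likewise admissible.

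For part~(2), assume $\vartheta'$ is not a turn of~$p$; then in particular $\vartheta'\neq t_a$ and its two half\dash edges come genuinely from $t_b$ and $t_a$. The outgoing half\dash edge of $\vartheta'$ is the second edge of $t_a$, which lies in $\SUPP(d)$: if $t_a$ is the initial turn this holds by hypothesis on the input of Algorithm~\ref{alg:connectedness}, if $t_a$ was appended via line~\ref{alg:connectedness:canonical} then $t_a\subseteq\SUPP(d)$, and if $t_a$ is a turn of an earlier swerve then either it is the second turn of that swerve (so its second edge is the last edge of the swerve, in $\SUPP(d)$ by Claim~\ref{cla:connectedness:welldefined}(2)) or it is the first turn, in which case the remaining edge is propagated into $\SUPP(d)$ across the $f$\dash flat swerve rhombus by Lemma~\ref{cla:negimpliespos} together with Lemma~\ref{lem:flowpropagation}. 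The incoming half\dash edge of $\vartheta'$ is the first edge of $t_b$, and since $t_b$ lies in the last appended piece $q_{m-1}$ the same reasoning applies verbatim, again using Claim~\ref{cla:connectedness:welldefined}(2) and~(4). Hence $\vartheta'\subseteq\SUPP(d)$, which (combined with the fact that all other turns of $c$ carry edges of $p$ that, modulo swerve interiors, lie in $\SUPP(d)$) is exactly what is needed afterwards to invoke Lemma~\ref{cla:connectedness:morethanhalf}.

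The main obstacle is purely the size of the case analysis in part~(1): one has to track, for each way $v$ can be a shared black vertex of the last appended piece and of an earlier turn, which rhombus each of $\vartheta'$, $(t_{b-1},\vartheta')$, $(\vartheta',t_{a+1})$ lies in and whether that rhombus is forced to be $f$\dash flat. No individual case is hard — each is the same local bookkeeping already performed in Lemma~\ref{le:canpa} and Claim~\ref{cla:connectedness:welldefined}, now confined to a neighbourhood of~$v$ — but the enumeration is unavoidably tedious, and the role of the precedence rule~(\ddag) is precisely to eliminate configurations that cannot actually arise.
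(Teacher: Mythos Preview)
Your plan is the paper's proof. The paper reduces part~(1) to exactly the three checks you isolate --- that $\vartheta'$ is not a counterclockwise turn at an acute angle of an $f$\dash flat rhombus, and that neither $(t_{b-1},\vartheta')$ nor $(\vartheta',t_{a+1})$ is a two\dash clockwise contribution in an $f$\dash flat rhombus --- and then runs the case analysis organised by the type of $q$ (counterclockwise turn, clockwise turn, swerve) rather than by the provenance of $t_a$ and $t_b$; the tools invoked are precisely Claim~\ref{cla:connectedness:welldefined}, Lemma~\ref{lem:connectedness:hivepreservingcyclesonG}, and the precedence rule~(\ddag), with one sub\dash case eliminated by contradiction, just as you anticipate.

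One small simplification for part~(2): the paper's case split makes visible that in every sub\dash case except one, $\vartheta'$ coincides with a turn already in $q$, so the hypothesis ``$\vartheta'$ is not used by $p$'' fails and there is nothing to prove. In the single remaining sub\dash case $\vartheta$ is the \emph{second} (counterclockwise) turn of an earlier swerve, whence its second edge is the last edge of that swerve and lies in $\SUPP(d)$ directly by Claim~\ref{cla:connectedness:welldefined}(2). Your ``$t_a$ is the first turn of a swerve, propagate via Lemma~\ref{cla:negimpliespos} and Lemma~\ref{lem:flowpropagation}'' branch therefore never occurs under the hypothesis of~(2), and you can simply drop it rather than justify it.
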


Let us first show that once Claim~\ref{cla:LAST} is shown, we are done.

\begin{prooff}[Proof of Proposition~\textup{\ref{pro:connectedness:existscycle}}] 
We first show that Algorithm~\ref{alg:connectedness} produces an 
$f$\dash secure cycle~$c$. 
Claim~\ref{cla:LAST} already tells us that $c$ is $f$\dash hive preserving.
Assume that $c$ uses both $\rhpoulMlXolrWl$ in a rhombus~$\rhc$.
Claim~\ref{cla:connectedness:welldefined}(2) and 
Claim~\ref{cla:LAST}(2) imply
that at least the second edge of every counterclockwise turn in $c$ is contained in $\SUPP(d)$.
Hence $\rhaourW(d)>0$ and $\rhaollM(d)>0$, which implies $\s \rhc d \leq -2$.
The fact $\s \rhc {f+d} \geq 0$ implies $\s \rhc f \geq 2$ and hence $\rhc$ is not nearly $f$\dash flat.
It follows that $c$ is $f$\dash secure.

Claim~\ref{cla:LAST}(2)
combined with Claim~\ref{cla:connectedness:welldefined}(2) 
also ensures that the only turns in $c$, that are not contained in $\SUPP(d)$, are turns of swerves.
Hence at least half of the edges of $c$ are contained in $\SUPP(d)$.
This inequality is strict, because $c$ cannot consist of swerves only.
Lemma~\ref{cla:connectedness:morethanhalf} implies $\dist(f+c,g)<\dist(f,g)$.
\end{prooff}


From now on, swerves (e.g.~$\rhpoulMrl$) will be drawn as straight arrows with a filled triangular head, e.g.~%
\begin{tikzpictured}\draw[rhrhombidraw] (0.0pt,0.0pt) -- (-4.619pt,8.0001pt) -- (0.0pt,16.0002pt) -- (4.619pt,8.0001pt) -- cycle;\draw[thin,-my] (-2.3095pt,12.0002pt) --  (2.3095pt,4.0001pt);;\end{tikzpictured}%
.
(They are not to be confused with throughput arrows like $\rhacW$, which have a different head and are always drawn crossing fat edges.)

\begin{prooff}[Proof of Claim~\textup{\ref{cla:LAST}}]
Since $p$ uses no negative contributions in $f$\dash rhombi by construction, 
the proof that the cycle $c$ is $f$\dash hive preserving breaks down into the following parts:

\begin{remunerate}
 \item[(neg1)] The turn $\vartheta'$ is not counterclockwise at the acute angle of an $f$\dash flat rhombus.
 \item[(neg2)] The turn $\vartheta'$ is not both clockwise and preceded in $c$ by another clockwise turn such that both turns lie in the same $f$\dash flat rhombus.
 \item[(neg3)] The turn $\vartheta'$ is not both clockwise and succeded in $c$ by another clockwise turn such that both turns lie in the same $f$\dash flat rhombus.
\end{remunerate}

We also need to prove the following property:
\begin{remunerate}
 \item[(su)] If $\vartheta'$ is not used by $p$, then $\vartheta' \subseteq \SUPP(d)$.
\end{remunerate}
\smallskip

Recall that $q$ denotes the $q_i$ which was appended last.
Three cases can appear:
(a) $q$~is a counterclockwise turn, (b) $q$~is a clockwise turn, (c) $q$~is a swerve.
All three cases are significantly different and require careful attention to detail.
We start with the simplest one, which does not require the precedence rule (\ddag):

\noindent{\bf (a)} Assume that $q$ is a counterclockwise turn, pictorially $q=$
\begin{tikzpicture}\draw[rhrhombidraw] (0.0pt,0.0pt) -- (9.238pt,0.0pt) -- (4.619pt,8.0001pt) -- cycle;\draw[thin,-my] (2.3095pt,4.0001pt) arc (240:300:4.619pt);\end{tikzpicture}%
. Considering Algorithm~\ref{alg:connectedness}, we see that $q \subseteq \SUPP(d)$.
There are two possibilities for $\vartheta$:
$\vartheta=$
\begin{tikzpicture}\draw[rhrhombidraw] (0.0pt,0.0pt) -- (9.238pt,0.0pt) -- (4.619pt,8.0001pt) -- cycle;\draw[thin,-my] (4.619pt,0.0pt) arc (180:120:4.619pt);\end{tikzpicture}
or
$\vartheta=$
\begin{tikzpicture}\draw[rhrhombidraw] (0.0pt,0.0pt) -- (9.238pt,0.0pt) -- (4.619pt,8.0001pt) -- cycle;\draw[thin,-my] (6.9285pt,4.0001pt) arc (120:180:4.619pt);\end{tikzpicture}%
, because the other four turns lead to a contradiction to the fact that Algorithm~\ref{alg:connectedness} stops as soon as $p$ contains a vertex twice.

\noindent{\bf (a1)} Suppose first $\vartheta=$
\begin{tikzpicture}\draw[rhrhombidraw] (0.0pt,0.0pt) -- (9.238pt,0.0pt) -- (4.619pt,8.0001pt) -- cycle;\draw[thin,-my] (4.619pt,0.0pt) arc (180:120:4.619pt);\end{tikzpicture}.
In this case, we have $\vartheta'=q$.
The statement (neg1) holds, because $\vartheta'$ is used by $p$ and $p$ uses no negative contributions in $f$\dash flat rhombi.
We also see that (su)
holds in this case, because $\vartheta'$ is used by~$p$.

\noindent{\bf (a2)} Suppose now $\vartheta=$
\begin{tikzpicture}\draw[rhrhombidraw] (0.0pt,0.0pt) -- (9.238pt,0.0pt) -- (4.619pt,8.0001pt) -- cycle;\draw[thin,-my] (6.9285pt,4.0001pt) arc (120:180:4.619pt);\end{tikzpicture}. 
Since
\begin{tikzpicture}\draw[rhrhombidraw] (0.0pt,0.0pt) -- (9.238pt,0.0pt) -- (4.619pt,8.0001pt) -- cycle;\draw[rhrhombithickside] (9.238pt,0.0pt) -- (4.619pt,8.0001pt);\draw[->,rhrhombiarrow] (2.9284pt,1.6906pt) -- (10.9286pt,6.3096pt);\end{tikzpicture}%
$(d)>0$, it follows that
\begin{tikzpicture}\draw[rhrhombidraw] (0.0pt,0.0pt) -- (9.238pt,0.0pt) -- (4.619pt,8.0001pt) -- cycle;\draw[thin,-my] (6.9285pt,4.0001pt) arc (120:180:4.619pt);\end{tikzpicture}
is part of a swerve: The situation of $p$ can be depicted as
\begin{tikzpicture}\draw[rhrhombidraw] (0.0pt,0.0pt) -- (9.238pt,0.0pt) -- (4.619pt,8.0001pt) -- cycle;\draw[thin,-my] (2.3095pt,4.0001pt) arc (240:300:4.619pt);\draw[thin,-my] (9.238pt,8.0001pt) --  (4.619pt,0.0pt);;\end{tikzpicture}%
. By construction, $\vartheta'=$
\begin{tikzpicture}\draw[rhrhombidraw] (0.0pt,0.0pt) -- (9.238pt,0.0pt) -- (4.619pt,8.0001pt) -- cycle;\draw[thin,-my] (2.3095pt,4.0001pt) arc (60:0:4.619pt);\end{tikzpicture}%
, so $\vartheta'$ consists of an edge of $q$ and the last edge of a swerve.
Hence $q \subseteq \SUPP(d)$ and (su) follows in this case.
It remains to verify (neg2) and (neg3).
Note that Algorithm~\ref{alg:connectedness} ensures that the counterclockwise turn $q$ is not a negative contribution in $f$\dash flat rhombi
and hence the shaded rhombus
\begin{tikzpictured}\fill[thick,fill=black!20] (0.0pt,0.0pt) -- (4.619pt,8.0001pt) -- (0.0pt,16.0002pt) -- (-4.619pt,8.0001pt) -- cycle;\draw[rhrhombidraw] (-4.619pt,8.0001pt) -- (4.619pt,8.0001pt) -- (0.0pt,16.0002pt) -- cycle;\end{tikzpictured}
is not $f$\dash flat. This proves (neg3).
If we assume the contrary of (neg2), then the path
\begin{tikzpicture}\fill[thick,fill=black!20] (-4.619pt,8.0001pt) -- (0.0pt,0.0pt) -- (9.238pt,0.0pt) -- (4.619pt,8.0001pt) -- cycle;\draw[rhrhombidraw] (0.0pt,0.0pt) -- (9.238pt,0.0pt) -- (4.619pt,8.0001pt) -- cycle;\draw[thin,-my] (-2.3095pt,4.0001pt) arc (120:60:4.619pt) arc (60:0:4.619pt);\end{tikzpicture}
is a negative contribution in an $f$\dash flat rhombus.
But since swerves lie in $f$\dash flat rhombi according to Claim~\ref{cla:connectedness:welldefined}(4), it follows that the trapezoid
\begin{tikzpicture}\fill[thick,fill=black!20] (-4.619pt,8.0001pt) -- (0.0pt,0.0pt) -- (9.238pt,0.0pt) -- (13.857pt,8.0001pt) -- cycle;\draw[rhrhombidraw] (0.0pt,0.0pt) -- (9.238pt,0.0pt) -- (4.619pt,8.0001pt) -- cycle;\draw[thin,-my] (9.238pt,8.0001pt) --  (4.619pt,0.0pt);;\end{tikzpicture}
is $f$\dash flat, which is a contradiction to Lemma~\ref{lem:connectedness:hivepreservingcyclesonG}, 
applied to 
\begin{tikzpictured}\draw[rhrhombidraw] (-4.619pt,8.0001pt) -- (0.0pt,0.0pt) -- (9.238pt,0.0pt) -- (13.857pt,8.0001pt) -- cycle;\fill (-4.619pt,8.0001pt) circle (1pt);\fill (4.619pt,8.0001pt) circle (1pt);\fill (13.857pt,8.0001pt) circle (1pt);\fill (0.0pt,0.0pt) circle (1pt);\fill (9.238pt,0.0pt) circle (1pt);\draw[thin,-my] (11.5475pt,12.0002pt) arc (120:180:4.619pt);\draw[thin,-my] (6.9285pt,12.0002pt) arc (60:0:4.619pt);\end{tikzpictured}
. This proves (neg2).
\smallskip

\noindent{\bf (b)} Assume that $q$ is a clockwise turn, pictorially $q=$
\begin{tikzpicture}\draw[rhrhombidraw] (0.0pt,0.0pt) -- (9.238pt,0.0pt) -- (4.619pt,8.0001pt) -- cycle;\draw[thin,-my] (6.9285pt,4.0001pt) arc (300:240:4.619pt);\end{tikzpicture}%
. As in case (a), we have two possibilities: 
$\vartheta=$
\begin{tikzpicture}\draw[rhrhombidraw] (0.0pt,0.0pt) -- (9.238pt,0.0pt) -- (4.619pt,8.0001pt) -- cycle;\draw[thin,-my] (2.3095pt,4.0001pt) arc (60:0:4.619pt);\end{tikzpicture}
or
$\vartheta=$
\begin{tikzpicture}\draw[rhrhombidraw] (0.0pt,0.0pt) -- (9.238pt,0.0pt) -- (4.619pt,8.0001pt) -- cycle;\draw[thin,-my] (4.619pt,0.0pt) arc (0:60:4.619pt);\end{tikzpicture}%
.
Since $q\subseteq \SUPP(d)$, we have
\begin{tikzpicture}\draw[rhrhombidraw] (0.0pt,0.0pt) -- (9.238pt,0.0pt) -- (4.619pt,8.0001pt) -- cycle;\draw[rhrhombithickside] (4.619pt,8.0001pt) -- (0.0pt,0.0pt);\draw[->,rhrhombiarrow] (6.3096pt,1.6906pt) -- (-1.6906pt,6.3096pt);\end{tikzpicture}%
$(d)>0$.
If we had $\vartheta=$
\begin{tikzpicture}\draw[rhrhombidraw] (0.0pt,0.0pt) -- (9.238pt,0.0pt) -- (4.619pt,8.0001pt) -- cycle;\draw[thin,-my] (2.3095pt,4.0001pt) arc (60:0:4.619pt);\end{tikzpicture},
then 
\begin{tikzpicture}\draw[rhrhombidraw] (0.0pt,0.0pt) -- (9.238pt,0.0pt) -- (4.619pt,8.0001pt) -- cycle;\draw[rhrhombithickside] (0.0pt,0.0pt) -- (4.619pt,8.0001pt);\draw[->,rhrhombiarrow] (-1.6906pt,6.3096pt) -- (6.3096pt,1.6906pt);\end{tikzpicture}%
$(d)>0$, which is a contradiction.
Hence $\vartheta=$
\begin{tikzpicture}\draw[rhrhombidraw] (0.0pt,0.0pt) -- (9.238pt,0.0pt) -- (4.619pt,8.0001pt) -- cycle;\draw[thin,-my] (4.619pt,0.0pt) arc (0:60:4.619pt);\end{tikzpicture}
 and thus $\vartheta'=q$.
This proves (su) in this case. 
The fact that $\vartheta'=q$ is a part of $p$ shows (neg2).
It remains to show (neg3). Assume the contrary.
Then the rhombus
\begin{tikzpicture}\fill[thick,fill=black!20] (4.619pt,8.0001pt) -- (-4.619pt,8.0001pt) -- (-9.238pt,16.0002pt) -- (0.0pt,16.0002pt) -- cycle;\draw[rhrhombidraw] (0.0pt,16.0002pt) -- (-4.619pt,8.0001pt) -- (4.619pt,8.0001pt) -- cycle;\end{tikzpicture}
is $f$\dash flat.
Hence
\begin{tikzpicture}\draw[rhrhombidraw] (0.0pt,16.0002pt) -- (-4.619pt,8.0001pt) -- (4.619pt,8.0001pt) -- cycle;\draw[rhrhombithickside] (0.0pt,16.0002pt) -- (4.619pt,8.0001pt);\draw[->,rhrhombiarrow] (6.3096pt,14.3097pt) -- (-1.6906pt,9.6907pt);\end{tikzpicture}%
$(d)>0$
implies
\begin{tikzpicture}\draw[rhrhombidraw] (0.0pt,16.0002pt) -- (-4.619pt,8.0001pt) -- (4.619pt,8.0001pt) -- cycle;\draw[rhrhombithickside] (-9.238pt,16.0002pt) -- (-4.619pt,8.0001pt);\draw[->,rhrhombiarrow] (-2.9284pt,14.3097pt) -- (-10.9286pt,9.6907pt);\end{tikzpicture}%
$(d)>0$.
The rhombus
\begin{tikzpictured}\fill[thick,fill=black!20] (0.0pt,0.0pt) -- (4.619pt,8.0001pt) -- (0.0pt,16.0002pt) -- (-4.619pt,8.0001pt) -- cycle;\draw[rhrhombidraw] (0.0pt,0.0pt) -- (9.238pt,0.0pt) -- (4.619pt,8.0001pt) -- cycle;\end{tikzpictured}
is not $f$\dash flat by Lemma~\ref{lem:connectedness:hivepreservingcyclesonG} applied to~%
\begin{tikzpictured}\draw[rhrhombidraw] (0.0pt,0.0pt) -- (-9.238pt,0.0pt) -- (-13.857pt,8.0001pt) -- (-9.238pt,16.0002pt) -- cycle;\fill (0.0pt,0.0pt) circle (1pt);\fill (-9.238pt,0.0pt) circle (1pt);\fill (-13.857pt,8.0001pt) circle (1pt);\fill (-9.238pt,16.0002pt) circle (1pt);\fill (-4.619pt,8.0001pt) circle (1pt);\draw[thin,-my] (2.3095pt,4.0001pt) arc (60:120:4.619pt);\draw[thin,-my] (0.0pt,8.0001pt) arc (0:-60:4.619pt);\end{tikzpictured}%
.
But the precedence rule (\ddag) of Algorithm~\ref{alg:connectedness} implies that $p$ continues from
\begin{tikzpicture}\draw[rhrhombidraw] (0.0pt,0.0pt) -- (9.238pt,0.0pt) -- (4.619pt,8.0001pt) -- cycle;\draw[thin,-my] (4.619pt,0.0pt) arc (0:60:4.619pt);\end{tikzpicture}
with the counterclockwise turn
\begin{tikzpicture}\draw[rhrhombidraw] (0.0pt,0.0pt) -- (9.238pt,0.0pt) -- (4.619pt,8.0001pt) -- cycle;\draw[thin,-my] (2.3095pt,4.0001pt) arc (60:120:4.619pt);\end{tikzpicture}%
. 
This is a contradiction, proving (neg3) in this case.
\smallskip

\noindent{\bf (c)}
Assume that $q$ is a swerve, pictorially $q=$
\begin{tikzpictured}\draw[rhrhombidraw] (0.0pt,16.0002pt) -- (-4.619pt,8.0001pt) -- (0.0pt,0.0pt) -- (4.619pt,8.0001pt) -- cycle;\draw[thin,-my] (-2.3095pt,12.0002pt) --  (2.3095pt,4.0001pt);;\end{tikzpictured}
. The rhombus $\rhc$ which contains the swerve is $f$\dash flat by Claim~\ref{cla:connectedness:welldefined}(4).
Since $p$ does not use negative contributions in $f$\dash flat rhombi, we get that $\rhrholl$ is not $f$\dash flat.
The possibilities for~$\vartheta$ here are $\rhpourMl$, $\rhpcMr$, $\rhpollWr$ 
(note that $\rhpolrWl$ is ruled out, because $\rhc$ is $f$\dash flat).
We distinguish the following three cases:

\noindent{\bf (c1)} 
Suppose $\vartheta=$
\begin{tikzpictured}\draw[rhrhombidraw] (0.0pt,16.0002pt) -- (-4.619pt,8.0001pt) -- (0.0pt,0.0pt) -- (4.619pt,8.0001pt) -- cycle;\draw[thin,-my] (-2.3095pt,4.0001pt) arc (120:60:4.619pt);\end{tikzpictured}%
. Here $\vartheta'=\rhpcWl$, which is part of~$q$. This proves (su) 
in this case.
The fact that $\rhrholl$ is not $f$\dash flat implies (neg1) in this case.

\noindent{\bf (c2)} 
Suppose $\vartheta=$
\begin{tikzpictured}\draw[rhrhombidraw] (0.0pt,16.0002pt) -- (-4.619pt,8.0001pt) -- (0.0pt,0.0pt) -- (4.619pt,8.0001pt) -- cycle;\draw[thin,-my] (2.3095pt,12.0002pt) arc (120:180:4.619pt);\end{tikzpictured}%
. Here $\vartheta' = \rhpoulMr$, which is part of~$q$, which again proves (su)
in this case.
The fact (neg2) follows because $p$ uses no negative contributions in $f$\dash flat rhombi.
It remains to show (neg3).
Note that $\rhaoulM(d)>0$, because the first edge of $\rhpoulMr$ is contained in $\SUPP(d)$.
Further, 
$\rhacW(d)>0$, because the second edge of 
the counterclockwise turn $\rhpourMl$ is contained in $\SUPP(d)$
(this is always the case for counterclockwise turns by construction of~$p$).
Hence $\rhpoulMr \subseteq \SUPP(d)$.
If $\rhrhoul$ were not $f$\dash flat, then
Algorithm~\ref{alg:connectedness} would have appended the clockwise turn $\rhpoulMr$ over appending the swerve
\begin{tikzpictured}\draw[rhrhombidraw] (0.0pt,16.0002pt) -- (-4.619pt,8.0001pt) -- (0.0pt,0.0pt) -- (4.619pt,8.0001pt) -- cycle;\draw[thin,-my] (-2.3095pt,12.0002pt) --  (2.3095pt,4.0001pt);;\end{tikzpictured}. 
Hence $\rhrhoul$ is $f$\dash flat.
The hexagon equality (\ref{cla:BZ}) implies that the trapezoid
\begin{tikzpictured}\fill[rhrhombifill] (9.238pt,0.0pt) -- (0.0pt,0.0pt) -- (-4.619pt,8.0001pt) -- (0.0pt,16.0002pt) -- cycle;\draw[rhrhombidraw] (9.238pt,0.0pt) -- (13.857pt,8.0001pt) -- (9.238pt,16.0002pt) -- (4.619pt,8.0001pt) -- cycle;\end{tikzpictured}
if $f$\dash flat.
The fact (neg3) follows from Lemma~\ref{lem:connectedness:hivepreservingcyclesonG} applied to~%
\begin{tikzpictured}\draw[rhrhombidraw] (9.238pt,0.0pt) -- (0.0pt,0.0pt) -- (-4.619pt,8.0001pt) -- (0.0pt,16.0002pt) -- cycle;\fill (9.238pt,0.0pt) circle (1pt);\fill (0.0pt,0.0pt) circle (1pt);\fill (4.619pt,8.0001pt) circle (1pt);\fill (-4.619pt,8.0001pt) circle (1pt);\fill (0.0pt,16.0002pt) circle (1pt);\draw[thin,-my] (9.238pt,8.0001pt) arc (0:-60:4.619pt);\end{tikzpictured}%
.

\noindent{\bf (c3)} 
Suppose $\vartheta=\rhpcMr$. Here $\vartheta'=\rhpoulMl$, which is a negative contribution in the $f$\dash flat rhombus~$\rhc$.
Hence we need to show that this case leads to a contradiction.
Recall that $\rhrholl$ is not $f$\dash flat.
Clearly,
\begin{tikzpictured}\draw[rhrhombidraw] (0.0pt,0.0pt) -- (-4.619pt,8.0001pt) -- (0.0pt,16.0002pt) -- (4.619pt,8.0001pt) -- cycle;\draw[rhrhombithickside] (0.0pt,0.0pt) -- (4.619pt,8.0001pt);\draw[->,rhrhombiarrow] (-1.6906pt,6.3096pt) -- (6.3096pt,1.6906pt);\end{tikzpictured}%
$(d)>0$
and
\begin{tikzpictured}\draw[rhrhombidraw] (0.0pt,0.0pt) -- (-4.619pt,8.0001pt) -- (0.0pt,16.0002pt) -- (4.619pt,8.0001pt) -- cycle;\draw[rhrhombithickside] (4.619pt,8.0001pt) -- (-4.619pt,8.0001pt);\draw[->,rhrhombiarrow] (0.0pt,3.3811pt) -- (0.0pt,12.6191pt);\end{tikzpictured}%
$(d)>0$, which implies
\begin{tikzpictured}\draw[rhrhombidraw] (0.0pt,0.0pt) -- (-4.619pt,8.0001pt) -- (0.0pt,16.0002pt) -- (4.619pt,8.0001pt) -- cycle;\draw[rhrhombithickside] (0.0pt,0.0pt) -- (-4.619pt,8.0001pt);\draw[->,rhrhombiarrow] (-6.3096pt,1.6906pt) -- (1.6906pt,6.3096pt);\end{tikzpictured}%
$(d)>0$.
This means that $\vartheta$ is preceded in $p$ by the counterclockwise turn
$\rhpollWl$.
This is a contradiction to the precedence rule (\ddag), because Algorithm~\ref{alg:connectedness} would have chosen
$\rhpollWr$ instead of $\rhpollWl$.
\qquad\end{prooff}


\bibliographystyle{siam}
{\small

}
\end{document}